\documentclass{amsart}
\usepackage[utf8]{inputenc}
\usepackage[maxnames=4,maxalphanames=4,style=alphabetic-verb]{biblatex}
\bibliography{main.bib}
\usepackage[margin=1in]{geometry}
\usepackage{graphicx}
\usepackage{amsmath,amsfonts,amssymb,amsthm,amsaddr,etoolbox}
\usepackage{latexsym,hyperref}
\usepackage{xcolor}
\usepackage{scrextend}
\usepackage{tikz-cd}
\usepackage{quiver}
\usepackage{enumitem}
\usepackage[normalem]{ulem}
\usepackage{mathtools}
\usepackage[linewidth=1pt]{mdframed}

\author[Srivastava, Sinha and Kuber]{Suyash Srivastava, Vinit Sinha and Amit Kuber}
\address{Department of Mathematics and Statistics\\Indian Institute of Technology, Kanpur\\ Uttar Pradesh, India}
\email{suyash20@iitk.ac.in, vinitsinha20@iitk.ac.in, askuber@iitk.ac.in}
\title[Stable radical for special biserial algebras]{{On the stable radical of the module category for special biserial algebras}}
\keywords{string algebra, special biserial algebra, hammock, stable rank, radical of the module category}
\subjclass[2020]{16G20,16S90,06A06,16G60}

\begin{document}

\def\rat{\mathbb{Q}}
\def\tblue#1{\color{blue}#1\color{black}}
\def\abs#1{\lvert#1\rvert}
\def\MSB{MSB }
\def\pivot{\mathsf{pivot}}
\def\dim{\mathsf{dim}}
\def\MS{\mathsf{MS}}
\def\res{\upharpoonright}
\def\tensor{\bigotimes}
\def\defeq{\vcentcolon=}
\def\meet{\wedge}
\def\join{\vee}
\def\dLOfpb#1#2{\mathrm{dLO}_\mathrm{fp}^{{#1}{#2}}}
\def\corner{F}
\def\less{\prec}
\def\frame#1{\begin{mdframed}#1\end{mdframed}}
\def\hdammock{$\Tilde{h}$ammock }
\renewcommand{\labelitemii}{$ \circ $}
\def\sd{sd}
\def\eqdef{=\vcentcolon}
\def\upset{\uparrow}
\def\downset{\downarrow}
\def\length{\mathsf{length}}
\def\gray{\textcolor{gray}}
\def\teal{\textcolor{teal}}
\def\lime{\textcolor{lime}}
\def\magenta{\textcolor{magenta}}
\def\orange{\textcolor{orange}}
\newcommand{\Il}{\pi_l(\ii)}
\newcommand{\Ir}{\pi_r(\ii)}
\newcommand{\htIl}{\htpi_l(\ii)}
\newcommand{\htIr}{\htpi_r(\ii)}
\newcommand{\STT}{\mathsf{long}}
\newcommand{\smol}{\mathsf{short}}
\newcommand{\beeg}{\mathsf{long}}
\newcommand{\eqvl}{\equiv_H^l}
\newcommand{\eqvr}{\equiv_H^r}
\newcommand{\eqvlr}{\equiv_H^{lr}}
\newcommand{\eqv}{\equiv_{H}}

\newcommand{\Lo}{\mathsf{L^0}}
\newcommand{\Lp}{\mathsf{L^+}}
\newcommand{\Lm}{\mathsf{L^-}}
\newcommand{\sfL}{\mathsf{L}}
\newcommand{\Ro}{\mathsf{R^0}}
\newcommand{\Rp}{\mathsf{R^+}}
\newcommand{\Rm}{\mathsf{R^-}}
\newcommand{\sfR}{\mathsf{R}}

\newcommand{\DL}{\Delta_\sfL}
\newcommand{\DR}{\Delta_\sfR}

\newcommand{\hb}{\widebar{H}}
\newcommand{\hht}{\widehat{H}}
\newcommand{\hd}{\widetilde H}
\newcommand{\Hb}{\overline{\mathcal H}}
\newcommand{\Hht}{\widehat{\mathcal H}}
\newcommand{\Hd}{\widetilde{\mathcal H}}
\newcommand{\infb}{\,^\infty\bb}
\newcommand{\binf}{\,\bb^\infty}
\newcommand{\qb}{\mathsf Q^\mathsf {Ba}}
\newcommand{\suc}{\mathfrak{succ}}
\newcommand{\pred}{\mathfrak{pred}}
\newcommand\A{\mathcal{A}}
\newcommand\B{\mathsf{B}}
\newcommand\BB{\mathcal{B}}
\newcommand\C{\mathcal{C}}
\newcommand\Pp{\mathcal{P}}
\newcommand\D{\mathcal{D}}
\newcommand\Hamm{\hat{H}}
\newcommand\hh{\mathfrak{h}}
\newcommand\HH{\mathcal{H}}
\newcommand\RR{\mathcal{R}}
\newcommand\Red[1]{\mathrm{R}_{#1}}
\newcommand\HRed[1]{\mathrm{HR}_{#1}}
\newcommand\K{\mathcal{K}}
\newcommand\LL{\mathcal{L}}
\newcommand\Lim{\text{\normalfont Lim}}
\newcommand\M{\mathcal{M}}
\newcommand\Q{\mathcal{Q}}
\newcommand\SD{\mathcal{SD}}
\newcommand\MD{\mathcal{MD}}
\newcommand\SMD{\mathcal{SMD}}
\newcommand\T{\mathcal{T}}
\newcommand\TT{\mathfrak T}
\newcommand\ii{\mathcal I}
\newcommand\UU{\mathcal{U}}
\newcommand\VV{\mathcal{V}}
\newcommand\ZZ{\mathcal{Z}}
%notations for rings of numbers
\newcommand{\N}{\mathbb{N}} 
\newcommand{\R}{\mathbb{R}}
\newcommand{\Z}{\mathbb{Z}}
\newcommand{\bb}{\mathfrak b}
\newcommand{\qq}{\mathfrak q}
\newcommand{\ch}{\circ_H}
\newcommand{\cg}{\circ_G}
\newcommand{\bua}[1]{\mathfrak b^{\alpha}(#1)}
\newcommand{\falpha}{{\mathfrak{f}\alpha}}
\newcommand{\fgamma}{\gamma^{\mathfrak f}}
\newcommand{\fbeta}{{\mathfrak{f}\beta}}
\newcommand{\bub}[1]{\mathfrak b^{\beta}(#1)}
\newcommand{\bla}[1]{\mathfrak b_{\alpha}(#1)}
\newcommand{\blb}[1]{\mathfrak b_{\beta}(#1)}
\newcommand{\lmin}{\lambda^{\mathrm{min}}}
\newcommand{\lmax}{\lambda^{\mathrm{max}}}
\newcommand{\xmin}{\xi^{\mathrm{min}}}
\newcommand{\xmax}{\xi^{\mathrm{max}}}
\newcommand{\lbmin}{\bar\lambda^{\mathrm{min}}}
\newcommand{\lbmax}{\bar\lambda^{\mathrm{max}}}
\newcommand{\ff}{\mathfrak f}
\newcommand{\cc}{\mathfrak c}
\newcommand{\dd}{\mathfrak d}
\newcommand{\sqsf}{\sqsubset^\ff}
\newcommand{\rr}{\mathfrak r}
\newcommand{\pp}{\mathfrak p}
\newcommand{\uu}{\mathfrak u}
\newcommand{\vv}{\mathfrak v}
\newcommand{\ww}{\mathfrak w}
\newcommand{\xx}{\mathfrak x}
\newcommand{\yy}{\mathfrak y}
\newcommand{\zz}{\mathfrak z}
\newcommand{\MM}{\mathfrak M}
\newcommand{\mm}{\mathfrak m}
\newcommand{\sbq}{\mathfrak s}
\newcommand{\tbq}{\mathfrak t}
\newcommand{\Spec}{\mathbf{Spec}}
\newcommand{\Br}{\mathbf{Br}}
\newcommand{\sk}[1]{\{#1\}}
\newcommand{\Prime}{\mathbf{Pr}}
\newcommand{\Parent}{\mathbf{Parent}}
\newcommand{\Uncle}{\mathbf{Uncle}}
\newcommand{\Cousin}{\mathbf{Cousin}}
\newcommand{\Nephew}{\mathbf{Nephew}}
\newcommand{\Sibling}{\mathbf{Sibling}}
\newcommand{\uc}{\mathrm{uc}}
\newcommand{\MCP}{\mathrm{MCP}}
\newcommand{\MSCP}{\mathrm{MSCP}}
\newcommand{\TTT}{\widetilde{\T}}
%Notations for terms
\newcommand{\la}{l}
\newcommand{\ra}{r}
\newcommand{\lb}{\bar{l}}
\newcommand{\rb}{\bar{r}}
\newcommand{\tBa}{\varepsilon^{\mathrm{Ba}}}
\def\ii{\mathcal{I}}
\newcommand{\brac}[2]{\langle #1,#2\rangle}
\newcommand{\braket}[3]{\langle #1\mid #2:#3\rangle}
\newcommand{\fin}{fin}
\newcommand{\inff}{inf}
%Notation for Ziegler spectrum, quiver etc.
\newcommand{\Zg}{\mathrm{Zg}(\Lambda)}
\newcommand{\Zgs}{\mathrm{Zg_{str}}(\Lambda)}
\newcommand{\STR}[1]{\mathrm{Str}(#1)}
\newcommand{\dmod}{\mbox{-}\mathrm{mod}}
\newcommand{\HQ}{\mathcal{HQ}^\mathrm{Ba}}
\newcommand{\bHQ}{\overline{\mathcal{HQ}}^\mathrm{Ba}}
\newcommand\Af{\mathcal{A}^{\ff}}
\newcommand\AAf{\bar{\mathcal{A}}^{\ff}}
\newcommand\Hf{\mathcal{H}^{\ff}}
\newcommand\Rf{\mathcal{R}^{\ff}}
\newcommand\Tf{\T^{\ff}}
\newcommand\Uf{\mathcal{U}^{\ff}}
\newcommand\Sf{\mathcal{S}^{\ff}}
\newcommand\Vf{\mathcal{V}^{\ff}}
\newcommand\Zf{\mathcal{Z}^{\ff}}
\newcommand\bVf{\overline{\mathcal{V}}^{\ff}}
\newcommand\bTf{\overline{\mathcal{T}}^{\ff}}
\newcommand{\fmin}{\xi^{\mathrm{fmin}}}
\newcommand{\fmax}{\xi^{\mathrm{fmax}}}
\newcommand{\xif}{\xi^\ff}
\newcommand{\mycomment}[1]{}
\newcommand{\LOfp}{\mathrm{LO}_{\mathrm{fp}}}

\newcommand\Tl{\mathbf{T}}
\newcommand\Tla{\mathbf{T}_{\la}}
\newcommand\Tlb{\mathbf{T}_{\lb}}
\newcommand\Ml{\mathbf{M}}
\newcommand\Mla{\mathbf{M}_{\la}}
\newcommand\Mlb{\mathbf{M}_{\lb}}
\newcommand\OT{\mathcal{O}}
\newcommand\LO{\mathbf{LO}}

\def\tblue{\textcolor{blue}}

\newcommand\rad{\mathrm{rad}_\Lambda} 

\newcommand{\fork}[1]{\mathrm{Str}_{\text{Fork}}^{\la}(#1)}
%Notations
\newcommand{\BaB}{\mathsf{Ba(B)}}
\newcommand{\CycB}{\mathsf{Cyc(B)}}
\newcommand{\ExtB}{\mathsf{Ext(B)}}
\newcommand{\St}{\mathsf{St}(\Lambda)}
\newcommand{\STB}[2]{\mathsf{St}_{#1}(#2)}
\newcommand{\BST}[1]{\mathsf{BSt}_{#1}(\xx_0,i;\sB)}
\newcommand{\CSt}[1]{\mathsf{CSt}_{#1}(\sB)}
\newcommand{\CST}[1]{\mathsf{CSt}_{#1}(\xx_0,i;\sB)}
\newcommand{\ASt}[1]{\mathsf{ASt}_{#1}(\sB)}

\newcommand{\OSt}[2]{{\overline{\mathsf{St}}_{#1}(#2)}}
\newcommand{\lB}{\ell_{\sB}}
\newcommand{\lbB}{\overline{\ell}_{\sB}}
\newcommand{\LB}{l_{\sB}}
\newcommand{\LbB}{\overline{l}_{\sB}}

\newcommand{\BalB}{\mathsf{Ba}_l(\sB)}
\newcommand{\BalbB}{\mathsf{Ba}_{\lb}(\sB)}
\newcommand{\BarB}{\mathsf{Ba}_r(\sB)}
\newcommand{\BarbB}{\mathsf{Ba}_{\bar{r}}(\sB)}

\newcommand{\sB}{\mathsf{B}}
\newcommand{\Str}[1]{\mathrm{Str}_{#1}(\xx_0,i;\sB)}
\newcommand{\Strd}[1]{\mathrm{Str}'_{#1}(\xx_0,i;\sB)}
\newcommand{\Strdd}[1]{\mathrm{Str}''_{#1}(\xx_0,i;\sB)}
\newcommand{\Cent}{\mathrm{Cent}(\xx_0,i;\sB)}
\newcommand{\Start}{\mathrm{Start}(\xx_0,i;\sB)}
\newcommand{\End}{\mathrm{End}(\xx_0,i;\sB)}
\newcommand{\braclB}{\brac{1}{\lB}}
\newcommand{\braclbB}{\brac{1}{\lbB}}
\newcommand{\QBa}{\Q^{\mathrm{Ba}}}
\newcommand{\Ba}{\mathcal Q_0^\mathrm{Ba}}
\newcommand{\Cyc}{\mathsf{Cyc}(\Lambda)}
\newcommand{\lazy}{1_{(v, i)}}
\newcommand{\dLOfd}{\mathrm{dLO_{fd}}}
\newcommand{\dLOfdb}[2]{\mathrm{dLO}_{\mathrm{fd}}^{{#1}{#2}}}

\newcommand{\LOfd}{\mathrm{LO}_{\mathrm{fd}}}
\newcommand{\Balr}{((\BalB\cap \BarB) \cup (\BalbB \cap \BarbB))}
\newcommand{\Hbar}{\overline{H}_l^i(\xx_0)}
\newcommand{\Hhat}{\widehat{H}_l^i(\xx_0)}
\newcommand{\HOST}[1]{\widehat{\overline{\mathsf{St}}}_{#1}(\xx_0,i;\sB)}
\newcommand{\HHlix}{\widehat{H}_l^i(\xx_0)}

\def\lan{\langle}
\def\ran{\rangle}
\newcommand{\s}[1]{{\mathsf{#1}}}
\newcommand{\f}[1]{\mathfrak {#1}}
\newcommand{\ol}[1]{\overline{#1}}

\newcommand{\gst}{\s{St}}
\newcommand{\stbar}{\ol\gst}
\mathchardef\mh="2D % Define a "math hyphen"
\newcommand{\allst}{(\le\omega)\mh\s{St}(\s B)}
\def\da{\Downarrow}
\def\ua{\Uparrow}
\def\comp{\circ}
\def\aa{\mathfrak a}
\def\to{\rightarrow}
\def\HR{\textsf{HR}}
\def\parens#1{{(#1)}}
\def\mbf{\mathbf}
\newcommand{\rank}{\mathsf{rank}}
\newcommand{\mfa}{\mathfrak{a}}
\newtheorem{defn}{Definition}[subsection]
\newtheorem{definitions}[defn]{Definitions}
\newtheorem{lem}[defn]{Lemma}
\newtheorem{conj}{Conjecture}
\newtheorem{manualthm}{Theorem}
\newtheorem{construction}[defn]{Construction}
\newtheorem{prop}[defn]{Proposition}
\newtheorem*{prop*}{Proposition}
\newtheorem*{thm*}{Theorem}
\newtheorem{thm}[defn]{Theorem}
\newtheorem{cor}[defn]{Corollary}
\newtheorem{claim}[defn]{Claim}
\newtheorem*{claim*}{Claim}
\newtheorem{algo}[defn]{Algorithm}
\theoremstyle{remark}
\newtheorem{rem}[defn]{Remark}
\theoremstyle{remark}
\newtheorem{remarks}[defn]{Remarks}
\theoremstyle{remark}
\newtheorem{notation}[defn]{Notation}
\theoremstyle{remark}
\newtheorem{exmp}[defn]{Example}
\theoremstyle{remark}
\newtheorem{examples}[defn]{Examples}
\theoremstyle{remark}
\newtheorem{dgram}[defn]{Diagram}
\theoremstyle{remark}
\newtheorem{fact}[defn]{Fact}
\theoremstyle{remark}
\newtheorem{illust}[defn]{Illustration}
\theoremstyle{remark}
\newtheorem{que}[defn]{Question}
\numberwithin{equation}{section}
\newtheorem{example}[defn]{Example}
\newtheorem{exercise}[defn]{Exercise}
\def\st{\mathrm{st}}
\def\rk{\mathrm{rk}}
\makeatletter
\let\save@mathaccent\mathaccent
\newcommand*\if@single[3]{%
  \setbox0\hbox{${\mathaccent"0362{#1}}^H$}%
  \setbox2\hbox{${\mathaccent"0362{\kern0pt#1}}^H$}%
  \ifdim\ht0=\ht2 #3\else #2\fi
  }
%The bar will be moved to the right by a half of \macc@kerna, which is computed by amsmath:
\newcommand*\rel@kern[1]{\kern#1\dimexpr\macc@kerna}
%If there's a superscript following the bar, then no negative kern may follow the bar;
%an additional {} makes sure that the superscript is high enough in this case:
\newcommand*\widebar[1]{\@ifnextchar^{{\wide@bar{#1}{0}}}{\wide@bar{#1}{1}}}
%Use a separate algorithm for single symbols:
\newcommand*\wide@bar[2]{\if@single{#1}{\wide@bar@{#1}{#2}{1}}{\wide@bar@{#1}{#2}{2}}}
\newcommand*\wide@bar@[3]{%
  \begingroup
  \def\mathaccent##1##2{%
%Enable nesting of accents:
    \let\mathaccent\save@mathaccent
%If there's more than a single symbol, use the first character instead (see below):
    \if#32 \let\macc@nucleus\first@char \fi
%Determine the italic correction:
    \setbox\z@\hbox{$\macc@style{\macc@nucleus}_{}$}%
    \setbox\tw@\hbox{$\macc@style{\macc@nucleus}{}_{}$}%
    \dimen@\wd\tw@
    \advance\dimen@-\wd\z@
%Now \dimen@ is the italic correction of the symbol.
    \divide\dimen@ 3
    \@tempdima\wd\tw@
    \advance\@tempdima-\scriptspace
%Now \@tempdima is the width of the symbol.
    \divide\@tempdima 10
    \advance\dimen@-\@tempdima
%Now \dimen@ = (italic correction / 3) - (Breite / 10)
    \ifdim\dimen@>\z@ \dimen@0pt\fi
%The bar will be shortened in the case \dimen@<0 !
    \rel@kern{0.6}\kern-\dimen@
    \if#31
      \overline{\rel@kern{-0.6}\kern\dimen@\macc@nucleus\rel@kern{0.4}\kern\dimen@}%
      \advance\dimen@0.4\dimexpr\macc@kerna
%Place the combined final kern (-\dimen@) if it is >0 or if a superscript follows:
      \let\final@kern#2%
      \ifdim\dimen@<\z@ \let\final@kern1\fi
      \if\final@kern1 \kern-\dimen@\fi
    \else
      \overline{\rel@kern{-0.6}\kern\dimen@#1}%
    \fi
  }%
  \macc@depth\@ne
  \let\math@bgroup\@empty \let\math@egroup\macc@set@skewchar
  \mathsurround\z@ \frozen@everymath{\mathgroup\macc@group\relax}%
  \macc@set@skewchar\relax
  \let\mathaccentV\macc@nested@a
%The following initialises \macc@kerna and calls \mathaccent:
  \if#31
    \macc@nested@a\relax111{#1}%
  \else
%If the argument consists of more than one symbol, and if the first token is
%a letter, use that letter for the computations:
    \def\gobble@till@marker##1\endmarker{}%
    \futurelet\first@char\gobble@till@marker#1\endmarker
    \ifcat\noexpand\first@char A\else
      \def\first@char{}%
    \fi
    \macc@nested@a\relax111{\first@char}%
  \fi
  \endgroup
}
\makeatother
\newcommand\test[1]{%
$#1{M}$ $#1{A}$ $#1{g}$ $#1{\beta}$ $#1{\mathcal A}^q$
$#1{AB}^\sigma$ $#1{H}^C$ $#1{\sin z}$ $#1{W}_n$}

\maketitle

\begin{abstract}
Suppose $\Lambda$ is a special biserial algebra over an algebraically closed field. Schr\"oer showed that if $\Lambda$ is domestic then the radical of the category of finitely generated (left) $\Lambda$-modules is nilpotent, and the least ordinal, denoted $\st(\Lambda)$, where the decreasing sequence of powers of the radical stabilizes satisfies $\st(\Lambda)<\omega^2$.  With Gupta and Sardar, the third author conjectured that if $\Lambda$ has at least one band then $\omega\le\st(\Lambda)<\omega^2$ even when $\Lambda$ is non-domestic. In this paper we settle this conjecture in the affirmative. We also describe an algorithm to compute $\st(\Lambda)$ up to a finite error. We also show that for each $\omega\leq\alpha<\omega^2$ there is a finite-dimensional tame representation type algebra $\Gamma$ with $\st(\Gamma)=\alpha$.
\end{abstract}

\section{Introduction}
Fix an algebraically closed field $\K$. Let $\Lambda$ be a finite-dimensional, and hence an Artin algebra over $\K$. We denote the category of all finitely generated(=finite-dimensional) left $\Lambda$-modules by $\Lambda\dmod$. A classical problem in representation theory is to classify all indecomposable objects in $\Lambda\dmod$ and morphisms between them. Auslander-Reiten theory, introduced in a series of articles in the 1970s, has become a fundamental tool for studying these types of problems for Artin algebras. The Auslander-Reiten (A-R) quiver of $\Lambda$ has as its vertices the isomorphism classes of indecomposables in $\Lambda\dmod$ while the set of arrows from a vertex to another is in bijective correspondence with the basis of the vector space of irreducible morphisms between the corresponding modules.

Denote by $\rad$ the radical of $\Lambda\dmod$, which is the two-sided (additive) ideal of this category generated by all non-invertible(=non-split) morphisms between indecomposable modules. Recall that for an ideal $J$ of $\Lambda\dmod$ and $n >0$, $J^n$ denotes the ideal generated by all the $n$-fold compositions of morphisms in $ J $. The A-R quiver yields a description of $\rad/\rad^\omega$, where $ \rad^\omega := \bigcap_{n \in \N} \rad^n$. Schr\"{o}er \cite{schroer2000infinite} built upon the idea of Simmons (see \cite[\S~10.1]{prest2003model}) to define $\rad^\nu$ for each ordinal $\nu$ using transfinite recursion as follows:
\begin{itemize}
    \item $\rad^0 := \Lambda\dmod$;
    \item {if $1\leq\nu<\omega$ then $\rad^\nu :=\rad\rad^{\nu-1}$;}
%    if $ \nu $ is an ordinal then $ \rad^{\nu + 1} = \rad\rad^\nu $;}
    \item if $\nu>0$ is a limit ordinal then $\rad^\nu := \bigcap_{\lambda <\nu}\rad^\lambda$;
    \item if $\nu = \lambda + n$ for some limit ordinal $\lambda$ and $0<n<\omega$ then $\rad^\nu:= \left(\rad^{\lambda}\right)^{n+1}. $
\end{itemize}
Set $\rad^\infty := \bigcap_{\nu\in\mathbb{ON}}\rad^\nu$. Clearly $\lan\rad^\nu\mid \nu \in \mathbb{ON}\ran$ is a decreasing sequence of ideals. Since $\Lambda\dmod$ is a skeletally small category, this sequence must stabilise--the smallest ordinal where its stabilises, denoted $\st(\Lambda)$, is called the \emph{stable rank of the radical} (\emph{stable rank}, for short) so that $\rad^\infty=\rad^{\st(\Lambda)}$. For a morphism $f \in \rad$, we further define the \emph{rank} of $f$ as
$$ \rk(f) := \begin{cases}
    \nu \quad&\text{if }f \in \rad^\nu\setminus\rad^{\nu+1}\text{ for some }\nu\in \mathbb{ON};\\
    \infty \quad&\text{if }f \in \rad^\infty.
\end{cases} $$

Drozd \cite{drozd1977matrix} and Crawley-Boevey \cite{bocses} classified each finite-dimensional algebra $\Lambda$ into one of three representation types, namely finite, tame, and wild, depending on the complexity of the classification problem of the indecomposable objects in $\Lambda\dmod$. Recall that a $\K$-algebra $\Lambda$ is said to be of the \emph{tame representation type} if for any $d \in\N^+$ there exists finite many $\Lambda$-$\K[T]$-bimodules $M_1 , M_2, \cdots , M_{\mu (d)}$, which are free and {of} finite rank as $\K[T]$-modules, such that all but finitely many $d$-dimensional indecomposables in $\Lambda\dmod$ are isomorphic to some $M_i\otimes_{\K[T]}S$ for some $i \in[\mu(d)]:=\{1,2\cdots, \mu (d)\}$ and a simple $\K[T]$-module $S$. We assume $\mu (d)$ to be the minimal natural number with this property. A tame representation type algebra is said to be \emph{domestic} if the function $\mu$ is bounded, otherwise \emph{non-domestic}.

The main objects of study in this paper are two important subclasses of the class of tame representation type algebras, consisting of string algebras and special biserial algebras, where various conjectures regarding the tame representation type can be tested due to the explicit combinatorial description of their A-R quivers. The class of string algebras is a subclass of the class of special biserial algebras. Schr\"oer showed \cite[Theorem~2]{schroer2000infinite} that a special biserial algebra $\Lambda$ is domestic if and only if $\rad^\infty=0$ if and only if $\rad^{\omega^2}=0$. Furthermore, if $\rad^\infty=0$ . For a domestic special biserial algebra $\Lambda$ he showed that $\st(\Lambda)$ is a successor ordinal \cite[Lemma~6.1(iii)]{schroer2000infinite} and that $\omega\cdot n\leq\st(\Lambda)<\omega\cdot(n+1)$ for some $n\in\N$ \cite[Theorem~3]{schroer2000infinite}, and also provided an algorithm to compute the value of $n$ using the maximal length of a path in a finite combinatorial gadget, called the \emph{bridge quiver}, associated to $ \Lambda $.

Assume that a string algebra is presented as $\Lambda=\K\Q/\lan\rho\ran$, where $\Q$ is a finite connected quiver and $\rho$ is a finite set of relations. Combinatorial entities called \emph{strings} and \emph{bands}, which are certain walks on the underlying quiver, provide a complete classification of the vertices of the A-R quiver for a string algebra--this result is essentially due to Gel'fand and Ponomarev \cite{G&P} whereas the classification of the arrows of the A-R quiver is due to Butler and Ringel \cite{butler1987auslander}. Bridge quivers for domestic string algebras have (certain equivalence classes of) bands as their vertices, and some special strings, called \emph{bridges}, as their arrows. Following Brenner \cite{brenner1986combinatorial}, Schr\"{o}er \cite{Schroer98hammocksfor} associated a poset, whose elements are pairs of strings, to each vertex of $\Q$; such posets are called \emph{hammock posets}. Since there are only finitely many vertices of $\Q$, he estimated the rank of a domestic string algebra as the maximum of the \emph{densities} of the hammock posets associated to these simple modules, up to a finite error term; here the density of a poset is a measure of its complexity taking value $\infty$ or an ordinal.

With Gupta and Sardar \cite{GKS20} the third author partially extended Schr\"{o}er's result \cite[Theorem~2]{schroer2000infinite} by computing the stable ranks for some non-domestic string algebras. They introduced a bridge quiver \cite[\S~3.2]{GKS20} for all string algebras using a finite set of bands called \emph{prime bands} that generate all bands. Under certain conditions on directed cycles in the bridge quiver they showed that $\omega\leq\st(\Lambda)\leq\omega +2$ \cite[Corollary~4.3.4]{GKS20}. They conjectured \cite[Conjecture~4.4.1]{GKS20} that the conclusion of Schr\"{o}er's result would be true for even non-domestic special biserial algebras, which is the content of the following theorem.
\begin{manualthm}\label{oldconj} For any special biserial algebra $ \Lambda $ with at least one band, we have $ \omega \leq \st(\Lambda) < \omega^2 $.
\end{manualthm}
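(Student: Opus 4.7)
The plan is to establish the two bounds of the theorem separately; the lower bound is fairly routine, while the upper bound carries the essential content.

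For the lower bound $\omega\le\st(\Lambda)$, the presence of a band $\bb$ produces, for every $\lambda\in\K^*$, a homogeneous tube $\{M(\bb,\lambda,n):n\ge 1\}$ in the Auslander--Reiten quiver of $\Lambda$. Along a ray in such a tube there are non-trivial paths of every finite length, and under the standard AR-quiver description of $\rad^n/\rad^{n+1}$ as paths of length $n$ modulo mesh relations, this exhibits morphisms in $\rad^n\setminus\rad^{n+1}$ for every $n<\omega$. Hence $\rad^n\supsetneq\rad^{n+1}$ for all finite $n$, so the radical chain cannot stabilise before $\omega$.

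For the upper bound $\st(\Lambda)<\omega^2$, the domestic case is Schr\"oer's \cite{schroer2000infinite}, so the new content concerns the non-domestic case. The strategy is to follow the Brenner--Schr\"oer hammock-poset framework. First, reduce from special biserial to string algebras: the biserial relations contribute only finitely many socle-finite ``irregular'' indecomposables, whose contribution to the rank filtration is controlled by a finite shift, so it suffices to prove the bound for the natural string-algebra quotient. Then, to each pair $(v,i)$ with $v\in\Q_0$ and $i\in\{l,r\}$, associate the hammock poset of strings passing through $v$ from the side $i$, ordered by factorisation of the corresponding homomorphisms. A Schr\"oer-type translation identifies $\rk(f)$ between string or band modules with the density of an element in such a poset, so the goal reduces to bounding the densities of finitely many hammock posets by ordinals strictly less than $\omega^2$.

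The main obstacle is this density bound in the non-domestic setting. The GKS bridge quiver on \emph{prime} bands makes sense for any string algebra, but for non-domestic ones it carries directed cycles, so the na\"ive induction on bridge paths used in the domestic case no longer terminates. My proposal is a two-tier combinatorial decomposition of each hammock poset: first stratify its elements by which prime bands appear in the underlying string, and then within each stratum order strings according to how many times and in what configuration they traverse each cycle of prime bands. The key technical claim is that each cycle of prime bands contributes only a uniformly bounded, finite increment to density---once a string enters a cycle, the bridge data force it along a predictable loop, and the resulting increments accumulate only finitely---while the acyclic skeleton of the bridge quiver contributes the familiar finite multiple of $\omega$. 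This yields densities of the form $\omega\cdot n+k$ with $n,k<\omega$ depending on $(v,i)$; taking the supremum over the finitely many pairs then gives a single ordinal less than $\omega^2$, which together with the finite correction from the reduction step gives $\st(\Lambda)<\omega^2$.

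The hardest step is precisely the bounded-increment claim for cycles in the bridge quiver, since these cycles are exactly what distinguishes the non-domestic case from Schr\"oer's original treatment; controlling them is the reason the GKS conjecture required genuinely new combinatorial input rather than a direct extension of \cite{schroer2000infinite}.
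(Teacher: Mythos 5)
Your lower bound and the reduction from special biserial to string algebras are essentially in line with the paper: the paper gets $\omega\le\st(\Lambda)$ more quickly from Auslander's characterisation of finite representation type via the radical, but the tube argument is a valid alternative, and the reduction to $\bar\Lambda$ via finitely many projective-injective modules is the paper's route as well.

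The gap is in the core of the upper bound. Your ``bounded-increment'' claim---that each cycle of prime bands contributes only a finite, uniformly bounded increment to the \emph{density of the hammock}---is false, and this is precisely what distinguishes the non-domestic case. A non-domestic cycle in the bridge poset means two distinct bands $\bb_1,\bb_2$ with $\bb_1\preceq\bb_2\preceq\bb_1$; iterating such a cycle produces a copy of $\eta$ inside $H_l^i(\xx_0)$, i.e.\ the hammock linear order is \emph{not scattered}, and hence $d(H(v))=\infty$ (this is Theorem~\ref{prop: Scattered hammock} together with Theorem~\ref{rank infinity iff not scattered}). So no decomposition and accounting of increments can land the whole hammock at $\omega\cdot n+k$. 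What makes the argument work is a \emph{localisation}: a graph map is outside $\rad^\infty$ exactly when the corresponding interval in $\hb(v)$ is scattered (Theorem~\ref{noninfscatteredequivalence}), so the stable rank only sees the scattered parts of the hammock. The paper then packages those parts as \emph{maximal scattered boxes} (Definition~\ref{msbdefn}), shows each is a finitely presented discrete $2$-hammock (Proposition~\ref{boxes are hammocks}, Remark~\ref{LOfp=LOfd cap Scattered}), and computes its density $<\omega^2$ via a corner decomposition that handles the unbounded but periodic ends (Corollary~\ref{about density computation}, Theorem~\ref{fpdensity}). Two further finiteness results are essential and absent from your sketch: the hammock $H(v)$ has only finitely many MSBs up to order isomorphism (Lemma~\ref{hammockfintype}, which uses that the projections are finite description linear orders), and only finitely many band-module points of $\hb(v)\setminus H(v)$ lie in the closure of any scattered box (Lemma~\ref{excptfin}, which requires the finiteness of the sets $\mathsf{Ba}_l(\sB)$, $\mathsf{Ba}_{\lb}(\sB)$, etc.). Without the scattered/non-scattered dichotomy and the MSB machinery, the stratification you describe cannot deliver a finite-ordinal bound, because the strata you would need to control are not scattered and therefore have infinite density.
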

The main goal of this paper is to settle this conjecture in the affirmative. The majority of the effort goes in the proof of this theorem for the case of string algebras. The extension to special biserial algebras is straightforward using the results from \cite{skowronski1983representation} and \cite{wald1985tame}. Additionally, we also provide an algorithm to estimate the stable rank up to a finite error term. In this manner this paper is a sequel to both \cite{schroer2000infinite} and \cite{GKS20}. Yet another important contribution of this paper is the following extension of \cite[Theorem~1]{schroer2000infinite}.
\begin{manualthm}\label{reversest}
For any $\omega\leq\alpha <\omega^2$, there exists a finite-dimensional tame representation type algebra $\Lambda$ such that $\st(\Lambda) = \alpha$. The algebra $\Lambda$ can be chosen to be a special biserial algebra.
\end{manualthm}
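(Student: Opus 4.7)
The plan is to write $\alpha = \omega\cdot n + k$ with $n\ge 1$ a positive integer and $0\le k<\omega$, and for each such pair construct an explicit finite-dimensional special biserial algebra $\Lambda_{n,k}$ with $\st(\Lambda_{n,k}) = \alpha$. Since Theorem~\ref{oldconj} already forces $\st\in[\omega,\omega^2)$ for every special biserial algebra with at least one band, the problem is not to stay inside the interval but to realise every ordinal inside it.

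The construction has two layers. First, I would take a domestic string algebra $\Lambda_n$ whose bridge quiver, in the sense of \cite{schroer2000infinite,GKS20}, is a directed path of length $n-1$ supported on $n$ prime bands, with no additional directed cycles and no shortcut bridges. Schröer's \cite[Theorem~3]{schroer2000infinite} gives $\omega n\le\st(\Lambda_n)<\omega(n+1)$; by taking the prime bands minimal and the connecting bridges to be single arrows, the finite offset can be forced to vanish, yielding $\st(\Lambda_n)=\omega n$ and matching the base case of \cite[Theorem~1]{schroer2000infinite}. Secondly, I would attach a directed ``tail'' of $k$ new arrows at a vertex of $\Lambda_n$ lying on the terminal prime band, together with monomial relations that keep the extended quiver special biserial and prevent the tail from spawning new bands. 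The tail is designed to contribute exactly $k$ additional non-invertible morphisms that compose after the chain of $n$ infinite band-factors, so that the stable rank should rise by precisely $k$ to $\omega n + k$.

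Verification proceeds via a two-sided estimate. For the upper bound I would invoke the algorithmic computation of $\st$ up to finite error announced in the abstract of this paper, applied to $\Lambda_{n,k}$: the combinatorics of its bridge quiver and hammock posets yields $\st(\Lambda_{n,k})\le\omega n+k$. For the lower bound I would exhibit an explicit morphism $f_{n,k}\colon M\to N$ between string modules that factors as $n$ band-induced factors of rank $\omega$ followed by $k$ non-invertible tail-string morphisms, and show via the hammock-poset rank calculus of \cite{schroer2000infinite,GKS20} that $f_{n,k}\in\rad^{\omega n+k}\setminus\rad^{\omega n+k+1}$.

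The main obstacle I anticipate is exactly the last step: showing that the $k$ tail morphisms and the $n$ infinite band-factors combine cleanly to give ordinal rank exactly $\omega n + k$, without accidentally telescoping into a higher layer $\rad^{\omega n+k+1}$ or collapsing into $\rad^{\infty}$. This will require a careful separation argument, based on the chosen monomial relations, confirming that no tail arrow ever enters a prime band and that no new bridges are created between the existing $n$ bands, so that the hammock posets of the newly added string modules interact with those at the band vertices in the expected additive fashion. Once the combinatorial invariants (longest bridge-quiver path length and tail length) are matched to the ordinal-valued rank, the equality $\st(\Lambda_{n,k})=\alpha$ follows; each $\Lambda_{n,k}$ is finite-dimensional special biserial by construction, hence automatically of tame representation type, proving both statements of the theorem.
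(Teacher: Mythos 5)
Your proposal has a fatal gap at its very first step. You claim to build a \emph{domestic} string algebra $\Lambda_n$ with $\st(\Lambda_n)=\omega\cdot n$, a limit ordinal, by tuning the bridge quiver so that ``the finite offset can be forced to vanish.'' But Schr\"oer's \cite[Lemma~6.1(iii)]{schroer2000infinite}, recalled in the introduction of this paper, states that for a \emph{domestic} special biserial algebra $\st(\Lambda)$ is always a \emph{successor} ordinal. So $\st(\Lambda_n)=\omega\cdot n$ is impossible for domestic $\Lambda_n$; the finite offset is guaranteed to be at least $1$ no matter how short the bridges are. Indeed Schr\"oer's \cite[Theorem~1]{schroer2000infinite} realises exactly the range $\omega\cdot n+d$ with $d\geq 1$ (and excludes $(n,d)=(1,1)$), with no ``base case at $d=0$.'' The limit ordinals $\omega\cdot n$ can only occur for \emph{non-domestic} algebras, which undercuts the second layer of your construction as well: once the base case is non-domestic, the clean additivity you hope for when attaching a $k$-arrow tail is no longer automatic.

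The paper's actual proof avoids your two-layer construction entirely and fills in the ordinal scale piecewise from existing and new sources. For successor ordinals $\omega\cdot n + d$ with $(n,d)\in\N^+\times\N^+$ and $(n,d)\neq(1,1)$, it cites Schr\"oer's \cite[Theorem~1]{schroer2000infinite} directly; for $\omega$ and $\omega+1$ it cites \cite[Example~4.3.6]{GKS20}; and for the limit ordinals $\omega\cdot n$ with $n\geq 2$ it exhibits the family of \emph{non-domestic} string algebras $\Gamma_n$ (Figure~\ref{ex: Gamma n}), verified via the density-of-MSB machinery in Example~\ref{ex: Proof of theorem 2}. The key observation you need to incorporate is that the limit-ordinal cases require non-domestic algebras, so any construction along your lines would have to start from something like $\Gamma_n$, not a domestic bridge-quiver path.
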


The ideas used in the proof of Theorem \ref{oldconj} could be traced back to \cite{sardar2021variations} and \cite{SardarKuberHamforDom} where Sardar and the third author meticulously computed the order types(=order-isomorphism classes) of left and right hammock linear orders for a fixed string $\xx_0$, i.e., sets of strings with $\xx_0$ as a starting or ending with $\xx_0$ respectively, for domestic string algebras. An equivalence relation on the set of all strings, called the \emph{$H$-equivalence} \cite[\S~5]{sardar2021variations}, captures when two strings have naturally isomorphic (left/right) hammocks--this equivalence is significant for there are only finitely many $H$-equivalence classes. The main result of these two papers states that the order types of left/right hammock linear orders lie in the class $\dLOfpb{1}{1}$ of bounded discrete finitely presented linear orders. The class $\LOfp$ is defined to be the smallest class of linear orders containing finite linear orders, and that is closed under isomorphisms, finite order sums, and anti-lexicographic products with $\omega$ and $\omega^*$ \cite[\S~5]{agrawal2022euclidean}.

This line of inquiry was continued in \cite{SKSK}, where the second and the third author together with Sengupta and Kale showed \cite[Theorem~11.9]{SKSK} that for any non-domestic string algebra the order types of left and right hammocks lie in the class $\dLOfdb{1}{1}$ of bounded discrete finite description linear orders. The class $\LOfd$ is the smallest class of linear orders obtained by closing $\LOfp$ under shuffles of finite subsets \cite[\S~2]{SKSK}. This result forms the backbone of the current paper. In fact as far as the proof of Theorem \ref{oldconj} is concerned, \cite{SKSK} is Part I while the current paper is Part II. 

The classes $\LOfp\subset\LOfd$ were introduced earlier in a model-theoretic study of linear orders by L\"{a}uchli and Leonard \cite{leonard1968elementary} using notations $\mathcal M_0\subset\mathcal{M}$ \cite[Definition~7.19]{rosenstein} to study a graded version of elementary equivalence between linear orders described via Ehrenfeucht-Fra\"{i}ss\'{e} games. In Section \ref{sec: new conjecture}, we build upon this connection between the logic of linear orders and the representation theory of finite-dimensional algebras to conjecture a stable rank characterization of the trichotomy of Drozd and Crawley-Boevey (Conjecture \ref{newconj})).

\subsection*{A sketch of the proof of Theorem \ref{oldconj}}
%Several key concepts were introduced in \cite{SKSK}. The ``reachability'' relation on the set of bands is a preorder that induces a natural equivalence relation (bi-reachability) \rst{so that the resulting set of}

Suppose $ \Lambda = \mathcal K \mathcal Q / \lan \rho \ran $ is a string algebra. Given indecomposable modules $M,N$ in $\Lambda\dmod$, a basis for the finite-dimensional vector space $ \mathrm{Hom}_\Lambda(M, N) $ was given by \cite{crawley1989maps} and \cite{krause1991maps} (Theorem \ref{hammockgraphmap}). Schr\"oer \cite[Chapter~2]{Schroer98hammocksfor} associated three hammock posets to a vertex $v$ of the quiver $\mathcal Q$. The largest of these, $ \HH(v) $, is the set of isomorphism classes of all $ v $-pointed indecomposables in $ \Lambda\dmod $ with respect to the existence of pointed homomorphisms.
The simplified hammock $ \hb(v) $ is obtained by identifying all band modules in $ \HH(v) $ corresponding to the same band, while the hammock $ H(v) $ is obtained by removing all points in $ \hb(v) $ corresponding to band modules.

The poset $ H(v) $ is a subposet of the Cartesian product of two discrete bounded finite description linear orders. We generalize this to study $ n $-dimensional analogues called  \emph{abstract $ n $-hammocks} (Definition \ref{hammocks}) that are certain finite unions of ``cuboidal" posets. In this language, $ H(v) $ is an abstract $ 2 $-hammock.

If $ \Lambda $ is non-domestic, then $ H(v) $ is not necessarily scattered, however the stable rank computation only depends on the scattered regions in $ H(v) $ (Theorem \ref{noninfscatteredequivalence}). To leverage this, we introduce \emph{maximal scattered boxes} (\emph{MSBs}, for short) (Definition \ref{msbdefn}) as the convex scattered subposets of an $ n $-hammock that are maximal under inclusion. 
If $ L \in \LOfd $ then it is constructed using only finitely many operations. As a consequence, the number of isomorphism classes of maximal scattered intervals in $L$ is finite. We say that an abstract $n$-hammock is of \emph{finite type} (Definition \ref{fintypedefn}) if there are only finitely MSBs up to order isomorphism. In fact, we show that if all projections of an abstract hammock are finite description linear orders then it is of finite type (Lemma \ref{hammockfintype}). In particular, $H(v)$ is of finite type.

We next recall from \cite{Schroer98hammocksfor} the definition of \emph{density} of a hammock poset, which is a measure of the complexity of its linear suborders, and generalize it to all posets. Schr\"oer gave an algorithm (Theorem \ref{Schroer formula}) to compute the density of  $ H(v) $ for a domestic string algebra.
In fact, his algorithm works for any bounded $ n $-hammock. Since an MSB in $ H(v) $ could be an unbounded poset, but has periodic \emph{corners}, we give a formula (Corollary \ref{about density computation}) to compute the density of such MSBs.
We show that the density of an MSB in an abstract $ n $-hammock with finite description projections is strictly less than $ \omega^2 $ (Theorem \ref{fpdensity}).
Through the generalization to abstract $ n $-hammocks, we are able to avoid a lot of the technicalities of string algebras.

Denote the closure in $ \hb(v) $ of an MSB $ \ii $ in $ H(v) $ by $ \ol\ii $. The set $ \QBa_0 $ of all bands for $ \Lambda $ is equipped with a natural ``reachability" preorder , whose posetal reflection $ \QBa $ is a finite poset \cite[Proposition~5.6]{SKSK}--this poset is a simplification of the bridge quiver. Associated to each $ \sB \in \QBa $ are four finite subsets which control the size of $ \ol\ii \setminus \ii $ for an MSB $ \ii $ (Lemma \ref{excptfin}).

The stable rank $ \st(\Lambda) $  differs from the maximum density of an MSB $ \ii $ included in some hammock $ H(v) $  only by the size of $ \bigcup (\ol\ii \setminus \ii) $, where $ \ii $ varies over all MSBs in $ H(v) $ (Theorem \ref{density=rank-1}). We already argued the finiteness of this union, and thus the bound on the density of an MSB in $ H(v) $ completes the proof of Theorem \ref{oldconj}.

Several finiteness results lie at the heart of this proof. We describe in detail algorithms for computing these finite sets in \S~\ref{sec: alg}, thereby facilitating effective stable rank computation.

\subsection*{Contents of sections} 
After recalling the basic vocabulary of linear orders in \S~\ref{subsection: linear orders}, 
we introduce and study \emph{abstract $ n $-hammocks} along with \emph{maximal scattered boxes} and the important notion of \emph{finite type} in \S~\ref{subsection: abstract hammocks}. We show that any maximal scattered box in an $ n $-hammock is itself so (Proposition~\ref{boxes are hammocks}), and that finite description hammocks are of finite type (Lemma~\ref{hammockfintype}). We recall and generalize density in \S~\ref{density}, which allows us to also define Hausdorff rank for all posets (Definition \ref{rank of poset}). We then compute the density of \emph{corners} (Theorem~\ref{Our formula}) and show how to compute the density of an unbounded finitely presented hammock using any of its \emph{corner decompositions} (Corollary~\ref{about density computation}).

The ultimate goal of \S~\ref{section: Proofs} is to prove Theorem \ref{oldconj}. The combinatorial and representation-theoretic vocabulary pertaining to special biserial algebras and string algebras, including the description of the A-R quiver for a string algebra, is recalled in \S~\ref{sec: Preliminaries}. The study of three different types of hammocks and the interplay between them forms the crux of \S~\ref{sec: hammock lin order}-\ref{conjproofstringalg}. The results of \S~\ref{order theory} for concrete hammock posets for string algebras are collected in Theorem~\ref{hammockordermain}. We prove a crucial finiteness result (Lemma \ref{excptfin}) using the finiteness of some sets of bands. We then establish a connection between the ranks of morphisms in the category $\Lambda\dmod$ and the densities of intervals in hammocks (Theorems \ref{density=rank-1}, \ref{noninfscatteredequivalence}), and show that the stable rank of a string algebra $\Lambda$ can be computed using the densities of finitely many scattered boxes in extended hammocks (Equation \eqref{stablerank7}). The proof of Theorem \ref{oldconj} for string algebras is completed at the end of \S~\ref{conjproofstringalg}, and its extension to special biserial algebras is achieved in \S~\ref{biserial}. 

The purpose of \S~\ref{sec: alg} is to present an algorithm to estimate the stable rank of a string algebra up to finite error (Equation \ref{stablerank8}). In \S~\ref{sec:H-eq}, we study different versions of $H$-equivalence relations on strings; each one of which partitions the set of strings into finitely many equivalence classes. In the next two technical subsections, we prove several other finiteness results to describe a way to effectively compute the density of an MSB, and hence to estimate the stable rank. A key ingredient is an adaptation of the algorithm by Schr\"oer \cite[\S~4]{Schroer98hammocksfor} to compute the density of a scattered hammock linear order discussed at the end of \S~\ref{sec:alg msi lin ord}. Through an explicit computation of the stable rank for a string algebra using this algorithm (Example \ref{ex: Proof of theorem 2}), we provide a proof of Theorem \ref{reversest}.

The last section, \S~\ref{sec: new conjecture}, is reserved to state the conjecture on the characterization of Drozd and Crawley-Boevey's trichotomy of finite-dimensional algebras in terms of stable ranks (Conjecture \ref{newconj}) after a few reasons for our belief in it.  

\subsection*{Acknowledgements}
The third author would like to thank Marcus Tressl for elaborating on the connection between order-theory and logic, and Charles Paquette for answering questions about special biserial algebras.

\section{Computing the densities of abstract hammocks}
The goal of this section is to develop order-theoretic tools necessary for the proof of Theorem \ref{oldconj}.

We begin by recalling the definitions of the classes $ \LOfp $ and $ \LOfd $ in \S~\ref{subsection: linear orders}.
We  then introduce and study in \S~\ref{subsection: abstract hammocks} \emph{abstract $ n $-hammocks}  (Definition~\ref{hammocks}), which are not-necessarily-bounded $ n $-dimensional generalizations of Schr\"oer's hammock posets \cite[\S~3.8]{Schroer98hammocksfor}  .
A key difference in our treatment is that we construct hammocks by taking finite unions of ``cuboidal" subposets instead of removing finitely many       ``antidiagonal-corners" from the ambient poset.

We also introduce \emph{maximal scattered boxes} (Definition~\ref{msbdefn}), which are those convex scattered subposets of an $ n $-hammock that are maximal under inclusion. These feature prominently in the stable rank computation in \S~\ref{excpoints}, \ref{conjproofstringalg}. 
Two key results are Theorem~\ref{boxes are hammocks}, which implies that maximal scattered boxes in an abstract $ n $-hammock are so themselves, and Theorem~\ref{lofdfintype}, which states that finite description $ n $-hammocks are of \emph{finite type}, i.e., have finitely many maximal scattered boxes up to order isomorphisms.
These allow us to reduce the study of a finite type
$ n $-hammock to that of finitely many scattered $ n $-hammocks.

In \S~\ref{subsection: density} we recall  the notion of \emph{density} from \cite{Schroer98hammocksfor} and generalize it to all posets. %which leads us to a new definition of Hausdorff ranks 
Finally, in \S~\ref{subsection: corners} we compute the densities of certain unbounded hammocks with periodic \emph{corners}. The major takeaway from this section is Theorem~\ref{fpdensity} which plays a central role in the proof of Theorem~\ref{oldconj} in \S~\ref{section: Proofs} by providing a bound on
the density of a  \emph{finitely presented hammocks}, i.e., a hammock all of whose projections are \emph{finitely presented linear orders}.

\label{order theory}
\subsection{Linear orders}
\label{subsection: linear orders}
We start by recalling some basic facts and notions abour linear orders.

We denote the set of natural numbers, i.e., the set of all non-negative integers, by $ \N $ and the set of all positive (resp. negative) integers by $ \N^+ $ (resp. $ \N^- $). For each $n\in\N$, we will use the notation $ [n] $ for the set $ \{1, 2, \dots, n\} $.

We will use standard order-theoretic notation, for which the reader is referred to \cite{rosenstein}.

\begin{defn}
Given a linear order, we define its \emph{order type} to be its equivalence class under order isomorphisms.
    For any natural number $ n $, we denote by $ \mbf{n} $ the order type of the finite linear order with exactly $ n $ elements.
The order types of $ \N^+ $, $ \rat$, $\N^- $ and $ \Z $ with respect to their standard orderings are denoted by $ \omega, \omega^*,  \eta $ and $ \zeta $ respectively.
\end{defn}
\begin{defn}\label{defn: dense and scattered}
A linear order $ L $ is said to be
    \begin{itemize}
        \item \emph{dense} if $ (\forall x, y \in L) [(x < y) \Rightarrow (\exists z \in L) (x < z < y)] $;
        \item \emph{discrete} if each element that is not minimum (resp. maximum) has an immediate predecessor (resp. immediate successor);
        \item \emph{scattered} if it admits no embedding of $ \eta $.
    \end{itemize}
\end{defn}

Linear orders can be added and multiplied.

\begin{defn}
    Given a linear order $L$ and a sequence $ \lan L_a \mid a \in L \ran $ of linear orders, we define the ordered sum $ \sum\limits_{a \in L}L_a $
    to be the set $ \{(a, x) \mid a \in L, x \in L_a\} $ with the  lexicographic order relation $ (a, x) < (a', x') $ if and only if $ a < a' $ or $( a = a' $ and $ x < x')$.

    Given linear orders $ L_1 $ and $ L_2 $, we define the \emph{sum} $ L_1 + L_2 $ to be the ordered sum $ \sum_{i \in \mbf{2}} L_i $, and the \emph{product} $ L_1 \cdot L_2 $ to be the ordered sum $ \sum_{x \in L_2} L_1 $.

    The \emph{dot sum} $L_1 \dot+ L_2 $ of linear orders $ L_1, L_2 $ is obtained from $ L_1 + L_2 $ by identifying the maximum of $ L_1 $ with the minimum of $ L_2 $, if these exist.
\end{defn}
\begin{rem}\label{rem: plus and plusdot}
    If $L_1, L_2, \cdots L_n$ are bounded discrete linear orders and $L_1 + L_2 + \cdots + L_n$ is an infinite order, then $L_1 \dot+ L_2 \dot+ \cdots \dot+ L_n \cong L_1 + L_2 + \cdots + L_n $.
\end{rem}

If $L=L_1+L_2$ then we say that $L_1$ is a \emph{prefix} of $L$ and $L_2$ is a \emph{suffix} of $L$.

The following is an important closure property of the class of scattered linear orders.
\begin{prop}\cite[Proposition~2.17]{rosenstein}\label{prop: SCattered sum of linear orders}
    If $L$ is a scattered linear order and $\lan L_i\mid i \in L\ran$ is a sequence of linear orders indexed by $L$, then $ \sum\limits_{i \in L} L_i $ is scattered if and only if $L_i$ is scattered for every $i \in L$.
\end{prop}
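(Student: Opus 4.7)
The plan is to handle the two implications separately, with the reverse direction being the substantive one. For the forward direction, observe that for each fixed $i_0 \in L$ the map $x \mapsto (i_0, x)$ order-embeds $L_{i_0}$ into $\sum_{i \in L} L_i$, so if the sum is scattered then every summand inherits scatteredness.

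For the backward direction I would proceed by contradiction. Assume $L$ and every $L_i$ are scattered, and suppose there is an order embedding $f \colon \eta \hookrightarrow \sum_{i \in L} L_i$; write $f(q) = (a_q, x_q)$ with $a_q \in L$ and $x_q \in L_{a_q}$. The lexicographic definition of the ordered sum forces $q \mapsto a_q$ to be weakly order-preserving, so for each $a \in L$ the fiber $F_a := \{q \in \eta \mid a_q = a\}$ is a convex subset of $\eta$. I would then split into two cases based on the sizes of these fibers.

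If some fiber $F_a$ contains at least two elements, then $F_a$ is a convex subset of $\mathbb{Q}$ with more than one point; any such subset is order-isomorphic to one of $\mathbb{Q}$, $\mathbf{1} + \mathbb{Q}$, $\mathbb{Q} + \mathbf{1}$, or $\mathbf{1} + \mathbb{Q} + \mathbf{1}$, each of which contains a copy of $\eta$. On $F_a$ the first coordinate of $f$ is constant, so the restriction $q \mapsto x_q$ is itself an order embedding, producing an embedding $\eta \hookrightarrow L_a$ and contradicting scatteredness of $L_a$. Otherwise every fiber is a singleton, in which case $q \mapsto a_q$ is injective and hence an order embedding $\eta \hookrightarrow L$, contradicting scatteredness of $L$.

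The main obstacle, such as it is, is the small lemma invoked above that any convex subset of $\mathbb{Q}$ with at least two elements contains a copy of $\eta$; this is a classical consequence of Cantor's characterization of $\eta$ as the unique countable dense linear order without endpoints, and it is the only nontrivial input. Everything else is bookkeeping driven by the lexicographic order on the sum, so I do not anticipate a deeper difficulty.
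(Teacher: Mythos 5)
Your proof is correct and is the standard argument for Rosenstein's Proposition 2.17; the paper cites the result without reproducing a proof, and the fiber-wise analysis of an embedding of $\eta$ (splitting into the case of a fat fiber vs.\ all-singleton fibers) is exactly how that reference handles it.
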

Recall the structure theorem for linear orders  due to Hausdorff \cite{Hausdorff1908}.
\begin{thm}\cite[Theorem~4.9]{rosenstein}\label{Hausdorffstrthm}  Every linear order can be written as a dense sum of scattered linear orders, i.e., for any linear order $L$ there exists a dense linear order $L'$ and a sequence $\lan L_i \mid i \in L'\ran$ of scattered intervals of $L$ such that $L \cong \sum \limits_{i \in L'} L_i$.
\end{thm}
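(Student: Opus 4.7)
The plan is to emulate Hausdorff's classical argument by producing $L'$ as the quotient of $L$ under a natural equivalence relation whose blocks are the maximal scattered convex subsets. Declare $x \sim y$ on $L$ iff the closed interval $[\min(x,y), \max(x,y)]$ is scattered. Reflexivity and symmetry are immediate. For transitivity, given $x \sim y$ and $y \sim z$, let $u \le v \le w$ be the sorted reordering of $x, y, z$; each of $[u,v]$ and $[v,w]$ sits inside $[x,y]$ or $[y,z]$, so each is scattered as a sub-interval of a scattered interval. The interval $[u,w]$ is the dot sum of these two pieces, and hence scattered by the two-element-index case of Proposition~\ref{prop: SCattered sum of linear orders}; the sub-interval $[\min(x,z), \max(x,z)] \subseteq [u,w]$ inherits scatteredness. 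Since sub-intervals of scattered intervals are scattered, equivalence classes are convex, so $L' \defeq L/{\sim}$ inherits a linear order, each block $L_i$ is an interval of $L$, and tautologically $L \cong \sum_{i \in L'} L_i$.

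I would next check that each $L_i$ is scattered. If $\eta$ embedded into $L_i$ via $f : \eta \hookrightarrow L_i$, then for any $a < b$ in $\eta$ one has $f(a) \sim f(b)$, so $[f(a), f(b)]$ is scattered; but the homogeneity of $\eta$ gives $\eta \cap [a,b] \cong \eta$, and $f$ restricted to this still embeds $\eta$ into $[f(a), f(b)]$, a contradiction.

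The main obstacle is verifying that $L'$ is dense, with a one-element $L'$ vacuously satisfying Definition~\ref{defn: dense and scattered}. Suppose $[x]_\sim < [z]_\sim$ in $L'$, so $[x,z]$ is not scattered, and fix an embedding $f : \eta \hookrightarrow [x,z]$. Set $S \defeq \{a \in \eta : f(a) \sim x\}$ and $T \defeq \{a \in \eta : f(a) \sim z\}$; convexity of the equivalence classes makes $S$ an initial segment and $T$ a final segment of $\eta$, and $S \cap T = \emptyset$. If $S \cup T = \eta$, both are nonempty (otherwise $f(\eta)$ would sit in a single scattered class), so there exist $a_0 \in S$ and $b_0 \in T$ with $a_0 < b_0$. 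Then $[f(a_0), f(b_0)]$ decomposes as the dot sum of $[f(a_0), f(b_0)] \cap [x]_\sim$ and $[f(a_0), f(b_0)] \cap [z]_\sim$, each a sub-interval of a scattered equivalence class, hence scattered by Proposition~\ref{prop: SCattered sum of linear orders}; this contradicts the embedding $f|_{\eta \cap [a_0, b_0]}$. Thus $\eta \setminus (S \cup T)$ is nonempty, and any $c$ in it gives $f(c)$ in an equivalence class strictly between $[x]_\sim$ and $[z]_\sim$, completing the density argument and the decomposition.
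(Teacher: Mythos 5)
Your construction is exactly the scattered-condensation quotient recorded in the paper's Definition~\ref{sccondlinorder}, and your transitivity argument, the convexity of classes, and the proof that each class $L_i$ is scattered are all sound. There is, however, a gap in the density step. When $S \cup T = \eta$, you assert that $[f(a_0), f(b_0)]$ decomposes as the sum of $[f(a_0), f(b_0)] \cap [x]_\sim$ and $[f(a_0), f(b_0)] \cap [z]_\sim$. That decomposition presupposes $[f(a_0), f(b_0)] \subseteq [x]_\sim \cup [z]_\sim$, but the hypothesis $S \cup T = \eta$ only controls the \emph{image} of $f$: the interval $[f(a_0), f(b_0)]$ of $L$ contains many points not in the range of $f$, and nothing prevents such a point from lying in a third equivalence class that $f$ simply never hits. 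In that situation the two pieces do not exhaust the interval, its scatteredness does not follow from Proposition~\ref{prop: SCattered sum of linear orders}, and no contradiction is reached, so you cannot conclude $\eta \setminus (S \cup T) \neq \emptyset$.

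The repair is small and can be done two ways. (a) Run the two-piece argument on the \emph{image} rather than the ambient interval: under $S \cup T = \eta$ one has $f(\eta) = f(S) + f(T)$ as an ordered sum of suborders of the scattered classes $[x]_\sim$ and $[z]_\sim$, so $f(\eta)$ is scattered by Proposition~\ref{prop: SCattered sum of linear orders}, contradicting $f(\eta) \cong \eta$. (b) More directly, dispense with $f$, $S$, $T$ altogether: if every element of $(x,z)$ lies in $[x]_\sim \cup [z]_\sim$ then, by convexity of the classes, $[x,z]$ is the ordered sum of two scattered intervals, hence scattered by Proposition~\ref{prop: SCattered sum of linear orders}, contradicting $[x]_\sim \neq [z]_\sim$; otherwise some $w \in (x,z)$ satisfies $[x]_\sim < [w]_\sim < [z]_\sim$, which is precisely the density witness.
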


The linear order $L'$ in the above theorem is (isomorphic to) the \emph{scattered condensation} of $L$.
\begin{defn}\cite[\S~4.5]{rosenstein}\label{sccondlinorder}
Given a linear order $L$ and $x\in L$, define its \emph{scattered condensation class} to be the set $c_S(x):= \{y \mid [\min\{x, y\},\max\{x,y\}]\text{ is scattered}\}$. Clearly $c_S(x)$ is an interval in $L$ and the set $c_S(L):=\{c_S(x)\mid x\in L\}$ equipped with  a linear order structure defined by $c_S(x)<c_S(y)$ if $x'<y'$ for each $x'\in c_S(x)$ and $y'\in c_S(y)$ is called the \emph{scattered condensation of $L$}.
\end{defn}
The linear order $c_S(L)$ is dense and the map $x\mapsto c_S(x):L\to c_S(L)$ is a surjective monotone map. 

The class of all linear orderings is very complex, so we restrict our attention to those linear orders which can be described by a finite amount of information. It is no surprise that such linear orders are ubiquitous. Two important such classes are the classes of finitely presented and finite description linear orders.

\begin{defn} \cite[Definition~2.3]{SKSK}
The class $ \LOfd $ of finite description linear orders is the smallest class closed under isomorphisms containing $ \mbf 0, \mbf 1 $ and closed under the following operations:
\begin{enumerate}
    \item $ L \mapsto \omega \cdot L $;
    \item $ L \mapsto \omega^* \cdot L $;
    \item $ \lan L_1, L_2\ran \mapsto L_1 + L_2 $;
    \item for each $n\in\mathbb N^+$, $ \lan L_1, L_2, \cdots, L_n\ran \mapsto \Xi(L_1, L_2, \cdots, L_n) $, where the $ n $-ary operator $ \Xi(L_1, L_2, \dots, L_n) $ denotes the shuffle of the linear orders $ L_1, L_2, \dots, L_n $.
\end{enumerate} 
The subclass $ \LOfp $ of finitely presented linear orders is obtained after omitting $(4)$ from the above definition.
\end{defn}

The next observation is the link between finite description and finitely presented linear orders.
\begin{rem}\label{LOfp=LOfd cap Scattered}
A finite description linear order is scattered if and only if it is finitely presented.
\end{rem}

The proof of the following proposition is an obvious extension of the proof of a similar result \cite[Proposition~5.7]{agrawal2022euclidean} for finitely presented linear orders, where the extension uses 4-signed rooted trees for dealing with the shuffle operator. 
\begin{prop}
\label{fd intervals}
    If $L\in\LOfd$ and $ L = L_1 + L_2 $, then $ L_1,L_2\in\LOfd$. Consequently if $I\subseteq L$ is an interval then $I\in\LOfd$.
\end{prop}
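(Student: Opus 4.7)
The plan is to prove the first statement by structural induction on the construction of $L \in \LOfd$, extending the template of the corresponding proof for $\LOfp$ in \cite{agrawal2022euclidean} with a new case for the shuffle operator. The base cases $L = \mbf{0}$ and $L = \mbf{1}$ are immediate, as any cut $L = L_1 + L_2$ yields components in $\{\mbf 0, \mbf 1\} \subseteq \LOfd$.

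For the inductive step I would treat the four constructors of $\LOfd$ in turn. The sum case $L = M_1 + M_2$ is direct: a cut either splits $M_1 = A + B$ (giving $L_1 = A$, $L_2 = B + M_2$) or splits $M_2$ symmetrically, and the induction hypothesis together with closure under binary sums yields $L_1, L_2 \in \LOfd$. For $L = \omega \cdot M \cong \sum_{m \in M} \omega$, a cut either lies at the boundary between two successive $\omega$-copies---giving $L_1 = \omega \cdot M_1$ and $L_2 = \omega \cdot M_2$ for a decomposition $M = M_1 + M_2$---or strictly inside the $\omega$-copy indexed by some $m_0 \in M$, giving $L_1 \cong \omega \cdot M_1 + \mbf k$ and $L_2 \cong \omega + \omega \cdot M_2 \cong \omega \cdot (\mbf 1 + M_2)$, where $M \cong M_1 + \mbf 1 + M_2$ with the central $\mbf 1$ identified with $\{m_0\}$ and $k \in \N$. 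Applying the induction hypothesis to the decomposition $M = M_1 + (\mbf 1 + M_2)$ gives $M_1, \mbf 1 + M_2 \in \LOfd$, and closure under the operators $\omega \cdot (-)$ and $+$ then yields $L_1, L_2 \in \LOfd$. The case $L = \omega^* \cdot M$ is symmetric.

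The key new case is the shuffle $L = \Xi(M_1, \ldots, M_n)$, which I would represent as the ordered sum $\sum_{x \in \eta} M_{f(x)}$ for a labeling $f \colon \eta \to [n]$ surjective on every nonempty open interval of $\eta$. A cut $L = L_1 + L_2$ either corresponds to a Dedekind cut of $\eta$ not realized by any labeled position---in which case both sides are again shuffles isomorphic to $\Xi(M_1, \ldots, M_n)$, since every open sub-interval of $\eta$ is itself order-isomorphic to $\eta$ and the restricted labeling remains dense-surjective---or falls strictly inside some $M_{f(x_0)} = A + B$, in which case $L_1 \cong \Xi(M_1, \ldots, M_n) + A$ and $L_2 \cong B + \Xi(M_1, \ldots, M_n)$ with $A, B \in \LOfd$ by the induction hypothesis applied to $M_{f(x_0)}$. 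Closure of $\LOfd$ under sums then concludes this step. The 4-signed rooted tree bookkeeping alluded to in \cite{agrawal2022euclidean} is what makes the transfer of dense-surjectivity to sub-intervals formally tractable, and verifying this preservation is the main technical obstacle of the proof.

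For the consequence about intervals, any interval $I \subseteq L$ yields a decomposition $L = P + I + S$, where $P$ and $S$ are the (possibly empty) portions of $L$ lying strictly below and above $I$, respectively. Two applications of the first part---first to the cut $L = (P + I) + S$ to conclude $P + I \in \LOfd$, and then to the cut $P + I = P + I$ to conclude $I \in \LOfd$---complete the proof.
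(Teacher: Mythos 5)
Your proposal is correct and takes essentially the same approach as the paper. The paper's proof is a one-line citation to \cite[Proposition~5.7]{agrawal2022euclidean} together with the remark that the shuffle case is handled via 4-signed rooted trees; that is precisely the same structural induction on the construction of $L$ that you carry out explicitly, with the tree formalism serving only as bookkeeping for the case analysis you spell out (and your treatment of the $\Xi$ case — distinguishing unrealized cuts of $\eta$, for which density-surjectivity passes to both initial and final segments, from cuts realized inside a single block $M_{f(x_0)} = A + B$ — is exactly the content the trees are meant to organize).
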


\subsection{Abstract hammocks}
\label{subsection: abstract hammocks}
\label{clats}
\label{abstract hammocks}
In this section we study certain posets built from finite Cartesian products of linear orders, which we call \emph{$n$-hammock posets} (Definition \ref{hammocks}). Next we define \emph{boxes} (Definition \ref{boxdefine}) as ``cuboidal" subposets of hammocks and show that any box in a hammock is a hammock (Proposition \ref{boxes are hammocks}).

We extend the scattered condensation operation from linear orders to $ n $-hammocks (Definition \ref{sccondhammocks}) and 
show that the scattered condensation classes of an $ n $-hammock are \emph{maximal scattered boxes} (Remark \ref{classes are msbs}). The highlight of this section is Proposition \ref{hammockfintype} which states that a finite description $ n $-hammock, i.e., an $n$-hammock whose projections are all finite description linear orders, is of finite type, i.e., has finitely many maximal scattered boxes up to order isomorphism (Definition \ref{fintypedefn}).

\begin{defn}\label{hammock define}
For $n\in\N^+$, an \emph{$n$-dimensional Cartesian lattice} (or \emph{clat}, for short) is a poset (isomorphic to one) of the form $\left(\prod\limits_{i \in [n]} L_i, \leq:=\prod\limits_{i\in[n]}\leq_i\right)$ for a finite sequence $\lan (L_i, \leq_i)\mid i \in [n]\ran$ of linear orders.

For each $i \in [n]$ there is a natural monotone projection map $\pi_i$ defined as $\pi_i(\bar x) = x_i$ for $\bar x\in\prod_{i\in[n]}L_i$.

A subposet $S \subseteq \prod\limits_{i \in [n]} L_i$ is called a \emph{sub-$n$-clat} if $S =\prod\limits_{i\in[n]} \pi_i(S)$.
\end{defn}

\begin{rem}\label{clatlat}
An $n$-clat is indeed a distributive lattice with respect to componentwise $ \min $ and $ \max $ operations as meet and join respectively.
\end{rem}

\begin{defn}
For any subset $A$ of a linear order $L$, define $A^\da := \{b \in L \mid (\forall a \in A) (b < a)\}$. Similarly define $A^\ua := \{b \in L \mid (\forall a \in A) (b > a)\}$.
\end{defn}

Now we are ready to define the titular object of this subsection.
\begin{defn}
\label{hammocks}
A subposet $H$ of an $n$-clat $C$ is called an \emph{abstract $n$-hammock} (\emph{$n$-hammock}, for short) if there exists a finite sequence of sub-$n$-clats $\lan S_j \mid j \in [k] \ran$ such that the following hold.

\begin{enumerate}
    \item $H = \bigcup\limits_{j \in [k]} S_j$.
    \item For every $i \in [n]$, $\pi_i(H) = \pi_i(C)$, i.e., $H$ has full projections.
    \item  For every $i \in [n]$, $\pi_i(S_1) \cap \pi_i(S_n) \not = \emptyset$.
    \item For every $i \in [n]$ and $j \in [k - 1]$, the following hold:
    \begin{itemize}
        \item $\pi_i(S_j)^\da \cap \pi_i(S_{j+1})= \emptyset$;
        \item $\pi_i(S_j)^\ua \cap \pi_i(S_{j+1})^\da = \emptyset$;
        \item $\pi_i(S_j)\cap \pi_i(S_{j+1})^\ua = \emptyset$.
    \end{itemize}
\end{enumerate}
\end{defn}

For the rest of this section, fix an $n$-hammock $H=\bigcup_{j\in[k]}S_j$ in an $n$-clat $C$ with projections $(L_i,\leq_i)$ for $i \in [n]$ and a subclat decomposition $\lan S_j \mid j \in [k]\ran$ of $H$ as in the definition.

Continuing from Remark \ref{clatlat}, we get more examples of distributive lattices.
\begin{prop}\label{hammocksublat}
The $n$-hammock $H$ is a sublattice of $C$, and hence is distributive. \end{prop}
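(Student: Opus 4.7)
The plan is to prove $H$ is closed under the componentwise meet and join of $C$; distributivity of $H$ then follows from the distributivity of $C$ (Remark~\ref{clatlat}), since any sublattice of a distributive lattice is itself distributive.

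Fix $x, y \in H$ and pick $a, b \in [k]$ with $x \in S_a$ and $y \in S_b$; without loss of generality, $a \leq b$. I would aim to prove the stronger statement that $x \wedge y \in S_a$ and $x \vee y \in S_b$. Since each $S_j$ is the Cartesian product $\prod_{i \in [n]} \pi_i(S_j)$ by definition of a sub-$n$-clat, this reduces, coordinatewise, to the claim that for every $i \in [n]$,
\[
\min(\pi_i(x), \pi_i(y)) \in \pi_i(S_a) \quad \text{and} \quad \max(\pi_i(x), \pi_i(y)) \in \pi_i(S_b).
\]

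When $\pi_i(x) \leq \pi_i(y)$, both conclusions are immediate from $\pi_i(x) \in \pi_i(S_a)$ and $\pi_i(y) \in \pi_i(S_b)$. The substantive case $\pi_i(x) > \pi_i(y)$ I would handle by induction on $b - a$: the base $a = b$ is trivial since $S_a$ is already a sublattice of $C$, and for the inductive step I would invoke all three bullets of Condition~(4) in Definition~\ref{hammocks}. The first bullet, $\pi_i(S_j)^\da \cap \pi_i(S_{j+1}) = \emptyset$, yields a witness $\xi' \in \pi_i(S_a)$ with $\xi' \leq \pi_i(y)$; the third bullet dually produces $\eta' \in \pi_i(S_b)$ with $\pi_i(x) \leq \eta'$; and the middle bullet $\pi_i(S_j)^\ua \cap \pi_i(S_{j+1})^\da = \emptyset$ rules out any element of $L_i$ lying strictly between consecutive projections. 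Combining these, the projections $\pi_i(S_a), \ldots, \pi_i(S_b)$ form a contiguous staircase along $L_i$, which should allow one to migrate $\pi_i(y)$ into $\pi_i(S_a)$ and $\pi_i(x)$ into $\pi_i(S_b)$.

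I expect the hardest step to be making this migration rigorous: one must thread the non-emptiness and adjacency data supplied by Condition~(4) through the intermediate sub-clats $S_{a+1}, \ldots, S_{b-1}$ without losing track of which projection currently contains each coordinate, so that any element of $L_i$ sandwiched between two elements of $\pi_i(S_a)$ cannot slip out of $\pi_i(S_a)$. Once this coordinatewise closure is in hand, $x \wedge y \in S_a \subseteq H$ and $x \vee y \in S_b \subseteq H$ fall out immediately, establishing the sublattice claim; distributivity is then a free consequence of Remark~\ref{clatlat}.
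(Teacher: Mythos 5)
Your intermediate claim is too strong and, as stated, false: the definition of a sub-$n$-clat in Definition~\ref{hammock define} only requires $S_j = \prod_{i\in[n]}\pi_i(S_j)$; it does \emph{not} require the projections $\pi_i(S_j)$ to be intervals in $L_i$. (That extra condition is what distinguishes a \emph{box} in Definition~\ref{boxdefine} from a general sub-clat.) Consequently the step you flagged as the hardest — ``any element of $L_i$ sandwiched between two elements of $\pi_i(S_a)$ cannot slip out of $\pi_i(S_a)$'' — is exactly a convexity assumption on $\pi_i(S_a)$ that the definition does not grant, and it fails concretely. For instance, take $n=2$, $k=3$, $L_1=\{1,2,3,4,5\}$, $L_2=\{1,3,4,5\}$, $S_1=\{1,3\}\times\{1,3\}$, $S_2=\{2,3,4\}\times\{1,3,4\}$, $S_3=\{3,5\}\times\{3,5\}$; all of conditions (1)--(4) hold. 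With $x=(3,1)\in S_1$ and $y=(2,3)\in S_2$, which are incomparable, you may legitimately take $a=1$, $b=2$, yet $x\wedge y=(2,1)\notin S_1$ because $2\notin\pi_1(S_1)=\{1,3\}$. So the coordinatewise claim $\min(\pi_i(x),\pi_i(y))\in\pi_i(S_a)$ is simply wrong here. (The meet is still in $H$, since $(2,1)\in S_2$ — so the \emph{proposition} itself is fine, but your route to it is not.)

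The paper avoids this trap by never asserting which $S_j$ the meet lands in. Its proof replaces a single coordinate at a time: for incomparable $\bar a,\bar b$ with $a_i<b_i$ and $a_j>b_j$ it shows that $\bar c:=(a_1,\dots,a_{j-1},b_j,a_{j+1},\dots,a_n)$ lies in $H$ (i.e., in \emph{some} $S_j$, not necessarily the one containing $\bar a$), and iterates over coordinates. That argument only uses condition (4) to rule out $b_j$ lying strictly below all of $\pi_j(S_r)$, which is all it needs, and it never relies on any convexity of the projections. If you want to repair your proof you should drop the ``$x\wedge y\in S_a$'' strengthening and instead follow a one-coordinate-at-a-time strategy, or equivalently just aim to show that at each intermediate step you land in $H=\bigcup_j S_j$ rather than in a predetermined $S_j$.
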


\begin{proof}
We will show that for any $\bar a,\bar b \in H $, $\bar a \wedge\bar b \in H $; the proof of closure under joins is symmetrical.

If $\bar a \leq\bar b $ then we are done, so assume that $\bar a $ and $\bar b $ are incomparable. Choose and fix $ i, j \in [n] $ such that $ a_i < b_i $ and $ a_j > b_j $.
Because $ a_i < b_i $, there exist $ r, s \in [k] $ such that $ r \leq s $, $\bar a \in S_r $ and $\bar b \in S_s $.
We claim that $\bar c := (a_1, a_2, \dots, a_{j-1}, b_j, a_{j+1}, \dots, a_n) \in H $.

Suppose not. Then $ b_j \in \pi_j(S_s) \setminus \pi_j(S_r) $. Then $ s > r $ and $ b_j \in \pi_j(S_r)^\ua \cup \pi_j(S_r)^\da $. Since $ \pi_j(S_r)^\da \cap \pi_j(S_s) = \emptyset $, we have  $ b_j \in \pi_j(S_r)^\ua $, and hence $ b_j > a_j $, which is a contradiction to our assumption.

Since $n$ is finite, the conclusion is immediate by repeated application of the claim.
\end{proof}

\begin{defn}\label{def:discrete-finpresented hammock}
Say $H$ is \emph{discrete} (resp. \emph{finitely presented, finite description)} if all its projections are so.
\end{defn}

\begin{rem}\label{hammockbdd}
A hammock $H$ is bounded, i.e., it contains a maximum and a minimum element, if and only if all its projections are so.
\end{rem}

\begin{prop}
The hammock $H$ is dense as a poset, i.e, every maximal chain (with respect to inclusion) in $H$ is dense, if and only if all its projections are dense.
\end{prop}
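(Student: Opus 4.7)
\bigskip
\noindent\textbf{Proof proposal.}
The plan is first to reduce, via Zorn's lemma, to the equivalent pairwise criterion: every maximal chain of $H$ is dense if and only if for every pair $\bar x < \bar y$ in $H$ there exists $\bar z \in H$ with $\bar x < \bar z < \bar y$. The forward direction of the reduction is immediate (extend $\{\bar x, \bar y\}$ to a maximal chain). For the reverse, given a maximal chain $\mathcal C$ and $\bar x < \bar y$ in $\mathcal C$, the intersection $\mathcal C \cap [\bar x, \bar y]_H$ is itself maximal in $[\bar x, \bar y]_H$ (any $\bar w \in \mathcal C \setminus [\bar x, \bar y]_H$ is automatically comparable to every element of the interval), so any $\bar z \in [\bar x, \bar y]_H$ strictly between $\bar x$ and $\bar y$, being comparable to both endpoints, already lies in $\mathcal C$.

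For the \emph{backward} direction (dense projections $\Rightarrow$ dense hammock), given $\bar x < \bar y$ in $H$, I would pick an index $i$ with $x_i < y_i$, use density of $L_i$ to find $c \in L_i$ with $x_i < c < y_i$, and then use $\pi_i(H) = L_i$ to obtain $\bar w \in H$ with $\pi_i(\bar w) = c$. Setting $\bar z := (\bar x \vee \bar w) \wedge \bar y$, Proposition~\ref{hammocksublat} places $\bar z$ in $H$, and a componentwise check confirms $\bar x < \bar z < \bar y$ (with $z_i = c$).

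For the \emph{forward} direction I would argue the contrapositive. Assuming some $L_i$ has adjacent elements $a < b$, the goal is to exhibit a pair $\bar u < \bar v$ in $H$ with no element of $H$ strictly between, violating the pairwise criterion. Using full projectivity, pick $\bar u_0, \bar v_0 \in H$ with $u_{0,i} = a$ and $v_{0,i} = b$. The crucial step is to use the hammock axioms to locate subclat indices $j_a$ and $j_b$ with $a \in \pi_i(S_{j_a})$ and $b \in \pi_i(S_{j_b})$, together with a common transverse tuple $h_{-i} \in \prod_{k \neq i}\bigl(\pi_k(S_{j_a}) \cap \pi_k(S_{j_b})\bigr)$. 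Then $\bar u := (a, h_{-i}) \in S_{j_a}$ and $\bar v := (b, h_{-i}) \in S_{j_b}$ satisfy $\bar u < \bar v$ in $H$; any $\bar z \in H$ with $\bar u \leq \bar z \leq \bar v$ must satisfy $z_k = h_k$ for $k \neq i$ (since $u_k = v_k$) and $z_i \in \{a,b\}$ (by the gap in $L_i$), forcing $\bar z \in \{\bar u, \bar v\}$.

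The main obstacle is the construction of the adjacent pair in the forward direction. When $a$ and $b$ both appear in the $i$-th projection of a single subclat, the transverse tuple is trivially available. In the mixed case one must track how the sets $\{j : a \in \pi_i(S_j)\}$ and $\{j : b \in \pi_i(S_j)\}$ sit within $\{1, \dots, k\}$ using the monotonicity encoded in property (4) of Definition~\ref{hammocks}, isolate consecutive indices $j, j+1$ at which the transition occurs, and then invoke property (3)'s overlap on this consecutive pair to produce $h_{-i}$; handling this case is the only nontrivial structural step in the proof.
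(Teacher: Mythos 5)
Your proposal is essentially correct and takes a noticeably different route in both directions; interestingly, in one place it is more careful than the paper's own argument.

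For the backward direction, your use of the lattice operations $\bar z := (\bar x \vee \bar w)\wedge \bar y$ together with Proposition~\ref{hammocksublat} is exactly the right move, because it \emph{forces} $\bar z\in H$. The paper's own proof simply picks a componentwise intermediate tuple and asserts it lies in $H$, which need not hold: taking $H=([0,5]\times[0,3])\cup([3,10]\times[2,10])$ inside $([0,10]\cap\rat)^2$, the tuple $(1,5)$ lies componentwise between $(0,0)\in H$ and $(10,10)\in H$ but is in neither subclat, hence not in $H$. Your meet/join construction sidesteps this cleanly.

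For the forward direction, your contrapositive is more roundabout than the paper's direct argument: given $y_j<z_j$ in $L_j$, the paper fixes $x_i\in\pi_i(S_1)\cap\pi_i(S_k)$ for each $i\neq j$; by the argument inside Proposition~\ref{assume intersection} one has $\pi_i(S_1)\cap\pi_i(S_k)\subseteq\pi_i(S_m)$ for \emph{every} $m$, so the two tuples with $y_j$ and $z_j$ in slot $j$ and this fixed transverse tuple elsewhere both land in $H$, and density of $H$ then gives a strictly intermediate $j$-th coordinate. This makes your anticipated ``mixed case'' obstacle evaporate: just take $h_k\in\pi_k(S_1)\cap\pi_k(S_k)$ for all $k\neq i$, which automatically lies inside every $S_j$. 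A genuine caution on the route you sketch: property~$(4)$ on a consecutive pair alone does not force $\pi_k(S_j)\cap\pi_k(S_{j+1})\neq\emptyset$ (try $\pi_k(S_j)=\{0,2\}$ and $\pi_k(S_{j+1})=\{1,3\}$ inside $\{0,1,2,3\}$, which satisfies $(4)$); you really do need property $(3)$ at the ends together with its propagation to all pairs, as in Proposition~\ref{assume intersection}. Your explicit reduction to the pairwise criterion is worth keeping since the paper elides it; just run the maximality argument at a pair $\bar x<\bar y$ that is \emph{adjacent} in $\mathcal C$ (so $\mathcal C\cap[\bar x,\bar y]_H=\{\bar x,\bar y\}$), because ``comparable to the two endpoints'' does not by itself place $\bar z$ inside $\mathcal C$ when $\mathcal C$ already contains other elements between them.
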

\begin{proof}
($\Rightarrow$) Suppose $H$ is dense and $y_j<z_j$ in some $L_j$, $j\in[n]$. For each $i\neq j$, choose and fix $x_i\in\pi_i(S_1)\cap\pi_i(S_k)$ using condition $(3)$ of the definition of a hammock. Then using the other conditions of the same definition, $(x_1,\cdots,x_{j-1},y_j,x_{j+1},\cdots,x_n)<(x_1,\cdots,x_{j-1},z_j,x_{j+1},\cdots,x_n)$ in $H$. Therefore, the density of $H$ implies the density of $L_j$.

($\Leftarrow$) Suppose $L_i$ is dense for each $i\in[n]$. If $\bar y<\bar z$ in $H$ then for each $i\in[n]$, whenever $y_i<z_i$ then choose $y_i<w_i<z_i$; otherwise choose $w_i:=y_i=z_i$, so that $\bar x<\bar w<\bar z$.
\end{proof}

We will prove a similar result for scattered hammocks, but in much more generality. For this purpose, we now introduce boxes as ``cuboidal" intervals in $H$. For intervals $I_i \subseteq L_i$, define $\bigotimes\limits_{i \in [n]} I_i := H \cap \prod\limits_{i \in [n]} I_i$.
\begin{defn}
\label{boxdefine}
Say that a subposet $X$ of $H$ is a \emph{box} if $\pi_i(X)$ is an interval in $L_i$ for each $i \in [n]$ and $X = \bigotimes\limits_{i\in [n]}\pi_i(X)$.
\end{defn}

\begin{rem}
\label{box intersection}
    The set of boxes in a hammock is closed under finite intersections.
\end{rem}

\begin{prop}
\label{stdproj}
    A box $X$ in $H$ is scattered if and only if all its projections are scattered.
\end{prop}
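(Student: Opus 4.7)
The plan is to prove both directions by comparing $X$ with the ambient Cartesian product $\prod_{i \in [n]} \pi_i(X)$, using the decomposition $X = \bigcup_{p \in [k]} X_p$ where $X_p := S_p \cap \prod_{i \in [n]} \pi_i(X) = \prod_{i \in [n]} (\pi_i(S_p) \cap \pi_i(X))$ is a sub-$n$-clat; note that this also gives $\pi_j(X) = \bigcup_{p \in [k]} \pi_j(X_p)$ for every $j \in [n]$.

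For the backward direction ($\Leftarrow$) I would first establish, by induction on $n$, the auxiliary claim that a finite product of scattered linear orders is scattered as a poset. Given an embedding $\phi : \rat \hookrightarrow \prod_{i \in [n]} J_i$, the map $f := \pi_1 \circ \phi : \rat \to J_1$ is monotone. If $f$ is constant then $\phi$ embeds $\rat$ into $\prod_{i \geq 2} J_i$, contradicting the inductive hypothesis. Otherwise each fibre $f^{-1}(y)$ embeds into $\prod_{i \geq 2} J_i$ through the remaining coordinates of $\phi$, hence is scattered by induction, so $\rat \cong \sum_{y \in f(\rat)} f^{-1}(y)$ becomes a scattered-order-indexed sum of scattered linear orders and is itself scattered by Proposition~\ref{prop: SCattered sum of linear orders}---a contradiction. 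Applying this to $J_i = \pi_i(X)$ and using $X \subseteq \prod_i \pi_i(X)$ as a subposet completes this direction.

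For the forward direction ($\Rightarrow$), which I would argue contrapositively, the key auxiliary input is that a finite union of scattered subsets of a linear order is scattered. For two summands: if $\rat = A \cup B$ with $A$ scattered, then every nondegenerate open interval of $\rat$ must meet $B$ (else it would embed densely into $A$), so $B$ is a countable dense sub-order of $\rat$ without endpoints, hence isomorphic to $\rat$ by Cantor's back-and-forth theorem, contradicting $B$ scattered; the general case follows by induction on the number of summands. Granted this, if $\pi_j(X)$ is non-scattered then the finite union $\pi_j(X) = \bigcup_p \pi_j(X_p)$ forces some $\pi_j(X_p)$ to be non-scattered. Since $X_p$ is a genuine Cartesian product, fixing any $\bar a \in X_p$ and varying only its $j$-th coordinate over $\pi_j(X_p)$ produces an axis-parallel chain inside $X_p \subseteq X$ that is order-isomorphic to the non-scattered $\pi_j(X_p)$, witnessing that $X$ is non-scattered.

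The main obstacle is the forward direction, since the projection $\pi_j$ generally admits no monotone section on $X$ itself, so a copy of $\eta$ in $\pi_j(X)$ cannot be lifted to $X$ directly. The sub-clat decomposition is what saves the argument by localising the non-scatteredness to a single $X_p$, where the strict Cartesian-product structure makes the axis-parallel lift trivial; the union-of-scattered-sets lemma is the exact tool that allows this localisation.
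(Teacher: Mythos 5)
Your proof is correct. For ($\Leftarrow$) you do essentially what the paper does: given a purported copy of $\eta$, project to the first coordinate, invoke Proposition~\ref{prop: SCattered sum of linear orders} to locate a non-scattered fibre, and induct on the remaining $n-1$ coordinates. You package the induction as ``a finite product of scattered linear orders is scattered'' and regard $X$ as a subposet of $\prod_i\pi_i(X)$, while the paper runs the same fibering argument inside the ambient hammock, but the skeleton is identical. Where you genuinely diverge is ($\Rightarrow$). The paper's proof chooses sections $b_a\in X\cap\pi_1^{-1}(a)$ by the axiom of choice and immediately asserts $\sum_{a\in\pi_1(X)}\{b_a\}\cong\eta\subseteq X$; but for $a<a'$ one only knows $\pi_1(b_a)<\pi_1(b_{a'})$, so the $b_a$ need not be pairwise comparable in the product order, and the formal ordered sum is not automatically a suborder of $X$. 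Your argument supplies exactly what that line is leaning on: writing $X=\bigcup_p X_p$ with $X_p:= S_p\cap X=\prod_i\bigl(\pi_i(S_p)\cap\pi_i(X)\bigr)$, so that $\pi_j(X)=\bigcup_p\pi_j(X_p)$ is a \emph{finite} union, the standard fact that a finite union of scattered suborders of a linear order is scattered localises a copy of $\eta$ to some $\pi_j(X_p)$; and since $X_p$ is a genuine Cartesian product, fixing all but the $j$-th coordinate lifts it to an axis-parallel chain that really does sit inside $X$. This is more work, but it produces an honest chain where the paper only gestures at one. (One cosmetic nit in your union lemma: the dense subset $B$ need not lack endpoints, but a countable dense linear order with at least two points contains $\eta$ regardless, so the conclusion is unaffected.)
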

\begin{proof}
    ($\Leftarrow$) We will prove the stronger statement that an arbitrary subset of $H$ is scattered if all its projections are scattered using induction on $n$. The base case, when $n = 1$, is trivial. Assume that the statement holds when $n=m$ and we will prove it for $n = m+1$.
    
    Suppose $f:\eta\to H$ is a strict monotone map. We will show that $\pi_i(f(\eta))$ is not scattered for some $i \in [n]$. If  $\pi_1(f(\eta))$ is scattered, then $\eta \cong f(\eta) = \sum\limits_{a \in \pi_1(X)} (\pi_1^{-1}(a) \cap f(\eta))$. Using Proposition \ref{prop: SCattered sum of linear orders}, choose $a_1 \in \pi_1(X)$ such that $X_1 := \pi_1^{-1}(a_1) \cap f(\eta)$ is not scattered.

    Let $C' := \prod\limits_{2 \leq i \le n} \pi_i(H) $ and consider the monotone embedding $\phi := (\pi_2, \hdots, \pi_{n}): X_1 \to C'$ sending $(a_1, \hdots, a_n)$ to $ (a_2, \hdots, a_n)$.
        It is straightforward to verify that $\phi(H)$ is an $(n-1)$-hammock in the $(n-1)$-clat $C'$, and that $\phi(X_1)$ is a box in $\phi(H)$.
    Hence by the inductive hypothesis, we have $\pi_i(\phi(X_1)) = \pi_i(X_1) \subseteq \pi_i(f(\eta))$ is not scattered for some $2 \le i \le n$. 

    $(\Rightarrow)$
    Without loss, suppose $\pi_1(X)$ is  not scattered. Using the axiom of choice,  pick an element $b_a \in X \cap \pi_1^{-1}(a)$ for each $a \in \pi_1(X)$. Then $\sum\limits_{a \in \pi_1(X)} \{b_a\} \cong \eta \subseteq X$.
\end{proof}

Now we show that boxes in hammocks are hammocks themselves. We start with an easy observation about intersecting intervals in linear orders.

\begin{prop}
\label{sunflower}
    If $ L, M, R $ are intervals in a linear order such that their pairwise intersections are nonempty, then $ L \cap M \cap R $ is also nonempty.
\end{prop}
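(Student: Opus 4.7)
The plan is to use a one-dimensional median argument, which amounts to Helly's theorem for intervals on a linearly ordered set. First I would invoke the three nonemptiness hypotheses to pick witnesses $a \in L \cap M$, $b \in M \cap R$, and $c \in L \cap R$. Since the ambient set is linearly ordered, the three elements $a, b, c$ admit a well-defined median, i.e., one of them lies in the closed interval spanned by the other two.

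Next I would perform a short case analysis on which of $a, b, c$ is the median, but the underlying observation is uniform across cases: if $x$ is the median of $\{a, b, c\}$ and $x$ already belongs to two of the three intervals by its defining membership, then the remaining interval contains the other two witnesses and hence, being convex, must contain $x$ as well. Concretely, if $b$ is the median then $b$ lies between $a$ and $c$; since $a, c \in L$ and $L$ is an interval, $b \in L$, which combined with $b \in M \cap R$ yields $b \in L \cap M \cap R$. The cases in which $a$ or $c$ is the median are handled by cyclically permuting the roles of the three intervals.

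I do not expect any real obstacle here: the claim is essentially the one-dimensional instance of Helly's theorem, and the argument uses only the convexity of $L, M, R$ together with the linearity of the ambient order. The only care needed in writing the proof is to formulate the median reduction once and then appeal to it uniformly, so as not to repeat the same three-line convexity argument in each of the three cases.
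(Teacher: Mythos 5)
Your proof is correct, and it is essentially the same argument as the paper's: both pick witnesses $a \in L \cap M$, $b \in M \cap R$, $c \in L \cap R$, and observe that whichever of the three lies between the other two must, by convexity of the remaining interval, belong to all three. Your median framing is a slightly cleaner and more symmetric presentation of the same case analysis.
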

\begin{proof}
    Let $ x \in L \cap M $, $ y \in L \cap R $ and $ z \in M \cap R $.
    Suppose $ x \not \in  R $. Then either $ x < \min \{y, z\} $ or $ x > \max \{y, z\}$. Without loss of generality assume the former.
    If $ y \in M $ then we are done. Otherwise, we have $ x < z < y $, so that $ z \in [x, y] \subseteq L $. Then $ z \in L \cap M \cap R$, as needed.
\end{proof}
The following corollary follows immediately by induction.
\begin{cor}
    If $n \in \N$ and $ \lan L_i \mid i \in [n] \ran $ is a finite sequence of intervals in a linear order with pairwise nonempty intersections, then $ \bigcap_{i \in [n]} L_i \not= \emptyset $. 
\end{cor}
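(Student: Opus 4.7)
The plan is a straightforward induction on $n$, using Proposition \ref{sunflower} as the core combinatorial input. The base cases $n \le 2$ are immediate from the hypothesis (for $n \le 1$ there is nothing to check, and for $n = 2$ the pairwise hypothesis is the conclusion), and the case $n = 3$ is precisely Proposition \ref{sunflower}. For the inductive step, assume the statement for sequences of length $n-1$, where $n \ge 4$, and let $\lan L_i \mid i \in [n] \ran$ be a sequence of intervals with pairwise nonempty intersections. The natural move is to ``contract" the last interval $L_n$ into the others: set $M_i \defeq L_i \cap L_n$ for $i \in [n-1]$. Each $M_i$ is an interval, being the intersection of two intervals in a linear order, and is nonempty by the pairwise hypothesis on the $L_i$.

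To invoke the inductive hypothesis on $\lan M_i \mid i \in [n-1] \ran$, one needs pairwise nonempty intersections among the $M_i$. For $i, j \in [n-1]$ this amounts to $L_i \cap L_j \cap L_n \ne \emptyset$, which is exactly the content of Proposition \ref{sunflower} applied to the three intervals $L_i$, $L_j$, $L_n$ (whose pairwise intersections are nonempty by hypothesis on the original family). The inductive hypothesis then yields $\bigcap_{i \in [n-1]} M_i \ne \emptyset$, and since this intersection equals $\bigcap_{i \in [n]} L_i$, the conclusion follows. There is no real obstacle here beyond verifying that the ``new" triples $\{L_i, L_j, L_n\}$ each have pairwise nonempty intersections so that Proposition \ref{sunflower} applies; the authors' own parenthetical remark that the corollary ``follows immediately by induction'' signals that Proposition \ref{sunflower} has already carried all the actual combinatorial weight.
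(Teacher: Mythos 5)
Your proof is correct and fills in exactly the induction the authors had in mind when they wrote that the corollary ``follows immediately by induction'' from Proposition \ref{sunflower}: contract the last interval into the others, observe that pairwise intersections of the contracted family are guaranteed by the $n=3$ case, and apply the inductive hypothesis. This is the same approach as the paper, just written out.
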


\begin{prop}
\label{assume intersection}
   For $ H $ and $ \lan S_j \mid j \in [k] \ran $ as fixed above, if $ m, M  \in [k] $ are such that $ m \le M $ then $ \bigcup_{m \le j \le M} S_j $ is a hammock in the clat $ \prod_{i \in [n]} \bigcup_{m \le j \le M} \pi_i(S_j) $.
\end{prop}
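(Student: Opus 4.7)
My plan is to verify the four hammock axioms of Definition~\ref{hammocks} for $H' := \bigcup_{m \leq j \leq M} S_j$ inside the clat $C' := \prod_{i \in [n]} \bigcup_{m \leq j \leq M} \pi_i(S_j)$, using the restricted subsequence $\langle S_j \mid m \leq j \leq M\rangle$ as its decomposition. Three of the four axioms are essentially free: axiom~(1) is the definition of $H'$; axiom~(2) follows from $\pi_i(H') = \bigcup_{m \leq j \leq M}\pi_i(S_j) = \pi_i(C')$; and axiom~(4), the three pairwise emptiness conditions for each consecutive pair within $\{m, \ldots, M-1\}$, transfers verbatim from $H$, since shrinking the ambient linear order from $L_i$ to $\pi_i(C')$ only shrinks the sets $\pi_i(S_j)^{\da}$ and $\pi_i(S_j)^{\ua}$ and therefore preserves all three emptiness conditions.

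The substantive step is axiom~(3), namely $\pi_i(S_m) \cap \pi_i(S_M) \neq \emptyset$ for every $i \in [n]$. Fixing $i$ and reading conditions~(a) and~(c) of the original hammock as quantifier statements---for each $x \in \pi_i(S_{j+1})$ there is $y \in \pi_i(S_j)$ with $y \leq x$, and symmetrically for (c)---I would first derive that both $\inf \pi_i(S_j)$ and $\sup \pi_i(S_j)$ are monotonically non-decreasing in $j$; condition~(b) then forces $\inf \pi_i(S_{j+1}) \leq \sup \pi_i(S_j)$, i.e.\ consecutive projections overlap. Combined with axiom~(3) for $H$ (which yields $\inf \pi_i(S_k) \leq \sup \pi_i(S_1)$), monotonicity propagates this to $\inf \pi_i(S_M) \leq \sup \pi_i(S_m)$ for all $m \leq M$ in $[k]$, so the family $\{\pi_i(S_j) \mid m \leq j \leq M\}$, viewed as intervals in $L_i$, has pairwise nonempty intersections. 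Applying the Corollary immediately preceding the statement (an iterated form of Proposition~\ref{sunflower}) to this family yields a common point, witnessing axiom~(3).

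The main obstacle is ensuring that each $\pi_i(S_j)$ actually behaves like an interval of $L_i$, so that the sunflower Corollary applies verbatim and the inf/sup comparisons translate into genuine set intersections. In the paper's intended setting this follows from the way hammock decompositions are constructed (each $S_j$ is effectively a box, whence $\pi_i(S_j)$ is an interval); in the fully abstract setting I would instead replace $\pi_i(S_j)$ by its convex hull in $L_i$ to invoke the Corollary, and then upgrade the resulting common point to an element of $\pi_i(S_m) \cap \pi_i(S_M)$ via the explicit descent/ascent witnesses handed to us by the quantifier reformulations of (a) and (c).
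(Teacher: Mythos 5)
Your high-level plan --- verify each hammock axiom for $H' = \bigcup_{m\le j\le M} S_j$ under the restricted decomposition --- is the right one, and the observations for axioms (1), (2), (4) are fine. But for axiom (3) you take a genuinely different and more roundabout route than the paper, and the route as written has a concrete error. You assert that condition (b) ``forces $\inf \pi_i(S_{j+1}) \le \sup \pi_i(S_j)$,'' but this is false: take $L_i = \{1,3\}$ with $\pi_i(S_j) = \{1\}$ and $\pi_i(S_{j+1}) = \{3\}$. Then $\pi_i(S_j)^{\da} = \emptyset$, $\pi_i(S_j)^{\ua} = \{3\}$, $\pi_i(S_{j+1})^{\da} = \{1\}$, $\pi_i(S_{j+1})^{\ua} = \emptyset$, so all three clauses of condition (4) hold, yet $\inf \pi_i(S_{j+1}) = 3 > 1 = \sup\pi_i(S_j)$. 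Condition (b) only forbids an element of $L_i$ sitting strictly between the two projections; in a discrete order there may simply be no candidate for such an element, so the $\inf$/$\sup$ comparison you want does not come out of it. Beyond that, $\inf$ and $\sup$ need not exist in an arbitrary linear order, so they are not the right tool here; and your final ``upgrade'' from a common point of the convex hulls back to an element of $\pi_i(S_m)\cap\pi_i(S_M)$ is exactly where the real work hides, and you leave it as a gesture.

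The paper avoids all of this by proving something both stronger and simpler. It reduces to removing a single subclat from one end (say $m=2$, $M=k$) and shows directly that $\pi_i(S_1) \cap \pi_i(S_k) \subseteq \pi_i(S_2)$: from clause (c) at $j=1$ one gets $\pi_i(S_1) \cap \pi_i(S_2)^{\ua} = \emptyset$, and from clause (a) (propagated along the chain) one gets $\pi_i(S_k) \cap \pi_i(S_2)^{\da} = \emptyset$; any common point of $\pi_i(S_1)$ and $\pi_i(S_k)$ is therefore neither above nor below all of $\pi_i(S_2)$, hence lies in $\pi_i(S_2)$. Since $\pi_i(S_1) \cap \pi_i(S_k) \ne \emptyset$ by axiom (3) for $H$, the pair $\pi_i(S_2)$, $\pi_i(S_k)$ already has a common point, and iterating the ``peel one off an end'' step gives the general $m \le M$ case. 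Notice that this argument never needs Proposition~\ref{sunflower} or its corollary --- those are invoked only in the proof of Proposition~\ref{boxes are hammocks}, where one intersects a box with the $S_j$'s. Reserving the sunflower machinery for that later proposition, and replacing the $\inf$/$\sup$ reasoning with direct manipulation of the $(\cdot)^{\da}$ and $(\cdot)^{\ua}$ sets as the paper does, is the cleaner path.
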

\begin{proof}
    First assume that $ m = 2 $ and $ M = k $. 
    
Let $ i \in [n] $.
Because $ \lan S_j \mid j \in [k] \ran $ is a sub-$ n $-clat decomposition, $ \pi_i(S_1) \subseteq (\pi_i(S_2) \cup \pi_i(S_2)^\da) $.
Similarly $ \pi_i(S_n) \subseteq (\pi_i(S_2) \cup \pi_i(S_2)^\ua) $.
    Therefore $ \pi_i(S_1) \cap \pi_i(S_n) \subseteq \pi_i(S_2) \cap \pi_i(S_n)$. The left hand side is nonempty because $ \lan S_j \mid j \in [k] \ran $ is a sub-$ n $-clat decomposition, hence so is the right hand side.

Because our choice of the hammock $ H $ was arbitrary, the result holds for all hammocks. Therefore by induction, the result holds for all $ 1 \le  m \le M \le k$.
\end{proof}
\begin{prop}
\label{boxes are hammocks}
    If $ \ii $ is a nonempty box in a hammock $ H $, then $ \ii $ is a hammock in $ \prod_{i \in [n]} \pi_i(\ii) $.
\end{prop}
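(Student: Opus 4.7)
My goal is to construct a sub-$n$-clat decomposition of $\ii$ inside the ambient clat $\prod_{i \in [n]} \pi_i(\ii)$. For each $j \in [k]$ the natural candidate subclat is
\[ T_j \;:=\; S_j \cap \ii \;=\; S_j \cap \prod_{i \in [n]} \pi_i(\ii) \;=\; \prod_{i \in [n]}\bigl(\pi_i(S_j) \cap \pi_i(\ii)\bigr), \]
so each nonempty $T_j$ is already a sub-$n$-clat of $\prod_i \pi_i(\ii)$. Let $J := \{j \in [k] : T_j \neq \emptyset\}$, enumerate it in increasing order as $j_1 < \cdots < j_m$, and relabel $U_l := T_{j_l}$ for $l \in [m]$. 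Invoking Proposition~\ref{assume intersection}, I pass to the sub-hammock $\bigcup_{j_1 \leq j \leq j_m} S_j$ of $H$, in which $\ii$ remains a box; this allows me to assume $j_1 = 1$ and $j_m = k$.

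For the candidate decomposition $\langle U_l \mid l \in [m]\rangle$, the covering axiom~(1) is immediate from $\ii = H \cap \prod_i \pi_i(\ii)$, and the full-projections axiom~(2) follows because every $x \in \pi_i(\ii)$ arises as the $i$-th coordinate of some $\bar z \in \ii$ that lies in one of the $U_l$. For the adjacency axiom~(4), the three conditions on a consecutive pair $U_l, U_{l+1}$ are deduced from the transitive versions of the original hammock inequalities (established during the proof of Proposition~\ref{assume intersection}) combined with the interval property of $\pi_i(\ii)$: relations such as $\pi_i(S_{j_l})^\da \cap \pi_i(S_{j_{l+1}}) = \emptyset$ restrict correctly to their intersections with $\pi_i(\ii)$.

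\textbf{Main obstacle.} The naive decomposition $\langle U_l\rangle$ can still fail the boundary axiom~(3): $\pi_i(U_1) \cap \pi_i(U_m)$ may be empty despite $\pi_i(S_1) \cap \pi_i(S_k) \neq \emptyset$ in the original hammock, when all shared elements fall outside the interval $\pi_i(\ii)$. A one-dimensional illustration is $L_1 = \{1, 5, 8, 11\}$ with $S_1 = \{1, 5, 11\}$, $S_2 = \{8, 11\}$, and $\ii = \{5, 8\}$: here $U_1 = \{5\}$ and $U_2 = \{8\}$ share no projection. The remedy is to coarsen the decomposition by merging a consecutive run $U_l, U_{l+1}, \ldots, U_{l'}$ into the single sub-$n$-clat $V := \prod_{i \in [n]}\bigl(\pi_i(U_l) \cup \cdots \cup \pi_i(U_{l'})\bigr)$ of $\prod_i \pi_i(\ii)$. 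The hardest step is to verify that each such merged $V$ is contained in $\ii$, i.e., that every ``mixed'' element $\bar c \in V$ still lies in $H$. I expect this to follow by producing $\bar c$ as a sequence of meets and joins of carefully chosen elements drawn from the $U_s$ (for $l \leq s \leq l'$), using the distributive sublattice structure of $H$ from Proposition~\ref{hammocksublat}. After finitely many such coarsening steps, the resulting decomposition satisfies all four axioms of Definition~\ref{hammocks}.
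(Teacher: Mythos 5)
Your main observation—that condition~(3) can fail for the naive decomposition $\langle U_l \rangle$—is where your route diverges from the paper's, but the obstacle you identify does not actually arise in the paper's setting. Your one-dimensional counterexample requires the clat $S_1 = \{1,5,11\}$, whose projection $\{1, 5, 11\}$ is \emph{not} an interval in $L_1 = \{1,5,8,11\}$. Throughout the paper, however, the projections $\pi_i(S_j)$ of the sub-$n$-clats in a hammock decomposition are tacitly taken to be intervals (indeed, later in the paper the very same decomposition is referred to as a ``sub-$n$-box-decomposition'' in the corollary following Proposition~\ref{period}, and the set-containment $\pi_i(S_1) \subseteq \pi_i(S_2) \cup \pi_i(S_2)^\da$ asserted in the proof of Proposition~\ref{assume intersection} only holds when $\pi_i(S_2)$ is convex). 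Under that standing assumption your example is not a legitimate hammock decomposition, condition~(3) for $\langle U_l \rangle$ is not threatened, and the coarsening machinery you propose is unnecessary.

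Accordingly, the paper's proof proceeds exactly along the lines you describe up to and including the reduction via Proposition~\ref{assume intersection} to the case $J = [k]$, but then settles condition~(3) in one line with Proposition~\ref{sunflower}: since $\pi_i(S_1)$, $\pi_i(S_k)$, and $\pi_i(\ii)$ are all intervals with pairwise nonempty intersections (the first from condition~(3) in $H$, the other two because $S'_1$ and $S'_k$ are nonempty), the triple intersection $\pi_i(S_1) \cap \pi_i(\ii) \cap \pi_i(S_k) = \pi_i(S'_1) \cap \pi_i(S'_k)$ is nonempty. You never invoke the sunflower lemma at all, which is precisely the ingredient that makes the naive decomposition succeed. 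Your merging strategy, by contrast, leaves its hardest step—showing that every element of the coarsened sub-$n$-clat $V$ actually lies in $H$—as an expectation rather than an argument; you gesture at obtaining mixed tuples from lattice meets and joins via Proposition~\ref{hammocksublat}, but this would need real work (for instance, a binary ``mix'' of coordinates coming from $U_l$ and $U_{l'}$ with $l' > l+1$ is not obviously producible from finitely many $\wedge$ and $\vee$ operations staying inside $\ii$). I would advise replacing the whole ``main obstacle'' paragraph by the single sunflower-lemma deduction; at minimum, flag the implicit convexity hypothesis and verify that condition~(3) of Definition~\ref{hammocks} holds for the candidate decomposition before concluding there is a gap.
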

\begin{proof}
    Consider the sequence $ \lan S'_j \mid j \in [k] \ran $ of clats in $ H $, where $ S'_j \defeq S_j \cap \ii $.

\noindent{\textbf{Claim:}}
        $ J \defeq \{ j \in [k] \mid S'_j \not= \emptyset \} $ is a contiguous sequence.

\noindent{\textit{Proof of the claim.}} 
        Let $ m \defeq \min J $, $ M \defeq \max J $ and $ m < k < M $. Then by Proposition \ref{sunflower}, $ \pi_i(\ii) \cap \pi_i(S_m) \cap \pi_i(S_M) \cap \pi_i(S_k) \not= \emptyset $ for each $i\in[n]$.\hfill$\vartriangle$
        
    Thanks to Proposition \ref{assume intersection}, without loss of generality, we can assume that $ J = [k] $. 
    Note that $ S'_j = \prod_{i \in [n]} (\pi_i(S_j) \cap \pi_i(\ii)) $ is a clat for every $ j \in [k] $.
    It remains to show that $ \lan S'_j \mid j \in [k]\ran $ is a subclat decomposition for $ \ii $.
    It is clear that conditions $(1), (2)$ and $(4)$ of Definition \ref{hammocks} are met.
    To see that condition $(3)$ also holds, note that for each $ i \in [n] $, $ \pi_i(S'_1) \cap \pi_i(S'_k) = \pi_i(S_1) \cap \pi_i(\ii) \cap \pi_i(S_k) \not= \emptyset$, thanks to Proposition \ref{sunflower}.
\end{proof}

Using Proposition \ref{hammocksublat} the definition of scattered condensation (Definition \ref{sccondlinorder}) can be generalized to hammocks since they are distributive, and hence modular, lattices.
\begin{defn}
\label{sccondhammocks}
For $\bar x\in H$, define $c_S(\bar x):=\{\bar y\in H\mid[\bar x\wedge\bar y,\bar x\vee\bar y]\text{ is scattered}\}$.
\end{defn}
The next result is an immediate consequence of Proposition \ref{stdproj}.
\begin{prop}\label{fintypeproj}
Given an $n$-hammock $H$ and $\bar x,\bar y\in H$, $c_S(\bar x)=c_S(\bar y)$ in $c_S(H)$ if and only if $c_S(x_i)=c_S(y_i)$ in $c_S(L_i)$ for each $i\in[n]$. As a consequence, $c_S(\bar x)$ is a box.
\end{prop}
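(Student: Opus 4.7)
The plan is to reduce to linear-order scattered condensation via Proposition~\ref{stdproj}, by recognizing $I \defeq [\bar x \wedge \bar y, \bar x \vee \bar y]$ as a box in $H$ with explicit projections, then rewriting $c_S(\bar x)$ as a fibered product.

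First I would show that $I$ is a box with $\pi_i(I) = [x_i \wedge y_i, x_i \vee y_i]$ for each $i\in[n]$. Since $H$ is a sublattice of the ambient $n$-clat by Proposition~\ref{hammocksublat} and meets and joins are computed componentwise, one has $I = H \cap \prod_{i\in[n]} [x_i \wedge y_i, x_i \vee y_i]$, so $\pi_i(I) \subseteq [x_i \wedge y_i, x_i \vee y_i]$, with the endpoints lying in $\pi_i(I)$. For an interior $c \in (x_i \wedge y_i, x_i \vee y_i)$, the full-projection axiom (condition~(2) of Definition~\ref{hammocks}) supplies some $\bar v \in H$ with $v_i = c$, and a componentwise check shows that $\bar z \defeq (\bar v \vee (\bar x \wedge \bar y)) \wedge (\bar x \vee \bar y)$ lies in $I$ with $z_i = c$.

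Applying Proposition~\ref{stdproj} to this box, $I$ is scattered if and only if every $\pi_i(I) = [x_i \wedge y_i, x_i \vee y_i]$ is scattered in $L_i$, which in turn is exactly the condition $c_S(x_i) = c_S(y_i)$ by Definition~\ref{sccondlinorder}. Combining with Definition~\ref{sccondhammocks}, I obtain $\bar y \in c_S(\bar x)$ if and only if $c_S(x_i) = c_S(y_i)$ for each $i \in [n]$. Because each linear-order $c_S$ is an equivalence relation, their componentwise conjunction defines an equivalence relation on $H$, and the sets $c_S(\bar x)$ are exactly its equivalence classes. The biconditional $c_S(\bar x) = c_S(\bar y) \iff c_S(x_i) = c_S(y_i)\ \forall i$ then follows.

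For the consequence, this biconditional rewrites $c_S(\bar x)$ as $H \cap \prod_i c_S(x_i)$, which is of the form $\bigotimes_i J_i$ with each $J_i \defeq c_S(x_i)$ an interval in $L_i$. To meet Definition~\ref{boxdefine}, I would check that each actual projection $\pi_i(c_S(\bar x))$ is convex in $L_i$ by mimicking the construction from Step~1: given $a \leq c \leq b$ with $a, b \in \pi_i(c_S(\bar x))$, pick witnesses $\bar z^a, \bar z^b \in c_S(\bar x)$ over $a, b$ and $\bar v \in H$ with $v_i = c$, and set $\bar z \defeq (\bar v \vee \bar z^a) \wedge (\bar z^a \vee \bar z^b)$; a short case analysis on whether $v_j$ lies above or below $z^a_j \vee z^b_j$ shows that $z_j \in c_S(x_j)$ for each $j$, while $z_i = c$, so $\bar z \in c_S(\bar x)$. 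Hence $c_S(\bar x)$ is a box.

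The main technical obstacle is crafting the right lattice expressions so that the projection to coordinate $i$ hits the desired middle value while the remaining coordinates stay inside the prescribed scattered intervals. This is where the sublattice property (Proposition~\ref{hammocksublat}) and the full-projection axiom of a hammock are used in an essential way; once these convexity/fullness verifications are in place, the reduction to one-dimensional scattered condensation via Proposition~\ref{stdproj} is immediate.
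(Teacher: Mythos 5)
Your proof is correct and follows essentially the same route the paper has in mind when it calls the result ``an immediate consequence of Proposition~\ref{stdproj}'': you realise the interval $[\bar x\wedge\bar y,\bar x\vee\bar y]$ as a box with projections $[x_i\wedge y_i,x_i\vee y_i]$ (the clamping $\bar z=(\bar v\vee(\bar x\wedge\bar y))\wedge(\bar x\vee\bar y)$ together with the full-projection axiom handles surjectivity correctly, and the sublattice property of $H$ from Proposition~\ref{hammocksublat} keeps $\bar z$ in $H$), apply Proposition~\ref{stdproj}, and translate back via Definitions~\ref{sccondlinorder} and~\ref{sccondhammocks}. The verification that $c_S(\bar x)$ is a box by checking convexity of each projection with the analogous lattice clamp is also sound; you have simply supplied the details the paper elides.
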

In view of the above proposition, Theorem \ref{Hausdorffstrthm} generalizes to hammocks.  
\begin{prop}
The set $c_S(H):=\{c_S(\bar x)\mid\bar x\in H\}$ equipped with a partial order structure $c_S(\bar x)\leq c_S(\bar y)$ if $\bar x'\leq\bar y'$ for some $\bar x'\in c_S(\bar x)$ and $\bar y'\in c_S(\bar y)$ is a dense hammock.
\end{prop}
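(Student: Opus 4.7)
The plan is to realize $c_S(H)$ as a hammock inside the clat $\prod_{i\in[n]} c_S(L_i)$ via the natural map $\phi\colon c_S(H) \to \prod_i c_S(L_i)$ given by $\phi(c_S(\bar x)) := (c_S(x_1), \dots, c_S(x_n))$. By Proposition \ref{fintypeproj}, $\phi$ is well-defined and injective. The first task is to show that the stated relation on $c_S(H)$ coincides, under $\phi$, with the product order on $\prod_i c_S(L_i)$. The easy direction is immediate, since $\bar x' \leq \bar y'$ forces $c_S(x'_i) \leq c_S(y'_i)$ componentwise. For the converse, suppose $c_S(x_i) \leq c_S(y_i)$ for every $i$; I would set $\bar x' := \bar x \wedge \bar y$ and $\bar y' := \bar x \vee \bar y$, which exist in $H$ thanks to Proposition \ref{hammocksublat}. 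A brief case analysis, in which $c_S(x_i) < c_S(y_i)$ forces $x_i < y_i$ strictly while $c_S(x_i) = c_S(y_i)$ guarantees that $[\min(x_i, y_i), \max(x_i, y_i)]$ is scattered, shows that the boxes $[\bar x', \bar x]$ and $[\bar y, \bar y']$ have scattered projections, hence are themselves scattered by Proposition \ref{stdproj}. This delivers $\bar x' \in c_S(\bar x)$ and $\bar y' \in c_S(\bar y)$ with $\bar x' \leq \bar y'$. In particular, the stated relation is a partial order, transported along $\phi$ from the product order.

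The next step is to verify that $\phi(c_S(H))$ is a hammock in $\prod_i c_S(L_i)$ using the subclat decomposition $T_j := \phi(\{c_S(\bar x) \mid \bar x \in S_j\})$ for $j \in [k]$. An elementary argument shows that the image of an interval under scattered condensation is an interval, so $T_j = \prod_i c_S(\pi_i(S_j))$ is a genuine subclat of the ambient clat. Conditions $(1)$, $(2)$ and $(3)$ of Definition \ref{hammocks} for $\lan T_j \mid j \in [k]\ran$ transfer essentially verbatim from the corresponding properties of $\lan S_j \mid j \in [k]\ran$; for condition $(2)$, note that $\pi_i(\phi(c_S(H))) = c_S(L_i)$. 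The conditions in $(4)$ require more care, but the key observation is that $c_S(a) < c_S(b)$ strictly forces $a < b$ strictly in $L$, so any element witnessing a violation of $(4)$ for $\lan T_j \mid j \in [k]\ran$ lifts to a witness of the corresponding violation for $\lan S_j \mid j \in [k]\ran$, contradicting that $H$ is a hammock.

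Finally, density follows from the earlier proposition characterizing density of a hammock via its projections (stated just after Remark \ref{hammockbdd}), combined with the standard fact recorded after Definition \ref{sccondlinorder} that the scattered condensation of any linear order is dense. Indeed, the $i$-th projection of $\phi(c_S(H))$ is all of $c_S(L_i)$, which is dense, so $c_S(H)$ is a dense hammock.

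The main obstacle I anticipate is the $(\Leftarrow)$ direction of the identification of the two orders, since one must manufacture explicit representatives $\bar x' \sim \bar x$ and $\bar y' \sim \bar y$ with $\bar x' \leq \bar y'$. The availability of meets and joins in $H$ (Proposition \ref{hammocksublat}) together with the projection-level characterization of scatteredness for boxes (Proposition \ref{stdproj}) is precisely what makes the canonical choice $\bar x' = \bar x \wedge \bar y$, $\bar y' = \bar x \vee \bar y$ work.
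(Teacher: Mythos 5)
Your proof is correct, and it fills in an argument the paper does not actually supply: the proposition is stated with no proof, and the only hint in the text is a remark that the partial order on $c_S(H)$ can be obtained from the congruence on the modular lattice $H$ associated to the subclass of scattered lattices (citing Prest). You instead verify the hammock axioms directly for the image of $c_S(H)$ inside the clat $\prod_i c_S(L_i)$. The two routes buy different things: the lattice-congruence argument dispatches well-definedness of the quotient order abstractly (via closure of scattered modular lattices under sublattices and quotients), but says nothing by itself about the sub-$n$-clat decomposition or density; your approach is more elementary and yields the full statement in one pass. Your hard direction for identifying the defined relation with the product order — taking $\bar x' = \bar x \wedge \bar y$ and $\bar y' = \bar x \vee \bar y$ and using Proposition~\ref{stdproj} to conclude that $[\bar x',\bar x]$ and $[\bar y,\bar y']$ are scattered — is exactly the clean way to do it, and the key observation that strict order on condensation classes lifts to strict order on representatives correctly transfers condition~$(4)$. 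One small remark: you do not actually need $c_S(\pi_i(S_j))$ to be an interval for $T_j$ to be a sub-$n$-clat; the definition of sub-$n$-clat only requires $T_j = \prod_i \pi_i(T_j)$, which follows directly from $T_j = \prod_i c_S(\pi_i(S_j))$ (pick representatives $b_i \in \pi_i(S_j)$ with $c_S(b_i)=c_S(a_i)$ and use that $S_j$ is a clat). The interval observation is harmless but superfluous here.
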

The partial order structure on $c_S(H)$ in the above proposition can also be obtained by considering the congruence relation corresponding to the class of all scattered modular lattices in the sense of \cite[\S~10.1]{prest2003model}. Since this class is closed under sublattices and quotients, we can apply \cite[Lemma~10.3]{prest2003model} to the modular lattice $ H $ to see that $ x \approx y \iff x \in c_S(y) $ indeed defines a congruence relation on $ H $.

We are interested in those hammocks which can be described using only a finite amount of scattered data.
\begin{defn}
\label{fintypedefn}
Say that a hammock $H$ is \emph{of finite type} if the set of order types(=order-isomorphism classes) of elements of $c_S(H)$ is finite.
\end{defn}

If $H$ is a finitely presented hammock then $c_S(H)=\mathbf{1}$, and hence $H$ is of finite type. In fact the conclusion can be generalized to all finite description clats.

\begin{lem}\label{lofdfintype}
If $C$ is a finite description $n$-clat then $C$ is of finite type.
\end{lem}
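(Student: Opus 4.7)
The plan is to reduce, via the product structure, to a statement about a single linear order in $\LOfd$, and then prove that statement by structural induction on the construction of $L \in \LOfd$. Write $C = \prod_{i \in [n]} L_i$ with each $L_i \in \LOfd$; without loss every $L_i$ is nonempty, else $c_S(C) = \emptyset$. Note that $C$ is itself an $n$-hammock (trivial subclat decomposition with $k = 1$ and $S_1 = C$), so Proposition~\ref{fintypeproj} applies. Combining the biconditional there with Proposition~\ref{stdproj} applied to the box $[\bar x \wedge \bar y, \bar x \vee \bar y] = \prod_i [x_i \wedge y_i, x_i \vee y_i]$, I would conclude that
\[
c_S(\bar x) \;=\; \prod_{i \in [n]} c_S(x_i)
\]
as a sub-clat of $C$. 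Hence the order type of $c_S(\bar x)$ is the product of the order types of the $c_S(x_i)$, and the number of isomorphism classes in $c_S(C)$ is at most $\prod_{i \in [n]} |\mathcal T(L_i)|$, where $\mathcal T(L) := \{[c_S(x)] : x \in L\}$ denotes the set of order types of $c_S$-classes of $L$. It therefore suffices to show that $\mathcal T(L)$ is finite for every $L \in \LOfd$.

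I would prove this by structural induction on the construction of $L \in \LOfd$. The base cases $L \in \{\mbf 0, \mbf 1\}$ are immediate. For $L = L_1 + L_2$, every $c_S$-class of $L$ is either a $c_S$-class entirely inside $L_1$, or entirely inside $L_2$, or the dot-sum of the top class of $L_1$ with the bottom class of $L_2$ (this ``merged'' class occurs only when both boundary classes exist and concatenate into a scattered interval); thus $|\mathcal T(L_1 + L_2)| \leq |\mathcal T(L_1)| + |\mathcal T(L_2)| + 1$, which is finite by the inductive hypothesis.

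For $L = \omega \cdot L'$, writing elements of $L$ as pairs $(a, n)$ with $a \in L'$ and $n \in \omega$, the interval $[(a,n),(a',n')]$ for $a \le a'$ decomposes, up to a finite prefix and suffix, as $\sum_{a < b < a'} \omega$. By Proposition~\ref{prop: SCattered sum of linear orders} this is scattered if and only if the open interval $(a, a')$ in $L'$ is scattered, i.e., if and only if $a' \in c_S(a)$. Hence $c_S((a,n))$ is precisely the $\omega$-fold lift of $c_S(a)$, with order type $\omega \cdot |c_S(a)|$, and so $\mathcal T(\omega \cdot L') = \{\omega \cdot \tau : \tau \in \mathcal T(L')\}$, finite by induction; the case $\omega^* \cdot L'$ is symmetric. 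For the shuffle $\Xi(L_1, \dots, L_m) = \sum_{q \in \mathbb Q} L_{f(q)}$, if $x, y$ lie in distinct blocks at positions $q < q'$ then $[x,y]$ contains $\sum_{q < r < q'} L_{f(r)}$. The indexing set $(q,q')$ is a dense suborder of $\mathbb Q$, and since every fiber $f^{-1}(i)$ is dense, some nonempty $L_i$ contributes densely many copies; Proposition~\ref{prop: SCattered sum of linear orders} then forces this sub-order to be non-scattered. Consequently no $c_S$-class of the shuffle can span two distinct blocks, so every $c_S$-class of the shuffle is a $c_S$-class of some $L_i$, giving $\mathcal T(\Xi(L_1,\dots,L_m)) \subseteq \bigcup_i \mathcal T(L_i)$.

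The main obstacle will be keeping the bookkeeping clean at the boundaries in the sum and shuffle cases---pinning down exactly when the ``merged'' class in $L_1 + L_2$ is actually a new type and handling the degenerate shuffle situation where all $L_i$ are empty. Neither presents conceptual difficulty, but both require a careful application of the scattered-sum criterion (Proposition~\ref{prop: SCattered sum of linear orders}) together with the convexity of $c_S$-classes.
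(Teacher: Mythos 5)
Your reduction to the one-dimensional statement via Proposition~\ref{fintypeproj} and the subsequent structural induction over the $\LOfd$-constructors is exactly the route the paper takes; the paper simply packages the induction as Proposition~\ref{ordtypecomplexity} and leaves its verification as ``a straightforward exercise,'' which your case analysis supplies. You may note in passing that your additive $+1$ in the sum case is in fact needed (take $L_1 = \mathbf 2 \cdot \eta + \mathbf 2$ and $L_2 = \mathbf 3 + \mathbf 3 \cdot \eta$, each with a single class type, while $L_1 + L_2$ has the three types $\mathbf 2, \mathbf 5, \mathbf 3$), so the bound $m_S(L+L') \le m_S(L) + m_S(L')$ as stated in Proposition~\ref{ordtypecomplexity} is off by one, though this has no bearing on the finiteness conclusion of the lemma.
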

In view of Proposition \ref{fintypeproj}, it is enough to show that if $L\in\LOfd$ then $L$ is of finite type--the proof of this statement is immediate once the next proposition is established. For a linear order $L$ of finite type, let $m_S(L)$ denote the number of order types of maximal scattered intervals.

\begin{prop}\label{ordtypecomplexity}
Let $L,L',L_1,\cdots,L_p$ be linear orders of finite type. Then the following hold.
\begin{itemize}
    \item $m_S(\mathbf 0)=m_S(\mathbf 1)=1$.
    \item $m_S(L+L')\leq m_S(L)+m_S(L')$.
    \item $m_S(L\cdot\omega)\leq m_S(L)+1$ and $m_S(L\cdot\omega^*)\leq m_S(L)+1$.
    \item $m_S(\Xi(L_1,\cdots,L_p))\leq\sum_{j\in[p]}m_S(L_j)$.
\end{itemize}
\end{prop}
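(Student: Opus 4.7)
My plan is to analyse, in each of the four clauses, how the maximal scattered intervals (MSIs, i.e.\ scattered condensation classes) of the constructed linear order arise from those of the inputs. Clause (1) is immediate: both $\mathbf 0$ and $\mathbf 1$ are themselves scattered, so each yields a single MSI up to order isomorphism.

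For clause (2) on $L + L'$, I would partition the MSIs of $L + L'$ into three families: those contained entirely in $L$, those contained entirely in $L'$, and at most one \emph{spanning} MSI straddling the junction. A spanning MSI exists precisely when $L$ admits a cofinal scattered suffix $L_\top$ (necessarily an MSI of $L$) and $L'$ admits an initial scattered prefix $L'_\bot$ (an MSI of $L'$); it is then unique, of order type $L_\top + L'_\bot$, and absorbs both $L_\top$ and $L'_\bot$. The remaining MSIs in the first and second families are, respectively, the MSIs of $L$ other than $L_\top$ and the MSIs of $L'$ other than $L'_\bot$. Counting distinct order types across the three families then yields the claimed bound.

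For clause (3) on $L \cdot \omega = \sum_{i \in \omega} L$, I would first dispose of the case where $L$ is scattered: then by Proposition~\ref{prop: SCattered sum of linear orders} the whole sum is scattered, so $m_S(L \cdot \omega) = 1$. Otherwise, each MSI of $L \cdot \omega$ is either internal to some copy of $L$ (contributing an MSI order type of $L$) or spans two consecutive copies via a junction MSI of order type $L_\top + L_\bot$; since this junction order type is the same at every junction, it adds at most one new order type to the tally. The case $L \cdot \omega^*$ is dual. For clause (4) on $\Xi(L_1, \dots, L_p) = \sum_{q \in \eta} L_{f(q)}$, I would argue that no MSI spans across distinct $\eta$-indices: for any $q_0 < q_1$ in $\eta$, the portion $\sum_{q_0 < q < q_1} L_{f(q)}$ contains a copy of $\eta$ (since each fibre of $f$ is dense in $\eta$) and is therefore non-scattered by Proposition~\ref{prop: SCattered sum of linear orders}. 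Hence every MSI of $\Xi$ is confined to a single block $L_{f(q)}$ and is an MSI of some $L_i$, yielding $m_S(\Xi) \le \sum_i m_S(L_i)$.

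The main obstacle is the counting in clause (2). In the case where both $L_\top$'s order type is duplicated among the MSIs of $L$ and $L'_\bot$'s order type is duplicated among those of $L'$, a naive union bound across the three families would exceed $m_S(L) + m_S(L')$ by one, since the spanning MSI can contribute a genuinely new order type while neither absorption reduces the count from $L$ or from $L'$. Resolving this requires exploiting structural consequences of duplication --- for instance, the fact that duplication of $L_\top$'s order type in $L$ forces $c_S(L)$ to be a nontrivial dense linear order --- in order to extract a forced overlap between the MSI order types of $L$ and of $L'$, mirroring the $4$-signed rooted tree bookkeeping of the analogous result in \cite{agrawal2022euclidean}. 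This is the only step that requires substantive argument; the remaining three clauses reduce to the direct MSI analysis sketched above.
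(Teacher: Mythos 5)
Your three-part decomposition of the maximal scattered intervals of $L + L'$ --- those internal to $L$, those internal to $L'$, and at most one class of order type $L_{\top} + L'_{\bot}$ spanning the junction --- is the right framework, and your arguments for clauses (1), (3) and (4) are correct. (The paper itself gives no proof, declaring the proposition a routine exercise.)

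The obstacle you isolate in clause (2), however, cannot be closed in the direction you propose, because the bound stated there is in fact too tight. Take $L := \Xi(\omega) + \omega$ and $L' := \omega^* + \Xi(\omega^*)$; both lie in $\LOfd$ and are of finite type. Every maximal scattered interval of $L$ has order type $\omega$, so $m_S(L) = 1$ and $L_{\top} \cong \omega$; dually $m_S(L') = 1$ with $L'_{\bot} \cong \omega^*$. In $L + L'$ the spanning class has order type $\omega + \omega^*$, which is bounded on both sides and hence distinct from $\omega$ and from $\omega^*$, so $m_S(L + L') = 3 > m_S(L) + m_S(L') = 2$. There is no ``forced overlap'' between the MSI order types of $L$ and of $L'$: duplication of $L_{\top}$'s type inside $L$ constrains $c_S(L)$, as you observe, but imposes nothing about $L'$. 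What your partition does establish, and what is actually true, is $m_S(L + L') \leq m_S(L) + m_S(L') + 1$, and this relaxation is harmless for Lemma~\ref{lofdfintype}, the only place Proposition~\ref{ordtypecomplexity} is used: one needs only that each generating operation of $\LOfd$ increases $m_S$ by a finite amount, so that a linear order built by finitely many applications has finitely many maximal scattered interval types. So do not spend effort trying to extract the overlap --- correct the constant and move on.
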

The proof of this result is a straightforward exercise.

\begin{defn}
\label{msbdefn}
A box $\ii$ in $H$ is said to be a \emph{maximal scattered box} (\emph{MSB}, for short) if it is maximal under inclusion among all scattered boxes in $H$.
\end{defn}

\begin{rem}
\label{classes are msbs}
    Let $\ii$ be a scattered box in $H$ containing $\bar x$. Since $\ii$ is a distributive lattice, for any $\bar y \in \ii$ the interval $[\bar x \wedge \bar y, \bar x \vee \bar y]$ is contained in $\ii$, and therefore scattered. Hence $\ii \subseteq c_S(\bar x)$. As a consequence, $c_S(\bar x)$ is an MSB for each $\bar x\in H$.
\end{rem}

Proposition \ref{stdproj} yields a sufficient condition for a box to be an MSB.
\begin{cor}\label{stdprojcor}
If $ \ii $ is a box such that $ \pi_i(\ii) $ is a maximal scattered interval in $ L_i $ for each $ i\in [n] $, then $ \ii $ is a maximal scattered box.
\end{cor}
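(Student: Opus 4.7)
The plan is to combine Proposition \ref{stdproj} with a straightforward contradiction argument that exploits the fact that a box is determined by its projections. The key observation is that if two boxes in the same hammock have the same projections, they are equal as subsets of $H$, since the definition of a box forces $\ii = \bigotimes_{i\in[n]}\pi_i(\ii)$.

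First, I would establish that $\ii$ is scattered. Since each $\pi_i(\ii)$ is a maximal scattered interval, it is in particular scattered, so all projections of $\ii$ are scattered; by Proposition \ref{stdproj}, $\ii$ itself is scattered.

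Next, for the maximality, I would assume for contradiction that there is a scattered box $\mathcal{J}$ in $H$ with $\ii \subsetneq \mathcal{J}$. Since both $\ii$ and $\mathcal{J}$ are boxes, we have $\ii = \bigotimes_{i\in[n]}\pi_i(\ii)$ and $\mathcal{J} = \bigotimes_{i\in[n]}\pi_i(\mathcal{J})$, so $\pi_i(\ii) \subseteq \pi_i(\mathcal{J})$ for all $i \in [n]$, with the inclusion being strict for at least one index $i_0$ (else the two boxes would coincide). Now $\pi_{i_0}(\mathcal{J})$ is an interval in $L_{i_0}$ (by the definition of a box) and is scattered (again by Proposition \ref{stdproj}, applied to the scattered box $\mathcal{J}$). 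Thus $\pi_{i_0}(\ii)$ is properly contained in a scattered interval of $L_{i_0}$, contradicting the maximality of $\pi_{i_0}(\ii)$ among scattered intervals.

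There is no serious obstacle here; the entire argument hinges on having the right reformulation of ``box'' in terms of projections so that maximality transfers coordinatewise. The only subtlety worth flagging is the distinction between ``maximal scattered box'' and ``box whose projections are maximal scattered intervals''---Corollary \ref{stdprojcor} gives only one direction, and the converse fails in general because a maximal scattered box can have a projection that is properly contained in a larger scattered interval of $L_i$ whenever the enlargement is incompatible with the box structure imposed by $H$.
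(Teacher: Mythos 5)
Your proof is correct and follows essentially the same route as the paper's: a contradiction argument that uses the fact that a box is determined by its projections to transfer the proper inclusion $\ii\subsetneq\mathcal J$ to some coordinate $i_0$, and then invokes Proposition~\ref{stdproj} together with the maximality of $\pi_{i_0}(\ii)$. The only cosmetic difference is that the paper derives that $\pi_{i_0}(\mathcal J)$ is \emph{not} scattered (from maximality of $\pi_{i_0}(\ii)$) and then applies Proposition~\ref{stdproj} to contradict the scatteredness of $\mathcal J$, whereas you apply Proposition~\ref{stdproj} first to deduce that $\pi_{i_0}(\mathcal J)$ is scattered and then contradict maximality---the same logical content, reordered.
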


\begin{proof}
Suppose not. Let $\ii'$ be a scattered box such that $\ii \subsetneq \ii'$. Since a box is completely determined by its projections, choose $i \in [n]$ such that $\pi_i(\ii)\subsetneq \pi_i(\ii')$. Since $\pi_i(\ii)$ is a maximal scattered interval, $\pi_i(\ii')$ is not scattered. Then by Proposition \ref{stdproj}, $\ii'$ is not scattered as well, which is a contradiction.
\end{proof}

However, the converse of the above result fails as the following example demonstrates.
\begin{example}
Let orders $A, B, C, D$ be copies of $\omega^*,\omega,\zeta,\eta$ respectively. Consider the $2$-clat $(A+B) \times (C+D)$. Fix a hammock $H$ with subclat decomposition $\lan A \times (C + D), (A+B) \times D\ran$. Then $A \times C$ is a maximal scattered box in $H$ but $A$ is not a maximal scattered interval in $(A+B) \cong \zeta$.
\end{example}

The next result is what we promised at the beginning of the section.

\begin{lem}\label{hammockfintype}
    If $H$ is a finite description $ n $-hammock then $H$ is of finite type.
\end{lem}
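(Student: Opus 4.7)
The plan is to combine Remark~\ref{classes are msbs} and Proposition~\ref{fintypeproj}, which together say that each MSB of $H$ is a scattered condensation class of the form
\[ c_S(\bar x) = H \cap \prod_{i \in [n]} c_S(x_i) = \bigcup_{j \in [k]} \prod_{i \in [n]} \bigl(\pi_i(S_j) \cap c_S(x_i)\bigr), \]
with the finiteness of the endpoint data of the finitely many projected intervals $\pi_i(S_j)$. Since each $L_i \in \LOfd$, Proposition~\ref{ordtypecomplexity} gives that $L_i$ is of finite type as a linear order, and so has only finitely many order-isomorphism types among its condensation classes. Thus it will suffice to attach to each MSB a finite data packet that determines its order type.

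For each $i \in [n]$, let $E_i \subseteq L_i$ be the (at most $2k$) endpoints of the intervals $\pi_i(S_1), \dots, \pi_i(S_k)$ and let $[E_i] \subseteq c_S(L_i)$ be the resulting finite set of endpoint condensation classes. Call the MSB $c_S(\bar x)$ \emph{generic} if $c_S(x_i) \notin [E_i]$ for every $i$, and \emph{exceptional} otherwise. In the generic case, each class $c_S(x_i)$ lies entirely inside or entirely outside each $\pi_i(S_j)$, so $\pi_i(S_j) \cap c_S(x_i)$ is either empty or all of $c_S(x_i)$; picking $j_0$ with $\bar x \in S_{j_0}$ then forces $c_S(x_i) \subseteq \pi_i(S_{j_0})$ for every $i$, hence $\prod_i c_S(x_i) \subseteq S_{j_0} \subseteq H$, so the MSB collapses to $\prod_i c_S(x_i)$. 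Its order type is then determined by the order types of its factors, giving only finitely many possibilities.

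In the exceptional case, let $I \subseteq [n]$ be the non-empty set of coordinates with $c_S(x_i) \in [E_i]$. For $i \in I$, $c_S(x_i)$ is one of finitely many specific classes, and each intersection $\pi_i(S_j) \cap c_S(x_i)$ is a fixed scattered interval. For $i \notin I$, the analysis of the generic case applies coordinate-wise, leaving only finitely many choices, namely the region of $c_S(L_i) \setminus [E_i]$ containing $c_S(x_i)$ together with the order type of $c_S(x_i)$. Aggregating over the finitely many choices of $I$ and the finitely many data packets per $I$ yields finitely many order types of MSBs, so $H$ is of finite type.

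The main technical obstacle is verifying that two MSBs with the same finite data packet are genuinely order-isomorphic; that is, that the coordinate-wise isomorphisms $\pi_i(S_j) \cap c_S(x_i) \to \pi_i(S_j) \cap c_S(x_i')$ assemble into a global isomorphism of the unions $\bigcup_j \prod_i (\pi_i(S_j) \cap c_S(x_i))$. I expect this to follow from the distributive lattice structure of $H$ (Proposition~\ref{hammocksublat}) together with a Proposition~\ref{sunflower}-style argument controlling the intersection pattern of the sub-boxes.
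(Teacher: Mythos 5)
Your "generic case" claim—that a condensation class $c_S(x_i)$ which contains no endpoint of any $\pi_i(S_j)$ must lie entirely inside or outside each $\pi_i(S_j)$—is false. The issue is that an interval $\pi_i(S_j)$ can terminate at a Dedekind \emph{gap} of $L_i$ (a cut with neither a supremum nor an infimum realized in $L_i$), and scattered orders like $\omega + \omega^*$ have exactly such gaps in their interior. When this happens, $\pi_i(S_j)$ contributes no element to your set $E_i$, yet still splits $c_S(x_i)$. Concretely, take $n=2$, $k=2$, $L_1 = L_2 = \eta + (\omega + \omega^*) + \eta \in \LOfd$, and write $C_i = \omega + \omega^*$ for the (unique nontrivial) scattered condensation class of $L_i$. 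Set $\pi_1(S_1) = \eta + C_1 + Q$ and $\pi_1(S_2) = C_{1,\omega^*} + \eta$ (the suffix beginning at the $\omega^*$-half of $C_1$), where $Q$ is a bounded prefix of the right-hand $\eta$, and mirror the roles in coordinate $2$: $\pi_2(S_1) = \eta + C_{2,\omega}$, $\pi_2(S_2) = P + C_2 + \eta$ with $P$ a suffix of the left-hand $\eta$. One checks the hammock axioms hold and $E_1, E_2 \subseteq \eta$, so the MSB through $C_1 \times C_2$ is "generic" by your test; yet it equals the $L$-shaped set $(C_1 \times C_{2,\omega}) \cup (C_{1,\omega^*} \times C_2)$, not the full product $C_1 \times C_2$. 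Your inference that $\bar x \in S_{j_0}$ forces $c_S(x_i) \subseteq \pi_i(S_{j_0})$ for every $i$ breaks exactly here: $x_i \in \pi_i(S_{j_0})$ does not propagate to $c_S(x_i) \subseteq \pi_i(S_{j_0})$ when $\pi_i(S_{j_0})$ ends inside the class at a gap.

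The paper sidesteps this by not trying to align condensation classes with the subclat structure at all. It first refines each $\pi_i(H)$ into a \emph{common partition} $\langle I^i_t \mid t \in [N]\rangle$ of intervals each contained in or disjoint from every $\pi_i(S_j)$ \emph{by construction} (effectively the profile classes $\{x : \{j : x \in \pi_i(S_j)\} = \text{const}\}$, which are intervals by the shingling axioms). The cells $\ii_f = \bigotimes_i I^i_{f(i)}$ are then full $n$-clats when nonempty, and every MSB decomposes into finitely many pieces $A \cap \ii_f$, each an MSB in a clat, where the clat case (Lemma~\ref{lofdfintype}) applies. The key difference is that the refinement intervals $I^i_t$ are allowed to cut \emph{through} condensation classes, so the gap issue never arises. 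To repair your argument you would either need to track cuts and not just realized endpoints (effectively replacing $E_i$ with a finite set of Dedekind cuts, which is delicate), or adopt the paper's refinement first and then condense—at which point you have the paper's proof.
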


\begin{proof}
    We use the subclat decomposition $ \lan S_j \mid j \in [k] \ran $ for $ H $.
    For each $ i \in [n] $, divide $ \pi_i(H) $ into intervals $ \lan I^i_t \mid t \in [N] \ran $ such that 
     for every $ t \in [N] $ and $ j \in [k] $, either $ I^i_t \cap \pi_i(S_j) = \emptyset $ or $ I^i_t \subseteq \pi_i(S_j) $.
    (We are assuming that $ N $ does not depend upon $ i $ since we allow empty intervals.)
    Each $I^i_t$ is then a finite description linear order by Proposition \ref{fd intervals}.

    For each function $ f \colon [n] \to [N] $, let $ \ii_f \defeq \bigotimes_{i \in [n]} I^i_{f(i)} $.
    It is clear that $ \{\ii_f \mid f \colon [n] \to [N]\} $ partitions $ H $.
    The claim below shows that each $ \ii_f $ is a finite description $ n $-clat, and hence of finite type by Proposition \ref{lofdfintype}. 
    
\noindent{\textbf{Claim:}} If $ f \colon [n] \to [N] $ is such that $ \ii_f \not= \emptyset $ then $ \ii_f = \prod_{i \in [n]} I^i_{f(i)} $.

\noindent{\textit{Proof of the claim.}} Let $ \bar x \in \ii_f $. Let $ j \in [k] $ be such that $ \bar x \in S_j $. Then for each $ i \in [n] $, $ I^i_{f(i)} \cap \pi_i(S_j) \not= \emptyset $, and hence $ I^i_{f(i)} \subseteq \pi_i(S_j) $. Therefore, $ \prod_{i \in [n]} I^i_{f(i)} \subseteq S_j \subseteq H $.\hfill$\vartriangle$
    
    Let $ A $ be an \MSB in $ H $. 
    Then $ \{A \cap \ii_f \mid f \colon [n] \to [N]\} $ is a finite partition of $A$ into boxes, thanks to Remark \ref{box intersection}. 
    To complete the proof it suffices to show that, for each $ f $, the box $ A \cap \ii_f $ is either empty or an \MSB in $ \ii_f $.
     
    Let $ f \colon [n] \to [N] $ and $ \bar x \in H $ be such that $ \bar x \in A \cap \ii_f $. 
    Then $ c_S^f(\bar x) \defeq c_S(\bar x) \cap \ii_f $ is the \MSB in $ \ii_f $ containing $ \bar x $, where $c_S$ is the scattered condensation operator on the $ n $-clat $C$ containing $H$.
    Clearly $ c_S^f(\bar x) \subseteq A \cap \ii_f $.
    If $ \bar y \in (A \cap \ii_f) \setminus c_S^f(\bar x) $ then $ [\bar x \wedge \bar y, \bar x \vee \bar y] $ is not scattered.
    However, since $ A \cap \ii_f $ is a box, $ [\bar x \wedge \bar y, \bar x \vee \bar y] \subseteq A\cap \ii_f $, contradicting the hypothesis that $A$ is an MSB. 
    Therefore $ A \cap \ii_f = c_S^f(\bar x) $ is an \MSB in $ \ii_f $.
\end{proof}

\subsection{The density of an abstract hammock}
\label{subsection: density}
\label{density}
The Hausdorff rank of a linear order is a measure of its complexity. The notion of density, introduced in \cite[\S~3.2]{Schroer98hammocksfor} for linear orders and certain bounded posets, is a finer invariant than the Hausdorff rank. We recall some elementary facts about Hausdorff ranks and density before extending both these notions to all posets (Definitions \ref{posetdensity} and \ref{rank of poset}). Our main focus is on the computation of the density of a poset using not-necessarily-bounded linear suborders to set the ground for the computation of the density for unbounded abstract $n$-hammock posets in the next section. At the end we recall Schr\"oer's formula (Theorem \ref{Schroer formula}) for the hammock poset of a domestic string algebra.

The following definition is motivated from \cite[\S~10.2]{prest2003model}.
\begin{defn}[Hausdorff condensation]
Let $(L, <)$ be a linear order. 
Inductively define for each ordinal $\alpha$ an equivalence relation $\sim^L_\alpha$ on $L$ such that the following hold for every $x, y \in L$ with $ x \le y $.
\begin{enumerate}
    \item We have $x \sim^L_0 y$ if and only if $x = y$.
    \item For every ordinal $\alpha$, $ x \sim^L_{\alpha+1} y$ if and only if $[x, y]/\sim^L_\alpha$ is finite.
    \item For every limit ordinal $\alpha$, $x \sim^L_\alpha y$ if and only if $x \sim^L_\beta y$ for some $ \beta < \alpha $.
\end{enumerate}
For an ordinal $\alpha$ and an element $x$ in $L$, let $[x]_\alpha$ denote the $\sim^L_\alpha$ equivalence class of $x$ in $L$. When $L$ is clear from context, it is dropped from the notation $\sim^L_\alpha$. Clearly these equivalence classes are convex, so the order relation $<$ on $L$ naturally projects onto an order relation, denoted $<^\parens\alpha$, on $L^\parens \alpha := \{[x]_\alpha\mid x \in L\}$. In simple words, for elements $ x, y $ in $ (L, <) $, we say that $ x <^\parens \alpha y $ if $ x < y $ and $ x \not\sim_\alpha y $.

The least $\alpha$, if it exists, such that $L^\parens\alpha$ is finite is called the \emph{Hausdorff rank} of $L$ and we write $\HR(L) = \alpha$. If no such $\alpha$ exists then we write $\HR(L) = \infty$.
\end{defn}
\begin{rem}
    \label{rank is increasing}
    If $ L $ is a linear order and $ I \subseteq L $ then $ \HR(I) \le \HR(L) $.
\end{rem}
\begin{rem}\label{HRfindesc}
If $L\in\LOfp$ then $\HR(L)<\omega$. 
\end{rem}

\begin{prop}
\label{sim and interval rank}
    If $ L $ is a linear order, $ x,y\in L $ and $n\in\N^+$ then $ x <^\parens n y $ if and only if $ \HR((x, y)) \ge n $.
\end{prop}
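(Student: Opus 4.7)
The plan is to reduce the statement to a straightforward unpacking of definitions, once the following locality property of the Hausdorff condensation is established: for any convex subset $I \subseteq L$, any ordinal $\alpha$, and any $a, b \in I$, we have $a \sim^L_\alpha b$ if and only if $a \sim^I_\alpha b$. I would prove this by transfinite induction on $\alpha$. The base case $\alpha = 0$ is trivial because both reduce to equality. For the successor step, if $a \le b$ in $I$ then the interval $[a, b]$ is the same whether computed in $L$ or in $I$ by convexity, and by the inductive hypothesis $\sim^L_\alpha$ and $\sim^I_\alpha$ induce the same partition on $[a, b]$, so $[a,b]/\sim^L_\alpha$ and $[a,b]/\sim^I_\alpha$ coincide as sets and are simultaneously finite or infinite. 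The limit case is formal from the inductive hypothesis and the definition of $\sim_\alpha$ at limit ordinals.

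With this locality lemma in hand, I would unpack the main statement. By definition $x <^{(n)} y$ says $x < y$ and $x \not\sim^L_n y$; since $n \ge 1$, the latter unfolds to the assertion that $[x,y]/\sim^L_{n-1}$ is infinite, which by the locality lemma is the same as $[x,y]^{(n-1)}$ computed intrinsically in the interval $[x,y]$. Next I would observe that $[x,y]^{(n-1)}$ exceeds $(x,y)^{(n-1)}$ by at most two elements (namely the $\sim_{n-1}$-classes of the endpoints $x$ and $y$), so one is infinite if and only if the other is. Finally, since each quotient $M^{(\alpha+1)}$ is a further quotient of $M^{(\alpha)}$, the cardinalities of the $M^{(\alpha)}$ are non-increasing; hence $\HR(M) \ge n$ is equivalent to $M^{(n-1)}$ being infinite. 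Applying this with $M = (x,y)$ closes the loop.

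The only bookkeeping subtlety I foresee is the edge case in which $(x, y) = \emptyset$ (so $y$ is the immediate successor of $x$, or $x \ge y$): then $(x,y)^{(n-1)} = \emptyset$ is finite, giving $\HR((x,y)) = 0 < n$, which consistently matches the fact that $[x,y]/\sim_{n-1}$ reduces to at most two classes and hence $x \sim^L_n y$ holds, precluding $x <^{(n)} y$. No genuine obstacle arises; the argument is a careful but routine manipulation once the locality lemma above is in place.
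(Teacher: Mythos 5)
Your argument is correct and follows essentially the same path as the paper: unpack $x <^{(n)} y$ to the infinitude of $[x,y]/\sim_{n-1}$, observe that passing from $[x,y]$ to $(x,y)$ changes the quotient by at most the two endpoint classes, and relate the intrinsic $\sim^{(x,y)}_{n-1}$ to the ambient $\sim^L_{n-1}$ by a locality/absoluteness lemma. The one difference is that you reprove this locality lemma from scratch by transfinite induction, whereas the paper already records it as Proposition~\ref{absoluteness of sim} (citing Rosenstein) and invokes it silently; your version is a fine self-contained treatment, just redundant given what is already on hand.
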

\begin{proof}
    We have 
    $x <^\parens n y \iff [x, y]/\sim_{n-1} \text{ is infinite } \iff (x, y)/\sim_{n-1} \text{ is infinite } \iff \HR((x, y)) \ge n.$
    Here the first and third equivalences follow directly from the definitions. For the second equivalence, note that $ ([x, y]/\sim_{n-1}) \setminus ((x, y)/\sim_{n-1}) \subseteq \{[x]_{n-1}, [y]_{n-1}\} $.
\end{proof}

Let us now recall the definition of the density of a linear order from \cite[\S~3.2]{Schroer98hammocksfor}.

\begin{defn}
\label{linear density}
Suppose $ L $ is a linear order. If $ \HR(L) = \infty $, then define its \emph{density} as $d(L) \defeq \infty$.
Otherwise, if $ \HR(L) = \alpha $, let $p$ be an integer such that $L^\parens \alpha = \mbf {p+1}$. Then the \emph{density} of $L$, denoted $d(L)$ is defined to be $\omega\cdot \alpha + p$. In particular, $d(\mathbf 0)=-1$.
\end{defn}

\begin{rem}
\label{density is increasing}
If $ L $ is a linear order and  $I \subseteq L$ then $d(I) \le d(L)$.
\end{rem}

\begin{prop}
\cite[Exercise~5.12(1)]{rosenstein}
\label{absoluteness of sim}
    If $ I \subseteq L $ is convex then for every ordinal $ \alpha$,  $ \sim^I_\alpha = \sim^L_\alpha \cap (I \times I) $.
\end{prop}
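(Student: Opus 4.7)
The plan is to prove the equality $\sim^I_\alpha = \sim^L_\alpha \cap (I \times I)$ by transfinite induction on $\alpha$, leaning crucially on the fact that for a convex subset $I \subseteq L$ and any $x, y \in I$ with $x \le y$, the interval $[x,y]$ computed in $I$ coincides with the interval $[x,y]$ computed in $L$. This is really the only place where convexity enters the argument, and it drives the whole proof.

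For the base case $\alpha = 0$, both equivalence relations reduce to equality by definition, so there is nothing to do. For the successor step, suppose the claim holds for $\alpha$ and take $x, y \in I$ with $x \le y$. By the inductive hypothesis, the equivalence classes of $\sim^I_\alpha$ on $[x,y]_I$ are precisely the restrictions to $[x,y]_I$ of the equivalence classes of $\sim^L_\alpha$ on $[x,y]_L$; since $[x,y]_I = [x,y]_L$ by convexity, we obtain a canonical bijection between the quotients $[x,y]_I/\sim^I_\alpha$ and $[x,y]_L/\sim^L_\alpha$. In particular one is finite if and only if the other is, which by definition means $x \sim^I_{\alpha+1} y$ iff $x \sim^L_{\alpha+1} y$.

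The limit case $\alpha$ a limit ordinal is immediate from the recursive definition: $x \sim^I_\alpha y$ iff $x \sim^I_\beta y$ for some $\beta < \alpha$, which by the inductive hypothesis is equivalent to $x \sim^L_\beta y$ for some $\beta < \alpha$, i.e., $x \sim^L_\alpha y$.

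There is no real obstacle here; the entire content of the proposition is the observation that the Hausdorff condensation is defined via local (interval-based) data, and convexity ensures that this local data is computed identically in $I$ and in $L$. The only subtle point worth flagging is the successor step, where one must carefully note that the bijection between quotients is induced by the identity map on $[x,y]$ and uses the inductive hypothesis applied to each pair of points inside $[x,y]$, all of which lie in $I$ by convexity.
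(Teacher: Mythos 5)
Your transfinite induction is correct, and it is the standard argument: convexity guarantees that for $x \le y$ in $I$ the bounded interval $[x,y]$ is the same whether computed inside $I$ or inside $L$, so the successor step carries over via the inductive hypothesis, and the base and limit cases are immediate from the definitions. The paper does not supply its own proof of this proposition; it simply cites Rosenstein (Exercise~5.12(1)), so there is nothing to compare against beyond noting that your argument is the one a reader would be expected to reconstruct from that reference.
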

The next result is a well-known theorem due to Hausdorff \cite{Hausdorff1908}.
\begin{thm}
\label{rank infinity iff not scattered}
    A linear order $ L $ has $ \HR(L) = \infty $, or equivalently $ d(L) = \infty $, if and only if it is not scattered.
\end{thm}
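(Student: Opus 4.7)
The equivalence $\HR(L) = \infty \Leftrightarrow d(L) = \infty$ is immediate from Definition \ref{linear density}, so my plan focuses on proving $\HR(L) = \infty$ if and only if $L$ is not scattered.

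For the ``only if'' direction, I would first compute $\HR(\eta) = \infty$ directly. By transfinite induction on $\alpha$, I would show that $\sim^\eta_\alpha$ is the identity relation: the base case is by definition; at a successor, density of $\eta$ forces $[x,y]/\sim^\eta_\alpha = [x,y]$ to be infinite whenever $x < y$, so $x \not\sim^\eta_{\alpha+1} y$; the limit step is immediate from clause (3) of the definition. Hence $\eta^\parens\alpha = \eta$ is infinite for every ordinal $\alpha$, so $\HR(\eta) = \infty$. If $L$ is not scattered, then some subset $I \subseteq L$ is order-isomorphic to $\eta$, and Remark \ref{rank is increasing} (together with invariance of $\HR$ under order isomorphism) gives $\HR(L) \ge \HR(I) = \HR(\eta) = \infty$.

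For the ``if'' direction I would prove the contrapositive. Assume $L$ is scattered; the goal is to find an ordinal $\alpha$ with $L^\parens\alpha$ finite. Since $\lan \sim^L_\alpha \mid \alpha \in \mathbb{ON}\ran$ is a monotone nondecreasing chain of subsets of the set $L \times L$, it must stabilize at some ordinal $\alpha_0$; set $M := L^\parens{\alpha_0}$, so that $\sim_{\alpha_0} = \sim_{\alpha_0+1}$. I claim $M$ is finite. Suppose not. For any $X < Y$ in $M$, stabilization gives $X \not\sim_{\alpha_0+1} Y$, which by Definition \ref{linear density}(2) (equivalently by Proposition \ref{sim and interval rank}) means that the interval $[X,Y]$ in $M$ is infinite, hence $(X,Y) \neq \emptyset$. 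Thus $M$ is a dense linear order with at least two elements, and a standard back-and-forth construction of a countable dense subset without endpoints produces an embedding $\eta \hookrightarrow M$. Finally, I lift this embedding to $L$ by choosing a representative $x_X \in L$ with $[x_X]_{\alpha_0} = X$ for each class $X$ in the image; convexity of the equivalence classes implies that $X < Y$ in $M$ forces $x_X < x_Y$ in $L$, so the composed map $\eta \hookrightarrow L$ is an order embedding. This contradicts scatteredness of $L$, so $M$ is finite and $\HR(L) \le \alpha_0 < \infty$.

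The main obstacle is the ``if'' direction, specifically the passage from ``infiniteness of the stable condensation $M$'' to ``density of $M$''. The forward direction reduces to a clean transfinite induction plus Remark \ref{rank is increasing}, and within the reverse direction the stabilization of the chain of equivalence relations is a routine cardinality argument; the crux is noticing that the equation $\sim_{\alpha_0} = \sim_{\alpha_0+1}$ is precisely the hypothesis needed to turn ``every nontrivial interval in $M$ is infinite'' into genuine density, after which the extraction of $\eta$ and its lift to $L$ are standard.
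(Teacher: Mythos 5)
The paper does not prove this statement; it is cited as a classical result of Hausdorff (1908), so there is no ``paper proof'' to compare against. Your argument is a complete and correct rendering of the standard proof, and the structure is sound in both directions. The transfinite induction showing $\sim^\eta_\alpha$ is always the identity is right, and invoking Remark~\ref{rank is increasing} plus isomorphism-invariance of $\HR$ cleanly handles the ``not scattered $\Rightarrow$ $\HR = \infty$'' half. The contrapositive half is also correct: the chain $\lan\sim^L_\alpha\ran$ is nondecreasing (this uses convexity of the $\sim_\alpha$-classes, which the paper asserts and which you implicitly rely on both for monotonicity of the chain and for the final lift of $\eta$ from the quotient $M$ back to $L$); stabilization at some $\alpha_0$ is a cardinality argument; and $\sim_{\alpha_0}=\sim_{\alpha_0+1}$ forces $M=L^{(\alpha_0)}$, if infinite, to be dense, yielding an embedded copy of $\eta$. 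One small typographical slip: the clause ``$x\sim_{\alpha+1}y$ iff $[x,y]/\sim_\alpha$ finite'' lives in the (unlabelled) Hausdorff-condensation definition, not in Definition~\ref{linear density}, so the cross-reference ``Definition~\ref{linear density}(2)'' is misdirected; the mathematical content you invoke is nonetheless the intended one.
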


We note an immediate observation for the density of bounded linear order.
\begin{prop}\cite[Lemma~p.30]{Schroer98hammocksfor}\label{bddlindensitysuccessor}
Suppose $L$ is a bounded linear order with at least two elements. If $d(L)\neq\infty$ then $d(L)$ is a successor ordinal.
\end{prop}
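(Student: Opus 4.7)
The plan is to fix $\alpha = \HR(L)$, so that by hypothesis $\alpha$ is an ordinal and $L^{(\alpha)} \cong \mathbf{p+1}$ for some $p \in \N$, giving $d(L) = \omega\cdot\alpha + p$. Since $\omega\cdot\alpha + p$ is a successor ordinal precisely when $p \ge 1$ (for $p=0$ it is either $0$ or a limit), the whole task reduces to showing $p \ge 1$, i.e., that $L^{(\alpha)}$ has at least two elements.

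I would argue this by contradiction. Suppose $p = 0$, so $L^{(\alpha)}$ consists of a single $\sim_\alpha$-class, which forces all pairs of elements of $L$ to be $\sim_\alpha$-equivalent. Because $L$ is bounded, let $a = \min L$ and $b = \max L$; by the hypothesis $|L| \ge 2$ we have $a < b$, and by assumption $a \sim_\alpha b$. The contradiction now comes from case analysis on $\alpha$:
\begin{itemize}
    \item If $\alpha = 0$, then $a \sim_0 b$ forces $a = b$, contradicting $a<b$.
    \item If $\alpha = \gamma + 1$ is a successor, then $a \sim_{\gamma+1} b$ means $[a,b]/\sim_\gamma$ is finite; but $[a,b] = L$, so $L^{(\gamma)}$ is finite, contradicting the minimality of $\alpha = \HR(L)$.
    \item If $\alpha$ is a limit ordinal, then $a \sim_\alpha b$ yields $\beta < \alpha$ with $a \sim_\beta b$. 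Using that $\sim_\beta$-classes are convex (as noted immediately after the Hausdorff condensation definition), every $x \in L = [a,b]$ lies in the $\sim_\beta$-class of $a$, so $L^{(\beta)} = \mathbf{1}$ is finite, again contradicting the minimality of $\alpha$.
\end{itemize}

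In each case one reaches a contradiction, so $p \ge 1$ and therefore $d(L) = \omega\cdot\alpha + p$ is a successor, as desired. The only mildly delicate point is the limit ordinal case, where the step from $a \sim_\beta b$ to $L^{(\beta)} = \mathbf{1}$ relies on the convexity of the $\sim_\beta$-classes (equivalently, on Proposition \ref{absoluteness of sim} applied to the interval $[a,b] = L$). The remaining cases are immediate from the definitions, so I do not expect any serious obstacle.
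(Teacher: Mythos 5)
Your proof is correct. The paper itself does not prove this proposition but simply cites it from Schr\"oer's thesis, so there is no in-paper argument to compare against; nonetheless, your argument is the natural one and is complete. The reduction to showing $p\geq 1$ in the decomposition $d(L)=\omega\cdot\alpha+p$ is sound (for $p=0$ the density $\omega\cdot\alpha$ is either $0$ or a limit ordinal, and for $p\geq 1$ it is a successor), and each branch of the case analysis on $\alpha=\HR(L)$ correctly produces a contradiction. In particular you correctly exploit that $a=\min L$, $b=\max L$ exist by boundedness, that $[a,b]=L$, and that $\sim_\beta$-classes are convex intervals (which the paper notes immediately after the Hausdorff condensation definition). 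No gaps.
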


The density operation is behaves particularly nicely with dot sums of bounded discrete linear orders. The proofs of the next two results are obvious generalizations of similar results from \cite[\S~3.6]{Schroer98hammocksfor}.
\begin{prop}\label{prop: density of sum of orders}
    If $L_1$ and $L_2$ are two bounded discrete linear orders with $d(L_i) = \omega \cdot \alpha_i + m_i$ for $i \in \{1,2\}$, then 
    $$ d(L_1 \dot+ L_2) = \begin{cases}
        \omega \cdot \alpha_1 + m_1 \quad&\text{if } \alpha_1>\alpha_2;\\
        \omega \cdot \alpha_2 + m_2 \quad&\text{if } \alpha_2>\alpha_1;\\
        \omega \cdot \alpha_1 + (m_1+m_2) \quad &\text{if }\alpha_1=\alpha_2.
    \end{cases} $$
\end{prop}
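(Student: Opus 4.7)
The plan is to show that iterated Hausdorff condensation commutes with the dot sum operation, that is, for every ordinal $\alpha$, writing $L \defeq L_1 \dot+ L_2$, one has
\[
L^\parens\alpha \;\cong\; L_1^\parens\alpha \;\dot+\; L_2^\parens\alpha,
\]
and then to read off the density of $L$ by locating the stage at which $L^\parens\alpha$ first becomes finite. Granting this isomorphism, the three cases of the statement will reduce to arithmetic on the invariants $(\alpha_i, m_i)$.

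To establish the isomorphism, let $c$ denote the element of $L$ that is simultaneously the maximum of $L_1$ and the minimum of $L_2$. Since $L_1$ and $L_2$ are convex subsets of $L$, Proposition \ref{absoluteness of sim} yields $\sim_\alpha^{L} \cap (L_i \times L_i) \,=\, \sim_\alpha^{L_i}$ for $i \in \{1,2\}$, so each $L_i^\parens\alpha$ embeds isomorphically into $L^\parens\alpha$ as the image of $L_i$ under the quotient map. A routine transfinite induction on $\alpha$ (using the definition of $\sim_{\alpha+1}$ at successor stages and taking unions at limits) shows that every $\sim_\alpha^L$-class is convex in $L$. Consequently, the images of $L_1^\parens\alpha$ and $L_2^\parens\alpha$ in $L^\parens\alpha$ form, respectively, an initial and a final segment meeting exactly in the class of $c$; since $L = L_1 \cup L_2$, these images cover $L^\parens\alpha$, giving the claimed isomorphism.

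Taking cardinalities yields $\lvert L^\parens\alpha\rvert = \lvert L_1^\parens\alpha\rvert + \lvert L_2^\parens\alpha\rvert - 1$, so $L^\parens\alpha$ is finite if and only if both $L_i^\parens\alpha$ are, whence $\HR(L) = \max(\alpha_1, \alpha_2)$. To finish, I would use the elementary observation that once $L_i^\parens\alpha$ becomes finite, $L_i^\parens\beta = \mbf 1$ for every $\beta > \alpha$ (since then every interval is trivially finite modulo $\sim_\alpha$, forcing $x \sim_{\alpha+1} y$ for all $x, y$). The three cases are then immediate: if $\alpha_1 > \alpha_2$, then at stage $\alpha_1$ one has $\lvert L_1^\parens{\alpha_1}\rvert = m_1 + 1$ and $\lvert L_2^\parens{\alpha_1}\rvert = 1$, so $\lvert L^\parens{\alpha_1}\rvert = m_1 + 1$ and $d(L) = \omega \cdot \alpha_1 + m_1$; the case $\alpha_2 > \alpha_1$ is symmetric; and if $\alpha_1 = \alpha_2 = \alpha$, then $\lvert L^\parens\alpha\rvert = m_1 + m_2 + 1$, giving $d(L) = \omega \cdot \alpha + (m_1 + m_2)$.

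The main obstacle is verifying the initial-segment-plus-final-segment description of $L^\parens\alpha$ at every stage; in particular, at a limit ordinal one must check that no spurious merging occurs across $c$. This rests squarely on the convexity of $\sim_\alpha^L$-classes, and once that is in place the remainder of the argument is essentially ordinal arithmetic.
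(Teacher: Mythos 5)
Your proof is correct and follows the natural route, which is presumably what the reference to Schr\"oer's thesis resolves to: showing that the Hausdorff condensation commutes with dot sums via the convexity of $\sim_\alpha$-classes and Proposition~\ref{absoluteness of sim}, and then reading off $\HR(L)=\max(\alpha_1,\alpha_2)$ and the cardinality $|L^{\parens\alpha}|=|L_1^{\parens\alpha}|+|L_2^{\parens\alpha}|-1$ at that stage. The key verification — that the only $\sim_\alpha^L$-class straddling the join point $c$ is $[c]_\alpha^L$, so $L^{\parens\alpha}\cong L_1^{\parens\alpha}\dot+L_2^{\parens\alpha}$ with the glued point being precisely $[c]_\alpha^L$ — is handled correctly by the convexity argument, and the remaining case split is accurate ordinal/cardinal arithmetic.
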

\begin{cor}\label{order irrelevant}
    The density of a finite dot sum of bounded linear orders does not depend on the permutation of the orders. 
\end{cor}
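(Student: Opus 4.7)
The plan is to derive a closed-form expression for the density of a finite dot sum that is manifestly symmetric in its summands. Write $L := L_1 \dot+ L_2 \dot+ \cdots \dot+ L_n$.

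First I would dispose of the infinite case. If some $L_i$ has $d(L_i) = \infty$, then by Theorem \ref{rank infinity iff not scattered} the order $L_i$ is not scattered. Since $L_i$ sits inside $L$ as a convex subinterval regardless of the order in which the summands are listed, Remark \ref{density is increasing} forces $d(L) = \infty$ in every arrangement, and permutation-invariance is immediate.

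So we may assume each $d(L_i) = \omega \cdot \alpha_i + m_i$ is finite. Set $\alpha := \max_{i \in [n]} \alpha_i$ and $S := \{i \in [n] \mid \alpha_i = \alpha\}$; I claim
$$d(L_1 \dot+ L_2 \dot+ \cdots \dot+ L_n) = \omega \cdot \alpha + \sum_{i \in S} m_i,$$
an expression depending only on the multiset $\{d(L_1), \ldots, d(L_n)\}$, which will prove the corollary. The claim is established by a straightforward induction on $n$: the base case $n = 1$ is tautological. For the inductive step, one associates $L$ as $(L_1 \dot+ \cdots \dot+ L_{n-1}) \dot+ L_n$, applies the inductive hypothesis to the prefix to compute its density as $\omega \cdot \alpha' + \sum_{i \in S'} m_i$ (with $\alpha'$ and $S'$ the analogous data for the first $n-1$ summands), and then combines with $L_n$ via Proposition \ref{prop: density of sum of orders}. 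Splitting into the three cases $\alpha_n > \alpha'$, $\alpha_n < \alpha'$, and $\alpha_n = \alpha'$ shows in turn that $\alpha = \alpha_n$ with $S = \{n\}$, that $\alpha = \alpha'$ with $S = S'$, and that $\alpha = \alpha'$ with $S = S' \cup \{n\}$; in each case the two-variable formula of Proposition \ref{prop: density of sum of orders} delivers the predicted value.

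No substantial obstacle is expected—the corollary is essentially a bookkeeping exercise built on the two-variable formula of Proposition \ref{prop: density of sum of orders}. The only mild subtlety is that this two-variable formula is stated only for linear orders of finite density, which is why the $\infty$ case has to be dispatched first using monotonicity of density on convex subsets.
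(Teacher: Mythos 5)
Your proof is correct and it is essentially the route the paper intends: the claim is stated as an immediate corollary of Proposition \ref{prop: density of sum of orders}, and your inductive derivation of the symmetric closed-form formula (together with disposing of the $\infty$ case via Remark \ref{density is increasing} and Theorem \ref{rank infinity iff not scattered}) is precisely the bookkeeping the paper is alluding to when it calls both results obvious generalizations of Schr\"oer's arguments. One could note that the formula of Proposition \ref{prop: density of sum of orders} is stated for bounded \emph{discrete} linear orders, so the corollary is really about dot sums of those (and your induction correctly uses that a dot sum of bounded discrete orders is again bounded and discrete); but that is an ambiguity already present in the paper's phrasing, not a gap in your argument.
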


Here is a test for a lower bound on the density.
\begin{prop}
\label{witness for density}
For a linear order $L$, we have $d(L) > \omega \cdot \alpha + p $ if and only if there exists a chain $x_1 <^\parens \alpha x_2 <^\parens \alpha \dots <^\parens \alpha x_{p+2}$ in $L^{(\alpha)}$.
\end{prop}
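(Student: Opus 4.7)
The plan is to translate both sides into statements about the cardinality of $L^{(\alpha)}$ and then run a short case split on how $\HR(L)$ compares with $\alpha$. First I would observe that a chain $x_1 <^{(\alpha)} x_2 <^{(\alpha)} \cdots <^{(\alpha)} x_{p+2}$ in $L$ exists precisely when $|L^{(\alpha)}| \geq p+2$, since by definition $x <^{(\alpha)} y$ iff $[x]_\alpha < [y]_\alpha$ in $L^{(\alpha)}$, so such a chain is exactly the data of $p+2$ strictly increasing classes. The proposition therefore reduces to showing
\[ d(L) > \omega \cdot \alpha + p \iff |L^{(\alpha)}| \geq p+2. \]

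The key auxiliary observation is that, whenever $L^{(\gamma)}$ is finite, every interval of $L$ modulo $\sim^L_\gamma$ sits inside the finite linear order $L^{(\gamma)}$ and is hence finite; by the defining clause of $\sim^L_{\gamma+1}$ this forces $x \sim^L_{\gamma+1} y$ for all $x,y \in L$, so $|L^{(\delta)}| \leq 1$ for every $\delta > \gamma$. With this in hand the three-way case split is almost automatic. If $\HR(L) > \alpha$ (including $\HR(L) = \infty$), then $L^{(\alpha)}$ is infinite by minimality of $\HR(L)$ and $d(L) \geq \omega \cdot (\alpha + 1) > \omega\cdot\alpha + p$, so both sides hold. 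If $\HR(L) = \alpha$, then Definition \ref{linear density} gives $|L^{(\alpha)}| = d(L) - \omega\cdot\alpha + 1$, so $|L^{(\alpha)}| \geq p+2$ iff $d(L) \geq \omega\cdot\alpha + p + 1$, which is equivalent to $d(L) > \omega\cdot\alpha + p$. If $\HR(L) < \alpha$, the auxiliary observation forces $|L^{(\alpha)}| \leq 1 < p+2$, while simultaneously $d(L) < \omega \cdot (\HR(L)+1) \leq \omega \cdot \alpha \leq \omega\cdot\alpha + p$, so both sides fail.

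The only step requiring real attention is the auxiliary monotonicity claim; the rest is elementary bookkeeping. The degenerate situations (such as $L = \mathbf{0}$, where $d(L) = -1$ and $L^{(\alpha)} = \emptyset$, or the edge value $p = -1$) are absorbed into the case split without difficulty once one observes that $\HR(\mathbf{0}) = 0$.
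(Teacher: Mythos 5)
Your proposal is correct and is in substance the same argument as the paper's: both rest on the observation that a chain $x_1 <^{(\alpha)} \dots <^{(\alpha)} x_{p+2}$ is equivalent to having at least $p+2$ many $\sim_\alpha$-classes in $L$, and then compare $\HR(L)$ with $\alpha$. You merely package the two implications symmetrically via an intermediate cardinality biconditional and make explicit the small monotonicity fact (once $L^{(\gamma)}$ is finite, $L^{(\delta)}$ collapses to at most one class for $\delta>\gamma$) that the paper's proof leaves tacit when it simply asserts the class count in the forward direction.
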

\begin{proof}
    $(\Leftarrow)$
    Let $ x_1, x_2, \dots, x_{p+2} $ be as in the statement. 
    Notice that $ \HR(L) \ge \alpha $, for else we would have had $ [x_1]_\alpha = [x_2]_\alpha $ and $ x_1 \sim_\alpha x_2 $.
    If $ \HR(L) > \alpha $ then we are done, so assume that $ \HR(L) = \alpha $.
    Then $ [x_1]_\alpha < [x_2]_\alpha < \dots < [x_{p+2}]_\alpha $ are $ p+2 $ distinct elements in $ L^\parens \alpha $, so $ d(L) \ge \omega \cdot \alpha + (p + 1) $.

    $(\Rightarrow)$
    Suppose $ d(L) > \omega \cdot \alpha + p $. Then $ L^\parens \alpha $ contains at least $ (p + 2) $ many $\sim_\alpha$-equivalence classes, so choose elements $ x_1, x_2, \dots, x_{p+2} $ in $ L $ such that $ x_1 < x_2 < \dots < x_{p+2} $ and $[x_1]_\alpha \not= [x_2]_\alpha \not= \dots \not= [x_{p+2}]_\alpha$.
\end{proof}
The sequence of elements $\lan x_j \mid j \in [p + 2] \ran $ in the above proposition is said to \emph{witness} $ d(L) > \omega \cdot \alpha + p$.

The densities of bounded suborders of a linear order $ L $ determine the density of $ L $.

\begin{prop}
\label{bounded doesnt matter}
    For any linear order $ L $, $ d(L) = \sup\{d(L') \mid L' \subseteq L$ is a bounded linear suborder$\} $.
\end{prop}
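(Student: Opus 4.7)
The plan is to prove the two inequalities separately. The direction $\sup\{d(L')\mid L'\subseteq L\text{ bounded}\}\le d(L)$ is immediate from Remark \ref{density is increasing}, which gives $d(L')\le d(L)$ for every bounded suborder $L'\subseteq L$; taking the supremum yields the inequality.

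For the reverse inequality I would split on whether $d(L)$ is infinite. If $d(L)=\infty$, then by Theorem \ref{rank infinity iff not scattered} there is an order-embedding $f\colon\eta\hookrightarrow L$. Choose any $a<b$ in $\eta$ and set $L':=[f(a),f(b)]$, which is a bounded suborder of $L$. Since every nonempty open interval of $\eta$ is isomorphic to $\eta$, $L'$ contains $f((a,b))\cong\eta$ and hence is not scattered, so $d(L')=\infty$ again by Theorem \ref{rank infinity iff not scattered}. Thus the supremum already attains $\infty=d(L)$.

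Now suppose $d(L)<\infty$. It suffices to show that for every ordinal $\beta<d(L)$ there exists a bounded suborder $L'\subseteq L$ with $d(L')>\beta$. Writing $\beta$ in normal form as $\beta=\omega\cdot\gamma+q$ with $q\in\N$, we have $d(L)>\omega\cdot\gamma+q$, so Proposition \ref{witness for density} produces a chain $x_1<^\parens\gamma x_2<^\parens\gamma\cdots<^\parens\gamma x_{q+2}$ in $L$. Let $L':=[x_1,x_{q+2}]\subseteq L$, which is a bounded interval (hence a bounded suborder). By Proposition \ref{absoluteness of sim}, $\sim^{L'}_\gamma=\sim^L_\gamma\cap(L'\times L')$, so the same chain still satisfies $x_1<^\parens\gamma\cdots<^\parens\gamma x_{q+2}$ when computed inside $L'$. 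A second application of Proposition \ref{witness for density}, now applied to $L'$, gives $d(L')>\beta$, as required. Taking the supremum over all $\beta<d(L)$ completes the proof.

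The only delicate step is transferring the witnessing chain from $L$ down to the bounded interval $L'$; this is handled cleanly by the absoluteness of the equivalences $\sim_\alpha$ on convex suborders (Proposition \ref{absoluteness of sim}). Once that is observed, the remainder is just normal-form manipulation together with two applications of the characterization in Proposition \ref{witness for density}, plus the separate (and easy) non-scattered case.
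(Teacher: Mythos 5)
Your proof is correct and takes essentially the same route as the paper: the easy inequality via Remark~\ref{density is increasing}, and the hard inequality by extracting a witnessing chain from Proposition~\ref{witness for density}, restricting to the bounded interval $[x_1,x_{p+2}]$, and using Proposition~\ref{absoluteness of sim} to show the chain still witnesses the same density bound there. The explicit case split on $d(L)=\infty$ is a harmless refinement --- the paper's single argument already covers it, since a supremum of ordinals exceeding every $\omega\cdot\alpha+p$ cannot itself be an ordinal.
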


\begin{proof}
    It is clear from Remark \ref{density is increasing} that the left hand side is an upper bound of the right hand side. For the other inequality, suppose that $ \alpha , p $ are such that $ d(L) > \omega \cdot \alpha + p $.
    It suffices to show that if  $ d(L) > \omega \cdot \alpha + p $, then there exists a bounded linear suborder $ L' $ of $ L $ with $ d(L') > \omega \cdot \alpha + p $.
     Use Proposition \ref{witness for density} to choose $ x_1 <^\parens \alpha x_2 <^\parens \alpha \dots <^\parens \alpha x_{p+2} $ in $ L $. 
    Let $ L' \defeq [x_1, x_{p+2}] $ so that $ L' $ is a bounded interval.
    By Proposition \ref{absoluteness of sim}, $ x_1 <^\parens \alpha x_2 <^\parens \alpha \dots <^\parens \alpha x_{p+2} $ in $ L' $ witness $ d(L') > \omega \cdot \alpha + p $, again by Proposition \ref{witness for density}.
\end{proof}
Motivated by the Proposition \ref{bounded doesnt matter}, let us define the density of partial orders.
\begin{defn}
\label{posetdensity}
    Define the \emph{density} of a poset $ P $ as $ d(P)\defeq \sup\{d(L) \mid L \subseteq P$ is a bounded linear suborder$\}$.
\end{defn}

The following proposition shows that we do not lose any information while working with bounded linear suborders.

\begin{prop}\label{densityposetunbdd}
Let $P$ be a poset. Then $d(P) = \sup\{d(T) \mid T$ is a linear suborder of $P\}$.
\end{prop}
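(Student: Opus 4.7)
The plan is to establish two inequalities. The inequality $ d(P) \le \sup\{d(T) \mid T $ is a linear suborder of $ P\} $ is immediate because every bounded linear suborder of $P$ is, in particular, a linear suborder of $P$; thus the supremum on the right is taken over a larger set and hence dominates the one defining $d(P)$.

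For the reverse inequality, I would argue that it suffices to show $d(T) \le d(P)$ for every linear suborder $T$ of $P$. Applying Proposition \ref{bounded doesnt matter} to the linear order $T$ gives
\[
d(T) = \sup\{d(L) \mid L \subseteq T \text{ is a bounded linear suborder}\}.
\]
Now every bounded linear suborder $L$ of $T$ is also a bounded linear suborder of $P$, so each such $d(L)$ appears in the supremum defining $d(P)$. Hence $d(T) \le d(P)$, and taking the supremum over all linear suborders $T$ of $P$ yields the desired inequality.

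The proof is therefore essentially a one-line reduction to Proposition \ref{bounded doesnt matter}; the only subtlety is the observation that ``bounded in $T$'' coincides with ``bounded in $P$'' in the sense needed, which is automatic since boundedness of a linear suborder is an intrinsic property of the suborder (the minimum and maximum witnesses lie inside the suborder itself). There is no real obstacle here; the point of the proposition is to record that the relaxation from bounded to arbitrary linear suborders in the defining supremum is harmless, so that Definition \ref{posetdensity} extends the classical linear-order density in a robust way.
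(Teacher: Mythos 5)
Your proposal is correct and follows essentially the same route as the paper's own proof: both directions reduce to Proposition~\ref{bounded doesnt matter}, with the observation that a bounded linear suborder of a linear suborder $T$ of $P$ is itself a bounded linear suborder of $P$.
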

\begin{proof}
    Since the set of bounded linear suborders of $P$ is included in the set of linear suborders of $P$, the right hand side is an upper bound for the left hand side. 
    
    For the other direction, let $T \subseteq P$ be an unbounded linear order. Then by Proposition \ref{bounded doesnt matter} there exists a set $\mathcal{L}$ of bounded linear suborders of $T$, and hence of $P$ with $\sup \{d(L) \mid L \in \mathcal{L}\} = d(T)$.
    Therefore, $ d(P) \ge d(T) $.
\end{proof}

Definition \ref{linear density} and Proposition \ref{bounded doesnt matter} also motivate us to define to associate a Hausdorff rank to general posets in terms of their density.

\begin{defn}
\label{rank of poset}
    If $ (P, <) $ is a poset with $ d(P) = \infty $ then define the \emph{Hausdorff rank} of $ P $ as $ \HR(P) \defeq \infty $. Otherwise, set $ \HR(P) \defeq \alpha $ where $ \alpha $ is the unique ordinal such that for some $ p < \omega $, $ d(P) = \omega \cdot \alpha + p $.
\end{defn}

Now we state some results on the density and the Hausdorff rank of a bounded discrete $n$-hammock, the proofs of which are easy generalizations of the corollary and the lemma on page 37 of \cite{Schroer98hammocksfor}.

\begin{thm}
\label{Schroer formula}
   If $H$ is a bounded discrete $n$-hammock, then $d(H) = d(\pi_1(H) \dot + \pi_2(H) \dot + \hdots \dot + \pi_n(H))$.  
\end{thm}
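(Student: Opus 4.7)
The plan is to prove both inequalities $d(H) \leq d(L_1 \dot+ \cdots \dot+ L_n)$ and $d(H) \geq d(L_1 \dot+ \cdots \dot+ L_n)$ separately, writing $L_i \defeq \pi_i(H)$. For the upper bound, since $H \subseteq \prod_i L_i$, Remark \ref{density is increasing} together with Proposition \ref{densityposetunbdd} reduces the task to showing that any chain $C$ in the ambient clat $\prod_i L_i$ satisfies $d(C) \leq d(L_1 \dot+ \cdots \dot+ L_n)$. I would prove this by induction on $n$: decompose $C = \sum_{a \in \pi_1(C)} F_a$ via $\pi_1$-fibres (each a chain in the $(n-1)$-clat $L_2 \times \cdots \times L_n$), invoke the inductive hypothesis to get $d(F_a) \leq d(L_2 \dot+ \cdots \dot+ L_n)$, and then iterate Proposition \ref{prop: density of sum of orders} --- justified in this bounded discrete setting by Remark \ref{rem: plus and plusdot} --- to bound $d(C)$ by $d(\pi_1(C) \dot+ L_2 \dot+ \cdots \dot+ L_n)$, which is in turn at most $d(L_1 \dot+ \cdots \dot+ L_n)$ by monotonicity of density in bounded discrete dot sums.

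For the lower bound, the plan is to exhibit an explicit chain $C^\star \subseteq H$ realizing the target density. Let $\bar m = \min H$ and $\bar M = \max H$, which exist by Remark \ref{hammockbdd}, and use the given subclat decomposition $\lan S_j \mid j \in [k] \ran$. A key preliminary is that $\pi_i(S_j) \cap \pi_i(S_{j+1}) \neq \emptyset$ for each coordinate $i$ and every $j \in [k-1]$; this can be extracted by an easy adaptation of the argument in the proof of Proposition \ref{assume intersection}, since the interface conditions of Definition \ref{hammocks}(4) propagate the nonempty intersection guaranteed by condition (3) to all consecutive pairs. This allows me to pick interface points $\bar e_j \in S_j \cap S_{j+1} = \prod_i (\pi_i(S_j) \cap \pi_i(S_{j+1}))$ for $j \in [k-1]$, setting $\bar e_0 \defeq \bar m$ and $\bar e_k \defeq \bar M$. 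Within each full subclat $S_j$, I would then build the usual staircase max chain from $\bar e_{j-1}$ to $\bar e_j$ by increasing one coordinate at a time from its minimum to its maximum within the relevant interval of $\pi_i(S_j)$. Concatenating these subchains produces $C^\star \subseteq H$, and by Corollary \ref{order irrelevant} together with the observation $\bigcup_j \pi_i(S_j) = L_i$, the density of $C^\star$ matches $d(L_1 \dot+ \cdots \dot+ L_n)$.

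The main obstacle, already present in Schr\"oer's two-dimensional original on page~37 of \cite{Schroer98hammocksfor}, is the bookkeeping needed to confirm that the concatenated staircase realizes exactly the target density rather than something smaller. Concretely, one must verify that the dot sum of maximal chains inside the various $S_j$, once rearranged via Corollary \ref{order irrelevant}, traverses each $L_i$ cleanly --- neither double-counting interface points nor losing elements at shared boundaries. For $n > 2$ this requires simultaneously tracking $n$ projections and $k-1$ interface points, but the per-subclat density computation itself remains local, and the aggregation follows from iterating Proposition \ref{prop: density of sum of orders}.
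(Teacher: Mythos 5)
The paper does not actually supply a proof of this theorem: it merely states that the proofs are ``easy generalizations of the corollary and the lemma on page 37 of \cite{Schroer98hammocksfor}.'' So there is no internal argument to compare against; your proposal is best read as a reconstruction of that generalization.

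Your lower bound is essentially the right construction. Two comments. First, $\pi_i(S_j)\cap\pi_i(S_{j+1})\neq\emptyset$ does indeed follow from Proposition~\ref{assume intersection} applied with $m=j$, $M=j+1$, as you say. Second, and this is the part you flag as ``bookkeeping'' but do not resolve: an arbitrary choice of interface points $\bar e_j\in S_j\cap S_{j+1}$ need \emph{not} satisfy $\bar e_{j-1}\le\bar e_j$, so the concatenation need not be a chain. You need a specific choice. Observing that condition~(4) of Definition~\ref{hammocks} forces $\min\pi_i(S_j)\le\min\pi_i(S_{j+1})$ and $\max\pi_i(S_j)\le\max\pi_i(S_{j+1})$ for each $i$ (so the projections march monotonically upward), the choice $(\bar e_j)_i\defeq\min\pi_i(S_{j+1})$ lands in $\pi_i(S_j)\cap\pi_i(S_{j+1})$ (when these projections are convex, which is implicit throughout the section) and is monotone in $j$; with that choice your staircase chain $C^\star$ exists in $H$, and the Corollary~\ref{order irrelevant} rearrangement gives $d(C^\star)=d(L_1\dot+\cdots\dot+L_n)$ exactly as you describe.

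The upper bound is where the real gap lies. You reduce to bounding $d(C)$ for a chain $C\subseteq\prod_i L_i$, decompose $C=\sum_{a\in\pi_1(C)}F_a$, and then claim that ``iterating Proposition~\ref{prop: density of sum of orders}'' yields $d(C)\le d(\pi_1(C)\dot+L_2\dot+\cdots\dot+L_n)$. But Proposition~\ref{prop: density of sum of orders} and its finitely-iterated form (Corollary~\ref{order irrelevant}) apply only to a \emph{finite} dot sum of bounded linear orders, whereas $\pi_1(C)$ is in general infinite. The inequality $d\big(\sum_{a\in L}M_a\big)\le d(L\dot+M)$ when $d(M_a)\le d(M)$ for all $a$ is exactly the content that needs proving here: it requires a separate argument about Hausdorff condensation of an ordered sum over an infinite index set, or a direct witness argument via Proposition~\ref{witness for density} that, given $\bar x_1<^{(\alpha)}\cdots<^{(\alpha)}\bar x_{p+2}$ in $C$, some projection must carry at least a proportional share of those jumps. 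As written, this step is asserted but not established, and it is precisely where the substance of the upper-bound direction lives; citing Remark~\ref{rem: plus and plusdot} (which just identifies $\dot+$ with $+$ for infinite bounded discrete orders) does not repair it.
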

\begin{cor}
\label{rank of bdd hammock}
    If $H$ is a bounded discrete $n$-hammock, then $ \HR(H) = \max_{i \in [n]} \HR(\pi_i(H)) $.
\end{cor}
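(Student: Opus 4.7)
The plan is to deduce the corollary from Theorem \ref{Schroer formula} by unpacking how the Hausdorff rank is encoded in the density, and then inductively invoking Proposition \ref{prop: density of sum of orders}.

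First, Theorem \ref{Schroer formula} gives $d(H) = d(\pi_1(H) \dot+ \pi_2(H) \dot+ \cdots \dot+ \pi_n(H))$. Since $H$ is bounded and discrete, each $\pi_i(H)$ is a bounded discrete linear order, so the dot sum on the right is again a bounded discrete linear order. By Definition \ref{rank of poset}, $\HR(P)$ is the unique ordinal $\alpha$ such that $d(P) = \omega \cdot \alpha + p$ for some $p < \omega$, with $\HR(P) = \infty$ iff $d(P) = \infty$. So it suffices to prove the analogous statement for linear orders, namely
\[\HR(\pi_1(H) \dot+ \cdots \dot+ \pi_n(H)) = \max_{i \in [n]} \HR(\pi_i(H)).\]

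For the binary case, suppose $L_1$ and $L_2$ are bounded discrete linear orders. If both $d(L_1)$ and $d(L_2)$ are ordinals, say $d(L_j) = \omega \cdot \alpha_j + m_j$, then inspection of each of the three cases in Proposition \ref{prop: density of sum of orders} shows that the coefficient of $\omega$ in $d(L_1 \dot+ L_2)$ is exactly $\max(\alpha_1, \alpha_2)$; equivalently $\HR(L_1 \dot+ L_2) = \max(\HR(L_1), \HR(L_2))$. If instead some $d(L_j) = \infty$, then $L_j$ is non-scattered by Theorem \ref{rank infinity iff not scattered}, and embeds into $L_1 \dot+ L_2$, so the dot sum is non-scattered with $\HR = \infty$, again matching $\max(\HR(L_1), \HR(L_2))$.

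Finally, I will induct on $n$. The base case $n = 1$ is trivial. For the inductive step, decompose $\pi_1(H) \dot+ \cdots \dot+ \pi_n(H) = (\pi_1(H) \dot+ \cdots \dot+ \pi_{n-1}(H)) \dot+ \pi_n(H)$ and combine the binary case with the inductive hypothesis. Combining this with the opening reduction finishes the proof.

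No step is a genuine obstacle; the corollary is essentially a translation of Theorem \ref{Schroer formula} through the $d \leftrightarrow \HR$ correspondence. The only mild subtlety is handling the case $\HR = \infty$ separately, since Proposition \ref{prop: density of sum of orders} is stated only for ordinal-valued densities.
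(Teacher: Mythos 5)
Your argument is correct and takes the natural route the paper implicitly intends: the paper gives no explicit proof, merely noting that both Theorem~\ref{Schroer formula} and Corollary~\ref{rank of bdd hammock} are easy generalizations of the corollary and lemma on page~37 of Schr\"oer's thesis, and deriving the rank formula from the density formula through the $d \leftrightarrow \HR$ translation (Definition~\ref{rank of poset}), the binary Proposition~\ref{prop: density of sum of orders}, the separate treatment of the $\HR = \infty$ case via Theorem~\ref{rank infinity iff not scattered}, and induction on $n$ is exactly that adaptation.
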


\subsection{Corners}
\label{subsection: corners}
\label{corners}
The hammock posets for non-domestic string algebras will contain MSBs that are finitely presented but not necessarily bounded, so Theorem \ref{Schroer formula} cannot directly be applied to them. 
However they will have periodic \emph{corners}, allowing us to devise a method to calculate their densities. 
In this section we define corners (Definition \ref{corner}), calculate their densities (Proposition \ref{Our formula}), and then calculate the density of an unbounded hammock using a choice of bounded subhammocks and corners (Corollary \ref{about density computation}). As a consequence, we obtain a strict upper bound in the density of a finitely presented hammock (Theorem \ref{fpdensity}).

Unbounded discrete finitely presented linear orders have been shown to have periodic prefixes and suffixes.
\begin{prop}
\cite[Proposition~11.11]{SKSK}
\label{period}
    Suppose $L(\neq\mathbf 0)\in\dLOfpb{}{}$.
    \begin{itemize}
        \item If $L\in\dLOfpb{0}{1}$ then there exist $L_-\in \dLOfpb{1}{1}$ and $L_0\in\dLOfpb{1}{1}\cup\{\mathbf{0}\}$ such that $L \cong L_- \cdot \omega^* + L_0$.
        \item If $L\in\dLOfpb{1}{0}$ then there exist $L_+ \in \dLOfpb{1}{1}$ and $L_0\in\dLOfpb{1}{1}\cup\{\mathbf{0}\}$ such that $L \cong L_0 + L_+ \cdot \omega$.
        \item If $L \in \dLOfpb{0}{0}$ then there exist $L_-,L_+ \in \dLOfpb{1}{1}$ and $L_0\in\dLOfpb{1}{1}\cup\{\mathbf{0}\}$ such that $L \cong L_- \cdot \omega^* + L_0 + L_+ \cdot \omega$.
    \end{itemize}
\end{prop}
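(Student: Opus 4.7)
I would proceed by structural induction on the construction of $L$ in the class $\LOfp$, restricting attention throughout to those constructions yielding discrete orders. The base cases $\mathbf 0$ and $\mathbf 1$ are bounded and hence lie in $\dLOfpb{1}{1}$, so they fall outside the three cases of the statement and there is nothing to prove.

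For the inductive step on $L = L_1 + L_2$ with $L_1, L_2 \in \dLOfp$, the key observation is that $L$ is unbounded below (resp.\ above) precisely when $L_1$ is (resp.\ $L_2$ is), and that the discreteness of $L$ forces the boundary condition that $L_1$ has a maximum if and only if $L_2$ has a minimum. For each of the three cases of the proposition, these constraints restrict which boundedness profiles of $L_1$ and $L_2$ are possible, and one applies the inductive hypothesis to whichever summand inherits the relevant unboundedness; the bounded discrete pieces arising in the middle of the resulting decomposition can then be absorbed into a single $L_0 \in \dLOfpb{1}{1} \cup \{\mathbf 0\}$ while the periodic tails $L_- \cdot \omega^*$ or $L_+ \cdot \omega$ stay intact.

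For the product case $L = M \cdot \omega$, I would first observe that discreteness of $L$ forces $M$ to be in $\dLOfpb{1}{1}$ or $\dLOfpb{0}{0}$ (if nonempty): if $M$ has exactly one of a minimum and a maximum, then the boundary between two consecutive copies of $M$ in $L$ produces a non-discrete gap. If $M \in \dLOfpb{1}{1}$ then $L \in \dLOfpb{1}{0}$ is already in the form $\mathbf 0 + M \cdot \omega$. If $M \in \dLOfpb{0}{0}$, apply the inductive hypothesis to write $M \cong M_- \cdot \omega^* + M_0 + M_+ \cdot \omega$ with $M_\pm \in \dLOfpb{1}{1}$ and $M_0 \in \dLOfpb{1}{1} \cup \{\mathbf 0\}$. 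The crucial observation is that $N := M_+ \cdot \omega + M_- \cdot \omega^*$ is itself bounded and discrete, since its boundary carries no max on the left and no min on the right and hence introduces no discreteness defect. Re-grouping the $\omega$-fold concatenation of copies of $M$ then yields
\[
L \;\cong\; M_- \cdot \omega^* + M_0 + (N + M_0) \cdot \omega,
\]
which is of the required shape with $L_- = M_-$, $L_0 = M_0$, and $L_+ = N + M_0 \in \dLOfpb{1}{1}$. The case $L = M \cdot \omega^*$ is handled symmetrically.

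The main obstacle is the bookkeeping in the sum case: one has to track boundedness on each side for each of $L_1$ and $L_2$, verify consistency with the discreteness constraint at every boundary, and match the resulting decomposition against the three target normal forms. The creative step that powers the product case is the recognition that an order of the form $M_+ \cdot \omega + M_- \cdot \omega^*$ with $M_\pm \in \dLOfpb{1}{1}$ lies again in $\dLOfpb{1}{1}$; this is precisely what allows the periodicity of $M$ to be re-packaged as a new periodicity of $L$ after multiplying by $\omega$.
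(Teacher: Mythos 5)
The paper does not prove this statement; it is imported verbatim as \cite[Proposition~11.11]{SKSK}, so there is no in-paper argument to compare against. Your proof sketch is sound and is the natural structural-induction argument one would write for it: the reduction of the sum case to a bookkeeping exercise via the boundary-discreteness criterion (`$L_1$ has a $\max$ iff $L_2$ has a $\min$') is correct, and the key step in the product case --- that $M \cdot \omega$ discrete forces $M$ bounded on both sides or on neither, followed by re-grouping $\omega$-fold concatenation around the bounded order $N = M_+ \cdot \omega + M_- \cdot \omega^*$ --- is exactly the periodicity-repackaging trick that makes the claim go through. Two small points worth making explicit in a final write-up: (i) you implicitly use that if $L = L_1 + L_2$ (resp. $L = M\cdot\omega$) is discrete then so are $L_1, L_2$ (resp. $M$), which holds because these are intervals of $L$ and discreteness passes to intervals, so the induction never leaves the discrete world; and (ii) the definition of $\LOfp$ stated in the paper uses the lexicographic product $L \mapsto \omega \cdot L$, but the usage throughout (including this proposition, Proposition~\ref{ordtypecomplexity}, and the intro) is the anti-lexicographic $L \mapsto L\cdot\omega$, which is what you sensibly assumed --- it would be good to flag that you are using the latter reading, since with the former the class of discrete finitely presented orders collapses and the statement becomes vacuous.
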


The appropriate generalizations of prefixes and suffixes to $ n $-hammocks are bottom and top corners respectively.

\begin{defn}\label{corner}
Let $H$ be an $n$-hammock. A \emph{bottom} (resp. \emph{top}) \emph{corner} $\corner$ of $H$ is a sub-$n$-box of $H$ such that $ \corner $ is an $ n $-clat and
for every $i \in [n]$, $ \pi_i(\corner) $ is a prefix of $ \pi_i(H) $ isomorphic to $ L'_i \cdot \omega^*$ (resp. $ L'_i \cdot \omega $) for some linear order $ L'_i $.
\end{defn}

A bottom or top corner may not exist for a hammock. A necessary condition for a hammock $H$ to have a bottom corner is that all its projections are unbounded below. Interestingly, this is also a sufficient condition for the case of finitely presented hammocks, giving us the promised extension of Proposition \ref{period}.

\begin{cor}
If $H$ is a finitely presented hammock all of whose projections are unbounded below, then $H$ has a bottom corner.
\end{cor}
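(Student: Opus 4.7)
The plan is to build the bottom corner $F$ inside the first subclat $S_1$ of a subclat decomposition $\langle S_j \mid j \in [k]\rangle$ of $H$. Writing $L_i := \pi_i(H)$, the construction has three steps: establishing cofinality of $\pi_i(S_1)$ in $L_i$, extracting a periodic prefix of $L_i$ from $\pi_i(S_1)$, and assembling the product.

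The crux, which I expect to be the main obstacle, is to show that $\pi_i(S_1)^\da = \emptyset$ for each $i$, i.e., $\pi_i(S_1)$ meets $L_i$ cofinally from below. I would argue this by induction on $j$: assume for contradiction that some $y \in L_i$ lies strictly below every element of $\pi_i(S_1)$, and show that $y \in \pi_i(S_j)^\da$ for all $j \in [k]$. For the inductive step, let $z \in \pi_i(S_{j+1})$; condition $(4)(a)$ of Definition \ref{hammocks} gives $\pi_i(S_j)^\da \cap \pi_i(S_{j+1}) = \emptyset$, so we can pick $x \in \pi_i(S_j)$ with $x \leq z$, and the inductive hypothesis then yields $y < x \leq z$. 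Iterating delivers $y \notin \bigcup_j \pi_i(S_j) = L_i$, contradicting $y \in L_i$.

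Granted cofinality, each $\pi_i(S_1)$ --- being an interval of $L_i$, as is the case in all concrete hammocks of interest --- is a prefix of $L_i$. It inherits absence of a minimum from $L_i$, and is itself finitely presented by Proposition \ref{fd intervals} together with Remark \ref{LOfp=LOfd cap Scattered}. Proposition \ref{period} then produces a prefix $F_i \cong L'_i \cdot \omega^*$ of $\pi_i(S_1)$, and hence of $L_i$, with $L'_i \in \dLOfpb{1}{1}$. Set $F := \prod_{i \in [n]} F_i$. The containments $F_i \subseteq \pi_i(S_1)$ and $S_1 = \prod_i \pi_i(S_1)$ give $F \subseteq S_1 \subseteq H$; the subposet $F$ is automatically an $n$-clat; each projection $\pi_i(F) = F_i$ is an interval with the required $L'_i \cdot \omega^*$ shape; and $F = H \cap \prod_i \pi_i(F)$ since $F \subseteq H$ and $F = \prod_i \pi_i(F)$. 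Consequently $F$ is a sub-$n$-box meeting all the conditions of Definition \ref{corner}, and is therefore a bottom corner of $H$.
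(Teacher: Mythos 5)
Your proposal follows the paper's approach exactly: extract the bottom corner from inside $S_1$ by applying Proposition \ref{period} to its projections and setting $F:=\prod_i(L'_i\cdot\omega^*)$. Your explicit inductive argument that $\pi_i(S_1)^\da=\emptyset$ is a correct and useful addition that the paper's one-line proof leaves implicit when it passes from ``bottom corner of $S_1$'' to ``bottom corner of $H$''; the premise that $\pi_i(S_1)$ is an interval of $\pi_i(H)$, which you candidly flag as an assumption, is one the paper also asserts without justification.
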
 

\begin{proof}
Let $\lan S_j \mid j \in [k] \ran$ be a sub-$ n $-box-decomposition of $H$. 
For each $ i \in [n] $ let $L_i := \pi_i(S_1)$, so that $L_i$ is a convex subset of the finitely presented linear order $\pi_i(H)$, and thus is finitely presented.
Use Proposition \ref{period} to obtain, for every $ i \in [n] $ a prefix $ L'_i \cdot \omega^* \subseteq L_i $.
Then $ \corner :=\prod_{i \in [n]} (L'_i\cdot\omega^*)$ is (isomorphic to) a bottom corner of $S_1$, and hence of the hammock $H$. 
\end{proof}

We show in Proposition \ref{corner equimorphism} below that a bottom corner of the hammock $H$ is uniquely defined up to equimorphism, i.e., bi-embeddability--this result follows from the $n=1$ version below.
\begin{prop}
\label{equimorphism}
Suppose $L,L_-$ are non-empty linear orders such that $L_-$ is isomorphic to a prefix of $L\cdot\omega^*$. Then $L_-$ has a prefix isomorphic to $L\cdot\omega^*$.
\end{prop}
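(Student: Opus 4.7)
The plan is to directly analyze the structure of any nonempty prefix $P$ of $L\cdot\omega^*$ and show that such a $P$ always decomposes as an ordered sum of the form $L\cdot\omega^* + L'$ for some nonempty prefix $L'$ of $L$. Since $L_-$ is isomorphic to such a $P$, it will then follow immediately that $L\cdot\omega^*$ appears as a prefix of $L_-$.

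First I would identify $L\cdot\omega^*$ with $\sum_{k\in\omega^*}L_k$, where each $L_k$ is a labelled copy of $L$, and consider the set of indices at which $P$ has nontrivial intersection:
\[
X \defeq \{k\in\omega^* : L_k\cap P \neq \emptyset\}.
\]
Using that $P$ is downward closed in the lexicographic order, a short check shows that $X$ is itself a nonempty downward-closed subset of $\omega^*$: if $k\in X$ with witness $x\in L_k\cap P$ and $k'<k$ in $\omega^*$, then every element of $L_{k'}$ lies strictly below $x$, and hence belongs to $P$. Since every nonempty downward-closed subset of $\omega^*$ has a maximum, $X$ takes the form $\{-n,-n-1,-n-2,\ldots\}$ for some $n\ge 1$.

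Next I would describe how $P$ meets each individual fibre. For every $k\in X$ with $k<-n$, every element of $L_k$ lies strictly below a fixed witness $x\in L_{-n}\cap P$, so $L_k\subseteq P$ by downward closure. At the top index $-n$, the intersection $L' \defeq L_{-n}\cap P$ is a nonempty prefix of $L_{-n}\cong L$ by downward closure within that fibre. These observations yield the ordered decomposition
\[
P = \Bigl(\sum_{k\in\{-n-1,-n-2,\ldots\}}L_k\Bigr) + L'.
\]
Since the index set $\{-n-1,-n-2,\ldots\}$ is order-isomorphic to $\omega^*$, the left-hand summand is isomorphic to $L\cdot\omega^*$, exhibiting $L\cdot\omega^*$ as a prefix of $P$, and hence of $L_-$.

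The argument is essentially routine bookkeeping, so I do not anticipate a serious obstacle. The only mild subtlety is to handle uniformly both the boundary case $X=\omega^*$, in which the top (possibly partial) copy replaces the rightmost copy of the original $L\cdot\omega^*$, and the case $n\ge 2$, in which $P$ corresponds only to fibres strictly below index $-1$; the degenerate subcase $L'=L$ (so that $P\cong L\cdot\omega^*+L\cong L\cdot\omega^*$) is also absorbed by the same decomposition. In every case, the infinite descending tail of full copies of $L$ beneath the top fibre supplies the required $L\cdot\omega^*$-prefix of $L_-$.
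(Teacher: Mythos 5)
Your proof is correct and takes essentially the same route as the paper's: in both cases one observes that the portion of $L\cdot\omega^*$ lying strictly below the finitely many topmost copies of $L$ that $L_-$ meets is itself isomorphic to $L\cdot\omega^*$ and is a prefix of $L_-$. The paper achieves this slightly more economically by fixing any $x\in L_-$ and any finite suffix $L_k$ containing it, rather than first isolating the maximal index as you do; your extra bookkeeping (showing the index set is downward closed with a maximum, writing out the explicit decomposition of $P$) is harmless but not needed.
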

\begin{proof}
Fix suffixes $ \lan L_k \mid k < \omega \ran $ of $L\cdot \omega^* $ satisfying $ L_k \cong L\cdot k $. Then $ \bigcup_{k' < \omega} L_{k'} = L\cdot\omega^* \cong (L\cdot \omega^*)\setminus L_k $ for each $k\in\omega$. 
Let $ x \in L_- $ and $ k < \omega $ be such that $ x \in L_k $. Then $ L \cdot\omega^* \cong (L\cdot \omega^*)\setminus L_k \subseteq x^\da \subseteq L_- $. 
\end{proof}

\begin{prop}
\label{corner equimorphism}
Let $ \corner^0_- $ and $ \corner^1_- $ be bottom corners of an $ n $-hammock $ H$. Then $ \corner^0_- $ embeds into $ \corner^1_- $.
\end{prop}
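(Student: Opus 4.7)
The plan is to reduce the problem to the one-dimensional statement already handled by Proposition~\ref{equimorphism}, exploiting that both corners are $n$-clats. By definition, $\corner^j_- = \prod_{i \in [n]} \pi_i(\corner^j_-)$ for $j \in \{0,1\}$. Consequently, if I can produce, for each coordinate $i$, an order-embedding $f_i \colon \pi_i(\corner^0_-) \hookrightarrow \pi_i(\corner^1_-)$, then the product map $\prod_{i \in [n]} f_i$ is automatically an order-embedding $\corner^0_- \hookrightarrow \corner^1_-$, since its image lies in $\prod_{i \in [n]} \pi_i(\corner^1_-) = \corner^1_-$.

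To construct each $f_i$, fix $i \in [n]$ and observe that, by the definition of a bottom corner, $\pi_i(\corner^0_-)$ and $\pi_i(\corner^1_-)$ are prefixes of the linear order $\pi_i(H)$. Any two prefixes of a linear order are downward-closed convex subsets, hence comparable under inclusion: indeed, if $x$ lies in one but not the other then the downward-closure of the containing prefix forces the other to be a subset of it. If $\pi_i(\corner^0_-) \subseteq \pi_i(\corner^1_-)$, then the inclusion itself serves as $f_i$. Otherwise $\pi_i(\corner^1_-) \subseteq \pi_i(\corner^0_-)$; writing $\pi_i(\corner^0_-) \cong L \cdot \omega^*$ as furnished by the definition of a bottom corner, the set $\pi_i(\corner^1_-)$ is a nonempty prefix of $L \cdot \omega^*$. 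Proposition~\ref{equimorphism}, applied with $L_- \defeq \pi_i(\corner^1_-)$, then provides a prefix of $\pi_i(\corner^1_-)$ isomorphic to $L \cdot \omega^* \cong \pi_i(\corner^0_-)$, which is precisely an embedding $f_i$ as required.

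I do not expect any serious obstacle. The essential content is already concentrated in the one-dimensional Proposition~\ref{equimorphism}, and the $n$-clat structure of corners lets the coordinatewise embeddings be assembled into a global one by taking a product. The only mild subtlety to flag is that the direction of inclusion between $\pi_i(\corner^0_-)$ and $\pi_i(\corner^1_-)$ may vary with $i$, so the construction is not uniform across coordinates; this is harmless, however, since only the existence of some embedding in each coordinate is required. Degenerate cases (a corner being empty) are handled trivially: an empty $\corner^0_-$ embeds into anything, and the construction above shows that $\corner^1_-$ cannot be empty while $\corner^0_-$ is nonempty.
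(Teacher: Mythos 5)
Your proof is correct and follows the paper's argument almost verbatim: project to each coordinate, observe that the two projections are nested prefixes of $\pi_i(H)$, invoke Proposition~\ref{equimorphism} when $\pi_i(\corner^1_-)$ happens to be the smaller prefix, and assemble the coordinate-wise embeddings into a product embedding landing inside the clat $\corner^1_-$. One small quibble with the closing paragraph: the construction does not actually \emph{show} that $\corner^1_-$ cannot be empty while $\corner^0_-$ is nonempty --- Definition~\ref{corner} as literally written admits $L'_i = \mathbf 0$, hence an empty corner --- rather, both you and the paper tacitly exclude empty corners, which is in any case forced whenever Proposition~\ref{equimorphism} is invoked, since its hypotheses require non-empty linear orders.
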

\begin{proof}
For each $ i \in [n] $ use the definition of a bottom corner and Proposition \ref{equimorphism} to obtain a prefix $ L'_i $ of $ \pi_i(\corner^1_-) $ order isomorphic to $ \pi_i(\corner^0_-) $. Then $  \corner^0_- \cong \prod_{i \in [n]} L'_i \subseteq \corner^1_- $ as required.
\end{proof}

Since a box in a hammock is a hammock itself, thanks to Proposition \ref{boxes are hammocks}, we can speak of a corner without mentioning the ambient hammock. Moreover, being clats, corners are the simplest nonempty unbounded hammocks.

The next result describes how to compute the density of a corner.
\begin{prop}
\label{Our formula}
Let  $\corner$ be a top $n$-corner with $\pi_i(\corner) \cong L_i\cdot \omega$ for every $i \in [n]$. Then $d(\corner)$ is the limit ordinal $\omega\cdot (\HR(L_1 \dot+ L_2 \dot+ \hdots \dot+ L_n) + 1) $. Moreover, there exists a linear suborder $ L \subseteq \corner $ with $ d(L) = d(\corner)$. As a consequence, $ \HR(\corner) = \max\{\HR(\pi_i(F)) \mid i \in [n]\} $.
\end{prop}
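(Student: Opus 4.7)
The plan is to establish $d(\corner) = \omega\cdot(\alpha+1)$, with $\alpha := \HR(L_1\dot+\cdots\dot+L_n)$, by matching upper and lower bounds, thereby simultaneously furnishing the linear suborder $L$ and the Hausdorff rank equality. Write $d(L_1\dot+\cdots\dot+L_n) = \omega\cdot\alpha+m$. I decompose each projection $\pi_i(\corner)\cong L_i\cdot\omega$ as the ordered sum $\sum_{k<\omega} L_i^{(k)}$ of copies $L_i^{(k)}\cong L_i$, and form the bounded sub-clats $T_k := \prod_{i\in[n]} L_i^{(k)}\subseteq \corner$, each isomorphic to $\prod_i L_i$ as an $n$-hammock. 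These are chained componentwise in $\corner$ by $T_0<T_1<T_2<\cdots$.

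For the lower bound, Theorem~\ref{Schroer formula} applied to the bounded discrete hammock $T_k$ yields $d(T_k) = \omega\cdot\alpha+m$. I choose a bounded linear suborder $Z_k\subseteq T_k$ with $d(Z_k) = d(T_k)$: this is possible because $d(T_k)$ is a successor realised by an explicit zigzag in $T_k$ (as in the proof of Theorem~\ref{Schroer formula}), with $Z_k = T_k$ in the degenerate singleton case. Since $T_0<T_1<\cdots$, the union $L := \bigcup_{k<\omega} Z_k$ is a linear suborder of $\corner$ isomorphic to $\sum_k Z_k$. A uniform Hausdorff-condensation argument then shows that any bounded interval $[x,y]$ in $L$ spans only finitely many $Z_k$'s, each contributing only finitely many $\sim_\alpha$-classes, so $\sim_{\alpha+1}$ relates all elements of $L$ and $L^{(\alpha+1)} = \mathbf 1$; on the other hand each $Z_k$ contributes at least one $\sim_\alpha$-class, so $L^{(\alpha)}$ is infinite. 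Hence $\HR(L) = \alpha+1$ and $d(L) = \omega\cdot(\alpha+1)$, which simultaneously yields $d(\corner)\geq\omega\cdot(\alpha+1)$ and the promised suborder.

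For the upper bound, any bounded linear suborder $L'\subseteq\corner$ lies inside the bounded sub-clat $B := \prod_i[\pi_i(\min L'),\pi_i(\max L')]\subseteq\corner$. Each projection $\pi_i(B)$ is a bounded interval of $L_i\cdot\omega$, hence a finite sum of sub-intervals of copies of $L_i$. Every sub-interval of $L_i$ has Hausdorff rank at most $\HR(L_i)\leq\alpha$ by Remark~\ref{rank is increasing}, and a finite sum of bounded discrete orders inherits its rank from the maximum of the summand ranks (via boundary merging at $\sim_\alpha$-level), so $\HR(\pi_i(B))\leq\alpha$. Theorem~\ref{Schroer formula} together with Proposition~\ref{prop: density of sum of orders} applied to $\pi_1(B)\dot+\cdots\dot+\pi_n(B)$ then forces $d(B) = \omega\cdot\alpha_B+m_B$ with $\alpha_B\leq\alpha$, so $d(L')\leq d(B) < \omega\cdot(\alpha+1)$. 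Taking the supremum completes the upper bound. From $d(\corner) = \omega\cdot(\alpha+1)+0$ we read off $\HR(\corner) = \alpha+1$; Proposition~\ref{prop: density of sum of orders} gives $\alpha = \max_i\HR(L_i)$; and applying the same condensation argument in dimension one gives $\HR(\pi_i(\corner)) = \HR(L_i\cdot\omega) = \HR(L_i)+1$. Therefore $\max_i\HR(\pi_i(\corner)) = \alpha+1 = \HR(\corner)$.

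The main technical hurdle is to verify the Hausdorff-condensation computation for $L = \sum_k Z_k$ uniformly across edge cases (e.g., $\alpha = 0$ with all $L_i$ finite, or $m = 0$ forcing the $Z_k$ to be singletons and $L\cong\omega$). A related subtlety is the interplay between the regular sum $+$ internal to $L_i\cdot\omega=\sum_k L_i^{(k)}$ and the dot sum $\dot+$ appearing in Theorem~\ref{Schroer formula}, handled by Remark~\ref{rem: plus and plusdot} in the infinite case and by direct endpoint identification at $\sim_\alpha$-level in the finite case.
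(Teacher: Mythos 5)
Your proof is correct, and it reaches the paper's conclusion by a dual slicing of the corner. The paper exhausts $\corner$ by a nested chain of \emph{prefix} boxes $E_k$ with $\pi_i(E_k)\cong L_i\cdot k$; this makes the upper bound immediate (any bounded linear suborder sits in some $E_k$, and $\HR(E_k)=h:=\HR(L_1\dot+\cdots\dot+L_n)$ by Theorem~\ref{Schroer formula} and Proposition~\ref{prop: density of sum of orders}), and it gets the lower bound by exhibiting a linear suborder isomorphic to $(L_1\dot+\cdots\dot+L_n)\cdot m$ inside $E_m$ and citing a lemma of Schr\"oer to control its $\sim_h$-condensation. You instead tile $\corner$ by the \emph{disjoint} period translates $T_k\cong\prod_i L_i$: with that choice the lower bound becomes the convenient half (the stacked witness $L=\sum_k Z_k$ is one explicit suborder whose Hausdorff rank you compute directly, replacing the cited lemma by a convexity-of-condensation-classes argument), at the cost of a slightly longer upper bound, since the box hull $B$ of a bounded $L'$ can straddle several consecutive $T_k$'s and you must first bound $\HR(\pi_i(B))$ by $\HR(L_i)$ using Remark~\ref{rank is increasing}, Remark~\ref{rem: plus and plusdot} and Proposition~\ref{prop: density of sum of orders} before invoking Theorem~\ref{Schroer formula}. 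Both routes are sound; the trade-off is only which half of the sandwich is trivial. Two small points worth fleshing out when writing this up: (i) the existence of a bounded linear suborder $Z_k\subseteq T_k$ with $d(Z_k)=d(T_k)$ is easiest to justify by taking the explicit staircase $L_1\dot+\cdots\dot+L_n$ inside the clat $T_k$ rather than by appealing to the successor density alone; and (ii) the clause ``each $Z_k$ contributes at least one $\sim_\alpha$-class, so $L^{(\alpha)}$ is infinite'' conceals a step --- when $m\ge 1$ each $Z_k$ has at least two $\sim_\alpha$-classes, and one must use convexity of $\sim_\alpha$-classes together with $T_k<T_{k+1}$ to prevent any single class from swallowing a whole $Z_j$ and hence from spanning three or more consecutive $Z_k$'s; the degenerate $m=0$ case, where $\alpha=0$ and $L\cong\omega$, you already handle separately.
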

\begin{proof}
    Let $\lan E_k \mid k < \omega \ran$ be a sequence of sub-$n$-boxes of $\corner$ such that for every $k < \omega$ and every $i \in [n]$, $\pi_i(E_k) \cong L_i \cdot k$ is a prefix of $\pi_i(\corner)$.
    Clearly $\bigcup_{k < \omega} E_k = \corner$. Let $h:=\HR(L_1 \dot+ L_2 \dot+ \hdots \dot+ L_n)$. 
    
    Let $L'$ be a bounded linear suborder of $\corner$. Choose $k < \omega$ such that $L' \subseteq E_k$. Then $d(L') \le d(E_k)$. But Proposition \ref{Schroer formula} gives $d(E_k) = d(L'_0 \cdot k \dot+ L'_1\cdot k \dot+ \hdots \dot+ L'_{n-1}\cdot k) < \omega\cdot (h+1)$, and hence $ d(\corner) \le \omega \cdot (h + 1)$.
    
    For the other direction, for each $m \in \N^+$, let $L'_m$ be a bounded linear suborder of $\corner$ such that $L'_m \cong (L_1 \dot+ L_2 \dot+ \hdots \dot+ L_n) \cdot m$ and $ L'_m \subseteq L'_{m+1} $.
    Then for every $ m \in \N^+ $, $\HR(L'_m) = \HR(L'_1) = h$.
    Furthermore, the lemma on Page 31 in \cite{Schroer98hammocksfor} yields $n_m \ge m$ such that $L_m/\sim_h = \mbf n_m$.
    Thus for each $m\in\N^+$, we have $d(L_m) \ge \omega\cdot h + m$. 
    Letting $ L \defeq \bigcup_{m \in \N} L'_m $ we see that $ d(L) \ge \omega \cdot (h + 1) $.
    Therefore $ d(L) = d(\corner) = \omega \cdot (h + 1) $. 

    Hence by Definition \ref{rank of poset}, $ \HR(F) = h + 1 = \HR(L_1 \dot+ L_2 \dot+ \dots \dot+ L_n) + 1 = \max \{\HR(L_i) : i \in [n]\} + 1  = \max\{\HR(L_i \cdot \omega) : i \in [n] \} = \max \{\HR(\pi_i(F)) : i \in [n]\} $, where the fourth equality follows  by taking $ n = 1 $ in the preceding three paragraphs.
\end{proof}

\begin{cor}\label{prop: omega times linear order}
    For a linear order $L$, we have $d (L \cdot \omega) = \omega\cdot (\HR(L)+1)$.
\end{cor}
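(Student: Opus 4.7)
The plan is to simply specialize Proposition~\ref{Our formula} to the case $n=1$. A $1$-clat is just a linear order, and a $1$-hammock $H$ in a $1$-clat $C$ must satisfy $\pi_1(H) = \pi_1(C)$, so $H = C$ itself is a $1$-hammock; thus any linear order is a $1$-hammock over itself. Taking the ambient hammock to be $L \cdot \omega$ (viewed as a $1$-hammock over itself), the whole order $L \cdot \omega$ is a top $1$-corner with $\pi_1(L\cdot\omega) = L \cdot \omega$.

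Applying Proposition~\ref{Our formula} with $n=1$ and $L_1 = L$ then gives $d(L \cdot \omega) = \omega \cdot (\HR(L) + 1)$, which is exactly the desired equality. No further work is required, and there is no obstacle to speak of; the corollary is recorded separately only because the $n=1$ case is the version used repeatedly in the density computations of subsequent sections.
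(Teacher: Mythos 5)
Your proposal is correct and is precisely the intended derivation: the paper states this as a corollary of Proposition~\ref{Our formula} without a separate proof, and your observation that $L\cdot\omega$ is itself a $1$-hammock and a top $1$-corner of itself with $L_1=L$, followed by the $n=1$ specialization, is the natural reading. One small clarification worth noting: when $\HR(L)=\infty$ (i.e.\ $L$ not scattered) the formula should be read as $d(L\cdot\omega)=\infty$, which follows instead from Theorem~\ref{rank infinity iff not scattered}, since $L\cdot\omega$ is then not scattered; Proposition~\ref{Our formula}, whose proof relies on Theorem~\ref{Schroer formula} via the bounded subboxes $E_k$, is implicitly operating in the scattered case.
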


For the rest of this section, let $ H $ denote a discrete $ n $-hammock with all projections unbounded.
\begin{defn}
    A \emph{corner decomposition} of the $n$-hammock $ H $ is a tuple $ (\corner_-, H_0, \corner_+) $, where
    \begin{itemize}
        \item $\corner_-, \corner_+$ are bottom and top corners of $H$;
        \item $ H_0 $ is a bounded subhammock of $ H $;
        \item for each $ i \in [n] $
        \begin{itemize} 
            \item $ \pi_i(\corner_-), \pi_i(H_0) $ are bounded above,
            \item $ \pi_i(H_0), \pi_i(\corner_+) $ are bounded below,
            \item $ \pi_i(\corner_-) \dot+ \pi_i(H_0) \dot+ \pi_i(\corner_+) = \pi_i(H) $.
        \end{itemize}
    \end{itemize}
\end{defn}
We will show in Corollary \ref{about density computation} that $d(H)$ cab be computed using any corner decomposition. The next result is the $n=1$ case of that result.
\begin{prop}
\label{linear corners}
Let $ L_-, L_0, L_+ $ be linear orders such that $ L_-, L_0 $ have upper bounds and $ L_0, L_+ $ have lower bounds. Let $ L \defeq L_- \dot+ L_0 \dot+ L_+ $. If $ d(L_-), d(L_+) $ are limit ordinals, then $ d(L) = \max\{d(L_-), d(L_0), d(L_+)\}.$
\end{prop}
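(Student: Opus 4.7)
The plan is to prove the two inequalities separately. The lower bound $d(L) \ge \max\{d(L_-), d(L_0), d(L_+)\}$ is immediate from Remark~\ref{density is increasing}, since each of $L_-, L_0, L_+$ embeds in $L$ as a suborder.

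For the upper bound, we may assume each of $d(L_-), d(L_0), d(L_+)$ is an ordinal, since otherwise $d(L) = \infty$ by Theorem~\ref{rank infinity iff not scattered}. Set $\alpha_\ast := \HR(L_\ast)$ for $\ast \in \{-, 0, +\}$ and $\beta := \max\{\alpha_-, \alpha_0, \alpha_+\}$. Writing $d(L_\pm) = \omega \cdot \alpha_\pm + p_\pm$, the limit-ordinal hypothesis forces $p_\pm = 0$, and hence $|L_\pm^{(\beta)}| = 1$. The crucial tool is Proposition~\ref{absoluteness of sim}: since $L_-, L_0, L_+$ are convex in $L$, the restriction of $\sim^L_\beta$ to each of these pieces coincides with the internal $\sim_\beta$ of that piece. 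Consequently every $\sim^L_\beta$-class of $L$ either lies entirely inside some $L_\ast$ (where it corresponds to a class of $L_\ast^{(\beta)}$) or spans one of the boundary points $\max L_- = \min L_0$ or $\max L_0 = \min L_+$. In particular, all of $L_-$ lies in the $\sim^L_\beta$-class $C_-$ containing $\max L_-$, and all of $L_+$ in the class $C_+$ containing $\min L_+$.

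Now split into two sub-cases on $|L_0^{(\beta)}|$. If $|L_0^{(\beta)}| \le 1$, then $C_-$ also absorbs all of $L_0$ (hence $\min L_+$ and thereby all of $L_+$), giving $C_- = C_+ = L$, $|L^{(\beta)}| = 1$, and $d(L) = \omega \cdot \beta$; every $d(L_\ast) \le \omega \cdot \beta$, with equality for at least one $\ast$ where $\alpha_\ast = \beta$, so $\max\{d(L_\ast)\} = \omega \cdot \beta = d(L)$. If $|L_0^{(\beta)}| \ge 2$, then necessarily $\alpha_0 = \beta$ and $p_0 := |L_0^{(\beta)}| - 1 \ge 1$; now $C_-$ and $C_+$ are distinct, and $L^{(\beta)}$ consists of $C_-$, $C_+$, and the $|L_0^{(\beta)}| - 2$ interior classes of $L_0^{(\beta)}$, so $|L^{(\beta)}| = |L_0^{(\beta)}| = p_0 + 1$, yielding $d(L) = \omega \cdot \beta + p_0 = d(L_0) = \max\{d(L_\ast)\}$ (since $d(L_\pm) \le \omega \cdot \beta < d(L_0)$).

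The main obstacle is the careful bookkeeping at the two boundary points $\max L_- = \min L_0$ and $\max L_0 = \min L_+$; the limit-ordinal hypothesis on $d(L_\pm)$ is precisely what guarantees that $L_-$ and $L_+$ are each absorbed into a single $\sim^L_\beta$-class, contributing no extra finite count to $|L^{(\beta)}|$ beyond what $L_0^{(\beta)}$ already provides.
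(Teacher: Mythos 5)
Your proof is correct, and it takes a genuinely different route from the paper. The paper proves the upper bound by a witness argument: it uses Proposition~\ref{witness for density} to extract a chain $x_0 <^{(\alpha)} x_1 <^{(\alpha)} \dots <^{(\alpha)} x_{p+1}$ from the hypothesis $d(L)>\omega\cdot\alpha+p$ and then argues case-by-case that if any two consecutive witnesses (or a witness and $\max L_-$) fall into $L_-$, the $L_-/\!\sim_{\HR(L_-)}\,=\mathbf 1$ collapse forces $\HR(L_-)>\alpha$ and hence $d(L_-)>\omega\cdot\alpha+p$; otherwise the chain can be relocated entirely into $L_0$. You instead compute $L^{(\beta)}$ directly at the level $\beta=\max_\ast\HR(L_\ast)$, using Proposition~\ref{absoluteness of sim} (convexity of the three pieces) to identify $\sim^L_\beta$-classes with internal ones, and the observation $|L_\pm^{(\beta)}|=1$ (from the limit-ordinal hypothesis) to show that $L_\pm$ contribute no new classes beyond those already counted by $L_0^{(\beta)}$, so $|L^{(\beta)}|=\max\{1,|L_0^{(\beta)}|\}$. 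Your approach is more structural and arguably more transparent about where the limit hypothesis is used (to kill off $L_\pm^{(\beta)}$); the paper's witness approach avoids the bookkeeping of the boundary identifications and only needs a single pointwise case. One small thing you leave implicit: after showing $|L^{(\beta)}|$ is finite you get only $\HR(L)\le\beta$, so you should note that Remark~\ref{rank is increasing} applied to the piece realizing $\beta$ gives $\HR(L)\ge\beta$, hence equality, before reading off $d(L)$.
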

\begin{proof}
    By Remark \ref{density is increasing} it follows that $ d(L) \ge \max \{d(L_-), d(L_0), d(L_+)\} $.
    
    For the other direction let $ d(L) > \omega \cdot \alpha + p $. We show that $d(L')>\omega\cdot\alpha+p$ for some $L'\in\{L_-,L_0,L_+\}$.
    
    Use Proposition \ref{witness for density} to choose witnesses $ x_0 <^\parens \alpha x_1 <^\parens \alpha x_2 <^\parens \alpha \cdots <^\parens \alpha x_{p + 1} $ in $ L $.
    If $ L' \in \{ L_-, L_0, L_+ \} $ is such that 
      $ x_j \in L' $ for every $ 0 \le j \le p + 1 $, 
      then $ d(L') > \omega \cdot \alpha + p $ by Proposition \ref{witness for density} and we are done.
    
    Suppose $ x_0, x_1 $ are both in $ L_- $. Since $ d(L_-)$ is a limit ordinal, $ L_-/\sim_{\HR(L_-)} = \mbf 1 $ and hence $ x_0 \sim_{\HR(L_-)} x_1 $. Since $ x_0 <^\parens \alpha x_1$, we get $ \HR(L_-)>\alpha$.
    Again since $d(L_-)$ is a limit ordinal, $d(L_-)> \omega \cdot \alpha + p$ as required.
    
    Therefore we may assume that $ \max L_- <^\parens \alpha x_1 $. Similarly we can assume that $ x_p <^\parens \alpha \min L_+ $.
    But then $\max L_-=\min L_0 <^\parens \alpha x_1 <^\parens \alpha \cdots <^\parens \alpha x_p <^\parens \alpha \max L_0=\min L_+ $ are elements of $ L_0 $
      witnessing $ d(L_0) > \omega \cdot \alpha + p $.
\end{proof}

Now we are ready to show that the choice of the corner decomposition of $H$, if exists, does not affect the computation of $d(H)$.
\begin{lem}
\label{invariant}
Suppose $ (\corner^0_-, H^0_0, \corner^0_+) $ and $ (\corner^1_-, H^1_0, \corner^1_+) $ are two corner decompositions of a discrete $n$-hammock $ H $ all of whose projections are unbounded. Then \[ \max \{d(\corner^0_-), d(H^0_0) , d(\corner^0_+)\} = \max \{d(\corner^1_-), d(H^1_0), d(\corner^1_+)\}. \]
\end{lem}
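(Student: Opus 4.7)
The plan is to prove the stronger claim that $d(H) = \max\{d(\corner_-), d(H_0), d(\corner_+)\}$ for \emph{any} corner decomposition $(\corner_-, H_0, \corner_+)$ of $H$. This immediately gives the lemma, as both sides would then equal the decomposition-independent invariant $d(H)$. Setting $M := \max\{d(\corner_-), d(H_0), d(\corner_+)\}$, the inequality $M \le d(H)$ is immediate from Remark~\ref{density is increasing}, so the task is to show $d(H) \le M$.

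By Definition~\ref{posetdensity} it suffices to bound $d(L) \le M$ for every bounded linear suborder $L$ of $H$. Any such $L$ lies inside the bounded sub-box $B := \bigotimes_{i \in [n]}[\min\pi_i(L), \max\pi_i(L)]$, which is a bounded discrete sub-hammock of $H$ by Proposition~\ref{boxes are hammocks}. Hence Theorem~\ref{Schroer formula} gives $d(L) \le d(B) = d(\pi_1(B) \dot+ \cdots \dot+ \pi_n(B))$. The corner decomposition partitions each projection as $\pi_i(B) = I_i \dot+ J_i \dot+ K_i$ with $I_i \subseteq \pi_i(\corner_-)$, $J_i \subseteq \pi_i(H_0)$ and $K_i \subseteq \pi_i(\corner_+)$. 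Since $\pi_i(\corner_-) \cong L'_i \cdot \omega^*$ for some linear order $L'_i$, the bounded interval $I_i$ lies inside $L'_i \cdot k$ for some $k \in \N$, and by Corollary~\ref{prop: omega times linear order} (applied to the reversal) one has $\HR(I_i) \le \HR(L'_i) = \HR(\pi_i(\corner_-)) - 1$. Symmetrically $\HR(K_i) \le \HR(\pi_i(\corner_+)) - 1$, while $\HR(J_i) \le \HR(\pi_i(H_0))$.

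The bound $d(B) \le M$ then follows by case analysis. When $\HR(H_0) \ge \max\{\HR(\corner_-), \HR(\corner_+)\}$, using Proposition~\ref{Our formula} which yields $\HR(\pi_i(\corner_\pm)) \le \HR(\corner_\pm)$, we find $\HR(I_i), \HR(K_i) < \HR(H_0)$. So in computing $d(\pi_i(B))$ via iterated Proposition~\ref{prop: density of sum of orders}, only $J_i$ can contribute at the top rank $\HR(H_0)$, and its finite part is bounded by that of $d(\pi_i(H_0))$; a second iteration of the same proposition across $i \in [n]$ gives $d(B) \le d(H_0) \le M$. In the complementary case $\alpha := \max\{\HR(\corner_-), \HR(\corner_+)\} > \HR(H_0)$, all of $\HR(I_i), \HR(J_i), \HR(K_i)$ are strictly less than $\alpha$, whence $\HR(B) \le \alpha - 1$ and $d(B) < \omega \cdot \alpha = \max\{d(\corner_-), d(\corner_+)\} \le M$ (the equality uses Proposition~\ref{Our formula} to identify $d(\corner_\pm) = \omega \cdot \HR(\corner_\pm)$).

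The main obstacle lies in the first case, where one might worry that the many ``tail'' contributions from the pieces $I_i$ and $K_i$ could accumulate in the finite-part accounting and push $d(B)$ above $d(H_0)$. The crux of the argument is the strict inequality $\HR(I_i), \HR(K_i) < \HR(H_0)$: it ensures these tails never appear at the dominant rank $\HR(H_0)$, so the only pieces contributing at that rank are the $J_i \subseteq \pi_i(H_0)$, whose finite parts are controlled by the finite part of $d(H_0)$ itself via Theorem~\ref{Schroer formula} applied to $H_0$.
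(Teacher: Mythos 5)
Your proof is correct, and it takes a genuinely different route from the paper. Rather than showing directly that the two maxima agree, you prove the stronger statement $d(H) = \max\{d(\corner_-), d(H_0), d(\corner_+)\}$ for \emph{any} corner decomposition, which is exactly Corollary~\ref{about density computation}. The paper derives that corollary \emph{from} Lemma~\ref{invariant}, whose own proof runs by contradiction: it first equates corner densities via Corollary~\ref{corner equimorphism}, then supposes one middle density exceeds the other and shows that it cannot exceed the top corner density, splitting into two cases on $d(H^0_0)$ versus $d(F^1_-)$. Your argument inverts this order of deduction: you fix a bounded linear suborder $L$, enclose it in a bounded box $B$, apply Theorem~\ref{Schroer formula}, and then use the strict drop $\HR(I_i), \HR(K_i) < \HR(\pi_i(\corner_\pm))$ (coming from the fact that a bounded piece of a period$\cdot\omega^{(*)}$ has strictly smaller Hausdorff rank) to conclude that corner fragments never contribute at the dominant rank. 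This is self-contained, avoids Corollary~\ref{corner equimorphism} entirely, and proves Lemma~\ref{invariant} and Corollary~\ref{about density computation} simultaneously. Two small points you leave implicit: (i) you should dispatch the non-scattered case up front (as the paper does), since otherwise $\HR(L'_i)$ is not an ordinal and the successor arithmetic $\HR(\pi_i(\corner_-)) - 1$ breaks down; (ii) you should verify that $B := \bigotimes_i[\min\pi_i(L), \max\pi_i(L)]$ is indeed a box, i.e.\ that $\pi_i(B)$ is an interval; this follows from the lattice structure of $H$ (Proposition~\ref{hammocksublat}) by taking meets and joins with a point of $H$ projecting to the desired intermediate coordinate, but it is not automatic from the definitions.
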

\begin{proof}
First recall from Theorem \ref{rank infinity iff not scattered} and Proposition \ref{densityposetunbdd} that $H$ is not scattered if and only if $d(H)=\infty$. In case these equivalent conditions hold then Proposition \ref{prop: SCattered sum of linear orders} and Theorem \ref{Schroer formula} together ensure that both left and right hand sides equal $\infty$. So we may assume that $H$ is scattered, and hence $d(H)\neq\infty$.

By Corollary \ref{corner equimorphism} $ \corner^0_- $ embeds into $ \corner^1_- $ and vice versa, so $ d(\corner^0_-) = d(\corner^1_-) $.
Similarly $ d(\corner^0_+) = d(\corner^1_+) $. Without loss of generality, assume that $ d(F^1_-) \ge d(F^1_+) $.
If $ d(H^0_0) = d(H^1_0) $ then we are done, so assume without loss of generality that $ d(H^0_0) < d(H^1_0) $. In order to complete the proof, it is enough to show that $ d(H^1_0) < \max\{d(\corner^1_-),d(\corner^1_+)\}$.
          
Suppose not. Then $ d(H^1_0)\ge d(F^1_-)\geq d(F^1_+)$. Proposition \ref{Our formula} gives that both $d(F^1_-)$ and $d(F^0_-)$ are limit ordinals. Let $ d(F^1_-) = \omega \cdot \alpha$, where $\alpha:=\HR(F^1_-)$ is an ordinal.
        
For each $ i \in [n] $, since $\pi_i(H^1_0)$ is bounded, we can choose a bounded proper suffix $ L^i_- $ of $ \pi_i(\corner^0_-) $ and a bounded proper prefix $ L^i_+ $ of $ \pi_i(\corner^0_+) $ such that \begin{equation}
    \label{stretch} \pi_i(H^1_0) \subseteq L^i_- \dot+ \pi_i(H^0_0) \dot+ L^i_+ .
\end{equation}
        
There are two cases to consider.
\begin{itemize}
\item \textsf{Case 1: $ d(H^0_0) < d(F^1_-) $.} Proposition \ref{prop: density of sum of orders} gives that for each $i\in[n]$, we have $\HR(L^i_- \dot+ \pi_i(H^0_0) \dot+ L^i_+) = \max\{\HR(L^i_-), \HR(\pi_i(H^0_0)), \HR(L^i_+))\}$. Since $d(H_0^1)\geq d(F^1_-)$ use Corollary \ref{rank of bdd hammock} to choose $ j \in [n] $ such that $\HR(\pi_j(H^1_0)) \ge \HR(F^1_-)=\alpha$. Hence, in view of Equation \eqref{stretch}, it will be enough to show that $\max\{\HR(L^j_-), \HR(\pi_j(H^0_0)), \HR(L^j_+))\}<\alpha$.

Since $ d(F^1_-) $ is a limit ordinal, and $d(H_0^0)<d(F^1_-)$, we get $\HR(H_0^0)<\HR(F^1_-)=\alpha$. Furthermore, Corollary \ref{rank of bdd hammock} gives that
$\HR(H^0_0) = \max_i \HR(\pi_i(H^0_0)) \ge \HR(\pi_j(H^0_0))$. Thus $\alpha>\HR(\pi_j(H^0_0))$.

Since $ L^i_- $ is bounded, $ d(L^i_-) $ is a successor ordinal by Proposition \ref{bddlindensitysuccessor}. On the other hand, since $d(F^0_-)$ and $d(F^1_-)$ are limit ordinals, $ d(L^i_-) < d(F^i_-) \le d(F^1_-) = \omega \cdot \alpha $. Hence $ \HR(L^i_-) < \alpha $. Similarly we can argue that $ \HR(L^i_+) < \alpha $ to complete the proof of this case.

\item \textsf{Case 2: $ d(H^0_0) \ge d(F^1_-) $.}
Let $L_-:=L^1_- \dot+ L^2_- \dot+ \dots \dot+ L^n_-$ and $L_+:=L^1_+ \dot+ L^2_+ \dot+ \dots \dot+ L^n_+$. Then $L_-$ and $L_+$ are bounded linear suborders of $F_-^0$ and $F_+^0$ respectively. Since $d(L_-)$ and $d(L_+)$ are successor ordinals by Proposition \ref{bddlindensitysuccessor} while $d(F_-^0)$ and $d(F_+^0)$ are limit ordinals by Proposition \ref{Our formula}, we get $\HR(L_-^0)<\HR(F_-^0)=\HR(F_-^1)$ and $\HR(L_+^0)<\HR(F_+^0)=\HR(F_+^1)\leq\HR(F_-^1)$. So we have
\begin{align*}
d(H^1_0) &= d(\pi_1(H^1_0) \dot+ \pi_2(H^1_0) \dot+ \dots \dot+ \pi_n(H^1_0)) && \text{(Theorem \ref{Schroer formula})}\\
&\le d(L^1_- \dot+ \pi_1(H^0_0) \dot+ L^1_+ \dot+ \dots \dot+ L^n_- \dot+ \pi_n(H^0_0) \dot+ L^n_+) &&\text{(Equation \eqref{stretch})}\\
&= d(\pi_1(H^0_0) \dot+ \pi_2(H^0_0) \dot+ \dots \dot+ \pi_n(H^0_0) \dot+ L_-\dot+ L_+) &&\text{(Corollary \ref{order irrelevant})}\\
&=  d(\pi_1(H^0_0) \dot+ \pi_2(H^0_0) \dot+ \dots \dot+ \pi_n(H^0_0))&&(\max\{\HR(L_-),\HR(L_+)\}<\HR(F_-^1)\leq\HR(H^0_0))\\
&= d(H^0_0) && \text{(Theorem \ref{Schroer formula})}
\end{align*}
contrary to our assumption that $ d(H^1_0) > d(H^0_0) $.
\end{itemize}
\end{proof}
\begin{cor}
\label{about density computation}
Suppose $ (F_-, H_0, F_+) $ is a corner decomposition of a discrete $n$-hammock $ H $ all of whose projections are unbounded. Then $ d(H) = \max \{d(\corner_-), d(H_0), d(\corner_+)\} $.
\end{cor}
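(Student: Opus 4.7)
The inequality $\max\{d(F_-), d(H_0), d(F_+)\} \le d(H)$ is immediate from monotonicity: each of $F_-, H_0, F_+$ is a subposet of $H$, so Remark~\ref{density is increasing} (combined with Definition~\ref{posetdensity}) yields the bound.

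For the reverse inequality, the plan is to reduce to bounded linear suborders via Proposition~\ref{bounded doesnt matter} and then unpack both sides using Theorem~\ref{Schroer formula}. Given a bounded linear suborder $L \subseteq H$, enclose it in the bounded sub-box
\[
B_L := \bigotimes_{i \in [n]} [\min \pi_i(L), \max \pi_i(L)]_{\pi_i(H)},
\]
which is itself a bounded sub-hammock by Proposition~\ref{boxes are hammocks}. Then $d(L) \le d(B_L) = d(\pi_1(B_L) \dot+ \cdots \dot+ \pi_n(B_L))$ by Theorem~\ref{Schroer formula}. Each $\pi_i(B_L)$, being a bounded subinterval of $\pi_i(H) = \pi_i(F_-) \dot+ \pi_i(H_0) \dot+ \pi_i(F_+)$, decomposes as $M_i^- \dot+ M_i^0 \dot+ M_i^+$ with $M_i^\bullet$ lying in $\pi_i(F_\bullet)$ (writing $F_0 := H_0$). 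Rearranging the dot sum via Corollary~\ref{order irrelevant} gives
\[
d(B_L) = d(R^- \dot+ R^0 \dot+ R^+), \qquad R^\bullet := M_1^\bullet \dot+ \cdots \dot+ M_n^\bullet.
\]
Applying Theorem~\ref{Schroer formula} in reverse yields $d(R^\bullet) = d(\bigotimes_i M_i^\bullet)$, and since $\bigotimes_i M_i^\bullet \subseteq F_\bullet$ is a bounded sub-hammock by Proposition~\ref{boxes are hammocks}, Remark~\ref{density is increasing} forces $d(R^\bullet) \le d(F_\bullet)$.

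It remains to show $d(R^- \dot+ R^0 \dot+ R^+) \le \max\{d(F_-), d(H_0), d(F_+)\}$ using Proposition~\ref{prop: density of sum of orders}. By Proposition~\ref{Our formula}, $d(F_-)$ and $d(F_+)$ are limit ordinals, whereas the bounded orders $R^-, R^+$ have successor-ordinal densities by Proposition~\ref{bddlindensitysuccessor}; combined with $d(R^\pm) \le d(F_\pm)$, this forces $\HR(R^\pm) < \HR(F_\pm)$ strictly. A short case split, on whether the max is realized by one of the corners (a limit ordinal which strictly dominates all three of $\HR(R^-), \HR(R^0), \HR(R^+)$) or by $H_0$ (in which case $\HR(R^\pm) < \HR(F_\pm) \le \HR(H_0)$ ensures that only $R^0$ can contribute to the finite residue of the dot sum, which is then bounded by the finite residue of $d(H_0)$), closes the estimate. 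Taking the supremum over $L$ via Proposition~\ref{bounded doesnt matter} then yields $d(H) \le \max\{d(F_-), d(H_0), d(F_+)\}$.

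The main technical obstacle is this final case analysis, and in particular ruling out the possibility that the finite residues of $R^-, R^+$ illegitimately swell the density of the dot sum. This is prevented by the strict drop $\HR(R^\pm) < \HR(F_\pm)$ enforced by Proposition~\ref{Our formula}: the finite parts of $R^\pm$ sit at strictly lower Hausdorff levels than the corresponding corners and are therefore absorbed, via Proposition~\ref{prop: density of sum of orders}, by the higher-ranked $R^0$ term when $H_0$ achieves the max, or by the higher-ranked corner contribution when a corner does.
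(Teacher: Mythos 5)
Your proof takes a genuinely different route from the paper. The paper's proof is short because it leans on Lemma~\ref{invariant}: given a bounded linear suborder $L\subseteq H$, it enlarges the middle piece to a new corner decomposition $(F'_-,H'_0,F'_+)$ with $L\subseteq H'_0$, so that $d(L)\le d(H'_0)$, and then the invariance of $\max\{d(F_-),d(H_0),d(F_+)\}$ under change of decomposition (Lemma~\ref{invariant}) finishes the argument in one line. You instead keep the decomposition fixed and re-derive the estimate directly for each $L$: enclose $L$ in a bounded box $B_L$, apply Theorem~\ref{Schroer formula}, split each $\pi_i(B_L)$ along the corner decomposition, regroup via Corollary~\ref{order irrelevant}, and close with a Hausdorff-rank case split. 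This is in effect an inline re-proof of the technical heart of Lemma~\ref{invariant}; it is a legitimate and more self-contained path, though it gives up the modularity the paper gains by factoring that arithmetic into the auxiliary lemma.

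There is, however, a gap in your middle step. You assert $d(R^\bullet)=d(\bigotimes_i M_i^\bullet)$ by ``applying Theorem~\ref{Schroer formula} in reverse,'' but that application returns $R^\bullet=M_1^\bullet\dot+\cdots\dot+M_n^\bullet$ only if $\pi_j(\bigotimes_i M_i^\bullet)=M_j^\bullet$ for every $j$ and the box is nonempty, and neither is automatic. Some $M_i^\bullet$ can be empty while others are not (only some coordinates of $L$ may reach a given corner), in which case $\bigotimes_i M_i^\bullet=\emptyset$ while $R^\bullet\ne\emptyset$. Even when all $M_i^\bullet$ are nonempty, for $\bullet=0$ the piece $H_0$ is a hammock but not necessarily a clat, so $\bigotimes_i M_i^0$ may project onto a proper sub-interval of some $M_j^0$; Schr\"oer's formula then yields only $d(\bigotimes_i M_i^0)\le d(R^0)$, which points the wrong way for your chain $d(R^\bullet)=d(\bigotimes_i M_i^\bullet)\le d(F_\bullet)$. (For $\bullet=\pm$ the step is repairable: $F_\pm$ is a clat, so $\bigotimes_i M_i^\pm=\prod_i M_i^\pm$ automatically has full projections when all factors are nonempty.) The clean fix is to dispense with these boxes: for $\bullet=0$, observe that, up to the finitely many dot-sum identification points (irrelevant for density by Remark~\ref{rem: plus and plusdot}), $R^0$ is a suborder of $\pi_1(H_0)\dot+\cdots\dot+\pi_n(H_0)$, so $d(R^0)\le d(H_0)$ by Theorem~\ref{Schroer formula} and Remark~\ref{density is increasing}; and for the corners, use $\pi_i(F_-)\cong L'_i\cdot\omega^*$ and that $M_i^-$ is a bounded suffix, hence $\HR(M_i^-)\le\HR(L'_i)$, giving $\HR(R^-)\le\max_i\HR(L'_i)<\HR(F_-)$ directly (and dually for $F_+$). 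With that repair your concluding case split is sound.
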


\begin{proof}
Since $ \corner_-, H_0, \corner_+ $ are all subsets of $ H $, it is clear that $ d(H) \ge \max \{d(\corner_-), d(H_0), d(\corner_+)\} $.

For the other direction, let $ L \subseteq H $ be a bounded linear order.  Thanks to Lemma \ref{invariant} we can choose $H_0$ large enough so that $ L \subseteq H_0$. Then $ d(L) \le d(H_0) \le \max \{d(\corner_-), d(H_0), d(\corner_+)\} $. Since $L$ is arbitrary, Proposition \ref{bounded doesnt matter} gives $ d(H)\leq\max \{d(\corner_-), d(H_0), d(\corner_+)\} $ as required.
\end{proof}

\begin{cor}
\label{finrankdensity}
Suppose $H$ is a discrete $n$-hammock $ H $ all of whose projections are unbounded. If $ \HR(H) < \omega $ then $ d(H) < \omega^2 $. 
\end{cor}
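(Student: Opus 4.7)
The plan is to read off the conclusion directly from Definition \ref{rank of poset}. That definition sets the Hausdorff rank $\HR(P)$ of an arbitrary poset $P$ to $\infty$ precisely when $d(P) = \infty$, and otherwise defines it as the unique ordinal $\alpha$ satisfying $d(P) = \omega \cdot \alpha + p$ for some $p < \omega$.

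Applied to $P = H$, the hypothesis $\HR(H) < \omega$ in particular rules out $\HR(H) = \infty$, so $d(H) \neq \infty$ and $d(H) = \omega \cdot \HR(H) + p$ for some natural number $p$. Since $\HR(H)$ is a finite ordinal, we have $\omega \cdot \HR(H) < \omega^2$; combined with $p < \omega$ this yields $d(H) < \omega \cdot (\HR(H) + 1) \leq \omega^2$, which is the required bound.

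There is no serious obstacle: once Definition \ref{rank of poset} is in place, the statement is essentially a restatement of that definition restricted to the range $\HR < \omega$. The discreteness and unboundedness hypotheses are not invoked in the argument itself; they are present because this corollary is designed to be paired with the preceding corner-decomposition machinery. In applications the finite-rank assumption on $H$ will be checked by combining Corollary \ref{rank of bdd hammock} (which reduces $\HR$ of the bounded middle piece of a corner decomposition to the $\HR$ of its linear projections) with Proposition \ref{Our formula} (which expresses $\HR$ of a corner in terms of the projection linear orders, adding one). The practical content of the corollary is therefore that once one has a common finite upper bound on the Hausdorff ranks of all the linear projections appearing in some corner decomposition of $H$, Corollary \ref{about density computation} forces $d(H) < \omega^2$, which is exactly the shape of the bound that will feed into Theorem \ref{fpdensity} and ultimately into the proof of Theorem \ref{oldconj}.
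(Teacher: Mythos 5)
Your proof is correct, and it is a genuinely more direct route than the paper's. You observe, rightly, that Definition~\ref{rank of poset} already encodes the conclusion: $\HR(H) < \omega$ means (by Euclidean division of ordinals by $\omega$) that $d(H) = \omega\cdot\HR(H) + p$ for some $p < \omega$ and a finite $\HR(H)$, whence $d(H) < \omega\cdot(\HR(H)+1) \le \omega^2$. No further structure of $H$ is needed. The paper instead routes through a corner decomposition of $H$: it bounds $d(F_\pm) \le \omega\cdot\HR(F_\pm) \le \omega\cdot\HR(H) < \omega^2$ via Proposition~\ref{Our formula} and $\HR(F_\pm) \le \HR(H)$, bounds $d(H_0) < \omega^2$ via Theorem~\ref{Schroer formula}, and combines with Corollary~\ref{about density computation}. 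The reason the paper takes this longer path is contextual rather than logical: in the downstream application (Theorem~\ref{fpdensity}) one does \emph{not} know $\HR(H)<\omega$ a priori --- one only knows that each projection has finite Hausdorff rank --- so the corner-decomposition argument is the one that actually does the work of producing the bound on $d(H)$ from information about the projections. Your observation that the discreteness and unboundedness hypotheses are not invoked by your argument is also accurate; those hypotheses serve to guarantee that the corner machinery can be paired with this statement, not to make the statement true. One small caveat worth noting: your argument implicitly uses that $d(H)\neq -1$ (so that Euclidean division applies), which holds here since $H$ is infinite by the unbounded-projection hypothesis, but would need a separate word for an empty poset.
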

\begin{proof}
Suppose $ (F_-, H_0, F_+) $ is a corner decomposition of $H$. Since $\HR(H)<\omega$, Proposition \ref{Our formula} ensures that $d(F_-)\leq\omega\cdot\HR(F_-)\leq \omega\cdot\HR(H)<\omega^2$. Similarly we can argue that $d(F_+)<\omega^2$. Finally since $H_0$ is bounded and discrete, Theorem \ref{Schroer formula} gives $d(H_0)<\omega^2$. Thus, thanks to Corollary \ref{about density computation}, we get $d(H)=\max \{d(\corner_-), d(H_0), d(\corner_+)\}<\omega^2$.
\end{proof}

The next result documents a special case of the above corollary that will be a useful tool in the next section.

\begin{thm}\label{fpdensity}
Suppose $H$ is a finitely presented discrete $n$-hammock. Furthermore, assume that either for all $i\in[n]$, $\pi_i(H)$ is bounded below (resp. above) or for all $i\in[n]$, $\pi_i(H)$ is unbounded below (resp. above). Then $d(H)<\omega^2$.
\end{thm}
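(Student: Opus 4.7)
The plan is to bound $d(H)$ uniformly by exhibiting a fixed finite $K<\omega$ such that every bounded linear suborder $L\subseteq H$ sits inside a bounded subhammock of density strictly less than $\omega\cdot(K+1)$, and then invoking Proposition~\ref{bounded doesnt matter}. This yields $d(H)\le\omega\cdot(K+1)<\omega^2$.

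First, for each $i\in[n]$ I would apply Proposition~\ref{period} to the discrete finitely presented linear order $\pi_i(H)$ to obtain $\pi_i(H)\cong X_-^i+L_0^i+X_+^i$, where $X_-^i$ is either $\mathbf{0}$ (if $\pi_i(H)$ is bounded below) or $M_-^i\cdot\omega^*$ with $M_-^i\in\dLOfpb{1}{1}$ otherwise, $X_+^i$ is dealt with symmetrically using $M_+^i\cdot\omega$, and $L_0^i\in\dLOfpb{1}{1}\cup\{\mathbf{0}\}$. The uniformity hypothesis of the theorem guarantees that the ``empty versus nonempty'' status of $X_-^i$ (respectively $X_+^i$) is synchronized across all $i$, so the four resulting configurations can be handled in a single argument. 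By Remark~\ref{HRfindesc}, each of the bounded finitely presented pieces $L_0^i, M_-^i, M_+^i$ has finite Hausdorff rank, so setting $K:=\max_{i\in[n]}\max\{\HR(L_0^i),\HR(M_-^i),\HR(M_+^i)\}$ (over the terms actually present) gives $K<\omega$.

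Given a bounded linear suborder $L\subseteq H$, boundedness of each $\pi_i(L)\subseteq\pi_i(H)$ yields an $N\in\N^+$ (depending on $L$) such that $\pi_i(L)\subseteq T_N^i:=(M_-^i\cdot N)\dot+L_0^i\dot+(M_+^i\cdot N)$ for every $i$, with absent summands replaced by $\mathbf{0}$. Each $T_N^i$ is a bounded interval of $\pi_i(H)$, and $L':=H\cap\prod_{i\in[n]}T_N^i$ is a bounded discrete subhammock of $H$ containing $L$, by Proposition~\ref{boxes are hammocks}. Theorem~\ref{Schroer formula} then gives $d(L')=d(T_N^1\dot+T_N^2\dot+\cdots\dot+T_N^n)$. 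The key observation is that $\HR(M\cdot N)=\HR(M)$ for any bounded finitely presented $M$ and finite $N$, since $M$ embeds into $M\cdot N$ and condensing each factor at level $\HR(M)$ already produces a finite linear order. Iterating the Hausdorff-rank analogue of Proposition~\ref{prop: density of sum of orders} then yields $\HR(T_N^1\dot+\cdots\dot+T_N^n)\le K$, hence $d(L')<\omega\cdot(K+1)$. Taking the supremum over all bounded $L$ via Proposition~\ref{bounded doesnt matter} completes the proof.

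The chief subtlety is that $K$ must be independent of $L$ (equivalently, of $N$). Although the cardinality of the top condensation $T_N^i/\sim_K$ does grow with $N$, the Hausdorff rank of $T_N^i$ remains bounded by $K$; this stability of the Hausdorff rank (as opposed to the density) under finite multiplication is precisely what rules out rank $\omega$ despite the presence of $\omega$-periodic corners in unbounded finitely presented hammocks. Verifying this stability rigorously, together with the corresponding uniform Hausdorff-rank version of Proposition~\ref{prop: density of sum of orders}, constitutes the most technical step of the plan; once in place, the bound $d(H)<\omega^2$ follows cleanly.
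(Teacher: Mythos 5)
Your proof is correct, and it takes a genuinely different route than the paper's. The paper deduces Theorem~\ref{fpdensity} from the corner-decomposition machinery: it applies Proposition~\ref{period} to obtain top and/or bottom corners, computes the density of a corner exactly via Proposition~\ref{Our formula}, and then combines these with the density of a bounded middle subhammock using Corollary~\ref{about density computation} to conclude (via Corollary~\ref{finrankdensity}) that finite Hausdorff rank gives density below $\omega^2$. Your argument instead bypasses the corner infrastructure entirely: you fix the uniform bound $K=\max_{i}\max\{\HR(L_0^i),\HR(M_-^i),\HR(M_+^i)\}<\omega$ from the periodic decomposition, show that every bounded linear suborder $L\subseteq H$ lies in a bounded subhammock $L'=H\cap\prod_i T_N^i$ with $\HR(L')\le K$ (using $\HR(M\cdot N)=\HR(M)$, which follows from Proposition~\ref{absoluteness of sim} and Remark~\ref{rank is increasing}, and the Hausdorff-rank consequence of Proposition~\ref{prop: density of sum of orders}), and conclude directly from Definition~\ref{posetdensity} that $d(H)\le\omega\cdot(K+1)<\omega^2$. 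Your route is more elementary for proving this one theorem, since it avoids developing Proposition~\ref{Our formula}, Lemma~\ref{invariant}, and Corollaries~\ref{about density computation} and~\ref{finrankdensity}; the paper's route, in exchange, yields an exact formula for $d(H)$ in terms of a corner decomposition, which it needs later for the stable-rank algorithm in \S~\ref{sec: alg}. One small imprecision worth noting: the step generating $L'$ should appeal to the definition of the density of a poset (Definition~\ref{posetdensity}) rather than Proposition~\ref{bounded doesnt matter}, which is stated for linear orders; and $T_N^i$ is naturally a genuine sum, not a dot sum, as a subinterval of $\pi_i(H)$, though Remark~\ref{rem: plus and plusdot} makes this distinction immaterial.
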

\begin{proof}
Remark \ref{HRfindesc} gives that $\HR(\pi_i(H))<\omega$ for each $i\in[n]$. If the linear orders $\pi_i(H)$ are bounded then Theorem \ref{Schroer formula} gives the conclusion. If the linear orders $\pi_i(H)$ are all unbounded then Proposition \ref{period} ensures that $H$ has both top and bottom corners, and hence Corollary \ref{finrankdensity} gives the conclusion. Finally, if all $\pi_i(H)$ are bounded below but unbounded above, or if they are bounded above but unbounded below then Proposition \ref{period} ensures the existence of only one corner. In that case, an appropriate modification of Corollary \ref{finrankdensity} gives the conclusion.
\end{proof}

\section{The proof of Theorem \ref{oldconj}}
\label{section: Proofs}
We recall the definitions of special biserial algebras and string algebras in \S~\ref{sec: Preliminaries} along with associated combinatorial notations and terminologies. In the same subsection we also recall the description of finite-dimensional indecomposable modules for string algebras and (a basis of) the space of morphisms between them given by Gel'fand and Ponomarev\cite{G&P}, and Crawley-Boevey \cite{crawley1989maps} and Krause\cite{krause1991maps} respectively. In \S~\ref{sec: hammock lin order} and \S~\ref{excpoints}, we will study concrete hammock linear orders and hammock posets for string algebras respectively. In these two subsections, we recall several notations and results about hammocks from \cite{SKSK} and \cite{Schroer98hammocksfor}, and use them to obtain sufficient conditions for hammock linear orders to be finite (Proposition \ref{prop: Finite Hammock}) or scattered (Theorem \ref{prop: Scattered hammock}). The results in \S~\ref{order theory} imply that the hammock posets are finitely presented discrete $2$-hammocks and the density of any of its scattered boxes is bounded above by $\omega^2$ (Theorem \ref{hammockordermain}).  After proving yet one more finiteness result (Lemma \ref{excptfin}), we complete the proof Theorem \ref{oldconj} for the case when $\Lambda$ is a string algebra in \S~\ref{conjproofstringalg}. We achieve this by establishing connections between the stable rank and the maximum density of a maximal scattered box in a hammock poset (Equation \eqref{stablerank6}). In \S~\ref{biserial}, we extend Theorem \ref{oldconj} to the class of special biserial algebras.

\subsection{Preliminaries of string algebras and special biserial algebras}\label{sec: Preliminaries} We begin by recalling the definition of a string algebra as well as a special biserial algebra. Then we introduce the combinatorial glossary of the terms associated with a string algebra, including the definitions of strings and bands, that will be used throughout the rest of the paper. Our presentation mostly follows \cite{GKS20}. At the end of the section we will recall Gel'fand and Ponomarev's \cite{G&P} complete classification of finite-dimensional indecomposable modules for a string algebra, and the description of a basis for all maps between indecomposable modules in terms of the so-called \emph{graph maps} introduced by Crawley-Boevey \cite{crawley1989maps} and Krause \cite{krause1991maps}.

Fix an algebraically closed field $\K$. A \emph{string algebra} (resp. \emph{special biserial algebra}) is a $\mathcal K$-algebra $\Lambda:=\K\Q/\langle\rho\rangle$ presented as a certain quotient of the path algebra of a finite quiver $\Q=(Q_0,Q_1,s,t)$, where $Q_0$ is a finite set of vertices, $Q_1$ is a finite set of arrows, and $s,t:Q_1\to Q_0$ are the source and target functions respectively, by the admissible ideal generated by the set $\rho$ of monomial relations (resp. the set $\rho$ of relations that are linear combinations of paths of length at least $2$ and that share source and target vertices) satisfying the following conditions. We require that the indegree and outdegree of any vertex in the quiver $\Q$ is at most two. The Roman letters $v,w$ (possibly with subscripts) will be used for denoting the elements of $Q_0$ and $a,b,c,d,\cdots $ (possibly with subscripts) will be used to denote the elements of $Q_1$. For every $v \in Q_0$ we define a zero-length path, namely $1_{(v,1)}$. A path (of length $n\geq1$) in the quiver is defined as a finite sequence of arrows $a_na_{n-1}\cdots a_1$, where $s(a_{i+1}) = t(a_i)$ for every $i \in[n-1]$. The set $\rho$ satisfies the following properties.
\begin{itemize}
    \item For any $a \in Q_1$ there exists at most one $b \in Q_1$ such that $s(b) = t(a)$ and $ba \notin \langle\rho\rangle$.
    \item For any $a \in Q_1$ there exists at most one $b \in Q_1$ such that $t(b) = s(a)$ and $ab \notin \langle\rho\rangle$.
\end{itemize} 
For simplicity we assume that no path in $\rho $ has a proper subpath in $\rho$. For technical reasons, we also choose and fix a bisection, i.e., a pair of maps $\sigma,\varepsilon:Q_1\to\{1,-1\}$ satisfying the following conditions.
\begin{itemize}
    \item If $a_1 \neq a_2$ are arrows with $s(a_1) = s(a_2)$, then $\sigma(a_1) = - \sigma(a_2). $
    \item If $a_1 \neq a_2$ are arrows with $t(a_1) = t(a_2)$, then $\varepsilon (a_1 ) = -\varepsilon (a_2).$
    \item If $a_1, a_2 \in Q_1$ such that $t(a_1) = s(a_2)$ and $a_2a_1 \notin\langle\rho\rangle$, then $\sigma (a_2) = -\varepsilon (a_1)$.
\end{itemize}

\begin{exmp}\label{Ex: Standard example}
    Here are some standard examples of string algebras.
\begin{enumerate}
        \item For $n,m \geq 2$, the family $GP_{n,m}$ of algebras that was first studied by Gel'fand and Ponomarev is presented using the quiver
        % https://tikzcd.yichuanshen.de/#N4Igdg9gJgpgziAXAbVABwnAlgFyxMJZABgBpiBdUkANwEMAbAVxiVpAF8qQYoBzeEVAAzAE4QAtkjIgcEadQYQIaIgCYA7GWGM4MbgzoAjGAwAKmXPkKIQorHwAWOENXrNWto51IjxUxBk5BRAlFSIATm1dfUVjUwtsPAI2eycXN0YWNjpOCg4gA
\begin{tikzcd}
v \arrow["b"', loop, distance=2em, in=215, out=145] \arrow["a"', loop, distance=2em, in=35, out=325]
\end{tikzcd} with relations $\rho = \{ab,ba, a^n, b^m\}$. The algebra $GP_{n,m}$ is non-domestic if and only if $n+m \geq 5$.

\item The algebra $\Lambda_2 $ presented using the quiver
\begin{tikzcd}[ampersand replacement=\&]
	v_1 \& v_2 \& v_3 \& v_4
	\arrow["c", from=1-2, to=1-3]
	\arrow["d", curve={height=12pt}, from=1-3, to=1-4]
	\arrow["e", curve={height=-12pt}, from=1-3, to=1-4]
	\arrow["a", curve={height=12pt}, from=1-1, to=1-2]
	\arrow["b", curve={height=-12pt}, from=1-1, to=1-2]
\end{tikzcd} with relations $\rho = \{cb, dc\}$ is domestic.
\end{enumerate}
\end{exmp}

Let us denote by $Q_1^-$ the set of the uppercase Roman letters $A_i$ corresponding to each $a_i \in Q_1$. Set $s(A_i) := t(a_i)$, $t(A_i) := s(a_i)$, $\sigma(A_i) := \varepsilon (a_i)$ and $\varepsilon(A_i) = \sigma (a_i)$. We use lowercase Greek letters $\alpha, \beta, \cdots$ to denote the elements of $Q_1 \cup Q_1^-$; these element will be referred to as \emph{syllables}). The inverse of the zero-length path $1_{(v,1)}$ will be denoted using the notation $1_{(v,-1)}$ and the inverse of the path $\xx = a_n\cdots a_2a_1$ with $n\geq 1$ will be denoted  $\xx^{-1}: = A_1A_2\cdots A_n$. Set $\rho^{-1} := \{\xx^{-1}: \xx \in \rho\} $. 

Now we define two combinatorial entities, namely \emph{strings} and \emph{bands}, introduced by Butler and Ringel that play a significant role in the complete classification of indecomposable objects in $\Lambda\dmod$ (see \S~\ref{conjproofstringalg}).

\begin{defn}
    A \emph{word} $\uu$ is either a zero-length path, its inverse or a finite sequence $\alpha_n\cdots \alpha_2\alpha_1$ of syllables such that $s(\alpha_{i+1}) = t(\alpha_i)$ for each $i \in [n-1]$. A \emph{string} is a word $\uu$ that has no subpath in $\rho \cup \rho^{-1}$ or two consecutive syllables that are inverses of each other. A \emph{band} is string $ \bb = \alpha_n\cdots\alpha_2\alpha_1$ satisfying $\alpha_n \in Q_1$, $\alpha_1 \in Q_1^-$, $\bb^2$ is a string, and that $\bb\neq\bb_1^n$ for any string $\bb_1$ and $n\geq 2$.
\end{defn}
We use the notation $\St$ to denote the set of all strings for $\Lambda$ and $\mathsf{Ba}(\Lambda)$ to denote the set of all bands up to cyclic permutations of syllables. Let $\Ba$ be a fixed set of representatives in $\mathsf{Ba}(\Lambda)$. Call a cyclic permutation of an element in $\Ba$ a cycle. Denote the set of all cycles for $\Lambda$ by $\Cyc$.

\begin{exmp}
For the algebra $GP_{n,m}$ from Example \ref{Ex: Standard example} $aB , aBa, a, 1_{(v,1)}, 1_{(v,-1)} $ are some examples of strings, and $aB, bA$ are some examples of bands. For the algebra $\Lambda_2$ from the same example $eca, caB, eD, dE$ are some examples of strings, and $aB, eD$ are some examples of bands.
\end{exmp}

Words, strings and bands are denoted using lowercase Fraktur letters. We denote the length of a word $\xx$ by $|\xx|$. We extend the functions $s,t,\sigma$ and $\varepsilon$ to all strings as follows. For a string $\xx = \alpha_n\cdots \alpha_2 \alpha_1$ define $s(\xx):= s(\alpha_1)$, $\sigma(\xx) := \sigma(\alpha_1)$, $t(\xx) := t(\alpha_n)$ and $\varepsilon (\xx) := \varepsilon (\alpha_n)$. For any $v \in Q_0$ and $i \in \{ 1,-1 \}$ we define $s(1_{(v,i)})=t(1_{(v,i)}) := v$ and $\sigma(1_{(v,i)})=-\varepsilon (1_{(v,i)}) := -i$. The composition $1_{(v,i)}\uu$ is defined if $t(\uu) = v$ and $\varepsilon(\uu)=i$, and under these conditions we say that $1_{(v,i)}\uu = \uu$. Similarly the composition $\uu1_{(v,i)}$ is defined if $s(\uu) = v$ and $i = -\sigma(\uu)$, and under these conditions we say that $1_{(v,i)}\uu = \uu$.

For strings $\xx$ and $\yy$, we say that $\xx$ is a \emph{left substring} (resp. \emph{proper left substring}) of $\yy$, denoted $\xx \sqsubseteq_l \yy$ (resp. $\xx \sqsubset_l \yy$) if $\yy=\uu\xx$ for some (resp. positive length) string $\uu$. Dually say that $\xx$ is a \emph{right substring} (resp. \emph{proper right substring}) of $\yy$, denoted $\xx \sqsubseteq_r \yy$ (resp. $\xx \sqsubset_r \yy$) if $\yy=\xx\uu$ for some (resp. positive length) string $\uu$.

Suppose $\xx\in\St$ and $|\xx|>0$. We define $\theta_l(\xx), \theta_r(\xx)\in\{1,-1\}$ as follows: $\theta_l(\xx)=1$ if and only if the first syllable of $\xx$ is inverse, and $\theta_r(\xx)=1$ if and only if the last syllable of $\xx$ is inverse. Further, to identify if $\xx$ has any sign changes we define
$$\delta(\xx):=
\begin{cases}
    1&\text{ if all syllables of }\xx\text{ are inverse};\\
    -1&\text{ if all syllables of }\xx\text{ are direct};\\
    0&\text{ otherwise}.
\end{cases}$$

Using the language of strings and bands, now we describe the module category for $\Lambda$. Any indecomposable module for a string algebra is isomorphic to either a string module or a band module--this result is essentially due to Gel'fand and Ponomarev\cite{G&P}. We recall a few details and set up some notation, and refer the reader to \cite[Chapter~2]{LakThe16} for a detailed exposition on the construction of string and band modules, and maps between them.

The string module associated with $\xx\in\St$ is denoted $\mathrm{M}(\xx)$ and this association satisfies $M(\xx)\cong M(\yy)$ if and only if $\xx=\yy$ or $\xx^{-1}=\yy$. There is a family of band modules $\{\mathrm{B}(\bb , n, \lambda)\mid n \in \N^+,\lambda \in \K^*\}$ associated with $\bb\in\QBa_0$ satisfying $\mathrm{B}(\bb_1, n_1, \lambda_1)\cong\mathrm{B}(\bb_2, n_2, \lambda_2)$ if and only if $n_1=n_2$, $\lambda_1=\lambda_2$, and $\bb_1$ is a cyclic permutation of either $\bb_2$ or $\bb_2^{-1}$.

Crawley-Boevey \cite{crawley1989maps} and Krause \cite{krause1991maps} described a basis of the $\K$-vector space $\mathrm{Hom}_\Lambda (M,N) $ for any finite-dimensional indecomposable modules $M$ and $N$ of a string algebra $\Lambda$ in terms of the so-called \emph{graph maps}. To describe graph maps in detail, we need the concept of image and factor substrings.
\begin{defn}
For a string $\uu = \alpha_n\cdots \alpha_2\alpha_1$, a substring $\alpha_j \cdots \alpha_i$ is said to be an \emph{image substring} of $\uu$ if it satisfies the following conditions:
\begin{enumerate}
    \item if $j \neq n$ then $\alpha_{j+1} \in Q_1^-$;
    \item if $i \neq 1$ then $\alpha_{i-1} \in Q_1$.
\end{enumerate}
Dually call a substring $\alpha_j\cdots\alpha_i$ of $\uu$ to be a \emph{factor substring} of $\uu$ if it follows the following conditions:  \begin{enumerate}
    \item if $j \neq n$ then $\alpha_{j+1} \in Q_1$;
    \item if $i \neq 1$ then $\alpha_{i-1} \in Q_1^-$.
\end{enumerate}
\end{defn}
If $\vv$ is an image substring of $\uu_1$ then there is a canonical monomorphism $\mathrm{M}(\vv)\to \mathrm{M}(\uu_1)$. Dually, if $\vv$ is a factor substring of $\uu_2 $ then there is a canonical epimorphism $\mathrm{M}(\uu_2 ) \to \mathrm{M}(\vv)$. A \emph{graph map} $f: \mathrm{M}(\uu_2 ) \to \mathrm{M}(\uu_1 )$ \emph{between two string modules} is the composition of such a canonical epimorphism followed by such a canonical monomorphism, and the string $\vv$ is called the \emph{string associated with} $f$.

If $\vv$ is an image substring of $\prescript{\infty}{}{\bb}^\infty$ for a band $\bb$, then for every $n \in \N^+$ and $\lambda \in \K^*$ there is a family of canonical maps  $\mathrm{M}(\vv)\to \mathrm{B}(\bb , n, \lambda)$ indexed by a basis of $\mathrm{Hom}_{\K}(\K,\K^n)$. The composition of such map with a canonical epimorphic graph map $\mathrm{M}(\uu) \to \mathrm{M}(\vv) $ is called a \emph{graph map from a string module to a band module with associated string} $\vv$. Dually, we can also describe graph maps from a band module to a string module using factor substrings of $\prescript{\infty}{}{\bb}^\infty$. Furthermore, if $\vv$ is simultaneously an image substring of $\prescript{\infty}{}{\bb}^\infty$ and a factor substring of $\prescript{\infty}{}{\bb'}^\infty$ then the composition $\mathrm{B}(\bb' , n' , \lambda') \to \mathrm{M}(\vv)\to \mathrm{B}(\bb , n , \lambda)$ of the canonical morphisms is called a \emph{graph map between band modules with associated string} $\vv$.

\subsection{Hammock linear orders for string algebras}\label{sec: hammock lin order}
Some partially ordered sets called hammocks were introduced by Brenner \cite{brenner1986combinatorial} to study factorizations of maps between finitely generated modules. The simplest version of a hammock introduced by Schr\"oer \cite[\S~3]{schroer2000infinite} in the context of string algebras is a linear order. We recall the definitions of these left and right hammock linear orders associated with a fixed string $\xx_0$, and introduce several related notations and terminologies from \cite{SKSK}. The main result of that paper (Theorem \ref{SKSKThm}) states that such hammocks are bounded, discrete and finite description linear orders. At the end we provide criteria for hammocks to be finite (Proposition \ref{prop: Finite Hammock}) or scattered (Theorem \ref{prop: Scattered hammock}).
\begin{defn}\label{hammock defn}
The left and right hammock sets of a string $\xx_0$ are defined as
$$H_l(\xx_0):=\{\xx\in\St\mid\xx_0 \sqsubseteq_l \xx\},\ H_r(\xx_0):=\{\xx\in\St\mid\xx_0 \sqsubseteq_r \xx\}.$$
The left hammock $H_l(\xx_0)$ can be equipped with a linear order $<_l$, where for $\xx,\yy\in H_l(\xx_0)$ we have $\xx<_l\yy$ if one of the following holds:
\begin{itemize}
    \item $\yy=\uu\alpha\xx$ for some string $\uu$ and $\alpha\in Q_1^-$;
    \item $\xx=\vv\beta\yy$ for some string $\vv$ and $\beta\in Q_1$;
    \item $\xx=\vv\beta\ww$ and $\yy=\uu\alpha\ww$ for some $\alpha\in Q_1^-$, $\beta\in Q_1$ and strings $\uu,\vv,\ww$.
\end{itemize}
The ordering $<_r$ on $H_r(\xx_0)$ is defined as $\xx<_r\yy$ if and only if $\xx^{-1}<_l\yy^{-1}$ in $(H_l(\xx_0^{-1}),<_l)$.
\end{defn}

\begin{exmp}
For the algebra $\Lambda_2$ from Example \ref{Ex: Standard example}, we have $ eca<_l ca<_l Dca<_l ecaBa<_l DcaBa<_l  Ba  $ in $(H_l(a),<_l)$.
\end{exmp}

For $\xx,\yy\in H_l(\xx_0)$, denote by $\xx\sqcap_l\yy$ the maximal common left substring of $\xx$ and $\yy$. If $\xx=\ww(\xx\sqcap_l\yy)$ with $|\ww|>0$ then define $\theta_l(\xx\mid\yy):=\theta_l(\ww)$ and $\delta_l(\xx\mid\yy):=\delta(\ww)$. Similarly, for $\xx,\yy\in H_r(\xx_0)$ we define $\xx\sqcap_r\yy$ to be the maximal common right substring of $\xx$ and $\yy$. If $\xx=(\xx\sqcap_r\yy)\ww$ with $|\ww|>0$ then define $\theta_r(\xx\mid\yy):=\theta_r(\ww)$ and $\delta_r(\xx\mid\yy):=\delta(\ww)$. 

The hammock order $(H_l(\xx_0),<_l)$ can be expressed as $H_l(\xx_0)=H_l^{-1}(\xx_0)\dot+ H_l^1(\xx_0)$, where for $i \in \{1,-1\}$,
$H_l^i(\xx_0):=\{\xx\in H_l(\xx_0)\mid\text{ either }\xx=\xx_0\text{ or }\theta_l(\xx\mid\xx_0)=i\}$. Similarly $H_r(\xx_0)=H_r^{1}(\xx_0)\dot+ H_r^{-1}(\xx_0)$, where for $i \in \{1,-1\}$,
$H_r^i(\xx_0):=\{\xx\in H_r(\xx_0)\mid\text{ either }\xx=\xx_0\text{ or }\theta_r(\xx\mid\xx_0)=i\}$. As a convention, for any $\xx_0 \in \St$, fix $ H_r^0(\xx_0) = H_l^0(\xx_0)  := \{\xx_0\}$.

Almost all strings in a left hammock and a right hammock have an immediate successor as well as an immediate predecessor. Moreover, the order types of such hammocks have been determined.

\begin{thm}\cite[Theorem~11.9]{SKSK}
\label{SKSKThm}
If $\xx_0 \in \St $ then $(H_l(\xx_0 ),<_l),(H_r(\xx_0 ),<_r)\in\LOfd$, and both are bounded discrete linear orders.
\end{thm}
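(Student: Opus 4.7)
The map $\xx \mapsto \xx^{-1}$ is an order-reversing bijection $(H_r(\xx_0), <_r) \to (H_l(\xx_0^{-1}), <_l)$, and $\LOfd$ together with boundedness and discreteness are all preserved by order-reversal, so the plan is to reduce to the left hammock. For \emph{boundedness}, the decomposition $H_l(\xx_0) = H_l^{-1}(\xx_0) \dot+ H_l^{1}(\xx_0)$ shows that the minimum (resp.\ maximum) of $H_l(\xx_0)$ is the longest string obtained by extending $\xx_0$ on the left using only direct (resp.\ inverse) syllables while avoiding $\rho$; such an extension is unique at each step by the bisection property and terminates in finitely many steps because $\Lambda$ is finite-dimensional, so no infinite pure-direct or pure-inverse path exists. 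For \emph{discreteness}, given a non-maximal $\xx \in H_l(\xx_0)$ the immediate successor $\xx^{+}$ is obtained by making the smallest admissible modification at the leftmost flexible position of $\xx$---prepending a legal inverse syllable and then saturating to the left with direct syllables, stripping an initial direct syllable, or performing a local exchange $\vv\beta\ww \mapsto \uu\alpha\ww$ with $\alpha \in Q_1^-$ and $\beta \in Q_1$---corresponding to the three clauses of Definition~\ref{hammock defn}. Predecessors are constructed dually.

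The real content is the $\LOfd$ assertion. My plan is a recursion governed by the \emph{$H$-equivalence} on strings alluded to in the introduction: this relation has only finitely many equivalence classes, and two equivalent strings have (naturally) order-isomorphic left hammocks. Partition $H_l(\xx_0) \setminus \{\xx_0\}$ according to the (at most two) first-left syllables $\alpha$ that admissibly extend $\xx_0$; each block equals $H_l(\alpha\xx_0)$, a hammock for a longer pivot. Iterating, the pivots traverse only finitely many $H$-equivalence classes, so classes must eventually repeat. Each such repetition corresponds to looping around a band $\bb$ in the quiver, and---exactly as in Proposition~\ref{period}---produces a block of order type $L \cdot \omega^{*}$ (when the band can be traversed with direct syllables, giving descents below $\xx_0$) or $L \cdot \omega$ (for inverse traversals giving ascents above $\xx_0$), where $L$ is a piece already placed in $\LOfd$ by the inductive hypothesis.

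The place where the shuffle operator $\Xi$ enters, forcing us into $\LOfd$ rather than $\LOfp$, is the non-domestic case: when two or more distinct bands are simultaneously reachable at some stage of the recursion and the extension may pick among them at each iteration, the resulting block of $H_l(\xx_0)$ is order-isomorphic to a shuffle $\Xi(L_1,\dots,L_p)$ of the pieces produced by each per-choice branch, with each $L_i \in \LOfd$ by induction. The main obstacle I anticipate is the bookkeeping: one must verify that only finitely many distinct (up to isomorphism) sub-hammocks ever appear across the entire recursion, so that the $\LOfd$ expression actually terminates---this is precisely what the finiteness of the $H$-equivalence classes is designed to guarantee, and carrying it out amounts to checking compatibility of $H$-equivalence with the prepending operations, then assembling the output expression by structural induction on the finite quotient of $\St$ by $H$-equivalence.
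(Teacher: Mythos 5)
The paper does not prove this statement itself---it is imported verbatim from \cite[Theorem~11.9]{SKSK}---so there is no in-paper proof to compare against. What the paper does expose of \cite{SKSK}'s machinery (the reachability poset $(\QBa,\preceq)$, Proposition~\ref{prop: Hammocks as order sum}, and the proof of Theorem~\ref{prop: Scattered hammock}) indicates clearly how the argument there is organized, and against that your outline has the right ingredients but the wrong induction. Your reduction of $H_r$ to $H_l$ via $\xx\mapsto\xx^{-1}$, your boundedness argument via the pure-direct and pure-inverse extremal extensions (which terminate by admissibility of $\langle\rho\rangle$), and your discreteness argument via the saturating successor operation are all correct and standard.

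The two weak points are in the $\LOfd$ claim. First, the trigger for shuffles is stated loosely: the precise dichotomy is not ``two or more distinct bands are simultaneously reachable" but rather whether some reachable class $\sB\in\QBa$ is \emph{non-domestic}, i.e.\ contains two distinct bands $\bb_1\approx\bb_2$ with $\bb_1\preceq\bb_2\preceq\bb_1$. Two incomparable reachable bands still give a scattered hammock, hence an $\LOfp$ order; density (and hence the need for $\Xi$) arises exactly in the non-domestic case (Theorem~\ref{prop: Scattered hammock} plus Remark~\ref{LOfp=LOfd cap Scattered}). Your phrase ``may pick among them at each iteration" gestures at this, but should be pinned to the equivalence relation $\approx$. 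Second---and you flag this yourself as the ``main obstacle"---your recursion (prepend one syllable, detect $H_l$-equivalence loops among pivots) carries no visible well-founded induction variable: recognizing a loop produces periodicity along one branch (giving $L\cdot\omega$ or $L\cdot\omega^*$ via Proposition~\ref{prop: H-equivalent substrings}), but it does not shrink the problem on sibling branches, and tracking $H_l$-classes of pivots alone does not bound the recursion depth. The proof in \cite{SKSK} sidesteps this by inducting on the finite cardinality $|\QBa_{l,i}(\xx_0)|$: pick a $\preceq$-minimal reachable $\sB$, apply the order-sum decomposition
\[
(H_l^i(\xx_0),<_l)\;\cong\;\sum_{\yy\in(\OSt{l}{\xx_0,i;\sB},<_l)}(H_l^{-\varphi_\sB(\yy)}(\yy),<_l)
\]
of Proposition~\ref{prop: Hammocks as order sum}, observe that each inner summand satisfies $\QBa_{l,-\varphi_\sB(\yy)}(\yy)\subseteq\QBa_{l,i}(\xx_0)\setminus\{\sB\}$ and so is strictly smaller, and treat the ``spine" $\OSt{l}{\xx_0,i;\sB}$ directly (scattered finitely presented when $\sB$ is domestic, shuffle-like otherwise). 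To make your approach rigorous you should replace the informal loop detection on $H_l$-equivalence classes of pivots with this induction over the finite poset $\QBa$.
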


For any $\xx_0 \in \St$, denote by $\mm_{l,i} (\xx_0 ) $ $\left( \text{resp. } \mm_{r,i} (\xx_0 ) \right)$ the minimal element in $H_l^i(\xx_0 )$ and by $\MM_{l,i} (\xx_0 ) $ $\left( \text{resp. } \MM_{r,i} (\xx_0 ) \right)$ the maximal element in $(H_l^i(\xx_0 ), <_l) $ $\left( \text{resp. } (H_r^i(\xx_0 ), <_l) \right)$. Clearly $ \mm_{l,1}(\xx_0 ) = \mm_{r,-1}(\xx_0 ) =\MM_{l,-1}(\xx_0 ) = \MM_{r,1}(\xx_0 ) = \xx_0 $.

\begin{prop}\cite[\S~2.5]{Schroer98hammocksfor}
For any $\xx_0 \in \St$, $\mm_{l,-1}(\xx_0)$ is the longest string in $H_l(\xx_0 )$ satisfying either $\delta_l(\mm_{l,-1}(\xx_0 )\mid\xx_0 )=-1 $ or $\mm_{l,-1}(\xx_0 )=\xx_0$, whereas $\MM_{l,1}(\xx_0)$ is the longest string satisfying either $\delta_l(\MM_{l,1}(\xx_0)\mid\xx_0)=1$ or $\MM_{l,1}(\xx_0 )=\xx_0 $ .

Dually for any $\xx_0 \in \St$, $\mm_{r,1}(\xx_0 )$ is the longest string in $H_r(\xx_0 )$ satisfying either $\delta_r(\mm_{r,1}(\xx_0 )\mid\xx_0 )=1$ or $\mm_{r,1}(\xx_0 )=\xx_0$, whereas $\MM_{r,-1}(\xx_0 )$ is the longest string satisfying either $\delta_r(\MM_{r,-1}(\xx_0 )\mid\xx_0 )=-1$ or $\MM_{r,-1}(\xx_0)=\xx_0 $.
\end{prop}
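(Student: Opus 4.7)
The plan is to identify $\mm_{l,-1}(\xx_0)$ explicitly as the longest iterated direct-syllable left extension of $\xx_0$ and verify it is the minimum of $H_l^{-1}(\xx_0)$. Inductively define syllables $\beta_1, \beta_2, \ldots \in Q_1$, where $\beta_i$ is the unique direct syllable, if any, such that $\beta_i\beta_{i-1}\cdots\beta_1\xx_0$ is a string; uniqueness comes from the string-algebra axiom that any arrow admits at most one direct continuation avoiding $\langle\rho\rangle$. Finite-dimensionality of $\Lambda$ rules out infinite direct paths avoiding $\rho$, so the sequence terminates at some $k\ge 0$, giving $\xx^*\defeq\beta_k\cdots\beta_1\xx_0$ (with $\xx^*=\xx_0$ when $k=0$). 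When $k\ge 1$, the rightmost (``first'') syllable of $\ww^*\defeq\beta_k\cdots\beta_1$ is direct, so $\theta_l(\xx^*\mid\xx_0)=-1$ and $\delta_l(\xx^*\mid\xx_0)=-1$; hence $\xx^*\in H_l^{-1}(\xx_0)$.

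The core verification is that $\xx^*\le_l\xx$ for every $\xx\in H_l(\xx_0)$. For $\xx\in H_l^1(\xx_0)\setminus\{\xx_0\}$ this is immediate: $\xx >_l \xx_0$ by Rule~(1) of Definition~\ref{hammock defn}, while iterating Rule~(2) yields $\xx^*\le_l\xx_0$. The essential case is $\xx=\ww\xx_0\in H_l^{-1}(\xx_0)\setminus\{\xx_0\}$, where the syllable of $\ww$ adjacent to $\xx_0$ is direct. Uniqueness of direct left-extensions forces $\ww$ and $\ww^*$ to share a maximal all-direct right suffix $\beta_m\cdots\beta_1$ with $m\le k$. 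If $\ww=\beta_m\cdots\beta_1$, then $\xx$ is a proper right substring of $\xx^*$ and Rule~(2) gives $\xx^*<_l\xx$. Otherwise $\ww=\vv'\alpha\beta_m\cdots\beta_1$, where $\alpha$ is necessarily inverse---it cannot equal $\beta_{m+1}$ by maximality of $m$, and no other direct option exists by uniqueness of direct extension---so Rule~(3) applied to the common right substring $\beta_m\cdots\beta_1\xx_0$ yields $\xx^*<_l\xx$. The ``longest'' clause then follows from the maximality built into the construction of $\ww^*$.

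The characterization of $\MM_{l,1}(\xx_0)$ is proved dually by replacing direct syllables with inverse ones and reversing all inequalities. The two right-hammock statements follow via the order-isomorphism $\xx\mapsto\xx^{-1}:H_r(\xx_0)\to H_l(\xx_0^{-1})$, which sends $H_r^1(\xx_0)$ to $H_l^{-1}(\xx_0^{-1})$ and $H_r^{-1}(\xx_0)$ to $H_l^1(\xx_0^{-1})$: inverting an all-direct (resp.\ all-inverse) left extension of $\xx_0^{-1}$ produces an all-inverse (resp.\ all-direct) right extension of $\xx_0$, which swaps the sign conventions appropriately.

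The main obstacle lies in the case analysis for Rule~(3), specifically in justifying that the diverging syllable $\alpha$ must be inverse. This requires the confluence of all three defining axioms of a string algebra: the bound on in/out-degrees at each vertex, the uniqueness of direct left-extensions modulo $\rho$, and the bisection condition on $\sigma,\varepsilon$ ruling out a second available direct syllable. A secondary technicality is ensuring termination of the inductive construction of $\ww^*$, which rests on finite-dimensionality of $\Lambda$.
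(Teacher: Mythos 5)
The paper does not prove this proposition\textemdash it is quoted with a citation to Schr\"oer's thesis and no argument is given\textemdash so there is no internal proof to compare against; the only question is whether your direct verification holds up, and in substance it does. You correctly build $\xx^*=\beta_k\cdots\beta_1\xx_0$ as the maximal all-direct left extension (uniqueness of each $\beta_i$ follows from the string-algebra axiom for a direct last syllable, from bounded out-degree for an inverse last syllable, and from the bisection for a trivial $\xx_0$; finite-dimensionality gives termination), observe $\theta_l(\xx^*\mid\xx_0)=-1$ when $k\ge 1$ so $\xx^*\in H_l^{-1}(\xx_0)$, and then show $\xx^*\le_l\xx$ for every $\xx\in H_l(\xx_0)$. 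The dualization for $\MM_{l,1}$ and the transfer to right hammocks via $\xx\mapsto\xx^{-1}$, which swaps $H_r^1(\xx_0)$ with $H_l^{-1}(\xx_0^{-1})$, are also correct.

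One small imprecision: in the case $\ww=\vv'\alpha\beta_m\cdots\beta_1$ with $\alpha\in Q_1^-$, you appeal to Rule~(3) uniformly. When $m<k$ this is right, taking the common right substring $\beta_m\cdots\beta_1\xx_0$ and the direct syllable $\beta_{m+1}$ on the $\xx^*$ side. But when $m=k$, the string $\xx^*$ \emph{is} the common right substring $\beta_m\cdots\beta_1\xx_0$, so $\xx^*$ has no further direct syllable and is not of the form $\vv\beta\ww$ with $\beta\in Q_1$, and Rule~(3) does not literally apply; here $\xx=\vv'\alpha\xx^*$ with $\alpha\in Q_1^-$, and it is Rule~(1) that gives $\xx^*<_l\xx$. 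This is a trivial repair and does not affect the overall correctness of your argument.
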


Intervals in hammocks enjoy a very special property.

\begin{prop}\cite[Proposition~4.3]{SKSK}\label{unique string with minimal length in an interval}
Given a non-empty interval $I$ in $(H_l(\xx_0),<_l)$ (resp. in $(H_r(\xx_0), <_r)$), there is a unique string $\uu$ in $I$ with minimal length. Moreover, $I\subseteq H_l(\uu)$ (resp. $I\subseteq H_r(\uu)$).
\end{prop}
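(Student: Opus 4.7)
The plan is to prove both assertions in the left hammock $(H_l(\xx_0),<_l)$ via a case analysis on the three clauses defining $<_l$; the right hammock case then follows from the duality $\xx<_r\yy \iff \xx^{-1}<_l\yy^{-1}$ applied in $H_l(\xx_0^{-1})$. Let $m:=\min\{|\xx| : \xx\in I\}$, which is well-defined since every element of $I$ has $\xx_0$ as a left substring and hence length at least $|\xx_0|$.

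The key technical lemma I would establish first is the following: if $\yy_1<_l\yy_2$ in $H_l(\xx_0)$ with $\yy_1\not\sqsubseteq_l\yy_2$ and $\yy_2\not\sqsubseteq_l\yy_1$, then clause (iii) of the definition forces $\yy_1=\vv_1\beta\ww$ and $\yy_2=\vv_2\alpha\ww$ for a common right-substring $\ww$ with $\beta\in Q_1$ and $\alpha\in Q_1^-$, and moreover $\ww\in H_l(\xx_0)$ with $\yy_1<_l\ww<_l\yy_2$. The only non-trivial point is $\ww\in H_l(\xx_0)$: if $|\xx_0|>|\ww|$, then since $\xx_0$ occupies the rightmost $|\xx_0|$ syllables of both $\yy_i$, its syllable at position $|\ww|+1$ from the right would have to simultaneously equal $\beta\in Q_1$ and $\alpha\in Q_1^-$, which is impossible. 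Hence $|\xx_0|\le|\ww|$ and $\xx_0\sqsubseteq_l\ww$. The inequalities $\yy_1<_l\ww<_l\yy_2$ are then immediate from clauses (ii) and (i) respectively.

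Both parts of the proposition now fall out. For uniqueness of the minimizer, if $\uu_1\neq\uu_2$ in $I$ both have length $m$ with $\uu_1<_l\uu_2$, then clauses (i) and (ii) directly produce unequal lengths, so the lemma applies and yields $\ww$ with $\uu_1<_l\ww<_l\uu_2$ and $|\ww|<m$; convexity of $I$ places $\ww\in I$, contradicting minimality of $m$. For the inclusion $I\subseteq H_l(\uu)$, fix $\xx\in I\setminus\{\uu\}$ and say $\uu<_l\xx$ (the other case is symmetric); clause (ii) would give $\uu=\vv\beta\xx$ and hence $|\uu|>|\xx|$, while the lemma's scenario would place a strictly shorter element of $I$ between $\uu$ and $\xx$ --- both contradicting minimality of $|\uu|$. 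The only remaining possibility is clause (i), which gives $\xx=\vv\alpha\uu$ and therefore $\uu\sqsubseteq_l\xx$, as required.

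I expect the main obstacle to lie in the technical lemma, specifically the careful bookkeeping for the degenerate case where $\ww$ is a zero-length lazy path $1_{(v,i)}$: one must verify that the sign $i$ consistent with $\alpha\ww$ and $\beta\ww$ being valid strings is also the one making $\ww$ a legitimate element of $H_l(\xx_0)$, and that the inequalities $\yy_1<_l\ww<_l\yy_2$ still go through under the convention identifying $1_{(v,i)}\uu$ with $\uu$. Once this edge case is dispatched the rest of the argument is routine case-chasing.
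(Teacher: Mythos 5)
This proposition is cited from \cite[Proposition~4.3]{SKSK}; the paper under review states it without supplying a proof, so there is no in-text argument to compare against. Your blind proof is nevertheless correct and self-contained. Identifying the operative fact as: \emph{if $\yy_1<_l\yy_2$ and neither is a left substring of the other, then their longest common left substring $\ww$ lies in $H_l(\xx_0)$ and sits strictly between them} --- is exactly the right move, and your position-counting argument for $|\ww|\ge|\xx_0|$ (the $(|\ww|+1)$-st syllable of $\yy_1$ would simultaneously be direct and inverse if $|\ww|<|\xx_0|$) is sound. Two small points worth making explicit: (a) the term ``common right-substring'' should read ``common left substring'' in the paper's conventions ($\ww$ sits in the rightmost position of $\yy_1=\vv_1\beta\ww$, which is $\ww\sqsubseteq_l\yy_1$); and (b) in the zero-length edge case you flag, one can show more than consistency of signs: the composability constraints $s(\beta)=v$, $j=-\sigma(\beta)$ for $\beta\ww$ (and symmetrically for $\alpha\ww$), combined with $\xx_0\sqsubseteq_l\yy_1$, force $\ww=1_{(v,j)}=\xx_0$ outright, so membership in $H_l(\xx_0)$ is automatic. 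The uniqueness and inclusion deductions from the lemma via convexity of the interval are routine and stated correctly.
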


In this paper, we will follow the approach of \cite{SKSK} (also see \cite[Proposition~3.4.2]{GKS20}) to differentiate between domestic and non-domestic string algebras. From \cite[\S~5]{SKSK} recall the relation $\preceq$ on the set $\Ba$ of bands, where $\bb_1\preceq\bb_2$ if there is a string $\uu$ such that $\bb_2\uu\bb_1$ is a string. Note that this relation is reflexive and transitive. Defining the equivalence relation $\approx:=\succeq\cap\preceq$ and setting $\QBa := \Ba / \approx$, we obtain a finite poset $(\QBa, \preceq)$ \cite[Proposition~5.6]{SKSK}. The preorder $(\QBa_0,\preceq)$ plays a crucial role in the computation of the order type of hammocks, as demonstrated in \cite{schroer2000infinite},\cite{GKS20} and \cite{SKSK}. Note that \cite[Corollary~5.5]{SKSK} and \cite[Theorem~3.1.6]{GKS20} together provide an algorithm to compute the poset $\left( \QBa, \preceq \right)$. Say that $\sB \in \QBa$ is \emph{domestic} if $|\sB| =1$; otherwise, we call it \emph{non-domestic}. 

\begin{rem}{\cite[Proposition~5.3]{SKSK}}
The string algebra $\Lambda$ is domestic if and only if $(\Ba,\preceq)$ is anti-symmetric if and only if $\sB$ is domestic for each $\sB \in \QBa$.
\end{rem}

Given $\sB \in \QBa$, $\bb \in \Cyc$ is termed a $\sB$\emph{-cycle} \cite[\S~5]{SKSK} if there exists $\uu_1\uu_2 \in \St$ and $\bb_1,\bb_2 \in \sB$ such that $\bb_1\uu_1\bb\uu_2\bb_2$ is a string. The set containing all $\sB$-cycles is denoted $\CycB$. Note that $\sB \subseteq \CycB$ for any $\sB \in \QBa$.

We begin our investigation of the order type of hammocks by defining a set of bands \emph{relevant} for a hammock, i.e., the bands \emph{visible} from within it. For $\xx\in \St$ and $i \in \{1,0,-1\}$, set
\begin{align*}
    \overline{\QBa_{l,i}} (\xx) := \{\bb \in \QBa_0\mid\text{there exists }\uu \in \St \text{ such that } \bb\uu\xx \in H_l^i(\xx)\}, &\quad \QBa_{l,i}(\xx) := \{ \sB \in \QBa\mid\sB \cap \overline{\QBa_{l,i}}(\xx) \neq \emptyset \},\\
    \overline{\QBa_{r,i}} (\xx) := \{\bb \in \QBa_0\mid\text{there exists }\uu \in \St \text{ such that } \xx\uu\bb \in H_r^i(\xx)\}, &\quad \QBa_{r,i}(\xx) := \{ \sB \in \QBa\mid\sB \cap \overline{\QBa_{r,i}}(\xx) \neq \emptyset \}.\\
\end{align*}
\begin{rem}\label{rem: QBa - l,1 is empty}
It is clear that $\overline{\QBa_{l,0}} (\xx) =\overline{\QBa_{r,0}} (\xx)= \QBa_{l,0}(\xx) = \QBa_{r,0}(\xx) = \emptyset.$    
\end{rem}

Since $\QBa$ is finite, we get finiteness of $\QBa_{l,i}(\xx)$ and  $\QBa_{r,i}(\xx)$ for each $ i \in \{1,0,-1\} $ and $\xx \in \St$. Furthermore, for $\xx \in \St$, we define 
\begin{align*}
    \overline{\QBa_l} (\xx) := \overline{\QBa_{l,1}}(\xx) \cup \overline{\QBa_{l,-1}}(\xx), &\quad  \QBa_l(\xx) := \QBa_{l,1} (\xx) \cup \QBa_{l,-1} (\xx),\\ \overline{\QBa_r} (\xx) := \overline{\QBa_{r,-1}} (\xx) \cup \overline{\QBa_{r,1}} (\xx),
    &\quad \QBa_r (\xx) := \QBa_{r,1} (\xx) \cup \QBa_{r,-1} (\xx).
\end{align*}

\begin{rem}\label{rem: QBa being the downset}
    Let $i \in \{1,-1\},\ \xx \in \St$ and $\sB_1,\sB_2 \in \QBa$. If $\sB_1 \in \QBa_{l,i}(\xx)$ and $\sB_1 \preceq \sB_2$ then $\sB_2 \in \QBa_{l,i}(\xx)$. Similarly, if $\sB_1 \in \QBa_{r,i}(\xx)$ and $\sB_2 \preceq \sB_1$ then $\sB_2 \in \QBa_{r,i}(\xx)$.
\end{rem}

For the computation of the order types of left hammocks, the authors of \cite[\S~6]{SKSK} defined various sets $\OSt{j}{\sB},$ $ \OSt{}{\sB}, \OSt{j}{\xx_0,i;\sB} $ and $\OSt{}{\xx_0,i;\sB} $ associated with the data $\sB \in \QBa,\ \xx_0 \in \St$ and $ i,j \in \{-1,1\} $. In this paper, we need to analyse both left and right hammock orders and hence we define appropriate extensions of these sets. We will see in Proposition \ref{prop: Hammocks as order sum} below that these sets can be used to express a hammock as an order sum of simpler hammocks. For any $\sB \in \QBa$ and $j \in \{1,-1\}$, we define
 $$ \OSt{l,j}{\sB} := \{ \xx \in \St \mid \text{there exists }\uu \in \St\text{ and }\bb \in \sB\text{ such that }\bb\uu\xx \in H_l^j(\xx) \} ,$$ $$ \OSt{r,j}{\sB} := \{ \xx \in \St \mid \text{there exists }\uu \in \St \text{ and }\bb \in \sB \text{ such that }\xx\uu\bb \in  H_r^j(\xx)\}, $$ $$ \OSt{l}{\sB} := \OSt{l,-1}{\sB} \cup \OSt{l,1}{\sB} \text{ and 
} \OSt{r}{\sB} := \OSt{r,-1}{\sB} \cup \OSt{r,1}{\sB} .$$

\begin{rem}\label{rem: reversing the sign of STB}
    For any $ \sB \in \QBa$ and $ j\in \{-1,1\} $, if $\xx \in \OSt{l}{\sB} \left( \text{resp. } \OSt{r}{\sB}\right)$ then there exists $\uu \in \St$ such that $ \uu\xx \in \OSt{l,-j}{\sB}  \left( \text{resp. }\xx\uu \in \OSt{r,-j}{\sB}\right)$ and $\delta(\uu) = j$ whenever $|\uu|>0$.
\end{rem}

Furthermore, for $\xx_0 \in \St$, $i,j \in \{1,-1\}$, $\sB_1 \in \QBa_{l,i}(\xx_0)$ and $\sB_2 \in \QBa_{r,i}(\xx_0)$, define $$ \OSt{l,j}{\xx_0, i ; \sB_1} := \{ \xx \in H_l^i(\xx_0) \mid \xx \in \OSt{l,j}{\sB_1} \text{ or }\delta_l\left( \xx \mid \xx_0 \right) = j \} \cup \{\xx_0\}, $$ $$ \OSt{r,j}{\xx_0, i ; \sB_2} := \{ \xx \in H_r^i(\xx_0) \mid \xx \in \OSt{r,j}{\sB_2} \text{ or }\delta_r\left( \xx \mid \xx_0 \right) = j \} \cup \{ \xx_0 \}, $$ $$ \OSt{l}{\xx_0 , i; \sB_1} := \OSt{l,-1}{\xx_0 , i; \sB_1} \cup \OSt{l,1}{\xx_0 , i; \sB_1} \text{ and } \OSt{r}{\xx_0 , i; \sB_2} := \OSt{r,-1}{\xx_0 , i; \sB_2} \cup \OSt{r,1}{\xx_0 , i; \sB_2}.$$

When we write $\OSt{l,j}{\xx_0 , i ; \sB_1}$ we implicitly imply that $\sB_1 \in \QBa_{l,i}(\xx_0)$. Similarly when we write $\OSt{r,j}{\xx_0 , i ; \sB_2}$ we implicitly imply that $\sB_2 \in \QBa_{r,i}(\xx_0)$. Since the set $\OSt{l}{\xx_0,i;\sB}$ is same as the set $\OSt{}{\xx_0,i;\sB}$ defined in \cite[\S~6]{SKSK}, the following result is a restatement of \cite[Lemma~7.8]{SKSK}.
\begin{prop}\label{prop: Hammocks as order sum}
    For any $\xx_0 \in \St, i \in \{1,-1\}$ and $\sB \in \QBa_{l,i}(\xx_0)$, we have

\begin{equation}\label{hammockorderdecomp}
(H_l^i(\xx_0),<_l) \cong \sum_{\yy \in \left(\OSt{l}{\xx_0,i;\sB},<_l\right)} \left( H_l^{-\varphi_\sB (\yy)}(\yy) , <_l\right),
\end{equation}
where $\varphi_\sB : \OSt{l}{\xx_0,i;\sB} \to \{1,0,-1\}$ is defined as $$ \varphi_\sB(\yy) := \begin{cases}
        0 \quad &\text{if }\yy \in \OSt{l,-1}{\xx_0,i;\sB} \cap  \OSt{l,1}{\xx_0,i;\sB};\\
         1 \quad &\text{if }\yy \notin \OSt{l,-1}{\xx_0,i;\sB};\\
          -1 \quad &\text{if }\yy \notin \OSt{l,1}{\xx_0,i;\sB}.
    \end{cases} $$
\end{prop}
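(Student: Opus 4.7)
The plan is to construct an explicit order-isomorphism between $H_l^i(\xx_0)$ and the ordered sum on the right-hand side of \eqref{hammockorderdecomp}, and then observe that this reduces to \cite[Lemma~7.8]{SKSK} once we note the notational correspondence $\OSt{l}{\xx_0,i;\sB} = \OSt{}{\xx_0,i;\sB}$ from \cite[\S~6]{SKSK}.

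The first step would be to assign to each $\xx \in H_l^i(\xx_0)$ a pivot $\yy(\xx) \in \OSt{l}{\xx_0,i;\sB}$ that determines the summand to which $\xx$ belongs. A natural candidate is the longest left substring of $\xx$ lying in $\OSt{l}{\xx_0,i;\sB}$; this is well-defined because $\xx_0 \sqsubseteq_l \xx$ and $\xx_0 \in \OSt{l}{\xx_0,i;\sB}$ by the convention in its definition, and because only finitely many candidates exist. One then checks that $\xx \in H_l^{-\varphi_\sB(\yy(\xx))}(\yy(\xx))$ by case analysis on $\varphi_\sB(\yy(\xx)) \in \{1, 0, -1\}$: when $\varphi_\sB(\yy(\xx)) = 0$, the maximality of $\yy(\xx)$ together with Remark~\ref{rem: reversing the sign of STB} forces $\xx = \yy(\xx)$; when $\varphi_\sB(\yy(\xx)) = j \in \{1,-1\}$, the same maximality forces the outermost syllable of the overhang $\uu$ in $\xx = \uu\yy(\xx)$ to have sign $-j$, since otherwise $\uu$ could be extended to produce a longer left substring still in $\OSt{l,-j}{\xx_0,i;\sB}$.

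The map $\Phi(\xx) := (\yy(\xx), \xx)$ is then a bijection with inverse given by second-coordinate projection; uniqueness of the pivot follows from the observation that any alternative $\yy' \sqsubset_l \yy(\xx)$ would force a conflicting sign assignment on the overhang. Order-preservation reduces to two checks: within each fiber the induced order restricts to $<_l$, which is automatic; and for distinct pivots $\yy_1 <_l \yy_2$, any extensions $\xx_1, \xx_2$ in the respective fibers satisfy $\xx_1 <_l \xx_2$, which unpacks directly from Definition~\ref{hammock defn} because the deciding syllable for comparison sits at or below the pivot level.

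The main obstacle is the precise matching of sign conditions --- namely the interplay between membership of $\yy$ in $\OSt{l,1}{\sB}$ versus $\OSt{l,-1}{\sB}$ (which controls $\varphi_\sB(\yy)$) and the value of $\theta_l(\xx \mid \yy)$ (which controls the fiber). This combinatorial bookkeeping is precisely what is carried out in \cite[\S~6--7]{SKSK}; after the notational translation above, the argument transfers verbatim from \cite[Lemma~7.8]{SKSK}.
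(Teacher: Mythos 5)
Your proposal lands in the same place as the paper: the paper's entire argument is the single observation that $\OSt{l}{\xx_0,i;\sB}$ coincides with the set $\OSt{}{\xx_0,i;\sB}$ of \cite[\S~6]{SKSK}, so the statement is a verbatim restatement of \cite[Lemma~7.8]{SKSK}. You reach the same conclusion and cite the same lemma.

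The additional direct sketch you offer is not part of the paper's proof, and I would flag one point in it. Your proposed pivot map $\xx\mapsto\yy(\xx)$, defined as the \emph{longest} left substring of $\xx$ lying in $\OSt{l}{\xx_0,i;\sB}$, is not obviously the right choice of pivot. If $\yy\sqsubset_l\yy'\sqsubseteq_l\xx$ with both $\yy,\yy'\in\OSt{l}{\xx_0,i;\sB}$, you would need to verify both that $\xx\in H_l^{-\varphi_\sB(\yy')}(\yy')$ holds for the longer $\yy'$ and that the fibers over $\yy$ and $\yy'$ do not overlap; neither of these is immediate from the maximality alone, and the conflict you allude to (a shorter $\yy'$ forcing a different sign) runs the other way around as well. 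This is the same ``bookkeeping'' you concede is done in \cite[\S~6--7]{SKSK}, so since you ultimately defer to that lemma the gap does no harm, but as written the sketch should not be taken as a self-contained argument.
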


We can write a similar statement for right hammocks. This proposition gives us a surjective order-preserving map $\left( H_l^i(\xx_0 ) , <_l \right) \to \left(\OSt{l}{\xx_0,i;\sB},<_l\right)$ (resp. $\left(H^i_r(\xx_0 ) , <_r\right) \to \left(\OSt{r}{\xx_0 ,i;\sB},<_r\right)$).

The next two results provide necessary and sufficient conditions to determine whether a left hammock is finite or scattered--their proofs rely on the tools developed in \cite{GKS20} and \cite{SKSK}.
\begin{prop}\label{prop: Finite Hammock}
    For $ \xx_0 \in \St$ and $i \in \{1,0,-1\}$, $H_l^i(\xx_0)$ is finite if and only if $\QBa_{l,i}(\xx_0) = \emptyset$. 

    Similarly, $H_r^i(\xx_0)$ is finite if and only if $\QBa_{r,i}(\xx_0) = \emptyset$.
\end{prop}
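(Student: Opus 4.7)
The case $i=0$ is immediate: $H_l^0(\xx_0) = \{\xx_0\}$ is a singleton and $\QBa_{l,0}(\xx_0) = \emptyset$ by Remark~\ref{rem: QBa - l,1 is empty}. For $i \in \{-1, 1\}$ the plan is to prove both implications for the left hammock; the right-hammock statement then follows by the order-reversing bijection $\xx \mapsto \xx^{-1}$ identifying $H_r^i(\xx_0)$ with $H_l^{-i}(\xx_0^{-1})$ and matching $\QBa_{r,i}(\xx_0)$ with $\QBa_{l,-i}(\xx_0^{-1})$ after inversion of band representatives. For the easy direction $\QBa_{l,i}(\xx_0) \neq \emptyset \Rightarrow H_l^i(\xx_0)$ infinite, I pick $\bb \in \QBa_0$ and $\uu \in \St$ with $\bb\uu\xx_0 \in H_l^i(\xx_0)$: since $\bb^2 \in \St$, an easy induction shows $\bb^n\uu\xx_0 \in \St$ for all $n \geq 1$, and the rightmost syllable of $\bb^n\uu$ coincides with that of $\bb\uu$, so each $\bb^n\uu\xx_0$ lies in $H_l^i(\xx_0)$ and these are pairwise distinct.

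The converse is proved contrapositively. Assuming $H_l^i(\xx_0)$ is infinite, every $\xx \in H_l^i(\xx_0) \setminus \{\xx_0\}$ has the form $\vv\alpha\xx_0$ where $\alpha$ is a syllable of sign $i$ (inverse when $i=1$, direct when $i=-1$) composable with $\xx_0$; there being only finitely many such $\alpha$, pigeonhole provides one, say $\alpha_0$, admitting infinitely many further left extensions $\vv\alpha_0\xx_0 \in \St$. I then form the tree $T$ rooted at $\alpha_0\xx_0$ whose children of a node $\ww$ are the strings $\beta\ww \in \St$ for syllables $\beta$; the string algebra axioms (the bisection conditions together with the forbidden monomials) force $T$ to be at most binary-branching. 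As $T$ is infinite, K\"onig's lemma furnishes an infinite path, realising an infinite leftward extension $\cdots\beta_3\beta_2\beta_1\alpha_0\xx_0$ each of whose finite truncations sits in $H_l^i(\xx_0)$ because its rightmost syllable is $\alpha_0$.

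It remains to mine a band from this infinite walk. The composability rules along the walk are dictated by a finite amount of local data (the current vertex together with enough syllable history to check the monomial relations), so by pigeonhole some such state recurs at two positions $k < l$; the contiguous subword $\beta_l\cdots\beta_{k+1}$ is then a cyclic string, which I decompose as $\bb^m$ for some primitive band $\bb$. Let $\bb' \in \QBa_0$ be the fixed representative of the cyclic class of $\bb$. Two consecutive copies of $\bb$ in the walk contain $\bb'$ as a contiguous subword, which by taking the occurrence far enough to the left of $\xx_0$ can be placed at positions $a, a+1, \ldots, a+|\bb'|-1$ with $a \geq 1$. Setting $\uu' := \beta_{a-1}\cdots\beta_1\alpha_0$, the substring $\bb'\uu'\xx_0$ of the walk is a string whose rightmost syllable is $\alpha_0$, hence $\bb'\uu'\xx_0 \in H_l^i(\xx_0)$, giving $\bb' \in \overline{\QBa_{l,i}}(\xx_0)$ and $[\bb'] \in \QBa_{l,i}(\xx_0) \neq \emptyset$. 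The main obstacle is precisely this last step: because $\QBa_0$ contains representatives only up to cyclic permutation, one must exploit multiple consecutive copies of the extracted band and track how its cyclic rotations embed into $\bb^m$ in order to pin down the representative $\bb'$ as a single contiguous subword lying in the correct region of the walk.
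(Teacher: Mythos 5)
Your proof is correct and the easy direction (nonempty $\Rightarrow$ infinite) is exactly the paper's argument: pump powers of the band. The genuine divergence is in the hard direction. The paper observes that if $\QBa_{l,i}(\xx_0)=\emptyset$ then every $\uu\xx_0 \in H_l^i(\xx_0)$ has band-free $\uu$, and then simply cites \cite[Proposition~3.1.7]{GKS20} for the fact that band-free left extensions form a finite set. You instead prove the contrapositive (infinite $\Rightarrow$ nonempty) from scratch: the at-most-binary extension tree plus K\"onig's lemma yields an infinite left $\N$-string through $H_l^i(\xx_0)$, a pigeonhole on finite ``local states'' extracts a repeating cyclic word, and then you locate the fixed representative $\bb'\in\QBa_0$ as a contiguous subword deep in the walk. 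That last bookkeeping step --- ensuring the rotated representative sits wholly inside the walk with a tail $\uu'$ ending in $\alpha_0$ so that $\bb'\uu'\xx_0\in H_l^i(\xx_0)$ --- is the only delicate point, and you handle it by using arbitrarily many consecutive copies of the extracted band, which is sound. In effect you have re-derived the content of the cited result rather than invoking it; the paper's proof is shorter while yours is self-contained. Both are valid, and your reduction of the right-hammock case via $\xx\mapsto\xx^{-1}$ (which sends $H_r^i(\xx_0)$ to $H_l^{-i}(\xx_0^{-1})$ because inversion flips $\theta_r$ into $-\theta_l$) is also fine.
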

\begin{proof}
    $(\Longrightarrow)$ We will prove the contrapositive. Suppose $ \QBa_{l,i}(\xx_0) \neq \emptyset$. Take $\bb \in \overline{\QBa_{l,i}}(\xx_0)$ and $\uu \in \St $ such that $\bb\uu\xx_0\in H_l^i(\xx_0)$. Thus $ \{\bb^n\uu\xx_0\mid n \in \N\} \subseteq H_l^i(\xx_0) $. Hence $H_l^i(\xx_0)$ is not finite.
    
    $(\Longleftarrow)$ If ${\QBa_{l,i}}(\xx_0) = \emptyset $ and $\uu\xx_0\in H_l^i(\xx_0)$ then $\uu$ is band-free. Thus  \cite[Proposition~3.1.7]{GKS20} yields $H_l^i(\xx_0)$ is finite. Hence $H_l^i(\xx_0)$ is scattered.
\end{proof}

\begin{thm}\label{prop: Scattered hammock}
    For any $\xx_0 \in \St$ and $i \in \{1,0,-1\}$, $H_l^i(\xx_0)$ is scattered if and only if $ \mathcal{Q}^{\mathrm{Ba}}_{l,i} (\xx_0)$ has only domestic elements. Similarly, $H_r^i(\xx_0)$ is scattered if and only if $ {\QBa_{r,i}} (\xx_0) $ has only domestic elements.
\end{thm}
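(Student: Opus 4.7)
The case $i=0$ is vacuous because $H_l^0(\xx_0)=\{\xx_0\}$ and $\QBa_{l,0}(\xx_0)=\emptyset$ by Remark \ref{rem: QBa - l,1 is empty}. I restrict attention to $i\in\{1,-1\}$; the right-hammock statement follows from the involutive symmetry $\xx\mapsto\xx^{-1}$ between $H_l(\xx_0)$ and $H_r(\xx_0^{-1})$ (Definition \ref{hammock defn}).

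For the $(\Leftarrow)$ direction I plan an induction on $|\QBa_{l,i}(\xx_0)|$. Pick a $\preceq$-minimal element $\sB\in\QBa_{l,i}(\xx_0)$ and apply Proposition \ref{prop: Hammocks as order sum} to write
\[ H_l^i(\xx_0) \cong \sum_{\yy\in\OSt{l}{\xx_0,i;\sB}} H_l^{-\varphi_\sB(\yy)}(\yy). \]
Remark \ref{rem: QBa being the downset} together with the minimality of $\sB$ shows that each summand has its reachable band set strictly contained in $\QBa_{l,i}(\xx_0)\setminus\{\sB\}$, so is scattered by the inductive hypothesis. Since $\sB=\{\bb\}$ is a singleton, the index order $\OSt{l}{\xx_0,i;\sB}$ is a $\zeta$-indexed assembly of finite blocks coming from the unique cycle $\bb$ (the construction behind Theorem \ref{SKSKThm} introduces the shuffle operator $\Xi$ only at non-domestic classes), and so is scattered. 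Proposition \ref{prop: SCattered sum of linear orders} then delivers that $H_l^i(\xx_0)$ is scattered. Alternatively, one may simply observe that under the domesticity hypothesis \cite[Theorem~11.9]{SKSK} places $H_l^i(\xx_0)$ in $\LOfp$, and appeal to Remark \ref{LOfp=LOfd cap Scattered}.

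For the $(\Rightarrow)$ direction I argue by contrapositive: given a non-domestic $\sB\in\QBa_{l,i}(\xx_0)$, I construct an embedding of $\eta$ into $H_l^i(\xx_0)$. Fix $\bb\in\sB\cap\overline{\QBa_{l,i}}(\xx_0)$ and $\vv\in\St$ with $\bb\vv\xx_0\in H_l^i(\xx_0)$. Pick $\bb'\in\sB\setminus\{\bb\}$; since $\bb\approx\bb'$, fix strings $\uu,\uu'$ with $\bb\uu\bb'$ and $\bb'\uu'\bb$ strings. For each finite binary word $\sigma\in\{0,1\}^{<\omega}$ I assemble a string $\ww_\sigma\in H_l^i(\xx_0)$ by reading $\sigma$ and concatenating, from left to right, copies of $\bb$ (for digit $0$) and $\bb'$ (for digit $1$) with the appropriate connector $\uu$ or $\uu'$ at each switch, capping with $\vv\xx_0$. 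Identifying $(\{0,1\}^{<\omega},<_{\mathrm{lex}})$ with a dense subset of the dyadics in $(0,1)$, the map $\sigma\mapsto\ww_\sigma$ becomes strictly order-preserving: at the first position where $\sigma$ and $\tau$ disagree, $\ww_\sigma$ and $\ww_\tau$ share a common left factor and split by a direct versus inverse syllable, matching the third clause of Definition \ref{hammock defn}. Hence $\eta\hookrightarrow H_l^i(\xx_0)$ and $H_l^i(\xx_0)$ fails to be scattered.

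The principal obstacle is the combinatorial bookkeeping in the $(\Rightarrow)$ construction: one must guarantee that each $\ww_\sigma$ is a legitimate string (no adjacent inverse syllables and no subpath in $\rho\cup\rho^{-1}$ at the joints between band blocks and connectors) and that at the splitting position the direct/inverse character flips correctly. Both issues are addressed by passing to cyclic conjugates and admissible powers of $\bb,\bb',\uu,\uu'$ to synchronise the signs $\sigma,\varepsilon$ at every juncture, using precisely the strings witnessing $\bb\approx\bb'$.
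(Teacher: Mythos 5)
Your $(\Leftarrow)$ argument follows the paper's route: the same induction on $|\QBa_{l,i}(\xx_0)|$, the same $\preceq$-minimal choice of $\sB$, and the same order-sum decomposition from Proposition \ref{prop: Hammocks as order sum}. Where the paper closes the induction by citing \cite[Corollary~11.8]{SKSK} for scatteredness of the index order $\OSt{l}{\xx_0,i;\sB}$ when $\sB$ is domestic, you assert without proof that this index order is a ``$\zeta$-indexed assembly of finite blocks'' --- that claim is precisely what the cited corollary encapsulates, and you would need to prove it or cite it. Your proposed shortcut is circular: Theorem \ref{SKSKThm} places $H_l^i(\xx_0)$ in $\LOfd$, not in $\LOfp$; upgrading $\LOfd$ to $\LOfp$ via Remark \ref{LOfp=LOfd cap Scattered} already requires knowing the hammock is scattered, which is the very thing under proof.

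For $(\Rightarrow)$ you do take a genuinely different route. The paper observes $\bb\uu\xx_0\eqvl\bb^2\uu\xx_0$, deduces that $\sB$ is non-domestic and minimal in an appropriate sense, and then delegates the non-scatteredness of $H_l(\bb\uu\xx_0)$ to \cite[Corollary~11.5, Proposition~10.10]{SKSK}. You instead propose a direct embedding of $\eta$ by reading a binary word $\sigma$ as a sequence of $\bb$/$\bb'$ blocks joined by connectors. The outline is plausible, but the claim that $\sigma\mapsto\ww_\sigma$ is monotone has a genuine gap. For the third clause of Definition \ref{hammock defn} to apply at the first disagreement, the $\bb$-branch and the $\bb'$-branch must, at the branch point, contribute a direct syllable on one side and an inverse syllable on the other. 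Nothing in $\bb\approx\bb'$ and $\bb\neq\bb'$ forces this: two $\approx$-equivalent bands (or their cyclic rotations sitting atop the common prefix) may begin with syllables of the same sign, in which case the branching syllable is buried inside a block and its sign is uncontrolled, so you cannot predict which of $\ww_\sigma,\ww_\tau$ is $<_l$-smaller. You acknowledge this obstacle and invoke ``passing to cyclic conjugates and admissible powers \dots to synchronise the signs'', but this names the repair without carrying it out; working it through is essentially the content of the \cite{SKSK} results the paper leans on. Until that sign analysis is explicitly done, the construction does not go through.
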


\begin{proof}
    $(\Longrightarrow)$ We will prove the contrapositive. Suppose there exists a non-domestic $\sB \in \QBa_{l,i} (\xx_0)$. Choose $\bb \in \sB \cap \overline{\QBa_{l,i}} (\xx_0)$ and $\uu \in \St$ such that $\bb\uu\xx_0 \in H_l^i(\xx_0)$. Since $\bb\uu\xx_0 \eqvl \bb^2\uu\xx_0$, we get that $\overline{\QBa_{l,1}}(\bb\uu\xx_0) = \overline{\QBa_{l,1}}(\bb^2\uu\xx_0) $. Thus $\sB$ is non-domestic and minimal for $( \bb\uu\xx_0, \theta_l(\bb))$. Hence \cite[Corollary~11.5, Proposition~10.10]{SKSK} together imply that $ H_l(\bb\uu\xx_0 ) $ is not scattered. Since $ H_l(\bb\uu\xx_0 ) \subseteq H_l^i(\xx_0 )$, we conclude that $H_l^i(\xx_0 )$ is not scattered.

    $(\Longleftarrow)$ Suppose $\mathcal{Q}^{\mathrm{Ba}}_{l,i} (\xx_0)$ contains only domestic elements. We will prove that $H_l^i(\xx_0)$ is scattered by an induction on $\left|{\QBa_{l,i}}(\xx_0)\right|$.

    \textbf{Base Case.} If ${\QBa_{l,i}}(\xx_0) = \emptyset $, then by Proposition \ref{prop: Finite Hammock} we get that $H_l^i(\xx_0)$ is finite, and hence scattered.

    \textbf{Induction Step.} If ${\QBa_{l,i}}(\xx_0) \neq \emptyset$, then choose and fix $\sB \in \QBa$ minimal with respect to $(i,\xx_0)$ from the finite poset $ (\QBa, \preceq)$. By Remark \ref{rem: QBa - l,1 is empty}, $i \neq 0$. Now Equation \eqref{hammockorderdecomp} yields 
    \begin{equation*}\label{eqn:1}
        (H_l^i(\xx_0) , <_l) \cong \sum_{\yy \in\left(\OSt{l}{\xx_0, i; \sB},<_l\right)} (H_l^{-\varphi_\sB (\yy)}(\yy), <_l).
    \end{equation*}
       
    Note that $ {\QBa_{l,-\varphi_\sB (\yy)}}(\yy) \subseteq  {\QBa_{l,i}}(\xx_0) \setminus \{\sB\}$ for each $ \yy \in  \OSt{l}{\xx_0,i;\sB} \subseteq H_l^i(\xx_0)$. Since $\sB \in {\QBa_{l,i}}(\xx_0) $, we get that $| {\QBa_{l,-\varphi_\sB (\yy)}}(\yy)  | < |\QBa_{l,i} (\xx_0)|$ for any $\yy \in \OSt{l}{\xx_0,i;\sB}$. Thus by the induction hypothesis we get that $H_l^{-\varphi_\sB (\yy)}(\yy)$ is scattered for each $\yy \in \OSt{l}{\xx_0,i;\sB}$. Since $\sB$ is domestic, \cite[Corollary~11.8]{SKSK} gives that $\OSt{l}{\xx_0,i;\sB}$ is scattered. Finally, Proposition \ref{prop: SCattered sum of linear orders} applied to the order sum decomposition from Equation \eqref{hammockorderdecomp} gives that $H_l^i(\xx_0)$ is scattered.
\end{proof}

\subsection{Hammock posets for string algebras}\label{excpoints}
Fix $v\in Q_0$. After defining the hammock poset $H(v)$ using the left and right hammocks, we recollect the main results of \S~\ref{order theory} in the context of $H(v)$ (Theorem \ref{hammockordermain}). In particular, we note that $H(v)$ is a finite type abstract $2$-hammock where each scattered box has density strictly bounded above by $\omega^2$. Then we extend $H(v)$ to $\hb(v)$ by adding some periodic infinite strings, and prove a finiteness result (Lemma \ref{excptfin}) for \emph{exceptional} points in $\hb(v)\setminus H(v)$, i.e., the points which lie in the closure of a scattered interval.

Recall from \cite[\S~2.1]{GKS20} that a left $\N$-string is a sequence of syllables $\cdots\alpha_3\alpha_2\alpha_1$ such that each $\alpha_n\cdots\alpha_2\alpha_1$ is a string. Call $\alpha_i$ the $i^\text{th}$ syllable of $\xx$. Similarly, a right $\N$-string is a sequence of syllables $\alpha_1\alpha_2\alpha_3\cdots$ such that each $\alpha_1\alpha_2\cdots\alpha_n$ is a string. The sets of left-$\N$ strings and right-$\N$ strings are denoted $\N$-$\St$ and $\N^*$-$\St$ respectively. Further, a $\Z$-indexed sequence of syllables $\cdots\alpha_{-1}\alpha_0 \alpha_1\cdots$ is said to be a $\Z$-string if $\alpha_{-i}\alpha_{-i+1}\cdots\alpha_{i-1}\alpha_i$ is a string for every $i\in\N$. The set of $\Z$-strings is denoted $\Z$-$\St$.

Say that a sequence $\lan\xx_n\mid n\geq1\ran$ of strings in $H_l(\xx_0) $  is \emph{convergent} if there is a left $\N$-string $\yy$ such that  
\begin{enumerate}
    \item $|\xx_n|\to\infty$ as $n\to\infty$;
    \item there is a sequence $\lan n_k\mid k\in\N^+\ran$ such that the $k^{th}$ syllable of $\yy$ and the $k^{th}$ syllable of $\xx_n$ (from right) are identical for $n\geq n_k$.
\end{enumerate}

The limit of a convergent sequence is unique, and we write $\mathrm{lim}^l_{n \to \infty} \xx_n :=\yy$. Convergence in $\left( H_r(\xx_0 ), <_r \right)$ is defined similarly and is denoted by {the }$ \mathrm{lim}^r_{n \to \infty}$ operator. We define $\widehat H_l(\xx_0) $ $\left( \text{resp. } \widehat H_r(\xx_0) \right)$ to be the extension of $H_l(\xx_0) $ $\left( \text{resp. } H_r(\xx_0) \right)$ by all the left (resp. right) $\N$-strings containing $\xx_0$ as a left substring. Note that the ordering $<_l $ $\left( \text{resp. } <_r \right)$ can be naturally extended to a linear order on $\widehat H_l(\xx_0) $ $\left( \text{resp. } \widehat H_r(\xx_0) \right)$. The order $\widehat H_l(\xx_0) $ $\left( \text{resp. } \widehat H_r(\xx_0) \right)$ is the order-theoretic completion of $H_l(\xx_0) $ $\left( \text{resp. of } H_r(\xx_0) \right) $\cite[Proposition~4.5]{SKSK}.

We now note an important observation regarding hammock orders. 
\begin{rem}\label{rem: Hammock order property}\cite[Lemma~4.5]{schroer2000infinite}
For $\xx,\yy, \zz \in \widehat{H}_l(\xx_0)$ satisfying $\xx\sqcap_l \zz \sqsubset_l \xx\sqcap_l \yy$, $\xx<_l \zz$ if and only if $\yy<_l \zz$. \end{rem}
For any $\xx,\yy\in\widehat H_l(\xx_0)$, there are four possible types of intervals:
\begin{align*}
    [\xx , \yy]_l := \{ \zz \in H_l(\xx_0) \mid \xx\leq_l \zz\leq_l \yy \}, &\qquad (\xx , \yy)_l := \{ \zz \in H_l(\xx_0) \mid \xx <_l \zz<_l \yy \},\\
    [\xx , \yy)_l := \{ \zz \in H_l(\xx_0) \mid \xx\leq_l \zz<_l \yy \}, &\qquad (\xx , \yy]_l := \{ \zz \in H_l(\xx_0) \mid \xx<_l \zz\leq_l \yy \}.
\end{align*}
For $\xx , \yy \in H_r(\xx_0)$, the intervals $ [\xx , \yy]_r , (\xx,\yy)_r , [ \xx,\yy )_r $ and $(\xx , \yy]_r$ are defined in a similar manner. Using these notations, we define the \emph{closure} of an interval in a hammock.
\begin{defn}\label{closure}
For an interval $I \subseteq H_l(\xx_0 ) $, define its closure as $\widehat{I} :=  \{ \zz \in \widehat{H}_l (\xx_0)\mid [\zz, \zz']_l \cup [\zz', \zz]_l\subseteq I \text{ for some }\zz' \in I \}$. Similarly, for any interval $I \subseteq H_r(\xx_0 ) $, define $\widehat{I} :=  \{ \zz \in \widehat{H}_l (\xx_0)\mid [\zz, \zz']_r \cup [\zz', \zz]_r\subseteq I \text{ for some }\zz' \in I \}$.
\end{defn}

Now we recall the definition of the ``$2$-dimensional'' hammock set introduced in \cite{Schroer98hammocksfor}, which is the main object of study in this paper. For $v \in Q_0$, define $$ H(v) := \{ (\uu,\vv) \in \St \times \St \mid s(\uu) = v = t(\vv), \varepsilon(\vv) = 1 = -\sigma (\uu), \uu\vv \in \St  \}.$$ 
Since $H(v)\subseteq H_l\left(1_{(v,1)}\right)\times H_r\left(1_{(v,1)}\right)$, the set $H(v)$ is equipped with an induced poset structure{ under the product order?}. We use the notations $\pi_l$ and $\pi_r$ to denote the projection maps of $H(v)$ to appropriate linear hammocks.

The poset $H(v)$ has several desirable properties.
\begin{thm}\label{hammockordermain}
For any $v\in Q_0$ and $\ii$ a scattered box in $H(v)$, the following are true.
\begin{enumerate}
    \item The poset $H(v)$ is a bounded discrete finite description abstract $2$-hammock.
    \item The poset $H(v)$ is a finite type $2$-hammock.
    \item The scattered box $\ii$ is a finitely presented abstract $2$-hammock.
    \item The density of the scattered box $\ii$ satisfies $d(\ii)<\omega^2$.
\end{enumerate}
\end{thm}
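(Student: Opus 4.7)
The plan is to prove part~(1) first, since parts~(2), (3) and~(4) follow relatively directly from it using the general machinery developed earlier in the paper.

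For part~(1), first I would verify that the projections of $H(v)$ are $\pi_l(H(v)) = H_l(1_{(v,1)})$ and $\pi_r(H(v)) = H_r(1_{(v,1)})$: for any $\uu \in H_l(1_{(v,1)})$ the pair $(\uu, 1_{(v,1)})$ lies in $H(v)$, and the reverse containment is immediate from the defining conditions. By Theorem~\ref{SKSKThm} both projections are then bounded, discrete, and in $\LOfd$. The main task is to exhibit a sub-$2$-clat decomposition of $H(v)$ verifying Definition~\ref{hammocks}. To this end I would enumerate the at most two syllables $\alpha^l_i$ with $s(\alpha^l_i) = v$ and $\sigma(\alpha^l_i) = -1$, and let $A_i$ be the set of $\uu \in \pi_l(H(v)) \setminus \{1_{(v,1)}\}$ whose rightmost syllable is $\alpha^l_i$; similarly, define $B_j$ using the at most two syllables $\alpha^r_j$ ending at $v$ with $\varepsilon(\alpha^r_j) = 1$. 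For each pair $(i,j)$ whose concatenation $\alpha^l_i \alpha^r_j$ is a valid string segment at $v$, the subset $S_{i,j} := (A_i \cup \{1_{(v,1)}\}) \times (B_j \cup \{1_{(v,1)}\})$ is a sub-$2$-clat contained in $H(v)$; one augments with the degenerate sub-clats $(A_i \cup \{1_{(v,1)}\}) \times \{1_{(v,1)}\}$ or $\{1_{(v,1)}\} \times (B_j \cup \{1_{(v,1)}\})$ when needed to cover points of $H(v)$ not visible from any compatible pair. Ordering these finitely many sub-clats according to the hammock linear orders on the projections then gives the decomposition, and boundedness, discreteness, and finite description of $H(v)$ follow from the corresponding properties of the projections via Remark~\ref{hammockbdd} and Definition~\ref{def:discrete-finpresented hammock}.

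Given~(1), part~(2) is immediate from Lemma~\ref{hammockfintype}. For part~(3), Proposition~\ref{boxes are hammocks} ensures that $\ii$ is itself a $2$-hammock in the clat $\pi_l(\ii) \times \pi_r(\ii)$; its projections, being intervals in $H_l(1_{(v,1)}), H_r(1_{(v,1)}) \in \LOfd$, lie in $\LOfd$ by Proposition~\ref{fd intervals}; scatteredness of $\ii$ transfers to its projections by Proposition~\ref{stdproj}; and Remark~\ref{LOfp=LOfd cap Scattered} then upgrades them to $\LOfp$, so $\ii$ is a finitely presented discrete $2$-hammock. Part~(4) follows from Theorem~\ref{fpdensity} applied to $\ii$, once one observes that each projection has finite Hausdorff rank by Remark~\ref{HRfindesc}; in the event that the two projections of $\ii$ have different boundedness types (not directly covered by the four cases in the proof of Theorem~\ref{fpdensity}), a minor extension of that argument, or else a direct sup estimate using Definition~\ref{posetdensity} together with Proposition~\ref{prop: density of sum of orders} applied to bounded linear sub-orders of $\ii$, still yields $d(\ii) < \omega^2$.

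The main obstacle is the verification of the ordering conditions~(3) and~(4) of Definition~\ref{hammocks} for the sub-$2$-clat decomposition in part~(1). The correct linear ordering of the $S_{i,j}$'s depends on whether each of $\alpha^l_i, \alpha^r_j$ is a direct arrow or an inverse arrow, since this determines whether $A_i$ (respectively $B_j$) lies above or below $1_{(v,1)}$ in the hammock linear order on $\pi_l(H(v))$ (respectively $\pi_r(H(v))$). The structural fact that makes the argument uniform is that the bisection $(\sigma, \varepsilon)$ of $\Lambda$ forces any forbidden pair $(\alpha^l_i, \alpha^r_j)$ — those whose concatenation lies in $\rho \cup \rho^{-1}$ — to be of uniform direction type (both direct or both inverse), whereas every mixed-direction pair is automatically compatible. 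This in turn ensures that the compatible $S_{i,j}$'s can always be arranged in a single linear order that is consistent with the induced orderings on both $\pi_l$ and $\pi_r$.
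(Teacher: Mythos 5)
Your treatment of parts (2), (3), and (4) coincides with the paper's own proof: (2) follows from Lemma \ref{hammockfintype}, (3) from Propositions \ref{boxes are hammocks}, \ref{fd intervals}, \ref{stdproj} together with Remark \ref{LOfp=LOfd cap Scattered}, and (4) from Theorem \ref{fpdensity}; the published proof is exactly this chain of citations. The caveat you attach to (4) is sharp. Theorem \ref{fpdensity} as stated handles only uniform boundedness signatures across projections, whereas a box with one projection unbounded above but bounded below and the other bounded above but unbounded below falls outside its hypotheses (such a box admits neither a bottom nor a top corner). The paper's proof does not address this mixed case either, so your observation that one must either rule it out for MSBs in $H(v)$ or extend the argument of Corollary \ref{finrankdensity} over all boundedness signatures identifies a genuine lacuna in the write-up, and your proposed fix via bounded linear suborders is a reasonable way to close it.

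Where the proposal genuinely fails is part (1). The paper offers no explicit sub-$2$-clat decomposition of $H(v)$; it cites Schr\"oer's dissertation for the bare fact that $H(v)$ is an abstract $2$-hammock and then applies Theorem \ref{SKSKThm}. You attempt to build the decomposition, but partitioning by the innermost syllable at $v$ is too coarse. Your structural remark is correct as far as it goes --- a forbidden length-$2$ junction pair must be uniform-direction, and any mixed-direction junction concatenates to a valid string --- but when both innermost syllables are direct (or both inverse) and $\alpha^l_i\alpha^r_j$ happens to be a valid $2$-syllable string, a relation of length at least $3$ passing through $v$ can still obstruct membership in $H(v)$, and that obstruction is invisible to a first-syllable partition. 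Concretely, take $\Gamma_1$ from Figure \ref{Ex: Standard H-equiv} with $v = v_1$ and a bisection chosen so that $\sigma(c) = -1$ (forcing $\varepsilon(e) = 1$). The only admissible innermost syllable on the $\pi_l$-side is $c$, and $e$, $A$ are admissible on the $\pi_r$-side; $ce$ is a string, yet $bce \in \rho$ gives $(bc, e) \notin H(v_1)$ although $bc \in A_1$ and $e \in B_1$. So your $S_{1,1}$ is not contained in $H(v_1)$. A correct cuboidal decomposition must refine the partition according to the maximal initial segment of each factor that agrees with the right or left piece of a relation through $v$, which yields a number of sub-clats controlled by the lengths and multiplicities of such relations --- precisely the translation of Schr\"oer's ``removal of antidiagonal corners'' into the ``finite union of cuboidal subposets'' framework described in the preamble to Section \ref{order theory}.
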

\begin{proof}
It follows from \cite[\S~2.7]{Schroer98hammocksfor} that $H(v)$ is an abstract 2-hammock, and $(1)$ follows from Theorem \ref{SKSKThm}. As a consequence, $(2)$ follows from Lemma \ref{hammockfintype}. Statement $(3)$ follows from Propositions \ref{fd intervals}, \ref{stdproj}, \ref{boxes are hammocks} and Remark \ref{LOfp=LOfd cap Scattered}. As a corollary, $(4)$ follows from Theorem 
\ref{fpdensity}.
\end{proof}

There is a natural extension of $H(v)$ that also includes appropriately pointed periodic $\Z$-strings. $$ \hb(v) := H(v) \cup \{ \left( \prescript{\infty}{}{\bb} , \bb^\infty \right) \in \N\text{-}\St \times \N^*\text{-}\St \mid \bb \in \Cyc, \varepsilon(\bb) = 1 = -\sigma (\bb), s(\bb) = v = t(\bb) \}. $$
Since $\hb(v) \subseteq \widehat H_l\left(1_{(v,1)}\right) \times \widehat H_r\left( 1_{(v,1)} \right)$, it also gets equipped with an induced partial order, which we again denote by $<$. We continue to use the notations $\pi_l$ and $\pi_r$ to denote projection maps onto the completions of linear hammocks.

Now we want to investigate local finiteness properties of the difference $\hb(v)\setminus H(v)$. In view of Definition \ref{closure} of the closure of an interval in a hammock linear order, given a box $ \ii \subseteq H(v) $ define 
\begin{equation*}
\overline{\ii} :=  \ii \cup \{\left(\prescript{\infty}{}{\bb}, \bb^\infty\right) \in \hb(v)\mid \prescript{\infty}{}{\bb} \in \widehat{\pi_l (\ii)} \text{ and } \bb^\infty \in \widehat{\pi_r(\ii )} \}.
\end{equation*}

\begin{defn}
    Say that $ (\infb, \binf) \in \hb(v) $ is \emph{exceptional} if there is a scattered box $\ii\subseteq H(v) $ such that $ (\infb, \binf) \in \ol \ii $. 
    
    Define $\mathsf{Cyc}(v):=\{\bb\in\Cyc\mid(^\infty\bb,\bb^\infty)\in\hb(v)\}$; this set is in bijection with $\hb(v)\setminus H(v)$ Say that $\bb\in\Cyc$ is \emph{exceptional for} $v$ if $\bb\in\mathsf{Cyc}(v)$ and the corresponding point in $\hb(v)$ is exceptional.
\end{defn}

We want to show the finiteness of the set of exceptional points. To this end, let us recall some finite sets of bands originally introduced in \cite[\S~8]{SKSK}. For each $\sB \in \QBa$, define
\begin{align*}
    \BalB := \{\bb \in \sB \mid \text{ for any }\bb_1\in \sB \text{ and }\uu\in \St \text{ if }\bb_1\uu\bb \in \St \text{ then }\prescript{\infty}{}{\bb} \leq_l \prescript{\infty}{}{\bb_1}\uu\bb  \},\\
    \BalbB := \{\bb \in\sB \mid \text{ for any }\bb_1\in \sB \text{ and }\uu\in \St \text{ if }\bb_1\uu\bb \in \St \text{ then }\prescript{\infty}{}{\bb_1}\uu\bb \leq_l \prescript{\infty}{}{\bb}  \}.
\end{align*}

\begin{prop}(\cite[Corollaries~8.26,8.28]{SKSK} and \cite[Theorem~3.1.6]{GKS20})\label{balbfiniteness}
For any $\sB \in \QBa$, the sets $\BalB$ and $\BalbB$ are non-empty and finite.
\end{prop}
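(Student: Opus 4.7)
The plan is to prove non-emptiness and finiteness of $\BalB$ separately; the statements for $\BalbB$ follow by a symmetric argument (reversing the inequality in the definition).

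\smallskip
\noindent\emph{Non-emptiness.} I would fix any $\bb_0 \in \sB$ and consider the set
\[
A(\bb_0) := \{\prescript{\infty}{}{\bb_1}\uu\bb_0 \mid \bb_1 \in \sB,\ \uu \in \St,\ \bb_1\uu\bb_0 \in \St\} \subseteq \widehat{H}_l(\bb_0).
\]
Since $\widehat{H}_l(\bb_0)$ is the order-theoretic completion of a bounded linear order (Theorem~\ref{SKSKThm} together with \cite[Proposition~4.5]{SKSK}), $A(\bb_0)$ admits an infimum $\yy^* \in \widehat{H}_l(\bb_0)$. The central step is to show $\yy^* = \prescript{\infty}{}{\bb^*}\uu_0\bb_0$ for some $\bb^* \in \sB$ and $\uu_0 \in \St$, i.e., that this infimum is itself periodic.

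\smallskip
\noindent Granting the periodicity, I would verify $\bb^* \in \BalB$ as follows. Given $\bb_1 \in \sB$ and $\uu' \in \St$ with $\bb_1\uu'\bb^* \in \St$, choose $n \in \N^+$ so large that $n|\bb^*|$ exceeds the maximum length of relations in $\rho$; then any $\rho$-subpath of $\bb_1\uu'(\bb^*)^n\uu_0\bb_0$ must be contained in one of the two strings $\bb_1\uu'(\bb^*)^n$ or $(\bb^*)^n\uu_0\bb_0$ (both of which are strings, using $(\bb^*)^n \in \St$), so the concatenation is itself a string. Hence $\prescript{\infty}{}{\bb_1}\uu'(\bb^*)^n\uu_0\bb_0 \in A(\bb_0)$, and the minimality of $\yy^*$ gives
\[
\prescript{\infty}{}{\bb^*}\uu_0\bb_0 \le_l \prescript{\infty}{}{\bb_1}\uu'(\bb^*)^n\uu_0\bb_0.
\]
Both sides share the right suffix $(\bb^*)^n\uu_0\bb_0$, so Remark~\ref{rem: Hammock order property} transfers the comparison into $\widehat H_l(\bb^*)$ and yields $\prescript{\infty}{}{\bb^*} \le_l \prescript{\infty}{}{\bb_1}\uu'\bb^*$, which is exactly the defining condition for $\bb^* \in \BalB$.

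\smallskip
\noindent\emph{Finiteness.} I would exhibit an injection from $\BalB$ into a finite set. Each $\bb \in \BalB$ determines the triple $(v, i, \prescript{\infty}{}{\bb})$ where $v = s(\bb)$ and $i = -\sigma(\bb)$; this triple recovers $\bb$ up to the cyclic rotation that has already been quotiented out by the fixed choice of representatives $\Ba$. Since $Q_0$ is finite, it suffices to bound, for each $(v,i)$, the number of elements $\prescript{\infty}{}{\bb}$ arising from bands $\bb \in \BalB$ inside the completed hammock $\widehat{H}_l(1_{(v,i)})$. Each such $\prescript{\infty}{}{\bb}$ is a ``leftmost'' periodic accumulation for the class $\sB$ at $(v,i)$, and the set of such leftmost periodic accumulations is finite and effectively enumerable from the bridge-quiver data of $\sB$ via \cite[Theorem~3.1.6]{GKS20}.

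\smallskip
\noindent\emph{Main obstacle.} The crux is establishing the periodicity of $\yy^* = \inf A(\bb_0)$: a priori, an infimum of periodic strings in a completed hammock can be a non-periodic left-infinite string. The argument would exploit that all bands in the $\approx$-class $\sB$ are mutually reachable via connecting strings in $\St$, which encodes a finite-state combinatorial system governing the admissible left-infinite extensions. A pigeonhole argument within this finite-state system would then force any left-infinite string realized as such an infimum to repeat a state and hence be periodic. Making this precise is the technical heart of \cite[Corollaries~8.26, 8.28]{SKSK} and of the combinatorial algorithm in \cite[Theorem~3.1.6]{GKS20}.
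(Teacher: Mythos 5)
This proposition is cited in the paper from \cite[Corollaries~8.26,8.28]{SKSK} and \cite[Theorem~3.1.6]{GKS20} with no in-paper proof, so there is nothing of the paper's to compare your argument against; judged on its own, your proposal has genuine gaps. For non-emptiness, the pigeonhole heuristic you sketch for the periodicity of $\yy^*:=\inf A(\bb_0)$ does not close the argument: reading a left $\N$-string through a finite-state system forces some state to recur, but the finite blocks between successive recurrences can vary indefinitely, so recurrence alone never yields eventual periodicity. Any correct argument must exploit the extremality of $\yy^*$ --- roughly, a pumping/surgery step showing that a hypothetical non-periodic infimum could be replaced by a strictly $<_l$-smaller lower bound of $A(\bb_0)$, a contradiction --- and that is exactly the step you are deferring. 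Moreover, even granting periodicity you have tacitly assumed that the period $\bb^*$ is a cyclic rotation of an element of $\sB$; since $\BalB\subseteq\sB$ by definition, while the infimum could a priori repeat a band belonging to a different $\approx$-class, you must separately show $\bb^*\in\sB$ before the minimality inequality can place it in $\BalB$.

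The finiteness argument is vacuous. The injection $\bb\mapsto(s(\bb),-\sigma(\bb),\prescript{\infty}{}{\bb})$ merely relabels $\BalB$: the last coordinate is again in bijection with $\BalB$, so ``bounding the number of leftmost periodic accumulations at each $(v,i)$'' is not a reduction of the problem but a restatement of it, and asserting its finiteness ``via \cite{GKS20}'' is a re-citation rather than a proof. Also, uniqueness of the leftmost accumulation at a fixed $(v,i)$ is not automatic: the minimality condition defining $\BalB$ is only relative to $\sB$-extensions of $\bb$ inside $\widehat H_l(\bb)$, not global minimality in $\widehat H_l(1_{(v,i)})$, so two distinct $\bb,\bb'\in\BalB$ with $s(\bb)=s(\bb')$ and $\sigma(\bb)=\sigma(\bb')$ cannot be excluded from the definition alone. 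In short, your scaffold identifies the right objects but punts on both hard steps to the cited sources, and the specific shortcut you suggest for periodicity would not work without a mechanism to exploit the infimum's extremality.
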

The proofs of the cited results also provide algorithms to compute these two sets.
\begin{exmp}
    For the algebra $GP_{2,3}$ from Example \ref{Ex: Standard example}, note that $aB \approx aB^2$ in the preorder $(\QBa_0 , \preceq)$. In fact, $|\QBa|=2$. Take $\sB \in \QBa$ such that $\{aB , aB^2\} \subseteq \sB$. It is readily verified that $aB \in \BalB$ and $aB^2 \in \BalbB$.
\end{exmp}

There is an obvious consequence of the above result for domestic elements of $\QBa$.
\begin{rem}\label{rem: BalB for domestic}
    For every domestic $\sB \in \QBa$, $\BalB = \sB = \BalbB$.
\end{rem}

No two bands in $\bigcup_{\sB\in\QBa}\BalB$, or their cyclic permutations are comparable as strings.
\begin{prop}\label{rem: Bl not being substrings}
    For any $\sB,\sB' \in \QBa$, $\bb_1\in \BalB$ and $\bb_2 \in \mathsf{Ba}_l(\sB')$, if $\bb_1\neq \bb_2$ then $\bb_1\not\sqsubset \prescript{\infty}{}{\bb_2}$ and $\bb_2\not\sqsubset  \prescript{\infty}{}{\bb_1}.$
\end{prop}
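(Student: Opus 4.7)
The plan is to argue by contradiction. Suppose $\bb_1 \sqsubset \prescript{\infty}{}{\bb_2}$; the second assertion $\bb_2 \not\sqsubset \prescript{\infty}{}{\bb_1}$ will follow by a symmetric argument. Since $\prescript{\infty}{}{\bb_2}$ is $|\bb_2|$-periodic, $\bb_1$ occurs at infinitely many positions in $\prescript{\infty}{}{\bb_2}$; choosing an occurrence deep enough, I can flank it on both sides by aligned $\bb_2$-blocks, obtaining a right substring of $\prescript{\infty}{}{\bb_2}$ of the form $\bb_2\uu\bb_1\vv\bb_2$ for some $\uu,\vv \in \St$. In particular $\bb_2\uu\bb_1,\,\bb_1\vv\bb_2 \in \St$, so $\bb_1 \preceq \bb_2$ and $\bb_2 \preceq \bb_1$, giving $\sB = \sB'$. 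By alignment (using primitivity of $\bb_2$, which forbids non-aligned occurrences of $\bb_2$ inside $\prescript{\infty}{}{\bb_2}$), the length of $\bb_2\uu\bb_1\vv\bb_2$ is a multiple of $|\bb_2|$, and the periodic structure then yields the left-$\N$-string equality $\prescript{\infty}{}{\bb_2} = \prescript{\infty}{}{\bb_2}\uu\bb_1\vv\bb_2$.

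Next, I would invoke the defining inequality of $\BalB$ for $\bb_1$ with $\bb' = \bb_2$ to get $\prescript{\infty}{}{\bb_1} \leq_l \prescript{\infty}{}{\bb_2}\uu\bb_1$, and symmetrically $\prescript{\infty}{}{\bb_2} \leq_l \prescript{\infty}{}{\bb_1}\vv\bb_2$ from $\bb_2 \in \mathsf{Ba}_l(\sB')$ with $\bb' = \bb_1$. Appending the common right tail $\vv\bb_2$ to the first inequality preserves $\leq_l$ (the branching point defining $<_l$ in Definition~\ref{hammock defn} lies strictly to the left of $\vv\bb_2$ and is unaffected, by Remark~\ref{rem: Hammock order property}), giving
\[ \prescript{\infty}{}{\bb_1}\vv\bb_2 \leq_l \prescript{\infty}{}{\bb_2}\uu\bb_1\vv\bb_2 = \prescript{\infty}{}{\bb_2}. \]
Combined with the second inequality, this forces the equality $\prescript{\infty}{}{\bb_1}\vv\bb_2 = \prescript{\infty}{}{\bb_2}$ of left $\N$-strings.

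The hardest step will be extracting $\bb_1 = \bb_2$ from this equality. Deep enough to the left, the common left $\N$-string is simultaneously periodic of period $|\bb_1|$ (from the $\prescript{\infty}{}{\bb_1}$-factor on the left-hand side) and of period $|\bb_2|$ (from $\prescript{\infty}{}{\bb_2}$). A Fine--Wilf-type argument in the discrete syllable setting shows that the set of periods of an infinite periodic sequence is closed under $\gcd$, so the fundamental period divides $\gcd(|\bb_1|,|\bb_2|)$; since $\bb_1,\bb_2$ are primitive (as bands, neither is a proper power), the fundamental period of $\prescript{\infty}{}{\bb_i}$ equals $|\bb_i|$ for each $i$, forcing $|\bb_1| = |\bb_2|$. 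With equal lengths and matching infinite left-tails, $\bb_1$ is then a cyclic permutation of $\bb_2$, and hence $\bb_1 = \bb_2$ since both lie in the fixed transversal $\Ba$, contradicting the hypothesis $\bb_1 \neq \bb_2$.
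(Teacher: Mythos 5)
Your proof is correct and takes essentially the same route as the paper's: extract an aligned factorisation of a power of $\bb_2$ around the embedded copy of $\bb_1$, use it to establish $\bb_1 \approx \bb_2$ and hence $\sB = \sB'$, invoke the defining $\leq_l$-inequalities of $\mathsf{Ba}_l(\cdot)$ on both sides, append a common right tail to collapse the chain to an equality of left $\N$-strings, and finish by primitivity. The only stylistic difference is that you make the final primitivity step fully explicit via a Fine--Wilf periodicity argument, whereas the paper leaves the passage from $\prescript{\infty}{}{\bb_2}=\prescript{\infty}{}{\bb_1}\uu_1$ and $\prescript{\infty}{}{\bb_1}=\prescript{\infty}{}{\bb_2}\uu_2$ to $\prescript{\infty}{}{\bb_1}=\prescript{\infty}{}{\bb_2}$ largely implicit.
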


\begin{proof}
    We will prove the contrapositive of this statement. Take any $\bb_1 \in \BalB$ and $\bb_2 \in\mathsf{Ba}_l(\sB')$ such that $\bb_1 \sqsubset \prescript{\infty}{}{\bb_2}$. Then there exist $\uu_1,\uu_2 \in \St$ and $n \in \N^+$ such that $\uu_2\bb_1\uu_1=\bb_2^n$. Since $\bb_2^{n+2} = \bb_2\uu_2\bb_1\uu_1\bb_2$ is a string, we get that $\bb_1 \approx \bb_2$, and hence $\sB'=\sB$. Since $ \bb_2 \in \BalB$ we obtain $\prescript{\infty}{}{\bb_2} \leq_l \prescript{\infty}{}{\bb_1}\uu_1\bb_2 $. Similarly, since $\bb_1\in\BalB$ we get that $ \prescript{\infty}{}{\bb_1} \leq_l \prescript{\infty}{}{\bb_2}\uu_2\bb_1$. Combining the data we get $\prescript{\infty}{}{\bb_2} \leq_l \prescript{\infty}{}{\bb_1}\uu_1\bb_2 \leq_l \prescript{\infty}{}{\bb_2}\uu_2\bb_1\uu_1\bb_2=\prescript{\infty}{}{\bb_2}$. Thus $\prescript{\infty}{}{\bb_2}= \prescript{\infty}{}{\bb_1}\uu_1$. Similarly we can argue that $\prescript{\infty}{}{\bb_1}= \prescript{\infty}{}{\bb_2}\uu_2$, and hence $\prescript{\infty}{}{\bb_2}= \prescript{\infty}{}{\bb_1}$. Since no two distinct elements of $\QBa_0$ are cyclic permutations of each other, we conclude $\bb_1=\bb_2$.
    Since $\bb_2^n = \uu_1\bb_1\uu_2$, we get that $ \prescript{\infty}{}{\bb_2}\uu_1\bb_1\uu_2\bb_2 = \prescript{\infty}{}{\bb_2} \leq_l \prescript{\infty}{}{\bb_1}\uu_2\bb_2$. Thus $\prescript{\infty}{}{\bb_2}\uu_1\bb_1 \leq_l \prescript{\infty}{}{\bb_1} $. Since $\bb_1 \in \BalB$, we get that $\prescript{\infty}{}{\bb_2}\uu_1\bb_1 = \prescript{\infty}{}{\bb_1}  $. Thus $\bb_1 = \bb_2$.
\end{proof}

We can define the right counterparts of the sets $\BalB$ and $\BalbB$.
\begin{align*}
    \mathsf{Ba}_r(\sB) := \{\bb \in \sB \mid \text{ for any }\bb_1\in \sB \text{ and }\uu\in \St \text{ if }\bb\uu\bb_1 \in \St \text{ then } \bb^\infty\leq_r\bb\uu\bb_1^\infty \};\\
    \mathsf{Ba}_{\rb}(\sB) := \{\bb \in\sB \mid \text{ for any }\bb_1\in \sB \text{ and }\uu\in \St \text{ if }\bb_1\uu\bb \in \St \text{ then } \bb\uu\bb_1^\infty\leq_r \bb^\infty \}.
\end{align*}
All results stated and proved for $\BalB$ and $\BalbB$ are also true for their counterparts.

Here is the promised result on the finiteness of the set of exceptional points in a hammock--this is the last ingredient in our recipe to compute $\st(\Lambda)$.
\begin{lem}\label{excptfin}
Suppose $v\in Q_0$, $\bb_1\in\mathsf{Cyc}(v)$, and $\bb_1$ is a cyclic permutation of $\bb\in\QBa_0$. If $\bb_1$ is exceptional for $v$ then $\bb \in \bigcup\limits_{\sB \in \QBa}\left(\BalB \cup \BalbB\right)$. Therefore, the set of exceptional points in $\hb(v)\setminus H(v)$ is finite.
\end{lem}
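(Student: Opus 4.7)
My plan is to prove the contrapositive: assume $\bb \notin \BalB \cup \BalbB$, and show $\bb_1$ is not exceptional. By the definition of $\overline{\ii}$ and Proposition~\ref{stdproj}, it suffices to show that ${}^{\infty}\bb_1 \notin \widehat{I}$ for every scattered interval $I \subseteq H_l(1_{(v,1)})$.

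Suppose for contradiction that ${}^{\infty}\bb_1 \in \widehat{I}$ for some scattered interval $I$. By Definition~\ref{closure}, fix $\zz' \in I$ with, without loss, $(\zz', {}^{\infty}\bb_1)_l \subseteq I$; the symmetric case $({}^{\infty}\bb_1, \zz')_l \subseteq I$ will use $\bb \notin \BalbB$ in place of $\bb \notin \BalB$. Write $\bb = \beta\alpha$ so that the cyclic shift $\bb_1 = \alpha\beta$ has endpoint $v$. Using $\bb \notin \BalB$, fix $\bb' \in \sB$ and $\uu' \in \St$ with $\bb'\uu'\bb \in \St$ and ${}^{\infty}\bb'\uu'\bb <_l {}^{\infty}\bb$. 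For each $m \in \N^+$, set $\yy_m := \bb'\uu'\bb^m\beta$. A syllable-by-syllable analysis reveals that $\yy_m$ and ${}^{\infty}\bb_1$ share the common right suffix $\bb^m\beta$, and the divergence syllables at the subsequent position (the last syllable of $\uu'$ on $\yy_m$'s side versus the last syllable of $\alpha$ on ${}^{\infty}\bb_1$'s side) exactly match those of the comparison ${}^{\infty}\bb'\uu'\bb$ versus ${}^{\infty}\bb$. Since $<_l$ depends only on the classification of these divergence syllables into $Q_1$ and $Q_1^-$, this forces $\yy_m <_l {}^{\infty}\bb_1$ and $\yy_m \to {}^{\infty}\bb_1$; hence $\yy_m \in I$ for $m$ sufficiently large.

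The crux is to embed $\eta$ into $(\zz', {}^{\infty}\bb_1)_l$, contradicting the scatteredness of $I$. Using $\bb \notin \BalbB$, additionally fix $\bb'' \in \sB$ and $\uu''$ with $\bb''\uu''\bb \in \St$ and ${}^{\infty}\bb''\uu''\bb >_l {}^{\infty}\bb$; using $\bb' \approx \bb''$ (both in $\sB$), fix a connecting string $\uu_c$ with $\bb'\uu_c\bb'' \in \St$. Interleaving $\bb''\uu''$-blocks via the connector $\uu_c$ between consecutive copies of $\bb$ in the $\yy_m$ construction, and iterating the interleaving to arbitrary finite depth, produces a family of valid strings in $(\zz', {}^{\infty}\bb_1)_l$ whose $<_l$-order realises $\eta$. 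An alternative cleaner route invokes the density-of-cluster results underlying \cite[Corollary~11.5, Proposition~10.10]{SKSK}: for non-domestic $\sB$, the family $\{{}^{\infty}\bb_2 \uu \bb_1 : \bb_2 \in \sB, \uu \in \St\}$ forms a densely ordered cluster in $\widehat{H}_l(1_{(v,1)})$ whose extremal elements are precisely the cyclic shifts of members of $\BalB \cup \BalbB$; since $\bb \notin \BalB \cup \BalbB$, ${}^{\infty}\bb_1$ is an interior point of this cluster, and finite approximations of its neighbouring cycles accumulate densely near ${}^{\infty}\bb_1$ inside any interval approaching it, precluding scatteredness of $I$.

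The principal obstacle is the combinatorial bookkeeping required to make the explicit $\eta$-embedding rigorous: verifying that each iterated concatenation is a string by pasting together data supplied by $\approx$, and correctly propagating the $<_l$-comparisons through the iterated divergence analyses. Once the contrapositive is in place, the finiteness conclusion follows at once: $\bigcup_{\sB \in \QBa}(\BalB \cup \BalbB)$ is finite by Proposition~\ref{balbfiniteness} and the finiteness of the poset $\QBa$, and each such $\bb$ admits only finitely many cyclic permutations lying in $\mathsf{Cyc}(v)$.
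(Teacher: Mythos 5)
Your logical framing---argue the contrapositive and reduce, via Proposition \ref{stdproj}, to showing $\prescript{\infty}{}{\bb_1}$ is not in the closure of any scattered interval of $H_l(1_{(v,1)})$---is sound and equivalent to the paper's setup. The approximating sequence $\yy_m:=\bb'\uu'\bb^m\beta$ and the verification that it converges to $\prescript{\infty}{}{\bb_1}$ from below are also essentially correct and parallel to what happens implicitly in the paper.

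The genuine gap is the step where you go from having a monotone sequence inside $I$ to producing an order-embedding of $\eta$. Your ``interleaving'' sketch has two unresolved problems. First, to insert $\bb''\uu''$-blocks into the $\yy_m$'s you need the resulting concatenations to be strings; you offer $\bb'\approx\bb''$ and a connector $\uu_c$ with $\bb'\uu_c\bb''\in\St$, but $\uu_c$ connects $\bb''$ to $\bb'$ in the wrong internal position, and no verification is given that the iterated pastings avoid zero relations or inverse cancellations. Second, even granting string validity, you would need to prove that the resulting family actually realizes the order type $\eta$ inside $(\zz',\prescript{\infty}{}{\bb_1})_l$; classifying the divergence syllables at every level of iteration is a delicate induction that you explicitly punt on. The ``alternative cleaner route'' is not really cleaner: it paraphrases results that are themselves buried inside the proof of Theorem~\ref{prop: Scattered hammock}, and does not explain how the ``cluster'' statement you quote produces an embedding of $\eta$ into a specific interval $(\zz',\prescript{\infty}{}{\bb_1})_l$.

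The key simplification you are missing is that you do \emph{not} need both $\bb\notin\BalB$ and $\bb\notin\BalbB$ in the case $(\zz',\prescript{\infty}{}{\bb_1})_l\subseteq I$, and you do not need to build the $\eta$-embedding by hand. Once you have $\bb\notin\BalB$, Remark~\ref{rem: BalB for domestic} tells you $\sB$ is non-domestic. Then, exactly as in the paper, take $\ww:=\prescript{\infty}{}{\bb'}\uu'\bb\sqcap_l\prescript{\infty}{}{\bb}$ and $n$ large enough that $\ww\bb^n\vv_2$ is longer than $\zz'$: this string sits inside $(\zz',\prescript{\infty}{}{\bb_1})_l$, has $\theta_l(\prescript{\infty}{}{\bb_1}\mid\ww\bb^n\vv_2)=1$, and the non-domestic class $\sB$ is visible from it in direction $-1$. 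Theorem~\ref{prop: Scattered hammock} then delivers the non-scatteredness of $H_l^{-1}(\ww\bb^n\vv_2)\subseteq(\zz',\prescript{\infty}{}{\bb_1})_l$ in one stroke, and the contradiction follows. This is both shorter and rigorous; the $\eta$-embedding machinery you are trying to reconstruct is already packaged inside that theorem.

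Your final finiteness deduction is correct and matches the paper.
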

\begin{proof}
Suppose $\bb_1$ is exceptional for $v$. Take $\sB \in \QBa$ such that $\bb\in \sB$. Let $ \ii \subseteq H(v) $ be a scattered box such that  $(\infb_1, \bb_1^\infty) \in \ol\ii $. Since $\prescript{\infty}{}{\bb_1} \in \widehat{\pi_l(\ii)}$, take $\xx \in \pi_l(\ii)$ such that $[\prescript{\infty}{}{\bb_1} , \xx]_l \cup [\xx , \prescript{\infty}{}{\bb_1 }]_l \subseteq \pi_l(\ii)$. Without loss of generality, assume $\xx<_l \prescript{\infty}{}{\bb_1}$ so that $[\xx,\prescript{\infty}{}{\bb_1})_l \subseteq \pi_l(\ii)$. We will show $\bb \in \BalB$ to complete the proof. (If $\prescript{\infty}{}{\bb_1}<_l\xx$ then a similar proof will show that $\bb \in \BalbB$.)

Towards a contradiction, assume $\bb \notin \BalB$. Then by Remark \ref{rem: BalB for domestic}, $\sB$ is non-domestic. Take $\vv_1 , \vv_2 \in \St$ such that $\bb_1 = \vv_1\vv_2$ and $\bb = \vv_2\vv_1$. Then $\prescript{\infty}{}{\bb_1} = \prescript{\infty}{}{(\vv_1\vv_2)} = \prescript{\infty}{}{(\vv_2\vv_1)}\vv_2 = \prescript{\infty}{}{\bb}\vv_2 $. Since $\bb \notin \BalB$ there exist $ \uu\in \St$ and $\bb' \in \sB$ such that $\prescript{\infty}{}{\bb'}\uu\bb <_l \prescript{\infty}{}{\bb}$. Let $\ww := \prescript{\infty}{}{\bb'}\uu\bb \sqcap_l \prescript{\infty}{}{\bb}$. Take $n >0$ such that $|\bb_1^n| > |\xx|$. Note that $\prescript{\infty}{}{\bb_1} \sqcap_l \xx \sqsubset_l \ww\bb^n\vv_2 \sqsubseteq_l \prescript{\infty}{}{\bb_1} \sqcap_l \yy$ for any $\yy \in H_l^{-1}(\ww\bb^n\vv_2)$. Thus, since $\xx <_l \prescript{\infty}{}{\bb_1}$, Remark \ref{rem: Hammock order property} gives $\xx <_l \yy$ for each $\yy\in H_l^{-1}(\ww\bb^n\vv_2)$.

Since $\prescript{\infty}{}{\bb'}\uu\bb <_l \prescript{\infty}{}{\bb}$, we get that $\theta_l(\prescript{\infty}{}{\bb_1} \mid \ww\bb_n\vv_2) =\theta_l(\prescript{\infty}{}{\bb}\vv_2 \mid \ww\bb^n\vv_2) =  \theta_l(\prescript{\infty}{}{\bb} \mid \ww) = 1$, and hence $ \yy <_l \prescript{\infty}{}{\bb_1} $ for each $\yy \in H_l^{-1}(\ww\bb^n\vv_2)$. Thus $H_l^{-1}(\ww\bb^n\vv_2) \subseteq (\prescript{\infty}{}{\bb_1}, \xx]_l$.

Again using $\prescript{\infty}{}{\bb'}\uu\bb <_l \prescript{\infty}{}{\bb}$ we get that $\theta_l(\prescript{\infty}{}{\bb'}\uu\bb^{n+1}\vv_2 \mid \ww\bb^n\vv_2) = \theta_l(\prescript{\infty}{}{\bb'}\uu\bb \mid \ww) = -1 $, and hence $\bb' \in \overline{\QBa_{l,-1}}(\ww\bb^n\vv_2)$ so that $\sB \in \QBa_{l,-1}(\ww\bb^n\vv_2)$. As $\sB$ is non-domestic, Theorem \ref{prop: Scattered hammock} gives that $H_l^{-1}(\ww\bb^n\vv_2)$ is not scattered. Since $H_l^{-1}(\ww\bb^n\vv_2) \subseteq (\prescript{\infty}{}{\bb_1}, \xx]_l\subseteq\pi_l(\ii)$, we also conclude that $\pi_l(\ii)$ is not scattered, which is a contradiction to the hypothesis that $\ii$ is a scattered box via Proposition \ref{stdproj}.

The final statement follows from the finiteness of sets $ \BalB $ and $ \BalbB $ (Proposition \ref{balbfiniteness}), and of $\QBa$ \cite[Proposition~5.6]{SKSK}.
\end{proof}
The proof of the above lemma has the following useful consequence.
\begin{cor}\label{cor: Exceptional point in left intervals}
Suppose $v\in Q_0$, $\bb_1\in\mathsf{Cyc}(v)$ is a cyclic permutation of $\bb\in \QBa_0$ and $\ii \subseteq H(v)$ is an MSB such that $(\prescript{\infty}{}{\bb_1}, \bb_1^{\infty}) \in \ol\ii\setminus \ii$.
\begin{itemize}
    \item If $\bb \notin \bigcup\limits_{\sB \in \QBa}\BalB$ (resp. $ \bb \notin \bigcup\limits_{\sB \in \QBa}\BalbB)$ then $\prescript{\infty}{}{\bb_1}$ is the minimum (resp. maximum) of $\widehat{\pi_l(\ii)}$.
    \item If $\bb \notin \bigcup\limits_{\sB \in \QBa}\BarB$ (resp. $ \bb \notin \bigcup\limits_{\sB \in \QBa}\BarbB$)  then ${\bb}_1^\infty$ is the minimum (resp. maximum) of $\widehat{\pi_r(\ii)}$.
\end{itemize}
\end{cor}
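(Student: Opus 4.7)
The plan is to extract, from the proof of Lemma \ref{excptfin}, finer information that pins down the location of $\prescript{\infty}{}{\bb_1}$ (and $\bb_1^\infty$) inside $\widehat{\pi_l(\ii)}$ (and $\widehat{\pi_r(\ii)}$). I will detail the first assertion; the remaining three are parallel, differing only in the coordinate ($\pi_l$ vs.\ $\pi_r$) and in which side of the limit point the witness sits.

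The proof of Lemma \ref{excptfin}, traced carefully, really establishes the following sharper implication: if some $\xx \in \pi_l(\ii)$ satisfies both $\xx <_l \prescript{\infty}{}{\bb_1}$ and $[\xx, \prescript{\infty}{}{\bb_1})_l \subseteq \pi_l(\ii)$, then $\bb \in \BalB$ for the class $\sB \in \QBa$ containing $\bb$. Hence, under the hypothesis $\bb \notin \bigcup_{\sB \in \QBa} \BalB$, no such witness from below can exist. Because $\ii$ is a box (Definition \ref{boxdefine}), $\pi_l(\ii)$ is an interval in $H_l(1_{(v,1)})$; combined with $\prescript{\infty}{}{\bb_1} \in \widehat{\pi_l(\ii)}$ and Definition \ref{closure}, this forces any element of $\pi_l(\ii)$ strictly below $\prescript{\infty}{}{\bb_1}$ to automatically serve as a witness from below, which is absurd. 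Therefore every element of $\pi_l(\ii)$ lies strictly above $\prescript{\infty}{}{\bb_1}$, making $\prescript{\infty}{}{\bb_1}$ the minimum of $\widehat{\pi_l(\ii)}$.

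The $\BalbB$ statement is handled by re-running the proof of Lemma \ref{excptfin} with the opposite inequality $\prescript{\infty}{}{\bb_1} <_l \xx$; the defining condition $\prescript{\infty}{}{\bb_1}\uu\bb \leq_l \prescript{\infty}{}{\bb}$ of $\BalbB$ then plays the role previously held by $\prescript{\infty}{}{\bb} \leq_l \prescript{\infty}{}{\bb_1}\uu\bb$ for $\BalB$, producing a non-scattered subhammock above $\prescript{\infty}{}{\bb_1}$ and forcing $\bb \in \BalbB$. The two right-hammock assertions transcribe these arguments via the duality $\xx \mapsto \xx^{-1}$ built into the definitions of $H_l$ and $H_r$ (cf.\ Definition \ref{hammock defn}), which exchanges $\BalB, \BalbB$ with $\BarB, \BarbB$.

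The principal obstacle is a short order-theoretic verification: for an interval $I$ of a linear order $L$ and a point $p \in \widehat{L} \setminus L$ with $p \in \widehat{I}$, $p$ fails to be the minimum of $\widehat{I}$ precisely when $I$ contains an element strictly below $p$ (because then the interval property forces an initial segment of $I$ up to $p$ to lie in $I$). Once this is in hand, the representation-theoretic machinery from Lemma \ref{excptfin} applies verbatim and delivers membership in the appropriate one of the four finite sets $\BalB, \BalbB, \BarB, \BarbB$.
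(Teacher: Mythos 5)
Your proposal is correct and follows essentially the same route as the paper, which states the corollary as a direct consequence of the proof of Lemma~\ref{excptfin}: you correctly identify that the lemma's argument only uses a witness $\xx \in \pi_l(\ii)$ with $\xx <_l \prescript{\infty}{}{\bb_1}$ and $[\xx, \prescript{\infty}{}{\bb_1})_l \subseteq \pi_l(\ii)$ to conclude $\bb \in \BalB$, and that the interval structure of $\pi_l(\ii)$ together with Definition~\ref{closure} converts the nonexistence of such a witness into the statement that $\prescript{\infty}{}{\bb_1}$ is the minimum of $\widehat{\pi_l(\ii)}$. The three remaining clauses are indeed handled by the symmetric and dual modifications you describe.
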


\subsection{The proof for the case of string algebras}\label{conjproofstringalg}
In this subsection, we use the relation between the representation theory of string algebras and the order theory of their hammock posets to translate our bound on the densities of scattered boxes in $ H(v) $ (Theorem \ref{hammockordermain} $(4)$) to a bound on the ranks of graph maps outside $\rad^\infty$, thereby proving Theorem \ref{oldconj} for string algebras.

For $v \in Q_0$, we start by describing the extension $\HH(v)$ of $\hb(v)$ that combinatorially encapsulates the factorizations of base-point preserving morphisms between finite-dimensional indecomposable $\Lambda$-modules (Theorem \ref{hammockgraphmap}). We then relate the ranks of morphisms with densities of closures (in $\hb(v)$) of intervals in $H(v)$ (Theorems \ref{density=rank-1} and \ref{noninfscatteredequivalence}). We then argue that $\st(\Lambda)$ is the supremum of the densities of the closures of MSBs in hammocks (Equation \eqref{stablerank7}), which is then used to conclude Theorem \ref{oldconj} for the case of string algebras.

Fix $v\in Q_0$. Recall the 2-hammock posets $H(v)$ and $\hb(v)$ from \S~\ref{excpoints}. We now describe a further extension of $\hb(v)$ to accommodate the index sets of pointed band modules.
\begin{defn}
For each $\lambda\in\K^*$ and $\bb\in\mathsf{Cyc}(v)$, set $\bb[\lambda]:=\omega^*\times\omega$. Define the \emph{extended hammock poset} $\HH (v)$ to be the poset obtained by replacing in $\hb(v)$ each element $(^\infty\bb,\bb^\infty)\in\hb(v)\setminus H(v)$ by $\bigsqcup_{\lambda\in\K^*}\bb[\lambda]$, where $x\in\bb[\lambda]$ and $y\in\bb[\lambda']$ are incomparable if $\lambda\neq\lambda'$.
\end{defn}

\begin{rem}
The poset $H(v)$ is a subposet of $\HH(v)$ and there is a natural projection map $\pi_v:\HH(v)\to\hb(v)$ that is identity when restricted to $H(v)$ and that sends each point in $\bb[\lambda]$ to $(^\infty\bb,\bb^\infty)\in\hb(v)$. 
\end{rem}

There is a map $\mathsf M:\HH(v)\to\Lambda\dmod$ \cite[\S~5.4]{Schroer98hammocksfor} that associates to $x:=(\xx_1,\xx_2)\in H(v)$ an appropriately pointed string module $\mathsf{M}(x):=\mathrm{M}(\xx_1\xx_2)$ and to $(-n,m)\in\bb[\lambda]=\omega^*\times\omega$ for $\bb\in\mathsf{Cyc}(v)$ an appropriately pointed band module $\mathsf{M}(\bb, \lambda, -n, m)\defeq\mathrm{B}(\bb_1,n+m+1,\lambda)$, where $\bb_1\in\QBa_0$ is the unique element such that $\bb$ is a cyclic permutation of $\bb_1$. The map $\mathsf M$ satisfies the following important property.
\begin{thm}(\cite{crawley1998infinite},\cite{krause1991maps})\label{hammockgraphmap}
If $M,N$ are indecomposable modules, i.e., either a string or a band module in $\Lambda\dmod$, then the set of graph maps in $\mathrm{Hom}(M,N)$ is a basis for the finite-dimensional vector space $\mathrm{Hom}(M,N)$.

There is a necessarily unique base-point preserving graph map $ f_{x,y}^v\in\mathrm{Hom}(\mathsf M(x),\mathsf M(y))$ if and only if $x\leq y$ in $\HH(v)$. Moreover, $f_{x,y}^v=f_{z,y}^v\circ f_{x,z}^v$ whenever $x\leq z\leq y$ in $\HH(v)$.
\end{thm}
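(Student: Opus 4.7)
The plan is to first invoke the classical basis statement from \cite{crawley1989maps,krause1991maps}, then verify by case analysis that the poset $\HH(v)$ was set up to encode exactly the base-point preserving graph maps. For the first part, the underlying idea is that string and band modules have explicit bases indexed by the vertices occurring in the underlying word (together with the Jordan-block vectors in the band case), and a $\Lambda$-linear map between them is determined by its values on these bases subject to commutativity along the arrows of $\Q$; these constraints force the matrix of a morphism to be constant on maximal fragments, namely factor substrings of the source and image substrings of the target, and the common strings $\vv$ appearing as both are precisely what graph maps capture. Linear independence then follows because graph maps with different associated strings have disjoint support on the standard bases, while spanning is shown by recursively peeling off the contribution of each graph map from a given morphism; in the band case, the eigenvalue $\lambda$ and Jordan block size contribute an additional $\mathrm{Hom}_\K$-factor per associated string.

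For the second statement, the plan is to verify, by a four-way case analysis (string-string, string-band, band-string, band-band), that the partial order on $\HH(v)$ is by construction the order in which base-point preserving graph maps exist. When $x=(\xx_1,\xx_2),\, y=(\yy_1,\yy_2)\in H(v)$, unfolding Definition \ref{hammock defn} shows that $\xx_1\leq_l\yy_1$ and $\xx_2\leq_r\yy_2$ happens precisely when there exists a common string $\vv$ that is a factor substring of $\xx_1\xx_2$ and an image substring of $\yy_1\yy_2$ with the distinguished vertex of $\vv$ sitting at the chosen base point; this is exactly the data of a base-point preserving graph map $\mathsf M(x)\to\mathsf M(y)$. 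The mixed string-band and band-string cases follow by the same analysis with one endpoint of the common string $\vv$ allowed to be infinite inside $\prescript{\infty}{}{\bb}^\infty$, while the band-band case uses that the $\omega^*\times\omega$ parameterization of $\bb[\lambda]$ records the positions of the base point from the two sides of the finite band module $\mathrm{B}(\bb_1,n+m+1,\lambda)$; incomparability across distinct values of $\lambda$ reflects that any graph map between band modules with different eigenvalues must act as zero on a common associated string after accounting for the Jordan action, so no nonzero base-point preserving graph map exists.

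The composition identity $f_{x,y}^v=f_{z,y}^v\circ f_{x,z}^v$ is then a direct consequence of the canonical epi-mono factorizations: writing $f_{x,z}^v=\iota_{x,z}\circ\pi_{x,z}$ through an associated string $\vv_{x,z}$ and analogously for $f_{z,y}^v$, the composite map is nonzero and base-point preserving, and hence must equal the unique base-point preserving graph map $f_{x,y}^v$ given by the preceding equivalence. The principal obstacle I expect is the bookkeeping in the band-to-band case, where correctly matching the two coordinates in $\omega^*\times\omega$ to base-point positions within $\mathrm{B}(\bb_1,n+m+1,\lambda)$, and confirming that base-point preservation forces the eigenvalues to coincide, requires careful use of the explicit construction of band modules recalled in \S~\ref{sec: Preliminaries}; once these matches are established, the full theorem assembles from the combinatorial definitions of $H(v)$, $\hb(v)$ and $\HH(v)$ together with the basis statement of Crawley-Boevey and Krause.
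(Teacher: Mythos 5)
The paper does not prove this theorem; it is imported verbatim, with the basis result attributed to Crawley-Boevey and Krause and the pointed-module/poset structure to Sch\"roer's framework (\cite{crawley1998infinite}, \cite{krause1991maps}, and \cite[Chapter~2,~\S~5.4]{Schroer98hammocksfor}). So there is no in-paper proof to compare against -- the relevant question is whether your reconstruction is sound, and there are two real gaps.

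First, the linear independence argument is wrong as stated. Graph maps with distinct associated strings do \emph{not} in general have disjoint support on the standard bases indexed by the letters of the underlying strings: two different common substrings $\vv,\vv'$ of the source and target words can overlap, so the corresponding canonical epi-mono composites involve overlapping blocks of basis vectors. Crawley-Boevey's and Krause's proofs of independence and spanning are genuinely subtle: they proceed by choosing a suitable total ordering on the occurrences of associated strings and reading off the ``leading'' (maximal) contribution of a morphism, peeling off one graph map at a time. Your ``recursive peeling'' idea for spanning is in the right spirit, but the independence needs the same ordering argument, not disjointness.

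Second, the composition step has a gap. The composite of two graph maps is in general \emph{not} a graph map -- it is only a linear combination of graph maps by the basis theorem -- and the composite being nonzero and base-point preserving does not by itself force it to coincide with $f_{x,y}^v$, since a priori it could equal $f_{x,y}^v + g$ for some non-base-point-preserving $g$. The actual argument in Sch\"roer's setup tracks the canonical epi-mono factorizations through associated strings: when $x\le z\le y$, the associated string of $f_{x,z}^v$ sits inside that of $f_{z,y}^v$ around the base point, the intermediate image and factor substring conditions line up, and the composite is shown directly to \emph{be} the canonical epi-mono factorization through the associated string of $f_{x,y}^v$. You should either reproduce that nesting argument or at least observe that the composite restricted to the base-point component is a nonzero scalar and that no other graph map in the basis decomposition of the composite can contribute to the base point, which together force the base-point coefficient of $f_{x,y}^v$ to be $1$ and rule out the other summands. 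Your high-level plan, including the four-case analysis for the order relation and the eigenvalue incomparability in $\HH(v)$ (which should ultimately appeal to $\mathrm{Hom}_\Lambda(\mathrm B(\bb,n,\lambda),\mathrm B(\bb,m,\mu))=0$ for $\lambda\neq\mu$), is headed in the right direction, but both the independence claim and the composition step need to be reworked along these lines.
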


For $x\leq y$ in $\HH(v)$, define the corresponding intervals $\ii_{x,y}^v:=\{z\in H(v)\mid \pi_v(x)\leq z\leq\pi_v(y)\}$ and $\ol\ii_{x,y}^v:=\{z\in\hb(v)\mid \pi_v(x)\leq z\leq\pi_v(y)\}$ in $H(v)$ and $\hb(v)$ respectively. Indeed, $\ol\ii_{x,y}^v$ is the closure of $\ii_{x,y}^v$ in $\hb(v)$.

The next result relates the densities of bounded scattered intervals in $\hb(v)$ with the ranks of certain morphisms.
\begin{thm}\label{density=rank-1}
Suppose $x<y$ in $\HH(v)$ and the interval $\ol\ii_{x,y}^v$ in $\hb(v)$ is scattered and infinite. Then
\begin{enumerate}
    \item $\ol\ii_{x,y}^v\setminus\ii_{x,y}^v$ is finite;
    \item $|d(\ol\ii_{x,y}^v)-d(\ii_{x,y}^v)|\leq|\ol\ii_{x,y}^v\setminus\ii_{x,y}^v|$;
    \item $\omega<d(\ii_{x,y}^v)<\omega^2$;
    \item $\rk(f_{x,y}^v)=d(\ii_{x,y}^v)-1$.
\end{enumerate}
\end{thm}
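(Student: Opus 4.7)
For \textbf{(1)}, each element of $\ol\ii_{x,y}^v\setminus\ii_{x,y}^v$ sits in $\hb(v)\setminus H(v)$ and lies in the closure (taken in $\hb(v)$) of the scattered box $\ii_{x,y}^v\subseteq H(v)$, hence is exceptional in the sense of the definition preceding Lemma~\ref{excptfin}; that lemma then delivers the finiteness. For \textbf{(2)}, I first verify the auxiliary one-point bound $|d(L)-d(L\setminus\{p\})|\le 1$ for any linear order $L$ and $p\in L$ by inspecting how the Hausdorff quotient $L^{(\alpha)}$ at $\alpha=\HR(L)$ changes under removal of $p$: at most the single class $[p]_\alpha$ disappears, shifting $d$ by at most one. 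I then lift the bound to posets through Definition~\ref{posetdensity}, noting that every bounded linear suborder of $P\setminus\{p\}$ is one of $P$, and conversely after possibly deleting $p$. Iterating over the finite difference $\ol\ii_{x,y}^v\setminus\ii_{x,y}^v$ completes part~(2).

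For the upper bound in \textbf{(3)}, $\ii_{x,y}^v$ is a scattered box in $H(v)$: the order-convexity of $\ii_{x,y}^v$ in the sublattice $H(v)$ (Proposition~\ref{hammocksublat}) implies its projections are intervals and the box identity $\ii_{x,y}^v = \bigotimes_{i\in\{l,r\}}\pi_i(\ii_{x,y}^v)$ holds; scatteredness passes from $\ol\ii_{x,y}^v$. Theorem~\ref{hammockordermain}(4) then gives $d(\ii_{x,y}^v)<\omega^2$. For the lower bound $d(\ii_{x,y}^v)>\omega$, I exhibit a bounded infinite scattered linear suborder of $\ii_{x,y}^v$. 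Since $\ol\ii_{x,y}^v$ is bounded in $\hb(v)$, infinite, and scattered, Theorem~\ref{SKSKThm} forces at least one of its projections in $\widehat H_l(1_{(v,1)})$ or $\widehat H_r(1_{(v,1)})$ to be infinite; the sublattice structure of $\hb(v)$ (inherited from the ambient clat) lifts a monotone sequence in this projection to a chain in $\ol\ii_{x,y}^v$ bounded between two honest points of $\ii_{x,y}^v$, the finitely many exceptional points being stepped over by a suitable tail of the chain. Being bounded, infinite, and scattered, such a linear suborder $L$ has $d(L)\ge\omega+1$ by Proposition~\ref{bddlindensitysuccessor}, yielding $d(\ii_{x,y}^v)\ge d(L)>\omega$.

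Finally, \textbf{(4)} is the main obstacle. The strategy is to prove, by transfinite induction on $\nu$, the equivalence $f_{x,y}^v\in\rad^\nu \iff d(\ii_{x,y}^v)\ge \nu+1$, from which the rank formula $\rk(f_{x,y}^v)=d(\ii_{x,y}^v)-1$ follows; the fact that $d(\ii_{x,y}^v)$ is thus forced to be a successor ordinal is consistent with parts (2) and (3). Using Theorem~\ref{hammockgraphmap} and the composition identity $f_{x,y}^v = f_{z,y}^v\circ f_{x,z}^v$, the successor step at a finite stage $n\mapsto n+1$ amounts to choosing an intermediate $z\in\ii_{x,y}^v$ that splits $f_{x,y}^v$ into two $\rad$-factors. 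For the limit case $\rad^\nu=\bigcap_{\lambda<\nu}\rad^\lambda$, the intersection has to be matched with the Hausdorff condensation of $\ii_{x,y}^v$: chains witnessing $d(\ii_{x,y}^v)>\omega\cdot\alpha$ (Proposition~\ref{witness for density}) produce arbitrarily deep nested compositions, while conversely $\nu$-fold compositions trace back to $\sim_\alpha$-separated elements of $(\ii_{x,y}^v)^{(\alpha)}$. The successor blocks $\nu=\lambda+n$ invoke $\rad^\nu=(\rad^\lambda)^{n+1}$ and provide the $n$ further factorisations. Two delicate points will need care: first, graph maps passing through band-module points in $\HH(v)$ introduce a $\K^*$ parameter, so the induction must be run on $\HH(v)$ and only then projected to $\hb(v)$; and second, since a general morphism in $\mathrm{Hom}_\Lambda(M(x),M(y))$ is a linear combination of graph maps (Theorem~\ref{hammockgraphmap}), the rank of $f_{x,y}^v$ must be controlled componentwise, verifying that no cancellation among summands can lower the rank of the distinguished graph-map component.
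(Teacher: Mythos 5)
Your proofs of (1) and (2) recapitulate the paper's: exceptional points plus Lemma~\ref{excptfin} for (1), and a one-point density perturbation bound iterated over the finite difference for (2). For (3) your route to the upper bound is the paper's (scattered box, hence Theorem~\ref{hammockordermain}(4)); your lower bound is in fact slightly more careful than the paper's one-line argument, since you insert a bounded, infinite, scattered linear suborder and use Proposition~\ref{bddlindensitysuccessor} to obtain the strict inequality rather than merely $\omega\le d(\ii_{x,y}^v)$.

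Part (4) is where you diverge and where the genuine gap is. The paper does \emph{not} rerun a transfinite induction. It uses the finiteness from (1) to verify that $\ol\ii_{x,y}^v$ is a hammock poset in the sense of \cite[\S~3.8]{Schroer98hammocksfor}, at which point the already-proven result on \cite[p.~66]{Schroer98hammocksfor} (characterizing $f_{x,y}^v\in\rad^{\omega\cdot n+p-1}$ by the existence of a linear suborder $T\subseteq\ol\ii_{x,y}^v$ with $d(T)=\omega\cdot n+p$) applies verbatim; coupling this with Proposition~\ref{densityposetunbdd} and the fact that $d(\ol\ii_{x,y}^v)=\omega\cdot n_0+p_0$ with $n_0,p_0\in\N^+$ (a successor, via the generalization of Proposition~\ref{bddlindensitysuccessor} to bounded posets) closes the argument. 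You instead propose rebuilding the equivalence $f_{x,y}^v\in\rad^\nu\iff d(\ii_{x,y}^v)\ge\nu+1$ from scratch, but you explicitly flag and leave unresolved the two hardest steps (the $\K^*$-parameter on band-module points of $\HH(v)$, and cancellation among the graph-map summands of a general morphism); these are exactly the points Schr\"oer's argument devotes several pages to, so the proof is incomplete as written. There is also a structural defect in the invariant itself: if $d(\ii_{x,y}^v)$ were a limit ordinal $\lambda$, your equivalence would force $f_{x,y}^v\in\rad^\mu$ for every $\mu<\lambda$ while simultaneously $f_{x,y}^v\notin\rad^\lambda=\bigcap_{\mu<\lambda}\rad^\mu$, a contradiction. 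The paper avoids this by running the argument on $\ol\ii_{x,y}^v$, which is bounded in $\hb(v)$ and therefore has successor density; the open box $\ii_{x,y}^v$ need not. If you rebuild the induction you must either prove that $d(\ii_{x,y}^v)$ is a successor ordinal or, as the paper's proof actually does, state and prove the formula with $d(\ol\ii_{x,y}^v)$ in place of $d(\ii_{x,y}^v)$.
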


\begin{proof}
Since $\ol\ii_{x,y}^v$ is scattered, $\ii_{x,y}^v=\ol\ii_{x,y}^v\cap H(v)$ is a scattered box in $H(v)$, and hence a finitely presented discrete $2$-hammock by Theorem \ref{hammockordermain}(3). Each element in $\ol\ii_{x,y}^v\setminus\ii_{x,y}^v$ is exceptional, and hence $\ol\ii_{x,y}^v\setminus\ii_{x,y}^v$ is finite by Lemma \ref{excptfin}. As a consequence, $\ii_{x,y}^v$ is infinite.

If $L$ is a finite linear order then $d(L)=|L|-1$, and hence Theorem \ref{Schroer formula} implies that a bounded discrete $2$-hammock $H$ is finite if and only if $d(H)<\omega$. Combining this with the hypothesis that $\ii_{x,y}^v$ is scattered and infinite, we conclude $\omega\leq d(\ii_{x,y}^v)<\infty$. Furthermore, since $\ii_{x,y}^v$ is finitely presented, Theorem \ref{hammockordermain}(4) yields that $d(\ii_{x,y}^v)<\omega^2$. Since $\ol\ii_{x,y}^v\setminus\ii_{x,y}^v$ is finite, and adding a point increases the density by at most $1$, we conclude that $|d(\ol\ii_{x,y}^v)-d(\ii_{x,y}^v)|\leq|\ol\ii_{x,y}^v\setminus\ii_{x,y}^v|$. Thus $\omega\leq d(\ol\ii_{x,y}^v)<\omega^2$. In addiction, the proof of Proposition \ref{bddlindensitysuccessor} generalizes to give that $d(\ol\ii_{x,y}^v)$ is a successor ordinal. Therefore, $d(\ol\ii_{x,y}^v)=\omega\cdot n_0+p_0$ for some $n_0,p_0\in\N^+$.

The finiteness of $\ol\ii_{x,y}^v\setminus\ii_{x,y}^v$ ensures that $\ol\ii_{x,y}^v$ is a hammock poset in the sense of \cite[\S~3.8]{Schroer98hammocksfor}, and hence the proof of the theorem on \cite[p.66]{Schroer98hammocksfor} together with the remark after that goes through for $\ol\ii_{x,y}^v$. This theorem states that, for $n\geq 1$, $f_{x,y}^v\in\rad^{\omega\cdot n+p-1}$ if and only if there is a linear suborder $T$ of $\ol\ii_{x,y}^v$ such that $d(T)=\omega\cdot n+p$. Combining this theorem, Proposition \ref{densityposetunbdd}, which states that $d(\ol\ii_{x,y}^v)=\mathrm{sup}\{d(T)\mid T\text{ is a linear suborder of }P\}$, and the conclusion of the above paragraph, we conclude that $f_{x,y}^v\in\rad^{\omega\cdot n_0+p_0-1}\setminus\rad^{\omega\cdot n_0+p_0}$. In other words, $\rk(f_{x,y}^v)=d(\ol\ii_{x,y}^v)-1$.
\end{proof}

The following result documents various equivalent conditions for morphisms to not lie in the stable radical.
\begin{thm}\label{noninfscatteredequivalence}
Suppose $v\in Q_0$ and $x\leq y$ in $\HH(v)$. Then the following are equivalent.
\begin{enumerate}
    \item $\ii_{x,y}^v\subseteq H(v)$ is scattered.
    \item $\ol\ii_{x,y}^v\subseteq\hb(v)$ is scattered.
    \item $\rk(f_{x,y}^v)\neq\infty$.
\end{enumerate}
\end{thm}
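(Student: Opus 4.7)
The plan is to close the equivalence via $(2) \Rightarrow (1)$, $(1) \Rightarrow (2)$, and $(2) \Leftrightarrow (3)$; the degenerate case $x = y$ is immediate (a singleton interval is scattered, and $f_{x,y}^v$ is an identity so $(3)$ holds vacuously), so I assume $x < y$. The implication $(2) \Rightarrow (1)$ is trivial because $\ii_{x,y}^v = \ol\ii_{x,y}^v \cap H(v)$ and scatteredness is hereditary.

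For $(1) \Rightarrow (2)$, the key point is that $\ii_{x,y}^v$ is a box in $H(v)$: both of its projections are intervals, and a direct check gives $\ii_{x,y}^v = H(v) \cap (\pi_l(\ii_{x,y}^v) \times \pi_r(\ii_{x,y}^v))$. Hence every element of $\ol\ii_{x,y}^v \setminus \ii_{x,y}^v$ is an exceptional point, and Lemma~\ref{excptfin} makes this difference finite. Any embedding $\phi \colon \eta \hookrightarrow \ol\ii_{x,y}^v$ would restrict, after removing the finite set $\phi^{-1}(\ol\ii_{x,y}^v \setminus \ii_{x,y}^v)$, to an embedding of $\eta \setminus F \cong \eta$ into the scattered poset $\ii_{x,y}^v$, contradicting $(1)$.

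For $(2) \Rightarrow (3)$: if $\ol\ii_{x,y}^v$ is finite, then $f_{x,y}^v$ decomposes as a finite composition of graph maps along any maximal chain via Theorem~\ref{hammockgraphmap}, so $\rk(f_{x,y}^v) < \omega$; if $\ol\ii_{x,y}^v$ is infinite and scattered, then $\ii_{x,y}^v$ (being a finitely presented scattered box in $H(v)$ by Theorem~\ref{hammockordermain}) is infinite too, and Theorem~\ref{density=rank-1} gives $\rk(f_{x,y}^v) = d(\ii_{x,y}^v) - 1 < \omega^2$. Conversely, for $\neg(2) \Rightarrow \neg(3)$, if $\ol\ii_{x,y}^v$ is not scattered then by the poset-level analogue of Theorem~\ref{rank infinity iff not scattered} (through Proposition~\ref{densityposetunbdd}) we have $d(\ol\ii_{x,y}^v) = \infty$, so $\ol\ii_{x,y}^v$ admits bounded linear suborders $T$ with $d(T)$ arbitrarily large. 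The ``long chain $\Rightarrow$ high radical stratum'' direction of Schr\"oer's density--rank correspondence \cite[p.~66]{Schroer98hammocksfor}, applied to each such $T$ by factoring $f_{x,y}^v$ through the associated graph maps of Theorem~\ref{hammockgraphmap}, puts $f_{x,y}^v \in \rad^\nu$ for every ordinal $\nu$, whence $\rk(f_{x,y}^v) = \infty$.

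The main obstacle is the non-scattered direction of $(2) \Leftrightarrow (3)$. The full ``if and only if'' form of Schr\"oer's density--rank correspondence used in the proof of Theorem~\ref{density=rank-1} requires $\ol\ii_{x,y}^v \setminus \ii_{x,y}^v$ to be finite, but Lemma~\ref{excptfin} only guarantees this when $\ii_{x,y}^v$ is scattered. In the non-scattered case one must rely solely on the one-sided implication that a chain of density $\omega \cdot n + p$ inside $\ol\ii_{x,y}^v$ yields $f_{x,y}^v \in \rad^{\omega \cdot n + p - 1}$; this amounts to reproducing Schr\"oer's local factorization argument along an arbitrary bounded scattered linear suborder of $\ol\ii_{x,y}^v$, which is valid because each such suborder is a bounded discrete finite description linear order by Theorem~\ref{SKSKThm}, independently of the global scatteredness of $\ol\ii_{x,y}^v$.
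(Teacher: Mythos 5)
Your treatment of $(1)\Leftrightarrow(2)$ and the scattered half of $(2)\Rightarrow(3)$ matches the paper (both rest on Lemma~\ref{excptfin} and Theorem~\ref{density=rank-1}), and the degenerate case $x=y$ is fine. The problem lies entirely in your $\neg(2)\Rightarrow\neg(3)$ argument, which replaces the paper's direct argument with an appeal to Schr\"oer's density--rank correspondence.

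There are two issues. First, even granting that the one-sided ``chain of density $\omega\cdot n+p$ implies $f_{x,y}^v\in\rad^{\omega\cdot n+p-1}$'' version of Schr\"oer's theorem applies without the finiteness of $\ol\ii_{x,y}^v\setminus\ii_{x,y}^v$ (something you only assert, not prove), this only gives $f_{x,y}^v\in\rad^\nu$ for $\nu<\omega^2$, hence $f_{x,y}^v\in\rad^{\omega^2}$. That is \emph{not} the same as $f_{x,y}^v\in\rad^\infty$ unless you already know $\rad^{\omega^2}=\rad^\infty$, i.e., $\st(\Lambda)\leq\omega^2$ --- but Theorem~\ref{noninfscatteredequivalence} is precisely a stepping stone towards proving $\st(\Lambda)<\omega^2$, so invoking that bound here would be circular. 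You write ``puts $f_{x,y}^v\in\rad^\nu$ for every ordinal $\nu$,'' but your construction only reaches ordinals below $\omega^2$. Second, Schr\"oer's density--rank theorem, as used (and localized) in the proof of Theorem~\ref{density=rank-1}, is a theorem about hammock posets in the sense of \cite[\S~3.8]{Schroer98hammocksfor}; its hypotheses include finiteness of $\ol\ii_{x,y}^v\setminus\ii_{x,y}^v$, exactly what fails in the non-scattered case, and you do not supply the modified argument you gesture at.

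The paper instead proves $(3)\Rightarrow(1)$ by contraposition using a self-contained transfinite induction that sidesteps both problems: from an embedding $\varphi\colon\eta\to\ii_{x,y}^v$ it extracts, via Theorem~\ref{hammockgraphmap}, a factorization system of graph maps $p_i$, $h_{i,j}$, $q_j$, and shows by transfinite induction (using the density of $\eta$ to split any such morphism as a composition of two of the same kind at each successor step, and intersecting at limits) that each of them lies in $\rad^\alpha$ for \emph{every} ordinal $\alpha\geq 1$. Since the induction runs over all ordinals, $f_{x,y}^v = q_j\circ h_{i,j}\circ p_i\in\rad^{\st(\Lambda)}=\rad^\infty$ with no a priori bound on $\st(\Lambda)$ needed. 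You should adopt that argument (or something equivalent that reaches arbitrary ordinals without quoting the $\omega^2$ bound) to close the gap.
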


\begin{proof}
We will show $(1)\Rightarrow(2)\Rightarrow(3)\Rightarrow(1)$.

\noindent{}$(1)\Rightarrow(2)$: This follows from Lemma \ref{excptfin}.

\noindent{}$(2)\Rightarrow(3)$: This follows from Theorem \ref{density=rank-1}.

\noindent{}$(3)\Rightarrow(1)$: We show the contrapositive. Suppose $\varphi:\eta\to\ii_{x,y}^v$ is an order embedding. For each $i<j$ in $\eta$, let $p_i:\mathsf M(x)\to\mathsf M(\varphi(i))$, $h_{i,j}:\mathsf M(\varphi(i))\to \mathsf M(\varphi(j))$ and $q_j:\mathsf M(\varphi(j))\to\mathsf M(y)$ be the canonical graph maps. Since $x<\varphi(i)<\varphi(j)<y$, we clearly have $f_{x,y}^v=q_j\circ h_{i,j}\circ p_i$.

\noindent{\textbf{Claim:}} For each ordinal $\alpha\geq 1$ and $i<j$ in $\eta$, $p_i,h_{i,j},q_j\in\rad^\alpha$.

\noindent{\textit{Proof of the claim.}} We prove the claim using transfinite induction on $\alpha$.

The base case $\alpha=1$ is clear from the definition of a graph map since $x<\varphi(i)<\varphi(j)<y$.

If $\alpha=\beta+1$ for some $\beta\geq1$ and the claim is true for $\beta$ then we show that the claim if true for $\alpha$. Choose $k_1<i<k_2<j<k_3$ in $\eta$. Then by the induction hypothesis we have $p_{k_1},q_{k_2},h_{k_1,i},h_{i,k_2},h_{k_2,j},h_{j,k_3}\in\rad^\beta$. Clearly $p_i=h_{k_1,i}\circ p_{k_1},h_{i,j}=h_{k_2,j}\circ h_{i,k_2},q_j=q_{k_3}\circ h_{j,k_3}$, and hence all these morphisms lie in $\rad^{\beta+1}=\rad^\alpha$ using the arithmetic of radical powers.

Finally, if $\alpha$ is a limit ordinal then recall that $\rad^\alpha=\bigcap_{\beta<\alpha}\rad^\beta$. Since by induction hypothesis, $p_i,h_{i,j},q_j\in\rad^\beta$ for all $\beta<\alpha$, we immediately get $p_i,h_{i,j},q_j\in\rad^\alpha$.\hfill$\vartriangle$

Since $\st(\Lambda)$ is an ordinal the claim gives $f_{x,y}^v=q_j\circ h_{i,j}\circ p_i\in\rad^{\st(\Lambda)}=\rad^\infty$, and thus $\rk(f_{x,y}^v)=\infty$.
\end{proof}

Now we are ready to prove Theorem \ref{oldconj} for the case when $\Lambda$ is a string algebra.

\begin{proof}(of Theorem \ref{oldconj} for string algebras)
Suppose $\Lambda$ is a string algebra with at least one band. This hypothesis in conjunction with a celebrated theorem of Auslander, which states that an algebra $\Lambda'$ is of finite representation type if and only if $\st(\Lambda')<\omega$ and $\rad^{\st(\Lambda')}=0$, gives us the lower bound $\omega\leq\st(\Lambda)$. Now
\begin{equation}\label{stablerank}
\st(\Lambda)=\sup\{\rk(f)+1\mid f\text{ is a morphism in }\Lambda\dmod,\ f\notin\rad^\infty\}.
\end{equation}

Since $\Lambda\dmod$ is a Krull-Schmidt category, if $f:\bigoplus_{i\in[n]}A_i\to\bigoplus_{j\in[m]}B_j$ is a morphism for indecomposable $A_i$ and $B_j$ with components $f_{i,j}:A_i\to B_j$, then $\rk(f)=\max_{i,j}\{\rk(f_{i,j})\}$. Thus Equation \eqref{stablerank} becomes
\begin{equation}\label{stablerank2}
\st(\Lambda)=\sup\{\rk(f)+1\mid f\text{ is a morphism between indecomposable modules in }\Lambda\dmod,\ f\notin\rad^\infty\}.
\end{equation}

Now the first part of Theorem \ref{hammockgraphmap} states that each map between indecomposables is a finite linear combination of graph maps, and thus using the fact that each $\rad^\alpha$ is an ideal, Equation \eqref{stablerank2} becomes
\begin{equation}\label{stablerank3}
\st(\Lambda)=\sup\{\rk(f)+1\mid f\text{ is a graph map between indecomposable modules in }\Lambda\dmod,\ f\notin\rad^\infty\}.
\end{equation}

Since each graph map $f$ has an associated string $\vv$ that lies in a hammock $H(v)$ for some $v\in Q_0$, using the second part of Theorem \ref{hammockgraphmap} Equation \eqref{stablerank3} becomes
\begin{equation}\label{stablerank4}
\st(\Lambda)=\max_{v\in Q_0}\ \sup\{\rk(f_{x,y}^v)+1\mid x\leq y\text{ in }\HH(v),\ f_{x,y}^v\notin\rad^\infty\}.
\end{equation}

Now using $(3)\Rightarrow(2)$ of Theorem \ref{noninfscatteredequivalence}, Equation \eqref{stablerank4} becomes
\begin{equation}\label{stablerank5}
\st(\Lambda)=\max_{v\in Q_0}\ \sup\{\rk(f_{x,y}^v)+1\mid x\leq y\text{ in }\HH(v),\ \ol\ii_{x,y}^v\text{ is scattered}\}.
\end{equation}

Since $|\QBa_0|\geq 1$, the connectedness of $\Q$ together with Proposition \ref{prop: Finite Hammock} and its other three variants implies that $H(v)$ is infinite for each $v\in Q_0$. Furthermore, since $H(v)$ is discrete by Theorem \ref{SKSKThm}, we conclude that each MSB in $H(v)$ is infinite, and hence so is its closure in $\hb(v)$. Therefore, using Theorem \ref{density=rank-1}(4) Equation \eqref{stablerank5} becomes
\begin{equation}\label{stablerank6}
\st(\Lambda)=\max_{v\in Q_0}\ \sup\{d(\ol\ii_{x,y}^v)\mid x\leq y\text{ in }\HH(v),\ \ol\ii_{x,y}^v\text{ is scattered}\}.
\end{equation}

Now $\ol\ii_{x,y}^v$ is scattered if and only if $\ii_{x,y}^v$ is scattered, thanks to $(1)\Leftrightarrow (2)$ of Theorem \ref{noninfscatteredequivalence}, and each scattered box in $H(v)$ is contained in an MSB. Thus if $\ii$ is an MSB in $H(v)$ then $d(\ii)=\sup\{d(\ii_{x,y})\mid x\leq y,\ \ii_{x,y}^v\subseteq\ii\}$. Since $H(v)$ is a finite type $2$-hammock by Theorem \ref{hammockordermain}(2), we know that there are only finitely many different order types of MSBs in $H(v)$. Furthermore, since $\ol\ii\setminus\ii$ only consists of exceptional points, Lemma \ref{excpoints} ensures that there are only finitely many order types of MSBs in $\hb(v)$ (by a slight abuse of terminology since we never defined MSBs for $\hb(v)$). Thus Equation \eqref{stablerank6} becomes
\begin{equation}\label{stablerank7}
\st(\Lambda)=\max_{v\in Q_0}\ \max\{d(\ol\ii)\mid \ol\ii\text{ is an MSB in }\hb(v)\}.
\end{equation}
For any MSB $\ol\ii$ in $\hb(v)$, $\ol\ii\cap H(v)$ is an MSB in $H(v)$. Moreover, since $\ol\ii\setminus(\ol\ii\cap H(v))$ is finite, we can argue similar to the proof of Theorem \ref{density=rank-1}(2) to obtain $|d(\ol\ii)-d(\ol\ii\cap H(v))|\leq|\ol\ii\setminus(\ol\ii\cap H(v))|$. Therefore, Equation \eqref{stablerank7} together with Theorem \ref{hammockordermain}(4) gives $\st(\Lambda)<\omega^2$, as required.
\end{proof}

\subsection{The proof for the case of special biserial algebras}\label{biserial}
Now consider $\Lambda = \K\Q / \langle \rho \rangle$ to be a special biserial algebra with at least band. We recall some standard facts about the representation theory of $\Lambda$ in comparison with the representation theory of a string algebra from \cite{skowronski1983representation} and \cite{wald1985tame}; however we follow \cite[\S~5]{schroer2000infinite} for the treatment of the material. Let $\bar{\rho}$ be the set of all the paths $p$ for which $(p-\lambda q) \in \langle \rho \rangle$ for some path $q$ sharing the source and target with $p$ and $\lambda \in \K$. In particular, $\bar\rho$ contains all the monomial relations in $\rho$. Thus the algebra $ \bar\Lambda := \K\Q / \langle \bar\rho \rangle $ is a string algebra. Moreover, $\langle \rho \rangle \subseteq\langle \bar\rho\rangle$. The surjective map $\Lambda\to\bar\Lambda$ induces a canonical $\Lambda$-module structure on every $\bar\Lambda$-module.

A finite-dimensional indecomposable $\Lambda$-module is either a string module, band module or a projective-injective module. In fact, the only indecomposable $\Lambda$-modules not annihilated by $\bar\rho$ are the projective-injective modules, and there is only a finite number of such modules. Each such module can be written as $P(p-\lambda q)$ for some $(p-\lambda q) \in \langle \rho \rangle $ with $\lambda \in \K^*$ and $p,q\notin\langle\rho\rangle$, and occurs in an Auslander-Reiten sequence $$ 0 \to M(D_1) \to M(C_1)\oplus P(p - \lambda q) \oplus M(C_2) \to M(D_2) \to 0,$$ where $M(D_1), M(C_1), M(C_2), M(D_2)$ are string modules over $\bar\Lambda$. The maps $M(D_1) \to P(p - \lambda q)$ and $P(p- \lambda q) \to M(D_2)$ are called the \emph{sink map} and the \emph{source map} respectively.

For any ordinal $\alpha$ and $M_1, M_2\in\Lambda\dmod$, define $\rad^\alpha (M_1 , M_2) := \rad^\alpha \cap \mathrm{Hom}_{\Lambda} (M_1 , M_2)$.
\begin{thm}
If $M_1 , M_2\in\Lambda\dmod$ are indecomposable and $\alpha \geq1$ then $$\dim_\K(\rad^{\omega \cdot \alpha} (M_1 , M_2) / \rad^{\omega \cdot (\alpha +1)} (M_1 , M_2))=\dim_\K(\mathrm{rad}_{\bar\Lambda}^{\omega \cdot \alpha} (M'_1 , M'_2) / \mathrm{rad}_{\bar\Lambda}^{\omega \cdot (\alpha +1)} (M'_1 , M'_2)),$$ where $M_1=M'_1$ if $M_1$ is a string or a band module, and $M_2=M'_2$ if $M_2$ is a string or a band module; otherwise $M'_1$ is the target of the source map of $M_1$ and $M'_2$ is the source of the sink map of $M_2$.
\end{thm}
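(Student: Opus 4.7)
The plan is to reduce radical-filtration computations in $\Lambda\dmod$ to those in $\bar\Lambda\dmod$ by isolating the role of the finitely many projective-injective modules $P(p-\lambda q)$ via their defining Auslander--Reiten sequences. The basic principle is that each such $P$ communicates with the rest of $\Lambda\dmod$ only through its sink map $\iota\colon M(D_1) \to P$ and source map $\pi\colon P \to M(D_2)$, which are irreducible morphisms appearing in the AR-sequence recalled above.

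First I would establish a factorization lemma: every $f \in \rad(X, P(p-\lambda q))$ factors as $f = \iota\circ g$ for some $g\colon X \to M(D_1)$, and dually every $f \in \rad(P(p-\lambda q), Y)$ factors through the source map $\pi$. This is the defining property of sink/source maps, since a morphism in $\rad$ is precisely one that fails to split. By induction on $n<\omega$, one can then show that $f \in \rad^n(X, P(p-\lambda q))$ if and only if $f$ admits such a factorization with $g \in \rad^{n-1}(X, M(D_1))$. Intersecting over $n$ yields $f \in \rad^\omega(X, P(p-\lambda q))$ iff $f = \iota\circ g$ with $g \in \rad^\omega(X, M(D_1))$; the finite shift caused by composing with $\iota$ is absorbed at the limit ordinal $\omega$. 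A transfinite induction on $\alpha$, combined with the arithmetic $\rad^{\omega\alpha}\rad^n \subseteq \rad^{\omega\alpha + n}$, lifts this equivalence to $\rad^{\omega\alpha}$ for all $\alpha \geq 1$.

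With this in hand, the dimension equality reduces to two claims. First, composition with the sink and source maps induces $\K$-linear isomorphisms
\[
\rad^{\omega\alpha}(M_1, M_2) / \rad^{\omega(\alpha+1)}(M_1, M_2) \;\cong\; \rad^{\omega\alpha}(M'_1, M'_2) / \rad^{\omega(\alpha+1)}(M'_1, M'_2),
\]
where $M'_i$ is the string module from the relevant AR-sequence whenever $M_i$ is projective-injective, and $M'_i = M_i$ otherwise. Second, since $M'_1, M'_2$ are string or band modules, the Hom-spaces $\mathrm{Hom}_\Lambda(M'_1, M'_2)$ and $\mathrm{Hom}_{\bar\Lambda}(M'_1, M'_2)$ coincide as $\K$-vector spaces (both have the same graph-map basis by Theorem \ref{hammockgraphmap}). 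Any $\Lambda$-factorization of such a map through some projective-injective $P(p-\lambda q)$ can be rerouted via $\iota$ and $\pi$ through the string modules $M(D_1), M(D_2)$ at the same $\omega\alpha$-level of the radical, yielding $\rad^{\omega\alpha}(M'_1, M'_2) = \mathrm{rad}_{\bar\Lambda}^{\omega\alpha}(M'_1, M'_2)$, which together with the isomorphism above completes the proof.

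The main obstacle is the precise verification that sink-map factorization behaves coherently under the limit operation $\rad^\omega = \bigcap_{n<\omega}\rad^n$. While a factorization $f = \iota\circ g$ exists for each individual $n$, exhibiting a single $g$ witnessing membership in $\rad^\omega$ requires compatible choices of factorizations across all $n$. This rests on the uniqueness-up-to-$\rad^2$ of the AR-sequence factorization, which in turn uses the irreducibility of $\iota$ together with the fact that only finitely many projective-injective modules exist, so that the cokernels of their sink maps contribute only finitely to the radical filtration. Once this coherence step is rigorously justified, the remainder of the argument is standard bookkeeping with radical-power arithmetic at limit ordinals.
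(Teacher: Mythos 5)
The paper does not prove this theorem; it is recalled from Schr\"{o}er's treatment (\cite[\S~5]{schroer2000infinite}, drawing on \cite{skowronski1983representation,wald1985tame}), so there is no in-paper argument to compare against. Your sketch has the right skeleton, but two steps contain genuine gaps. For the coherence at $\rad^\omega$ that you flag as the main obstacle, ``uniqueness up to $\rad^2$'' plus finiteness of the projective-injectives is not what resolves it. The decisive fact is that $P=P(p-\lambda q)$ is projective-injective: its sink map $\iota\colon M(D_1)\to P$ is the inclusion $\mathrm{rad}(P)\hookrightarrow P$, a monomorphism, and its source map $\pi\colon P\to M(D_2)$ is the quotient $P\twoheadrightarrow P/\mathrm{soc}(P)$, an epimorphism. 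Consequently the factorizations $f=\iota\circ g$ (for $f$ radical into $P$) and $f=g\circ\pi$ (for $f$ radical out of $P$) have a \emph{unique} $g$, not one defined only modulo $\rad^2$. The lift $g_n$ obtained at each finite level is then the same morphism, so a single $g$ lies in $\rad^\omega$, and the same observation handles all later limit ordinals; the shift by one becomes invisible from $\rad^\omega$ onward, which is precisely why the statement only holds at $\omega\cdot\alpha$ for $\alpha\geq 1$.

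For the claimed equality $\rad_\Lambda^{\omega\alpha}(M'_1,M'_2)=\mathrm{rad}_{\bar\Lambda}^{\omega\alpha}(M'_1,M'_2)$, the missing step is how to eliminate \emph{interior} occurrences of a projective-injective $P$ in a chain of radical morphisms between $\bar\Lambda$-modules. Your phrase ``rerouted via $\iota$ and $\pi$ through the string modules $M(D_1),M(D_2)$'' does not suffice: $\pi\iota$ lands in $\rad^2_\Lambda(M(D_1),M(D_2))$, but that alone does not exhibit it as a composition inside $\bar\Lambda\dmod$. The necessary tool is exactness of the Auslander--Reiten sequence $0\to M(D_1)\to M(C_1)\oplus P\oplus M(C_2)\to M(D_2)\to 0$, which yields $\pi\iota=-\sum_j\alpha_j\beta_j$ with $\beta_j\colon M(D_1)\to M(C_j)$ and $\alpha_j\colon M(C_j)\to M(D_2)$ irreducible maps through the \emph{other} middle terms, all of which are $\bar\Lambda$-modules. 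Substituting this relation wherever a chain passes through $P$ replaces the detour by a linear combination of $\bar\Lambda$-paths of the same radical length. Without this substitution lemma one cannot conclude that $\rad_\Lambda$-membership of a morphism between string modules is witnessed within $\bar\Lambda\dmod$, and the equality of the two filtrations is unproved. Once this lemma is supplied, the rest of your argument goes through.
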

Recall from the previous section that the proof of Theorem \ref{oldconj} for the string algebra $\bar\Lambda$ yields $\omega\leq\st(\bar\Lambda)<\omega^2$. The theorem above together with the finiteness of (iso-classes of) projective-injective $\Lambda$-modules ensures that $|\st(\Lambda)-\st(\bar\Lambda)|$ is finite. This completes the proof of Theorem \ref{oldconj} for all special biserial algebras.

\section{An algorithm to estimate the stable rank}\label{sec: alg}
Let $\Lambda=\K\Q/\lan\rho\ran$ be a string algebra with at least one band throughout this section. Now that we have established Theorem \ref{oldconj}, which states that $\st(\Lambda)=\omega\cdot n+m$ for some $n\in\N^+$ and $m\in\N$, the purpose of this section is to show that the ordinal $\omega\cdot n+m$ can be algorithmically estimated up to a finite error.

In \S~\ref{sec:H-eq}, we recall the concepts of (left and/or right) $H$-equivalence (Definition \ref{Defn: Hl Hr equivalence}) on the set of strings, and show finiteness of the set of equivalence classes for any of these equivalence relations. This is the key behind the fact that the hammock posets $H(v)$ are of finite type (Corollary \ref{Hlreqdensity}). In \S~\ref{sec:alg msi lin ord}, for $\sB \in \QBa$, we describe immediate neighbours of strings in the linear orders $(\OSt{l}{\xx_0 ,i;\sB }, <_l)$ and $(\OSt{r}{\xx_0 ,i;\sB }, <_l) $ as well as the nearest infinite string of given finite string (Theorem \ref{prop: H-reduced infty bux}). At the end of this subsection, we describe the adaptation of the algorithm given by Schr\"{o}er \cite[\S~4.10]{Schroer98hammocksfor} that could be used in conjunction with Theorem \ref{prop: Scattered hammock} to compute the density of any left or right hammock for a non-domestic string algebra. In \S~\ref{sec: Density of a box}, we devise a method to compute the density of a maximal scattered box using the previous algorithm, and finally we show how to compute the stable rank of the string algebra from this algorithm. At the end of this subsection, we also give a proof of Theorem \ref{reversest}.

\subsection{$H$-equivalence}\label{sec:H-eq}
In this subsection, we recall the concept of $H_l\left(\text{resp. }H_r\right)$-equivalence between strings that was introduced in \cite[\S~5]{sardar2021variations} to capture when their left (resp. right) hammocks are isomorphic. We also introduce the concept of $H_l$-\emph{reduced strings}--loosely speaking these are the strings which are not $H_l$-equivalent to any shorter strings. We prove two finiteness results, namely that of the set of $H_l$-reduced strings (Proposition \ref{prop: H-reduced finitely many}) and of the set of $H_l$-equivalence classes of strings (Proposition \ref{prop: finite H_l-equivalence class}). At the end we define the ``2-dimensional'' analogues of these concepts and state appropriate generalizations of the finiteness results (Propositions \ref{Hlrredfiniteness} and \ref{prop: Hlr reduced representative}). These finiteness results will be used to obtain an alternate proof of the finiteness of the order type of MSBs in the upcoming subsections.
\begin{defn}\label{Defn: Hl Hr equivalence}
Say that $\xx,\zz\in\St$ are \emph{$H_l$-equivalent}, denoted $\xx \eqvl \zz$, if for every $\ww \in \St$ we have $\ww\xx\in\St$ if and only if $\ww\zz\in\St$. 

Similarly, define the dual notion of \emph{$H_r$-equivalence}, and write $\xx \eqvr \zz$.
\end{defn}

\begin{exmp}
In the string algebra $\Gamma_1$ from Figure \ref{Ex: Standard H-equiv}, we have $c \eqvl cAb$ but $ce \not\equiv_H^l cAb $.
    \begin{figure}[h]
% https://q.uiver.app/#q=WzAsNCxbMCwxLCJcXGJ1bGxldCJdLFsyLDAsIlxcYnVsbGV0Il0sWzIsMiwiXFxidWxsZXQiXSxbNCwxLCJcXGJ1bGxldCJdLFswLDEsImUiXSxbMCwyLCJkIiwyXSxbMSwyLCJjIl0sWzIsMywiYiJdLFsxLDMsImEiXV0=
\[\begin{tikzcd}[ampersand replacement=\&]
	\&\&v_1 \\
	v_2 \&\&\&\& v_4 \\
	\&\& v_3
	\arrow["e", from=2-1, to=1-3]
	\arrow["d"', from=2-1, to=3-3]
	\arrow["c", from=1-3, to=3-3]
	\arrow["b", from=3-3, to=2-5]
	\arrow["a", from=1-3, to=2-5]
\end{tikzcd}\]

\caption{ $\Gamma_1$ with $\rho = \{ae , bd, bce\}$}
\label{Ex: Standard H-equiv}
\end{figure}
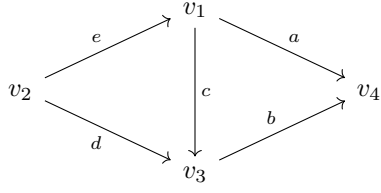
\end{exmp}

Clearly both $\eqvl$ and $\eqvr$ are equivalence relations on $\St$. For any $\xx,\yy \in \St$, if $\xx\eqvl\zz $ $\left( \text{resp. } \xx\eqvr\zz  \right)$ then $\uu\xx\mapsto\uu\zz $ $\left( \text{resp. } \xx\uu\mapsto\zz\uu \right)$ is an isomorphism between their left (resp. right) hammocks.

The next result sheds more light on the structure of two $H_l$-equivalent strings.
\begin{prop}\cite[Proposition~5.4]{sardar2021variations}\label{prop: H-equivalent substrings}
Suppose $\xx,\yy \in \St$ and $|\xx|>0$. If $\xx\yy\equiv_H^l \yy$ or $\yy\xx\eqvr\yy$ then $\xx = \bb^n$ for some $\bb \in \Cyc$ and $n \in \N^+$.
\end{prop}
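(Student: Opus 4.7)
The plan is to handle the $\eqvl$ case; the $\eqvr$ case follows by a symmetric argument on reversed words. Unwinding the hypothesis $\xx\yy \eqvl \yy$, we have $\xx\yy, \yy \in \St$, and for every $\ww \in \St$, $\ww\xx\yy \in \St$ iff $\ww\yy \in \St$. An induction on $n$ then shows $\xx^n\yy \in \St$ for every $n \in \N^+$: given $\xx^n\yy \in \St$ (so that $\xx^n \in \St$ as a left substring), applying the equivalence with $\ww := \xx^n$ yields $\xx^n(\xx\yy) = \xx^{n+1}\yy \in \St$.

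Since left substrings of strings are strings, this gives $\xx^n \in \St$ for every $n \geq 1$. Let $\bb$ be the primitive root of $\xx$, i.e., the shortest nonempty string with $\xx = \bb^m$ for some $m \in \N^+$; this exists and is unique because $\xx$ has finite length. Then $\bb^{mn} = \xx^n \in \St$ for every $n$, and since $\bb^i$ is a left substring of $\bb^{mn}$ whenever $i \le mn$, we obtain $\bb^i \in \St$ for all $i \in \N^+$. In particular $\bb^2 \in \St$, which forces the junction syllable pair of $\bb$ to satisfy the string axioms: writing $\bb = \alpha_k \cdots \alpha_1$, we get $s(\alpha_1) = t(\alpha_k)$, compatible signs, $\alpha_k \neq \alpha_1^{-1}$, and $\alpha_k\alpha_1 \notin \lan\rho\ran \cup \lan\rho^{-1}\ran$.

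The remaining and most delicate step is to promote the primitive periodic string $\bb$ to an element of $\Cyc$, and this is where admissibility of $\lan\rho\ran$ enters crucially. Admissibility implies that every sufficiently long path in $\Q$ lies in $\lan\rho\ran$ and therefore contains an element of $\rho$ as a subpath. So if $\bb$ consisted entirely of direct syllables, then for sufficiently large $n$, $\bb^n$ would be such a long path and would contain a relation, contradicting $\bb^n \in \St$; the dual argument on $\bb^{-1}$ rules out the purely inverse case. Hence $\bb$ contains syllables of both types, so some cyclic rotation $\bb'$ of $\bb$ begins with an inverse syllable and ends with a direct one. Since primitivity is a property of the cyclic word, $\bb'$ is primitive; moreover, $(\bb')^2$ is a (contiguous) substring of $\bb^3 \in \St$ and hence lies in $\St$. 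Thus $\bb'$ meets every defining condition of a band, so $\bb$ is a cyclic permutation of the band $\bb'$, yielding $\bb \in \Cyc$ and $\xx = \bb^m$ as required.
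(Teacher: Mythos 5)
Your proof is correct. The paper states this proposition as a citation from \cite[Proposition~5.4]{sardar2021variations} and does not reproduce a proof, so there is no in-text argument to compare against; your argument is, however, the natural one and checks out line by line: the induction with $\ww := \xx^n$ establishes $\xx^n\yy \in \St$ (hence $\xx^n \in \St$) for all $n$; the primitive root $\bb$ of $\xx$ is extracted; admissibility of $\lan\rho\ran$ (together with $\bb^n \in \St$ for all $n$) rules out the all-direct and all-inverse cases; and a cyclic rotation $\bb'$ with last-applied syllable direct and first-applied syllable inverse is a band because $(\bb')^2$ is a contiguous subword of $\bb^3 \in \St$ and primitivity is a property of the cyclic word. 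One small terminological slip: in the paper's convention $\xx^n$ is a \emph{right} substring of $\xx^n\yy$, i.e., $\xx^n \sqsubseteq_r \xx^n\yy$, not a left one (left substrings sit at the right end of the written word, which here is $\yy$); this has no effect on the argument, since contiguous subwords of strings are strings regardless, and your later claim that $\bb^i \sqsubseteq_l \bb^{mn}$ is the correct one.
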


The following definition provides a finite set of strings containing the set of band-free strings.
\begin{defn}\cite[\S~8]{sardar2021variations} 
Given $\xx_0 \in \St$, $i\in \{1,-1\}$ and $\uu\xx_0\in H_l^i(\xx_0)$, say that $\uu\xx_0$ is \emph{$H_l$-reduced relative to $(\xx_0, i)$} if the following conditions hold.
\begin{itemize}
    \item If $\uu=\uu_2 \vv \uu_1$ and $|\uu_1||\vv|>0$ then $\vv\uu_1\xx_0 \not\equiv_H^l \uu_1\xx_0$.
    \item If $\uu=\uu'_2\uu'_1$, $|\uu'_1|>0$ and $\uu'_1\xx_0\eqvl\xx_0$ then $\uu'_2\xx_0\in H_l^{-i}(\xx_0)$.
\end{itemize}
\end{defn}
Dually the definition of $H_r$-reduced relative to $(\xx_0,i)$ can be given for a string $\xx_0\uu\in H_r^i(\xx_0)$.

Here is the promised result on the finiteness of the set of $H_l$-reduced (resp. $H_r$-reduced) strings that will be one of the crucial ingredients in the estimation of the stable rank of $\Lambda$.

\begin{prop}\label{prop: H-reduced finitely many}
For any $\xx_0 \in \St$ and $i \in \{1,-1\}$, there are only finitely many strings $H_l$-reduced (resp. $H_r$-reduced) relative to $ (\xx_0, i)$.
\end{prop}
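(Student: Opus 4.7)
Plan: The plan is to prove the finiteness of $H_l$-reduced strings relative to $(\xx_0, i)$; the $H_r$ case is entirely dual. The strategy is to bound $|\uu|$ for any $H_l$-reduced $\uu\xx_0$ by a constant depending only on $\Lambda$, from which finiteness follows.

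Let $r$ denote the maximum length of a word in $\rho \cup \rho^{-1}$. I will first establish a \emph{self-absorption lemma}: if $\bb$ is a cycle and $\yy \in \St$ satisfies $\bb^k \sqsubseteq_l \yy$ for some $k$ with $k|\bb| \geq r$, then $\bb\yy \eqvl \yy$. Indeed, the $H_l$-equivalence class of a string of length at least $r$ depends only on its leftmost $r$ syllables, since any relation in $\rho \cup \rho^{-1}$ obstructing a left-concatenation $\ww$ has length at most $r$ and thus reaches at most $r$ positions into the string; both $\yy = \bb^k\yy'$ and $\bb\yy = \bb^{k+1}\yy'$ agree on their leftmost $k|\bb| \geq r$ syllables (all drawn from the $\bb$-pattern), so they admit the same set of valid left extensions.

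Using this, suppose for contradiction that $\uu\xx_0$ is $H_l$-reduced and $\uu$ contains $\bb^{k+1}$ as a subword for some cycle $\bb$, where $k := \lceil r/|\bb|\rceil$. Writing $\uu = \uu_2\,\bb^{k+1}\,\uu_1$, take the decomposition $\uu = \uu_2 \cdot \bb \cdot (\bb^k\uu_1)$ with $\vv := \bb$ (leftmost copy) and $\uu_1' := \bb^k\uu_1$, so that $|\vv|, |\uu_1'| > 0$ and $\uu_1'\xx_0$ begins with $\bb^k$. The self-absorption lemma then yields $\vv\uu_1'\xx_0 \eqvl \uu_1'\xx_0$, contradicting the first condition of $H_l$-reducedness. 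Since $|\bb| \geq 2$ for every cycle, $\uu$ contains no subword of the form $\bb^{\lceil r/|\bb|\rceil + 1}$ for any cycle $\bb$; this applies uniformly to the rightmost run of $\bb$'s at the end of $\uu$ as well, since in that case $\uu_1' = \bb^k$ is already nonempty.

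To conclude, I invoke the combinatorial finiteness of such ``cycle-power-avoiding'' strings in $\St$: this is a consequence of the bridge-quiver structure from \cite[\S~3.2]{GKS20}, since bounded powers of cycles bound the number of traversals of any closed walk in the finite bridge quiver, and the bridge quiver has finitely many vertices and arrows each representing prime bands or bridges of bounded length. The main anticipated obstacle is this final combinatorial step in the non-domestic setting, where the bridge quiver has genuine directed cycles; ruling out arbitrarily long walks with bounded loop-multiplicity requires a careful use of the $\approx$-preorder on bands from \cite[\S~5]{SKSK}. The second condition of $H_l$-reducedness does not feature in the argument above; it plays no role in the length bound itself but is presumably needed for the uniqueness of reduced representatives within each $H_l$-equivalence class in subsequent applications.
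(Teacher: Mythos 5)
Your reduction to bounding $|\uu|$ is the right starting point, and your ``self-absorption lemma'' is essentially correct (modulo notation: $\bb^k\sqsubseteq_l\yy$ in this paper means $\yy=\ww\bb^k$, whereas you mean $\yy=\bb^k\yy'$, i.e., the right-substring relation $\sqsubseteq_r$; what you actually use in the subsequent lines is the latter). The application to violate condition (1) of $H_l$-reducedness when $\uu$ contains $\bb^{k+1}$ as a subword is also fine.

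The gap is the final step. You assert that there are only finitely many strings avoiding the subword $\bb^{\lceil r/|\bb|\rceil+1}$ for every cycle $\bb$, and you wave at the bridge quiver and the $\approx$-preorder for justification. This claim is neither obvious nor, to my knowledge, available off-the-shelf: in a non-domestic string algebra there is no a priori bound on which cyclic words must occur with high multiplicity in a long string, and the bridge-quiver framework of \cite{GKS20} controls reachability between (finitely many) prime bands, not the length of strings with bounded cycle-power multiplicity. As stated, your auxiliary claim is at least as hard as the proposition you set out to prove, and any honest proof of it would have to discover some pigeonhole of the kind the paper uses anyway. So the argument is not self-contained.

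The paper's proof sidesteps cycles entirely. It observes that any two strings with the same two leftmost syllables of the form $aB$ ($a$ direct, $B$ inverse, a ``peak'') admit exactly the same left extensions, because a relation from $\rho$ that straddles the junction can reach at most the syllable $a$ (the next syllable $B$ is inverse, so it cannot belong to a direct relation), and a relation from $\rho^{-1}$ cannot touch $a$ at all; hence $aB\yy_2aB\yy_1\xx_0\eqvl aB\yy_1\xx_0$ whenever both are strings. A direct string has length at most some constant $M$, and there are only $N$ possible peaks $aB$, so any string of length $>2(M+1)(N+1)+|\xx_0|$ contains two copies of the same peak $aB$ to the left of $\xx_0$, which immediately falsifies condition (1) of reducedness. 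This pigeonhole on peaks is a strictly more elementary version of your cycle-power pigeonhole: it needs no self-absorption lemma, no knowledge of the set of cycles, and no detour through bands. You should replace your last paragraph with this peak-repetition argument; as written, the proposal is incomplete.
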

\begin{proof}
Let $M$ denote the maximum length of a direct string in $\Lambda$ and $N$ denote the number of strings of the form $aB$, where $a,b\in Q_1$. By the pigeonhole principle, if $\yy \in H_l^i(\xx_0)$ and $|\yy|>2(M+1)(N+1)+|\xx_0|$ then there are $a,b\in Q_1$ such that $\yy=\yy_3aB\yy_2aB\yy_1\xx_0$. Clearly $aB\yy_2aB\yy_1\xx_0\eqvl aB\yy_1\xx_0$, and $ \yy_3 aB\yy_1\xx_0 \in H_l^i(\xx_0) $. Thus if $\yy \in H_l^i(\xx_0)$ is an $H_l$-reduced string relative to $(\xx_0,i)$, then $|\yy|\leq 2(M+1)(N+1)+|\xx_0|$. Since there is only a finite set of strings of length bounded above by $2(M+1)(N+1) + |\xx_0|$, we have shown that the number of $H_l$-reduced strings relative to $(\xx_0,i)$ is finite.
\end{proof}

More generally, say that $\xx \in \St$ is \emph{$H_l$-reduced} (resp. $H_r$-reduced) if there do not exist $\uu_1,\vv,\uu_2 \in \St$ satisfying $\xx=\uu_1 \vv \uu_2$, $|\vv|>0$ and $\vv\uu_2 \equiv_H^l \uu_2$ (resp. $\uu_1\vv \eqvr \uu_1$). It can be shown that there are only finitely many $H_l$-reduced (resp. $H_r$-reduced) strings using a proof similar to the above proposition. As a consequence, we obtain the following finiteness result.

\begin{prop}\label{prop: finite H_l-equivalence class}
There are only finitely many $\eqvl$(resp. $\eqvr$)-equivalence classes of strings.
\end{prop}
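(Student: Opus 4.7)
The plan is to invoke the (already-noted) finiteness of the set of $H_l$-reduced strings and to show that every $\eqvl$-equivalence class contains at least one $H_l$-reduced representative. The $\eqvr$ case will follow by the obvious left-right duality, so I would only write out the $\eqvl$ case. This reduces the statement of the proposition to a finiteness fact already in hand.

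The crucial ingredient would be a reduction lemma: \emph{if $\xx = \uu_1 \vv \uu_2 \in \St$ with $|\vv| > 0$ and $\vv \uu_2 \eqvl \uu_2$, then $\uu_1 \uu_2 \in \St$ and $\xx \eqvl \uu_1 \uu_2$.} To show that $\uu_1 \uu_2$ is even a string, I would apply the hypothesis $\vv \uu_2 \eqvl \uu_2$ to the test string $\ww' := \uu_1$: since $\uu_1 \vv \uu_2 = \xx \in \St$, the defining biconditional of $\eqvl$ forces $\uu_1 \uu_2 \in \St$. For the equivalence $\xx \eqvl \uu_1 \uu_2$, I would fix an arbitrary test string $\ww \in \St$ and observe that if either of $\ww \uu_1 \vv \uu_2$ or $\ww \uu_1 \uu_2$ is a string, then the common left-factor $\ww \uu_1$ is itself a string (any concatenation factor of a string is a string). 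Substituting $\ww' := \ww \uu_1$ into $\vv \uu_2 \eqvl \uu_2$ then delivers $\ww \uu_1 \vv \uu_2 \in \St \Leftrightarrow \ww \uu_1 \uu_2 \in \St$, which is precisely $\xx \eqvl \uu_1 \uu_2$.

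With the reduction lemma in hand I would proceed by strong induction on $|\xx|$: if $\xx$ is $H_l$-reduced then it serves as its own representative, and otherwise any witnessing decomposition $\xx = \uu_1 \vv \uu_2$ (with $|\vv| > 0$ and $\vv\uu_2 \eqvl \uu_2$) lets me pass to the strictly shorter string $\uu_1 \uu_2 \eqvl \xx$, to which the inductive hypothesis applies. Combining the conclusion that every equivalence class meets the $H_l$-reduced strings with the finiteness of the latter (the analog of Proposition~\ref{prop: H-reduced finitely many} asserted in the paragraph preceding the proposition) yields the required finiteness of $\eqvl$-equivalence classes.

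The main obstacle I anticipate is not any single subtle step but the careful bookkeeping inside the reduction lemma: one has to handle the degenerate cases where any of $\uu_1$, $\uu_2$, or $\ww$ collapses to a lazy path $1_{(v, \pm 1)}$, verify that the concatenation conventions of \S~\ref{sec: Preliminaries} are respected throughout, and double-check both directions of the biconditional. None of these requires any new idea, but all require patience. Once settled, both conclusions ($\eqvl$ and $\eqvr$) follow with no additional machinery.
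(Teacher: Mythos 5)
Your proposal is correct and takes essentially the same approach as the paper: induction on string length, reducing a non-reduced string via a witnessing decomposition $\xx=\uu_1\vv\uu_2$ to the shorter $\eqvl$-equivalent string $\uu_1\uu_2$, and then invoking the finiteness of $H_l$-reduced strings. The only difference is that you spell out the justification for $\uu_1\vv\uu_2\eqvl\uu_1\uu_2$ (your ``reduction lemma''), which the paper asserts without elaboration; your argument for it via the test string $\ww\uu_1$ is sound.
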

\begin{proof}
We only show that $\St\, / \eqvl$ is a finite set; the finiteness of $\St\, / \eqvr$ follows similarly. In view of the above discussion, it is enough to show that for any $\xx \in \St$, there exists $\yy \in \St$ such that $\xx \eqvl \yy$ and $\yy$ is $H_l$-reduced using an induction on $|\xx|$.

\noindent{}\textbf{Base Case.} If $|\xx| = 0$, then $\xx$ is $H$-reduced, and hence the conclusion is trivial.

\noindent{}\textbf{Induction Step.} Suppose $|\xx| >0$. If $\xx$ is $H$-reduced then again the conclusion is trivial. Thus assume $\xx$ is not $H_l$-reduced. Take $\uu_1,\vv,\uu_2 \in \St$ such that $\xx=\uu_1\vv\uu_2, |\vv|>0$ and $\vv\uu_2 \eqvl \uu_2$. Then $\xx=\uu_1\vv\uu_2 \eqvl \uu_1\uu_2$. Since $|\uu_1\uu_2|<|\xx|$, the induction hypothesis yields $\yy \in \St$ such that $\yy$ is $H_l$-reduced and $\yy \eqvl \uu_1\uu_2 \eqvl \xx$.

Since there are only finitely many $H_l$-reduced strings, the above argument completes the proof that $\St / \eqvl$ is a finite set.
\end{proof}

Fix some $v\in Q_0$. Now we define a ``$2$-dimensional'' version of $H$-equivalence. Say that $(\xx_1 , \xx_2) , (\yy_1 , \yy_2) \in H(v)$ are \emph{$H_{lr}$-equivalent}, denoted $(\xx_1 , \xx_2) \equiv^{lr}_H (\yy_1 , \yy_2 )$, if $\xx_1\xx_2 \eqvl \yy_1\yy_2$ and $\xx_1\xx_2 \eqvr \yy_1\yy_2$. Thus if $(\xx_1 , \xx_2) \equiv^{lr}_H (\yy_1 , \yy_2 )$ then $ \left(H^i_l(\xx_1\xx_2),<_l\right) \cong \left( H^i_l(\yy_1\yy_2) , <_l \right) $ and $\left(H^i_r(\xx_1\xx_2),<_l\right) \cong \left( H^i_r(\yy_1 \yy_2 ) , <_l \right)$ via canonical maps for any $i \in \{1,-1\}$. Say that $(\xx_1 , \xx_2 )\in H(v )$ is \emph{$H_{lr}$-reduced} if there do not exist $\yy',\vv,\yy'' \in \St$ with $|\vv| >0$ such that at least one of the following conditions holds;
\begin{itemize}
    \item $\xx_1 = \yy'\vv\yy''$ and $(\yy'\yy'' , \xx_2) \eqvlr (\xx_1 , \xx_2)$,
    \item $\xx_2 = \yy'\vv\yy''$ and $(\xx_1 , \yy'\yy'') \eqvlr (\xx_1,\xx_2) $.
\end{itemize}
The following two results are natural analogues of Propositions \ref{prop: H-reduced finitely many} and \ref{prop: finite H_l-equivalence class} respectively, and can be proved with obvious modifications in the proof.
\begin{prop}\label{Hlrredfiniteness}
There are finitely many $H_{lr}$-reduced elements in $H(v)$.
\end{prop}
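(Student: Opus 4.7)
The plan is to adapt the pigeonhole argument in the proof of Proposition~\ref{prop: H-reduced finitely many} to the two-dimensional setting: I will show that in any $H_{lr}$-reduced pair $(\xx_1, \xx_2) \in H(v)$, each of $|\xx_1|$ and $|\xx_2|$ is bounded above by $2(M+1)(N+1)$, where $M$ denotes the maximum length of a direct string in $\Lambda$ and $N$ the number of two-syllable patterns $aB$ with $a, b \in Q_1$. Finiteness then follows since there are only finitely many pairs of strings with lengths bounded by a fixed constant.

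For the bound on $|\xx_1|$: supposing $|\xx_1| > 2(M+1)(N+1)$, the same pigeonhole counting as in Proposition~\ref{prop: H-reduced finitely many} produces a decomposition $\xx_1 = \yy_3\, aB\, \yy_2\, aB\, \yy_1$ with $a, b \in Q_1$. Setting $\yy' := \yy_3$, $\vv := aB\yy_2$, and $\yy'' := aB\yy_1$ gives $\xx_1 = \yy'\vv\yy''$ with $|\vv| > 0$, and it suffices to show $(\yy'\yy'', \xx_2) \eqvlr (\xx_1, \xx_2)$ to contradict $H_{lr}$-reducedness. For $\eqvl$, the total strings $\xx_1\xx_2$ and $\yy'\yy''\xx_2 = \yy_3\, aB\, \yy_1\, \xx_2$ share the leftmost segment $\yy_3\, aB$; since any element of $\rho \cup \rho^{-1}$ is a word of exclusively direct or exclusively inverse syllables, no monomial relation can straddle the direct-to-inverse transition $aB$, so the leftmost direct and inverse ``prefix runs'' of both strings coincide and the conditions for prepending any $\ww$ agree. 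For $\eqvr$, both strings share the rightmost segment $aB\, \yy_1\, \xx_2$, and the same barrier argument now applied at the right end ensures that the direct and inverse ``suffix runs'' coincide, so the conditions for appending any $\ww$ also agree.

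A symmetric pigeonhole argument applied to $\xx_2$ produces an analogous bound: a decomposition $\xx_2 = \zz_1\, aB\, \zz_2\, aB\, \zz_3$ lets one shorten $\xx_2$ while the same two-sided $aB$-barrier reasoning preserves both $\eqvl$ (trivially, since $\xx_1$ is untouched and the leftmost segment is fixed) and $\eqvr$. Combined, these force every $H_{lr}$-reduced pair to satisfy $|\xx_1|, |\xx_2| \leq 2(M+1)(N+1)$, yielding the desired finiteness. The one delicate point is justifying that $\eqvl$ and $\eqvr$ are simultaneously preserved even when some of the segments $\yy_1, \yy_3$ (or their analogues on the $\xx_2$ side) happen to be short; this is exactly where the direction-change property of $aB$ is essential, as it confines any monomial relation straddling the interface to a window contained entirely in the unchanged portion of the string.
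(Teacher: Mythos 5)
Your proof is correct and is precisely the adaptation the paper alludes to when it says the result ``can be proved with obvious modifications'' to the proof of Proposition~\ref{prop: H-reduced finitely many}: the same pigeonhole count applied separately to each coordinate, with a repeated $aB$ direction-change providing the barrier. Your observation that the shared leftmost segment $\yy_3 aB$ controls $\eqvl$ while the shared rightmost segment $aB\yy_1\xx_2$ controls $\eqvr$ (and symmetrically when shortening $\xx_2$) is exactly the point that carries the one-dimensional argument over.
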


\begin{prop}\label{prop: Hlr reduced representative}
    For any $(\xx_1 , \xx_2 ) \in H(v )$ there exists an $H_{lr}$-reduced string $(\yy_1 , \yy_2 ) \in H(v)$ such that $(\xx_1 , \xx_2 ) \eqvlr (\yy_1 , \yy_2 )$. Therefore, there is only a finite number of $\eqvlr$-equivalence classes of elements in $H(v)$.
\end{prop}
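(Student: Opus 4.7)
The plan is to imitate the proof of Proposition~\ref{prop: finite H_l-equivalence class} in the two-dimensional setting of $H(v)$, namely establish the existence of an $H_{lr}$-reduced representative by strong induction on the total length $|\xx_1|+|\xx_2|$, and then combine this existence with Proposition~\ref{Hlrredfiniteness} to deduce the finiteness of $\eqvlr$-classes.

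For the base case, if $(\xx_1,\xx_2)$ is already $H_{lr}$-reduced (in particular if $|\xx_1|+|\xx_2|=0$), take $(\yy_1,\yy_2):=(\xx_1,\xx_2)$. For the inductive step, assume that every element of $H(v)$ of strictly smaller total length admits an $H_{lr}$-reduced representative, and suppose that $(\xx_1,\xx_2)$ is not $H_{lr}$-reduced. By unraveling the definition of $H_{lr}$-reduced, one of the following holds:
\begin{itemize}
    \item there exist $\yy',\vv,\yy''\in\St$ with $|\vv|>0$, $\xx_1=\yy'\vv\yy''$, and $(\yy'\yy'',\xx_2)\eqvlr(\xx_1,\xx_2)$;
    \item there exist $\yy',\vv,\yy''\in\St$ with $|\vv|>0$, $\xx_2=\yy'\vv\yy''$, and $(\xx_1,\yy'\yy'')\eqvlr(\xx_1,\xx_2)$.
\end{itemize}
In either case the ``reduced'' pair has strictly smaller total length than $(\xx_1,\xx_2)$, so the induction hypothesis furnishes an $H_{lr}$-reduced $(\yy_1,\yy_2)\in H(v)$ that is $\eqvlr$-equivalent to it. Transitivity of $\eqvlr$ then yields $(\xx_1,\xx_2)\eqvlr(\yy_1,\yy_2)$, completing the induction.

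For the ``therefore'' part, the map sending an element of $H(v)$ to (a choice of) its $H_{lr}$-reduced representative descends to an injection from $H(v)/\eqvlr$ into the set of $H_{lr}$-reduced elements of $H(v)$, which is finite by Proposition~\ref{Hlrredfiniteness}; hence $H(v)/\eqvlr$ is finite.

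The only subtlety worth flagging is the verification that the pair obtained after reduction genuinely lies in $H(v)$ and not just in $\St\times\St$; but this is built into the definition of ``not $H_{lr}$-reduced,'' since the witnessing decomposition is required to yield an $\eqvlr$-equivalent pair, and membership in $H(v)$ is presupposed for $\eqvlr$ to apply. Thus no extra work is needed to keep the inductive candidate within $H(v)$.
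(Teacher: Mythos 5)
Your proof is correct and takes exactly the approach the paper intends: the paper states that Proposition~\ref{prop: Hlr reduced representative} "can be proved with obvious modifications" to the proof of Proposition~\ref{prop: finite H_l-equivalence class}, and your induction on total length $|\xx_1|+|\xx_2|$, together with invoking Proposition~\ref{Hlrredfiniteness} for finiteness, is precisely that modification. The closing remark--that membership of the reduced candidate in $H(v)$ is automatic because $\eqvlr$ is only defined on $H(v)\times H(v)$--is a worthwhile clarification that the paper leaves implicit.
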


\subsection{An algorithm to compute some infinite strings in hammock linear orders}\label{sec:alg msi lin ord}
The results of \cite[\S~9]{SKSK} imply that $\left(\OSt{l}{\xx_0 ,i;\sB},<_l\right)  $ is a discrete linear order; we begin this subsection by recalling the necessary details. The highlight of this rather technical subsection is Theorem \ref{prop: H-reduced infty bux} which states that the nearest infinite string of a given finite string $\xx$ that can be obtained via repeated applications of the successor operator enjoys some special properties, i.e., it is of the form $\prescript{\infty}{}{\bb}\uu\xx$, where both $\bb$ and $\uu$ belong to easily computable finite sets. Furthermore, we show (Proposition \ref{jkbound}) that the density of the interval $[\xx,\prescript{\infty}{}{\bb}\uu\xx)$ can also be effectively computed in finitely many steps. At the end we discuss how we can adapt to the non-domestic case the algorithm in \cite{Schroer98hammocksfor} to compute the density of hammock linear orders for domestic algebras. 

Following \cite[Definition~9.2]{SKSK} we define successor and predecessor functions on some appropriate subsets of $\OSt{l}{\xx_0,i;\sB}$. Note that for any $\xx_0 \in \St$, $i \in \{1,-1\}$ and $\xx \in \OSt{l,1}{\xx_0, i ; \sB} \setminus \{\MM_{l,i}(\xx_0)\}$, there exists a unique $\alpha \in Q_1^-$ such that $\alpha\xx \in \OSt{l}{\xx_0 }$. Define $\LB : \OSt{l,1}{\xx_0, i ; \sB} \setminus \{\MM_{l,i}(\xx_0)\}\to \OSt{l,1}{\xx_0, i ; \sB}$ by choosing $\LB(\xx)$ to be the longest string $\uu\alpha\xx\in\OSt{l,1}{\xx_0,i;\sB}$ satisfying $\delta(\uu)=-1$ whenever $|\uu|>0$. Remark \ref{rem: reversing the sign of STB} ensures that $\LB(\xx)$ is well-defined.

Similarly (consistent with the notations of \cite[\S~2.4]{GKS20}) we define  
\begin{align*}
    &\LbB : \OSt{l,-1}{\xx_0 , i ; \sB} \setminus \{ \mm_{l,i} (\xx_0) \} \to \OSt{l,-1}{\xx_0 , i ; \sB} \text{ for } \sB \in \QBa_{l,i}(\xx_0);\\
    &r_{\sB} : \OSt{r,-1}{\xx_0 , i ; \sB} \setminus \{ \MM_{r,i} (\xx_0) \} \to \OSt{r,-1}{\xx_0 , i ; \sB} \text{ for }\sB \in \QBa_{r,i} (\xx_0);\\
    &\rb_{\sB} : \OSt{r,1}{\xx_0 , i ; \sB} \setminus \{ \mm_{r,i} (\xx_0) \} \to \OSt{r,1}{\xx_0 , i ; \sB} \text{ for }  \sB \in \QBa_{r,i}(\xx_0).
\end{align*}

Note that the function  $\LB$ is defined for a given $\sB \in \QBa$ and a hammock $H_l^i(\xx_0)$. In this paper, for $\xx \in \OSt{l,1}{\sB}$ whenever we write $\LB (\xx)$, then the hammock should be inferred from the context. Since the set $\OSt{l,1}{\xx,i;\sB}$ is the same as $\OSt{1}{\xx,i;\sB}$ in \cite[\S~6]{SKSK}, the following result is a restatement of \cite[Proposition~9.4]{SKSK}.

\begin{prop}\label{prop: lb not in St-1}
    For any $\xx_0 \in \St$ and $i \in \{1,-1\}$, if $\xx \in \OSt{l,1}{\xx_0,i;\sB}\setminus \{\MM_i(\xx_0)\}$ then $ \xx <_l \LB(\xx), \LB(\xx) \notin \OSt{l,-1}{\sB}$ and $H_l^{-1}(\LB(\xx)) = \left( \xx , \LB(\xx) \right]_l$.
\end{prop}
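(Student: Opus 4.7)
For (1), the conclusion is immediate: by the definition of $\LB$ we have $\LB(\xx) = \uu\alpha\xx$ with $\alpha \in Q_1^{-}$, so the first clause defining $<_l$ in Definition \ref{hammock defn} yields $\xx <_l \LB(\xx)$.

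For (2), I would argue by contradiction. Suppose $\LB(\xx) \in \OSt{l,-1}{\sB}$, witnessed by some $\bb \in \sB$ and $\vv \in \St$ with $\bb\vv\LB(\xx) \in H_l^{-1}(\LB(\xx))$. The condition $\theta_l(\bb\vv\LB(\xx) \mid \LB(\xx)) = -1$, combined with the fact that the rightmost syllable of $\bb$ lies in $Q_1^{-}$, forces $|\vv| \geq 1$ with the rightmost syllable of $\vv$ being direct. Write $\vv = \vv''\vv_d$ where $\vv_d$ is the maximal all-direct suffix of $\vv$, so that either $\vv''$ is empty or its rightmost syllable is inverse. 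In both cases a short $\theta_l$-computation shows that $\bb\vv''\vv_d\LB(\xx) \in H_l^{1}(\vv_d\LB(\xx))$, witnessing $\vv_d\LB(\xx) \in \OSt{l,1}{\sB}$. Since $\vv_d\uu$ is all direct (being a concatenation of two direct strings), $\vv_d\LB(\xx) = (\vv_d\uu)\alpha\xx$ has the form required by the definition of $\LB$, and its $H_l^i(\xx_0)$-membership follows from that of $\LB(\xx)$ because $\LB(\xx) \neq \xx_0$. Thus $\vv_d\LB(\xx) \in \OSt{l,1}{\xx_0,i;\sB}$ is a string strictly longer than $\LB(\xx)$ of the required form, contradicting the maximality in the definition of $\LB$.

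For (3), both inclusions need proof. The inclusion $H_l^{-1}(\LB(\xx)) \subseteq (\xx, \LB(\xx)]_l$ is direct: any $\yy \in H_l^{-1}(\LB(\xx))$ distinct from $\LB(\xx)$ equals $\uu_1\LB(\xx)$ with direct rightmost syllable of $\uu_1$, so the second clause of $<_l$ gives $\yy <_l \LB(\xx)$ while the first clause (applied to $\yy = \uu_1\uu\alpha\xx$) gives $\xx <_l \yy$. For the reverse inclusion, take $\yy \in (\xx, \LB(\xx)]_l$ with $\yy \neq \LB(\xx)$ and case on whether $\xx \sqsubseteq_l \yy$. If yes, then uniqueness of the inverse extension of $\xx$ forces $\yy = \uu_1\alpha\xx$, and comparing $\uu_1$ with $\uu$ position-by-position from the right one checks that every subcase other than ``$\uu_1 = \tilde\uu\uu$ with $\tilde\uu$ having direct rightmost syllable'' yields $\yy >_l \LB(\xx)$ via the second or third clause of $<_l$, while the remaining subcase places $\yy$ in $H_l^{-1}(\LB(\xx))$. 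If $\xx \not\sqsubseteq_l \yy$, then $\xx <_l \yy$ is witnessed by the third clause through some proper right substring $\ww$ of $\xx$, and the third clause applied to the comparison $\LB(\xx)$ versus $\yy$ (with the same $\ww$ as common right substring, $\LB(\xx)$ continuing above with direct $\beta$ and $\yy$ continuing with inverse $\alpha'$) yields $\LB(\xx) <_l \yy$, again contradicting $\yy \leq_l \LB(\xx)$.

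The main technical obstacle is in (2): ensuring that the candidate extension $\vv_d\LB(\xx)$ lies specifically in $\OSt{l,1}{\sB}$, since the definition of $\OSt{l,1}{\xx_0, i; \sB}$ requires the $+1$ sign. The trick is that the maximality of $\vv_d$ as a direct suffix of $\vv$ guarantees that the syllable just to the left of $\vv_d$ in $\bb\vv$ is inverse---being either an inverse syllable from $\vv''$, or the rightmost syllable of $\bb$ when $\vv''$ is empty---which automatically supplies the correct $\theta_l$-sign and produces the witness for $\vv_d\LB(\xx) \in \OSt{l,1}{\sB}$ essentially for free from the data.
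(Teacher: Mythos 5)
The paper does not actually prove this proposition: it states that $\OSt{l,1}{\xx_0,i;\sB}$ coincides with the set $\OSt{1}{\xx_0,i;\sB}$ of \cite[\S~6]{SKSK} and simply cites \cite[Proposition~9.4]{SKSK}. Your proof is therefore an independent, from-the-definitions verification, and it is essentially correct; in particular, your argument for part (2), using the maximal all-direct left portion $\vv_d$ of the witnessing string $\vv$ and the fact that a band ends in an inverse syllable to manufacture a strictly longer candidate in $\OSt{l,1}{\xx_0,i;\sB}$ of the form required by the definition of $\LB$, is exactly the right move and is carried out correctly.

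There is one small omission in the reverse inclusion of part (3). In the case $\xx\not\sqsubseteq_l\yy$ you assert that $\xx<_l\yy$ must be witnessed by the third clause of $<_l$, which presumes $\xx$ and $\yy$ diverge at a proper common left substring $\ww$. But $\xx\not\sqsubseteq_l\yy$ also allows $\yy\sqsubsetneq_l\xx$, in which case the second clause witnesses $\xx<_l\yy$ (so $\xx=\vv\beta\yy$ with $\beta\in Q_1$), not the third. The conclusion still follows: since $\yy\sqsubsetneq_l\xx\sqsubsetneq_l\LB(\xx)$ and the syllable of $\LB(\xx)$ immediately above $\yy$ is the direct syllable $\beta$ of $\xx$, the second clause again yields $\LB(\xx)<_l\yy$, the desired contradiction. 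So the gap is easily closed, but as written the case split in part (3) is incomplete. (A purely cosmetic point: you write \emph{right substring} where the paper's convention $\ww\sqsubset_l\xx$, i.e.\ $\xx=\uu\ww$, is called a \emph{left substring}.)
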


Since $\LB(\xx) \notin \OSt{l,-1}{\sB}$ for any $\xx \in \OSt{l,1}{\xx_0,i;\sB} \setminus \{\MM_{l,i}(\xx_0)\}$ and $\sB \in \QBa_{l,i}(\xx_0)$, we get that $\sB \notin \QBa_{l,-1}(\LB (\xx))$. Hence by Proposition \ref{prop: lb not in St-1} we can make the following observation.

\begin{rem}\label{rem: lb is successor}
    For any $\xx_0 \in \St$, $i \in \{1,-1\}$ and $\sB \in \QBa_{l,i}(\xx_0)$, if $\xx \in \OSt{l,1}{\xx_0, i ; \sB} $ then $$ [\xx, \LB(\xx)]_l \cap \OSt{l}{\xx_0 , i; \sB} = \{\xx\} \cup \left(H_l^{-1}(\LB(\xx)) \cap  \OSt{l}{\xx_0 , i; \sB}\right) = \{\xx, \LB(\xx)\}.$$
\end{rem}

For any $\xx \in \OSt{l,1}{\xx_0, i ; \sB}$ we define powers of $\LB$ by induction: first set $\LB^0 (\xx) := \xx$ and for every $n\in\N$ if $\LB^n (\xx)$ exists and $\LB^n(\xx) \neq \MM_{l,i}(\xx_0)$ then define $\LB^{n+1} (\xx) := \LB \left( \LB^n(\xx) \right)$. Note that for $\xx,\yy\in \OSt{l,1}{\xx_0 , i; \sB}$ if there exists $\alpha \in \Q_1^-$ satisfying $\alpha\xx,\alpha\yy \in \OSt{l,1}{\xx_0 , i; \sB}$ then there exists a string $\vv_1$ such that $|\vv_1|>0$, $\LB(\xx) = \vv_1\xx$ and $\LB(\yy) = \vv_1\yy$. In fact this argument can be used inductively to obtain the following.

\begin{rem}\label{rem: l on strings with same target}
    For any $\xx, \yy \in \OSt{l,1}{\xx_0,i;\sB}$, $n \in \N$ and $\alpha\in Q_1^-$ satisfying $\alpha\xx,\alpha\yy \in \OSt{l}{\xx_0 , i; \sB} $,  if $\LB^n(\xx)$ exists and  $\LB^n(\xx) = \uu\xx$ then $\LB^n(\yy) = \uu\yy$.
\end{rem}

If $\LB^n (\xx)$ exists for every $n \in \N$, then define a left $\N$-string $\brac{1}{\LB} (\xx) := \lim^l_{n \to \infty} \LB^n (\xx)$. Note that for any $\xx \in \OSt{l,1}{\xx_0 , i ; \sB}$, $\brac{1}{\LB}(\xx)$ exists if and only if $\xx \in \OSt{l,1}{\sB}\setminus \{\MM_{l,i} (\xx_0)\}$.

\begin{rem}\label{rem: substrings of <1,lb>}
    For any $\xx \in \OSt{l,1}{\sB}$ and $\yy\in \St$ such that $\xx \sqsubset_l \yy\sqsubset_l \brac{1}{\LB}(\xx)$, $ \theta_l( \brac{1}{\LB}(\xx) \mid \yy) =1 $ if and only if there exists $n \in \N$ such that $\LB^n(\xx) =\yy$.
\end{rem}

\begin{prop}\label{prop: <1,LB> is an order preserving map}
    If $\sB \in \QBa$ and $\xx \in \OSt{l,1}{\xx_0,i;\sB}\cap \OSt{l,1}{\sB} \setminus \{ \MM_{l,i}(\xx_0) \}$, then $[ \xx,\brac{1}{\LB} (\xx))_l \cap \OSt{l}{\sB} = \{\LB^n(\xx)\mid n \in \N \}$.
\end{prop}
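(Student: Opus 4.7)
The plan is to prove the two set inclusions separately, with interval containment as a preliminary step common to both. For each $n \in \N$, iterating Proposition \ref{prop: lb not in St-1} gives $\xx \leq_l \LB^n(\xx)$, while the $\N$-string $\brac{1}{\LB}(\xx)$ properly extends $\LB^n(\xx)$ on the left with first syllable above $\LB^n(\xx)$ being the leading inverse syllable of the $\LB$-extension from $\LB^n(\xx)$ to $\LB^{n+1}(\xx)$; this forces $\theta_l(\brac{1}{\LB}(\xx) \mid \LB^n(\xx)) = 1$, whence $\LB^n(\xx) <_l \brac{1}{\LB}(\xx)$.

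For the inclusion $\subseteq$, I first observe that $[\xx, \brac{1}{\LB}(\xx))_l \subseteq H_l^i(\xx_0)$, since $\xx \in H_l^i(\xx_0)$ and the interval is bounded above (in the completion) by $\brac{1}{\LB}(\xx)$, which lies in the completion of $H_l^i(\xx_0)$. The definition of $\OSt{l,j}{\xx_0, i; \sB}$ then yields $\OSt{l}{\sB} \cap H_l^i(\xx_0) \subseteq \OSt{l}{\xx_0, i; \sB}$, so the left-hand side of the desired equality is contained in $[\xx, \brac{1}{\LB}(\xx))_l \cap \OSt{l}{\xx_0, i; \sB}$. As $\brac{1}{\LB}(\xx)$ is by construction the supremum of $\{\LB^n(\xx) \mid n \in \N\}$ in $\widehat{H}_l(\xx_0)$, every $\yy \in [\xx, \brac{1}{\LB}(\xx))_l$ satisfies $\yy <_l \LB^{m+1}(\xx)$ for some $m \in \N$, and hence
\begin{equation*}
[\xx, \brac{1}{\LB}(\xx))_l = \bigsqcup_{m \in \N} [\LB^m(\xx), \LB^{m+1}(\xx))_l.
\end{equation*}
Applying Remark \ref{rem: lb is successor} to each piece gives $[\LB^m(\xx), \LB^{m+1}(\xx))_l \cap \OSt{l}{\xx_0, i; \sB} = \{\LB^m(\xx)\}$, and taking the union over $m$ closes this direction.

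For the inclusion $\supseteq$, it remains to show $\LB^n(\xx) \in \OSt{l}{\sB}$ for every $n \in \N$, which I plan to argue by induction. The base case $n = 0$ is the hypothesis. For the step, suppose $\LB^n(\xx) \in \OSt{l,1}{\sB}$ with witness $\bb\uu\LB^n(\xx) \in H_l^1(\LB^n(\xx))$ for some $\bb \in \sB$ and $\uu \in \St$; since $\LB^{n+1}(\xx) \notin \OSt{l,-1}{\sB}$ by Proposition \ref{prop: lb not in St-1}, it suffices to produce a witness placing $\LB^{n+1}(\xx)$ in $\OSt{l,1}{\sB}$. When the rightmost segment of $\bb\uu$ above $\LB^n(\xx)$ coincides with the extension $\LB^{n+1}(\xx)/\LB^n(\xx) = \vv\alpha$, the maximality of the all-direct run $\vv$ forces the syllable of $\bb\uu$ immediately above $\LB^{n+1}(\xx)$ to be inverse, yielding the witness directly. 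The main obstacle is the remaining case in which the first inverse syllable of $\bb\uu$ above $\LB^n(\xx)$ differs from $\alpha$: here I plan to exploit the eventual periodicity of the $\N$-string $\brac{1}{\LB}(\xx)$ in the finite quiver to extract a band $\bb' \in \sB$ appearing as a left substring of $\LB^N(\xx)$ for some $N$, use $\bb' \cdot \LB^N(\xx)$ as a witness at stage $N$, and then transport the witness back to any smaller $n$ using that the syllable of $\LB^{n+1}(\xx)$ immediately above $\LB^n(\xx)$ is inverse by construction.
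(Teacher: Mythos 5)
Your $\subseteq$ direction is essentially the paper's proof: decompose $[\xx,\brac{1}{\LB}(\xx))_l$ as the disjoint union of the blocks $[\LB^m(\xx),\LB^{m+1}(\xx))_l$ and invoke Remark~\ref{rem: lb is successor} on each block. You also correctly noticed that the paper's one-line application of Remark~\ref{rem: lb is successor} silently uses $\LB^n(\xx)\in\OSt{l}{\sB}$ (the remark itself only places $\LB^n(\xx)$ in the larger set $\OSt{l}{\xx_0,i;\sB}$), so some justification of the reverse inclusion is genuinely needed.

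The gap is in your proposed induction for that reverse inclusion, and it is more than a matter of convolution. In your ``main obstacle'' case you plan to use eventual periodicity of $\brac{1}{\LB}(\xx)$ to extract a band $\bb'$ and use $\bb'\cdot\LB^N(\xx)$ as a $\sB$-witness. But the eventually periodic band in $\brac{1}{\LB}(\xx)$ need not lie in $\sB$: Theorem~\ref{prop: H-reduced infty bux} in this very paper only guarantees a $\sB'\in\QBa$ and $\bb'\in\mathsf{Ba}_l(\sB')$ with $\brac{1}{\LB}(\xx)={}^\infty\ol\bb'\,\ol\uu\,\xx$, and $\sB'$ can differ from $\sB$. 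A witness built from $\bb'$ then certifies $\LB^N(\xx)\in\OSt{l,1}{\sB'}$, not $\OSt{l,1}{\sB}$, so the inductive step does not close. You do not need any of this: the reverse inclusion is immediate from the existence criterion for $\brac{1}{\LB}$ recorded after Remark~\ref{rem: l on strings with same target}. Since $\brac{1}{\LB}(\xx)$ exists, $\LB^m(\xx)$ exists for every $m$; hence $\LB^m(\LB^n(\xx))=\LB^{m+n}(\xx)$ exists for every $m$, i.e.\ $\brac{1}{\LB}(\LB^n(\xx))$ exists, and the criterion then gives $\LB^n(\xx)\in\OSt{l,1}{\sB}\setminus\{\MM_{l,i}(\xx_0)\}\subseteq\OSt{l}{\sB}$. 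Replacing your final paragraph with this two-line argument and keeping your $\subseteq$ computation yields a complete proof consistent with the paper's.
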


\begin{proof}
    Since $\LB^n(\xx) <_l \LB^{n+1}(\xx)$ for every $n \in \N$ by Proposition \ref{prop: lb not in St-1}, we get that
    \begin{alignat*}{3}
    [ \xx,\brac{1}{\LB} (\xx))_l &= \bigcup_{n \in \N} [\LB^n(\xx), \LB^{n+1}(\xx))_l.& \\
       \therefore [ \xx,\brac{1}{\LB} (\xx))_l \cap \OSt{l}{\sB} &= \bigcup_{n \in \N} [ \LB^n(\xx), \LB^{n+1}(\xx) )_l \cap \OSt{l}{\sB}&\\
        &= \bigcup_{n \in \N} \{\LB^n(\xx)\} = \{ \LB^n(\xx): n \in \N \}&\hspace{25mm}\text{ by Remark \ref{rem: lb is successor}.}
    \end{alignat*}
\end{proof}

The following results state that any infinite string obtained by an application of the $ \brac{1}{\LB} $ operator can in fact be determined using the knowledge of a pair of distinct sufficiently long $ H_l $-equivalent left substrings.
\begin{prop}\label{prop: H-equialance in <1,lb>}
Suppose $\xx_0,\yy,\bb\in \St, i \in \{1,-1\},\sB \in \QBa$ and $\bb\yy\eqvl\yy$ so that $\prescript{\infty}{}{\bb}\yy$ is a left $\N$-string in view of Proposition \ref{prop: H-equivalent substrings}.
\begin{itemize}
    \item If $\xx \in \OSt{l,1}{\xx_0,i;\sB}\cap \OSt{l,1}{\sB} \setminus \{ \MM_{l,i}(\xx_0)\}$ and $\xx \sqsubset_l \yy \sqsubset_l \bb\yy \sqsubset_l \brac{1}{\LB} (\xx)$ then $\brac{1}{\LB} (\xx) = \prescript{\infty}{}{\bb}\yy$.
    \item If $\xx \in \OSt{l,-1}{\xx_0,i;\sB}\cap \OSt{l,-1}{\sB} \setminus\{\mm_{l,i}(\xx_0)\}$ and $\xx \sqsubset_l \yy \sqsubset_l \bb\yy \sqsubset_l \brac{1}{\LbB} (\xx)$ then $\brac{1}{\LbB} (\xx) = \prescript{\infty}{}{\bb}\yy$.
\end{itemize}
\end{prop}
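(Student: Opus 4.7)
The plan is to establish the first statement; the second will follow by a symmetric argument with $\LbB$ in place of $\LB$ and signs reversed throughout. My strategy is to show by induction on $m \in \N^+$ that $\bb^m\yy \sqsubset_l \brac{1}{\LB}(\xx)$. Once this is in hand, noting that $\prescript{\infty}{}{\bb}\yy = \mathrm{lim}^l_{m \to \infty} \bb^m\yy$ while $\brac{1}{\LB}(\xx) = \mathrm{lim}^l_{n \to \infty} \LB^n(\xx)$, the two left $\N$-strings will share a cofinal family of finite left substrings and therefore coincide.

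The base case $m = 1$ is immediate from the hypothesis $\bb\yy \sqsubset_l \brac{1}{\LB}(\xx)$. For the inductive step, I will first iterate the $H_l$-equivalence $\bb\yy \eqvl \yy$ by left-concatenation to obtain $\bb^m\yy \eqvl \bb^{m-1}\yy$ (and more generally $\bb^k\yy \eqvl \yy$ for every $k \geq 0$), which in particular ensures that $\bb^{m+1}\yy$ is again a string and that $\bb^m\yy$ and $\bb^{m-1}\yy$ admit the same legal left extensions. Assuming $\bb^m\yy \sqsubset_l \brac{1}{\LB}(\xx)$, I will locate the minimal $n_m \in \N$ with $\bb^m\yy \sqsubseteq_l \LB^{n_m}(\xx)$, and similarly $n_{m-1} \leq n_m$ for $\bb^{m-1}\yy$; by inspecting the $\theta_l$ and $\delta_l$ data inherited from these $\LB^n(\xx) \in \OSt{l,1}{\xx_0, i; \sB}$, I will verify that $\bb^m\yy, \bb^{m-1}\yy \in \OSt{l,1}{\xx_0, i; \sB}$. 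An application of Remark \ref{rem: l on strings with same target} to the pair $\bb^{m-1}\yy, \bb^m\yy$, with a common admissible inverse syllable for left extension extracted from the first inverse syllable of $\bb$, will then transfer the finite sequence of $\LB$-iterations carrying $\bb^{m-1}\yy$ up to $\bb^m\yy$ (which by the inductive setup prepends exactly $\bb$) to an identical sequence carrying $\bb^m\yy$ up to $\bb^{m+1}\yy$, thereby yielding $\bb^{m+1}\yy \sqsubseteq_l \LB^{n_m + (n_m - n_{m-1})}(\xx) \sqsubset_l \brac{1}{\LB}(\xx)$.

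The main obstacle I anticipate is the careful bookkeeping required to justify the transport of the $\LB$-increments across $H_l$-equivalent strings: one must argue that the syllables appended between $\LB^{n_{m-1}}(\xx)$ and $\LB^{n_m}(\xx)$ are precisely the syllables of $\bb$ (and not of some longer or shorter pattern), and that these same syllables can be legally appended on top of $\bb^m\yy$ to produce $\bb^{m+1}\yy$. Both points rest on $\bb^m\yy \eqvl \bb^{m-1}\yy$ identifying the sets of admissible left prepensions of these two strings, so that the hypotheses of Remark \ref{rem: l on strings with same target} hold verbatim. Once these details are settled, the induction closes and the desired equality $\brac{1}{\LB}(\xx) = \prescript{\infty}{}{\bb}\yy$ follows as outlined above.
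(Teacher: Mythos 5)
Your induction does not close as written because it conflates three distinct notions: left substring containment $\sqsubseteq_l$, the hammock order $\leq_l$, and membership in the $\LB$-orbit of $\xx$. To transport $\LB$-iterations via Remark \ref{rem: l on strings with same target} you need $\bb^{m-1}\yy$ and $\bb^m\yy$ to coincide with definite values $\LB^{n_{m-1}}(\xx)$ and $\LB^{n_m}(\xx)$, not merely to be left substrings of $\brac{1}{\LB}(\xx)$ nor to satisfy $\bb^m\yy\sqsubseteq_l\LB^{n_m}(\xx)$ for minimal $n_m$. By Remark \ref{rem: substrings of <1,lb>}, a left substring of $\brac{1}{\LB}(\xx)$ equals $\LB^n(\xx)$ for some $n$ exactly when the next syllable of $\brac{1}{\LB}(\xx)$ is inverse, and nothing in your inductive setup forces the $(|\bb\yy|+1)^\text{th}$ syllable of $\brac{1}{\LB}(\xx)$ to be inverse until you have already shown $\brac{1}{\LB}(\xx)=\prescript{\infty}{}{\bb}\yy$. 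Thus the assertion that the syllables prepended between $\LB^{n_{m-1}}(\xx)$ and $\LB^{n_m}(\xx)$ are precisely those of $\bb$ is what needs proving: if $\LB^{n_m}(\xx)$ is a proper superstring of $\bb^m\yy$, the prepended segment is not $\bb$. You also assert $\bb^{m}\yy,\bb^{m-1}\yy\in\OSt{l,1}{\xx_0,i;\sB}$ by inspecting $\theta_l,\delta_l$ data, but this set is not downward closed under $\sqsubseteq_l$ and no argument is offered.

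The deeper obstruction your proposal does not touch is the possibility that $\brac{1}{\LB}(\xx)<_l\prescript{\infty}{}{\bb}\yy$ in $\widehat H_l(\xx_0)$. In that case some power $\bb^n\yy$ fails to be $<_l\ \brac{1}{\LB}(\xx)$, Proposition \ref{prop: <1,LB> is an order preserving map} is unavailable, and the inductive claim $\bb^n\yy\sqsubset_l\brac{1}{\LB}(\xx)$ breaks even though the proposition's conclusion is still true. The paper's proof runs by contradiction on the minimal $n>1$ with $\bb^n\yy\not\sqsubset_l\brac{1}{\LB}(\xx)$, split on whether $\prescript{\infty}{}{\bb}\yy$ sits $<_l$-below or $<_l$-above $\brac{1}{\LB}(\xx)$: in the first case Remark \ref{rem: Hammock order property} and Proposition \ref{prop: <1,LB> is an order preserving map} force $\bb^n\yy$ into the $\LB$-orbit, a contradiction; in the second a $\theta_l$ analysis at the meet of $\brac{1}{\LB}(\xx)$ and $\prescript{\infty}{}{\bb}\yy$ places a string $\uu'\yy\in\OSt{l,-1}{\sB}$ in the orbit, contradicting Proposition \ref{prop: lb not in St-1}. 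Your direct induction bypasses both mechanisms, and the gap is essential rather than a matter of bookkeeping.
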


\begin{proof} Since there exists $m \in \N$ such that $\bb\yy \sqsubset_l \LB^m(\xx)$ and $\LB^m(\xx) \in \OSt{l,1}{\sB}$, we get that $\bb\yy \in \OSt{l}{\sB}$.
    
Towards a contradiction, assume $\brac{1}{\LB} (\xx) \neq \prescript{\infty}{}{\bb}\yy$. Take minimal $n > 1$ such that $\bb^n\yy \not\sqsubset_l \brac{1}{\LB}(\xx)$. Since $\bb\yy \eqvl \yy$, using induction we can show that $ \bb^n\yy \eqvl \bb\yy\eqvl  \yy $. Hence $\bb\yy\in \OSt{l}{\sB}$ implies $\bb^n\yy \in \OSt{l}{\sB}$. Note that $\xx <_l \bb^n\yy$. We have two cases.

\begin{itemize}
\item If $\xx <_l \prescript{\infty}{}{\bb}\yy <_l \brac{1}{\LB}(\xx) $ then since $\bb^n\yy \sqcap_l \brac{1}{\LB}(\xx) \sqsubset_l \bb^n\yy$,  Remark \ref{rem: Hammock order property} yields $\bb^n\yy <_l \brac{1}{\LB}(\xx)$. Since $\xx <_l \bb^n\yy <_l \brac{1}{\LB}(\xx)$, Proposition \ref{prop: <1,LB> is an order preserving map} yields $k \in \N$ such that $\LB^k(\xx) = \bb^n\yy$, which is a contradiction to $\bb^n\yy \not\sqsubset_l \brac{1}{\LB}(\xx)$.

\item If $ \xx <_l \brac{1}{\LB}(\xx) <_l \prescript{\infty}{}{\bb}\yy $, then choose a partition $\bb = \uu''\uu'$ such that $\uu'\bb^{n-1}\xx = \brac{1}{\LB}(\xx) \sqcap_l \prescript{\infty}{}{\bb}\yy$. Since $\uu''\uu'\bb^{n-1}\xx= \bb^n\xx \not\sqsubset_l \brac{1}{\LB}(\xx)$, we get that $|\uu''|>0$. Since $ \brac{1}{\LB}(\xx) <_l \prescript{\infty}{}{\bb}\yy $, we get that $\theta_l(\uu'') = 1$. Take $k \in \N$ such that $\uu'\bb^{n-1}\yy\sqsubset_l \LB^k(\xx) $. Since $ \brac{1}{\LB}(\xx) <_l \prescript{\infty}{}{\bb}\yy $, $\theta_l (\LB^k(\xx)\mid \uu'\bb^{n-1}\yy) =- \theta_l(\prescript{\infty}{}{\bb}\yy\mid  \uu'\bb^{n-1}\yy) = -1$. Since $\LB^k(\xx) \in \OSt{l,1}{\sB}$, we get that $ \uu'\bb^{n-1}\yy \in\OSt{l,-1}{\sB} $. Since $ \bb^n\yy \eqvl \yy $, we get that $\uu'\bb^n\yy \eqvl \uu'\yy $. Hence $\uu'\yy \in \OSt{l,-1}{\sB}$. Since $\bb\yy\sqsubset_l\brac{1}{\LB}(\xx)$ and $\theta_l ( \brac{1}{\LB}(\xx)\mid \uu'\yy ) = \theta_l (\bb\yy\mid \uu'\yy) = \theta_l (\uu'') = 1$, Remark \ref{rem: substrings of <1,lb>} yields $p >0$ such that $\LB^p(\xx) = \uu'\yy \in \OSt{l,-1}{\sB}$, which is a contradiction to Proposition \ref{prop: lb not in St-1}.
\end{itemize}
\end{proof}

\begin{cor}\label{prop: <1,lb> contains a band}
    For any $\xx_0 \in \St, i \in \{1,-1\}, \sB \in \QBa_{l,i}(\xx_0)$ and $\xx \in \OSt{l,1}{\xx_0 , i ; \sB} \cap \OSt{l,1}{\sB} \setminus \{ \MM_{l,i}(\xx_0) \}$, there exists $\bb \in \QBa_0$ and $\uu \in \St$ such that $\brac{1}{\LB} (\xx) = \prescript{\infty}{}{\bb}\uu\xx$.
\end{cor}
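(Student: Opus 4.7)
The plan is to exploit the finiteness of $\eqvl$-equivalence classes on $\St$ (Proposition \ref{prop: finite H_l-equivalence class}) to detect periodicity in the left $\N$-string $\brac{1}{\LB}(\xx)$, then invoke the already-proved Proposition \ref{prop: H-equialance in <1,lb>}. The hypothesis $\xx \in \OSt{l,1}{\xx_0,i;\sB} \cap \OSt{l,1}{\sB} \setminus \{\MM_{l,i}(\xx_0)\}$ guarantees that $\brac{1}{\LB}(\xx)$ exists, so $\lan \LB^n(\xx) \mid n \in \N^+ \ran$ is an infinite sequence of left substrings of $\brac{1}{\LB}(\xx)$ of strictly increasing length. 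Since there are only finitely many $\eqvl$-equivalence classes, the pigeonhole principle yields $1 \le n < m$ with $\LB^n(\xx) \eqvl \LB^m(\xx)$.

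Set $\yy := \LB^n(\xx)$ and write $\LB^m(\xx) = \uu'\yy$, where $|\uu'|>0$; then $\uu'\yy \eqvl \yy$. Proposition \ref{prop: H-equivalent substrings} then produces a cycle $\bb_0 \in \Cyc$ and $k \in \N^+$ with $\uu' = \bb_0^k$. We have the chain
$$\xx \sqsubset_l \yy \sqsubset_l \bb_0^k\yy = \LB^m(\xx) \sqsubset_l \brac{1}{\LB}(\xx).$$
Applying Proposition \ref{prop: H-equialance in <1,lb>} with the word $\bb := \bb_0^k$ (which satisfies $\bb\yy \eqvl \yy$ and is a positive power of a cycle, so $\prescript{\infty}{}{\bb}\yy = \prescript{\infty}{}{\bb_0}\yy$ is a genuine left $\N$-string) yields $\brac{1}{\LB}(\xx) = \prescript{\infty}{}{\bb_0}\yy$.

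To convert the cycle $\bb_0$ to an element of $\QBa_0$, decompose $\bb_0 = \bb_1\bb_2$ so that $\bb := \bb_2\bb_1 \in \QBa_0$; such a decomposition exists since $\QBa_0$ contains exactly one representative from each class of bands modulo cyclic permutation. Then $\prescript{\infty}{}{\bb_0}\yy = \prescript{\infty}{}{\bb}\bb_1\yy$, and writing $\yy = \uu''\xx$ (with $\uu''$ possibly empty if $n=0$, although our choice $n \ge 1$ ensures $|\uu''|>0$), we obtain $\brac{1}{\LB}(\xx) = \prescript{\infty}{}{\bb}\uu\xx$ with $\uu := \bb_1\uu'' \in \St$, completing the argument.

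No step is genuinely obstructive. The only subtlety is that Proposition \ref{prop: H-equialance in <1,lb>} is stated for $\bb \in \St$ satisfying $\bb\yy \eqvl \yy$, and we apply it to $\bb_0^k$ rather than $\bb_0$ itself; however, inspection of its proof confirms that the only use of the hypothesis is an induction deriving $\bb^N\yy \eqvl \yy$ for all $N \in \N^+$, which is precisely what we have for $\bb = \bb_0^k$ (and in any case $\prescript{\infty}{}{(\bb_0^k)} = \prescript{\infty}{}{\bb_0}$ as left $\N$-strings).
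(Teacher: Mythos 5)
Your proof is correct and follows essentially the same route as the paper's: pigeonhole on the finitely many $\eqvl$-classes to find $\LB^n(\xx)\eqvl\LB^m(\xx)$, apply Proposition~\ref{prop: H-equialance in <1,lb>} to get the periodic tail, and use Proposition~\ref{prop: H-equivalent substrings} to recognize the period as a power of a cyclic permutation of a band in $\QBa_0$. The only cosmetic difference is that you identify the cycle before invoking Proposition~\ref{prop: H-equialance in <1,lb>}, whereas the paper applies that proposition first and then names the cycle; this changes nothing.
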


\begin{proof}
Using the pigeonhole principle and Proposition \ref{prop: finite H_l-equivalence class}, we get the existence of $p<q$ in $\N$ such that $\LB^p(\xx) \eqvl \LB^q(\xx)$. Let $\LB^p(\xx)= \uu'\xx$ and $\LB^q(\xx)= \zz\uu'\xx$. Since $\zz\uu'\xx \eqvl \uu'\xx$, by Proposition \ref{prop: H-equialance in <1,lb>} we get that $\brac{1}{\LB}(\xx) = \prescript{\infty}{}{\zz}\uu'\xx$. Proposition \ref{prop: H-equivalent substrings} yields $\bb \in \QBa_0$ such that $\zz$ is a finite power of a cyclic permutation of $\bb$. Therefore $\brac{1}{\LB}(\xx) = \prescript{\infty}{}{\zz}\uu'\xx = \prescript{\infty}{}{\bb}\uu\xx $ for some $\uu \sqsubseteq_l \prescript{\infty}{}{\zz}\uu'$.
\end{proof}

There are only a finite number of bands $\bb \in \QBa_0$ satisfying the conclusions of the above corollary as the following result describes the left $\N$-strings of the form $\brac{1}{\LB}(\xx)$ using $H_l$-reduced strings and bands from the finite set $\bigcup_{\sB\in\QBa}\BalB$.

\begin{thm}\label{prop: H-reduced infty bux}
Suppose $\xx_0 \in \St$, $i \in \{-1,1\}$, $\sB\in\QBa$ and $\xx \in \OSt{l,1}{\xx_0,i;\sB} \cap \OSt{l,1}{\sB} \setminus \{\MM_{l,i}(\xx_0)\}$. Then there exist $\ol\uu\in\St$, $\sB'\in \QBa$, $\bb\in \mathsf{Ba}_l(\sB')$ and a cyclic permutation $\ol\bb$ of $\bb$ such that $\brac{1}{\LB}(\xx) = \prescript{\infty}{}{\ol\bb}\ol\uu\xx$. Furthermore, if $|\ol\uu|>0$ then $\theta_r(\ol\bb)=-\theta_r(\ol\uu)$, and 
\begin{enumerate}
    \item if $\ol\vv\sqsubset_l\ol\uu$ then $\ol\vv\xx$ is $H_l$-reduced relative to $(\xx,1)$;
    \item if $\ol\uu\xx$ is not $H_l$-reduced relative to $(\xx,1)$ then $\theta_l(\ol\bb)=-1$.
\end{enumerate}
\end{thm}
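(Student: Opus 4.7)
The plan is to first invoke Corollary~\ref{prop: <1,lb> contains a band} to obtain $\bb_0 \in \QBa_0$ and $\uu_0 \in \St$ such that $\brac{1}{\LB}(\xx) = \prescript{\infty}{}{\bb_0}\uu_0\xx$. Let $\sB' \in \QBa$ denote the $\approx$-class of $\bb_0$. Because $\QBa_0$ contains exactly one representative of each cyclic equivalence class, the element $\bb$ in the statement is forced to equal $\bb_0$, so the substantive existential claim reduces to verifying that $\bb_0 \in \mathsf{Ba}_l(\sB')$.

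For this key claim I would argue by contradiction. If $\bb_0 \notin \mathsf{Ba}_l(\sB')$ then there exist $\bb_1 \in \sB'$ and $\ww \in \St$ with $\bb_1\ww\bb_0 \in \St$ and $\prescript{\infty}{}{\bb_1}\ww\bb_0 <_l \prescript{\infty}{}{\bb_0}$. For sufficiently large $k$, set $\yy_k := \bb_1\ww\bb_0^k\uu_0\xx$; since $\bb_1$ furnishes a band witness on the left, $\yy_k$ lies in $\OSt{l,1}{\xx_0,i;\sB} \cap \OSt{l}{\sB}$. The strict $\N$-string inequality transfers to the finite comparison at the matching divergence position, giving $\yy_k <_l \LB^n(\xx)$ for a sufficiently large $n$ with $\LB^n(\xx) = \bb_0^{k'}\uu_0\xx$. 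On the other hand, $\yy_k$ extends $\LB^{n-1}(\xx) = \bb_0^{k'-1}\uu_0\xx$ on the left through the same inverse junction syllable that witnesses $\xx <_l \brac{1}{\LB}(\xx)$, yielding $\yy_k >_l \LB^{n-1}(\xx)$. This places $\yy_k$ strictly between two consecutive $\LB$-iterates in $\OSt{l}{\xx_0,i;\sB}$, contradicting Remark~\ref{rem: lb is successor}.

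Having shown $\bb := \bb_0 \in \mathsf{Ba}_l(\sB')$, I would next choose the cyclic permutation $\ol\bb$ of $\bb$ and the associated $\ol\uu$ by minimizing $|\ol\uu|$ among the finitely many presentations $\prescript{\infty}{}{\bb}\uu_0 = \prescript{\infty}{}{\ol\bb}\ol\uu$ induced by the cyclic shifts of $\bb$, breaking ties in favour of $\theta_l(\ol\bb) = -1$. Condition~(a) follows because $\theta_r(\ol\bb) = \theta_r(\ol\uu)$ with $|\ol\uu|>0$ would permit a cyclic absorption of the leftmost syllable of $\ol\uu$ into $\ol\bb$, violating the minimality of $|\ol\uu|$. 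Condition~(b) follows from Proposition~\ref{prop: H-equivalent substrings}: a failure of $H_l$-reducedness for some $\ol\vv\xx$ with $\ol\vv \sqsubset_l \ol\uu$ would produce a cycle that is a cyclic permutation of $\bb$, and merging it into $\ol\bb$ would shorten $\ol\uu$ further. Condition~(c) follows from the tiebreaker, since any $H_l$-reducibility of $\ol\uu\xx$ through a cycle touching the leftmost syllable of $\ol\uu$ furnishes an alternative cyclic shift of $\bb$ of the same length $|\ol\uu|$ but with $\theta_l(\ol\bb) = -1$.

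The principal obstacle is the second paragraph, where two delicate order-theoretic verifications are required: (i) that $\yy_k \in \OSt{l,1}{\xx_0,i;\sB} \cap \OSt{l}{\sB}$ for sufficiently large $k$, and (ii) that the finite-string inequalities $\LB^{n-1}(\xx) <_l \yy_k <_l \LB^n(\xx)$ genuinely hold. Both rely on carefully tracking how the divergence of the $\N$-strings $\prescript{\infty}{}{\bb_1}\ww\bb_0$ and $\prescript{\infty}{}{\bb_0}$ governs the hammock order between the corresponding finite strings, and in particular on identifying the correct index $n$ relative to $k$. Once this representation-theoretic core is in place, the cyclic-permutation bookkeeping of the third paragraph is a routine combinatorial exercise.
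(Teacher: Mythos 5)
Your overall strategy for the core claim $\bb\in\mathsf{Ba}_l(\sB')$ matches the paper's: both start from Corollary~\ref{prop: <1,lb> contains a band}, then argue by contradiction by producing a string in $\OSt{l}{\sB}$ strictly between two consecutive $\LB$-iterates, contradicting Remark~\ref{rem: lb is successor}. However, your choice of the intermediate string is incorrect. You take $\yy_k:=\bb_1\ww\bb_0^k\uu_0\xx$, placing the growing power on $\bb_0$ and keeping the prefix $\bb_1\ww$ fixed. As $k\to\infty$ this sequence converges to $\brac{1}{\LB}(\xx)=\prescript{\infty}{}{\bb_0}\uu_0\xx$ itself, not to a point strictly between two consecutive iterates. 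Consequently the bracketing pair of iterates depends on $k$, and your claimed chain $\LB^{n-1}(\xx)<_l\yy_k<_l\LB^n(\xx)$ is not generally valid: you presume $\LB^{n-1}(\xx)=\bb_0^{k'-1}\uu_0\xx$, i.e.\ that successive $\LB$-iterates differ by exactly one copy of $\bb_0$, which fails in general (the $\LB$-step can traverse less or more than one period, as the $j,k$ in Equation~\eqref{hammocksimp3} already make clear). You also assert that $\yy_k$ leaves $\LB^{n-1}(\xx)$ ``through the same inverse junction syllable that witnesses $\xx<_l\brac{1}{\LB}(\xx)$,'' but the first syllable of $\yy_k$ past $\bb_0^{k'-1}\uu_0\xx$ is the rightmost syllable of $\bb_1\ww$, which has no reason to be inverse; this depends on the signs inside $\ww$ (or $\bb_1$), not on the junction syllable you cite. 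The paper's construction is $(\bb')^k\uu'\bb\uu\xx$ --- the growing power is on $\bb'=\bb_1$, with $\uu'\bb\uu\xx$ held fixed --- so the sequence converges to the single $\N$-string $\prescript{\infty}{}{\bb'}\uu'\bb\uu\xx$. The discreteness of the hammock places this limit strictly between a \emph{fixed} pair $\LB^n(\xx)<_l\prescript{\infty}{}{\bb'}\uu'\bb\uu\xx<_l\LB^{n+1}(\xx)$, and then for $k$ large (once $|(\bb')^k\uu'\bb\uu\xx|>|\LB^{n+1}(\xx)|$) the finite approximant sits in the same gap by Remark~\ref{rem: Hammock order property}, and membership in $\OSt{l}{\sB}$ follows cleanly from $\bb\approx\bb'$ and $\bb\uu\xx\in\OSt{l}{\sB}$. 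You should replace your $\yy_k$ by this construction; the rest of your third paragraph (selecting $\ol\bb,\ol\uu$ by minimising $|\ol\uu|$ and deriving conditions~(1)--(2) from that minimality and Proposition~\ref{prop: H-equivalent substrings}) is in the same spirit as the paper, though the paper obtains condition~(2) directly from minimality together with the established condition~(1) rather than through a tie-breaking choice of cyclic shift, and you should spell out the two sub-cases for condition~(1) corresponding to the two clauses in the definition of ``$H_l$-reduced relative to $(\xx,1)$.''
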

\begin{proof}
Corollary \ref{prop: <1,lb> contains a band} applied to the hypotheses yields $\bb \in \overline{\QBa_{l,1}}(\xx_0)$ and $\uu \in \St$ with minimal $|\uu|$ such that $ \brac{1}{\LB}(\xx) = \prescript{\infty}{}{\bb}\uu\xx $. Clearly $\theta_l(\bb\uu)=1$ and $\bb\uu\xx\in \OSt{l}{\sB}$. Take $\sB' \in \QBa$ such that $\bb \in \sB'$.

Now we show that $\bb \in \mathsf{Ba}_l(\sB')$. Towards a contradiction, assume $\bb \notin \mathsf{Ba}_l(\sB')$. Then there are $\bb'\in \sB'$ and $\uu'\in \St$ such that $\bb'\uu'\bb\in\St$ and $\theta_l(\prescript{\infty}{}{\bb'}\uu'\bb \mid \prescript{\infty}{}{\bb}) = -1$ so that $ \xx <_l \prescript{\infty}{}{\bb'}\uu'\bb\uu\xx <_l \prescript{\infty}{}{\bb}\uu\xx $. Choose $n \in\N$ such that $\LB^n(\xx) <_l \prescript{\infty}{}{\bb'}\uu'\bb\uu\xx<_l \LB^{n+1}(\xx) $. Take $k \in \N$ such that $ \left|\LB^{n+1}(\xx)\right| < |(\bb')^k\uu'\bb\uu\xx| $ so that $ \LB^{n+1}(\xx) \sqcap_l (\bb')^k\uu'\bb\uu\xx \sqsubset_l  (\bb')^k\uu'\bb\uu\xx$. Thus Remark \ref{rem: Hammock order property} yields $ \LB^n(\xx) <_l(\bb')^k\uu'\bb\uu\xx <_l \LB^{n+1}(\xx) $. Since $\bb'\approx\bb$, there exists $\widehat{\uu}$ such that $\bb\widehat{\uu}\bb' \in \St$. Combining this with $\bb\uu\xx \in \OSt{l}{\sB}$ we obtain that  $(\bb')^k\uu'\bb\uu\xx \in \OSt{l}{\sB}$. This is a contradiction to Remark \ref{rem: lb is successor} as $(\bb')^k\uu'\bb\uu\xx \in (\LB^n(\xx), \LB^{n+1}(\xx))_l $. 

By reducing the length of $\uu$ if necessary, we can find a minimal length $\ol\uu\sqsubseteq_l\uu$ and a cyclic permutation $\ol\bb$ of $\bb$ satisfying $\prescript{\infty}{}{\bb}\uu\xx=\prescript{\infty}{}{\ol\bb}\ol\uu\xx$, and $\theta_r(\ol\uu)=\theta_r(\ol\bb)$ if $|\ol\uu|>0$.

Suppose $|\ol\uu|>0$. If $(1)$ fails then there is $\ol\vv\sqsubset_l\ol\uu$ such that $\ol\vv\xx$ is not $H_l$-reduced relative to $(\xx,1)$. There are two cases.
\begin{itemize}
    \item If there is a partition $\ol\uu=\uu_2\vv\uu_1$ with $|\uu_1||\vv|>0$ such that $\vv\uu_1\xx\equiv_H^l \uu_1\xx$ then Proposition \ref{prop: H-equialance in <1,lb>} gives that $ \brac{1}{\LB} (\xx) = \prescript{\infty}{}{\vv}\uu_1\xx $. Proposition \ref{prop: H-equivalent substrings} applied to $\vv\uu_1\xx \eqvl \uu_1\xx$ yields that $\vv$ is a finite power of a cyclic permutation of a band. Choose a decomposition $\vv = \uu''\uu', n \in \N^+$ and $\bb' \in \overline{\QBa_{l,1}}(\xx_0)$ such that $(\bb')^n = \uu'\uu''$ and $|\uu' |< |\bb'|\leq|\vv|$. Thus  $\prescript{\infty}{}{\vv}\uu_1\xx = \prescript{\infty}{}{\bb' }\uu'\uu_1\xx$. Since $|\uu'\uu_1|<|\vv\uu_1|\leq|\ol\uu|$ we obtain a contradiction to the minimality of $|\ol\uu|$.
    \item If there is a partition $\ol\uu=\uu'_2\uu'_1$ with $|\uu'_2||\uu'_1|>0$ and $\theta_l(\uu'_2)=1$ satisfying $\uu'_1\xx\eqvl\xx$ then $\theta_l(\uu'_1)=\theta_l(\uu'_2)=1$ implies that the first syllables of $\uu'_1$ and $\uu'_2$ are the same, say $\alpha\in Q_{-1}$. Let $\uu'_1=\ww\alpha$ and $\vv:=\alpha\ww$. Since $\uu'_1\xx\eqvl\xx$ we conclude that $\delta(\vv)=\delta(\uu'_1)=0$, and hence $|\vv|=|\uu'_1|\geq2$. As a consequence, $\xx\sqsubset_l\alpha\xx\sqsubset_l\uu'_1\xx\sqsubset_l\alpha\uu'_1\xx=\vv\alpha\xx\sqsubset_l\brac{1}{\LB}(\xx)$. Moreover, $\uu'_1\xx\eqvl\xx$ implies $\vv\alpha\xx\eqvl\alpha\xx$. Therefore, Proposition \ref{prop: H-equialance in <1,lb>} gives $ \brac{1}{\LB} (\xx) = \prescript{\infty}{}{\vv}\alpha\xx$. Now the proof of the above case can be followed, where Proposition \ref{prop: H-equivalent substrings} is applied to $\vv\alpha\xx\eqvl\alpha\xx$, to eventually obtain a contradiction to the minimality of $|\ol\uu|$.
\end{itemize}

Now that we have shown that $(1)$ holds, let us show that $(2)$ holds. Suppose $\ol\uu\xx$ is not $H_l$-reduced relative to $(\xx,1)$. Using $(1)$ we conclude that $\ol\uu\xx\eqvl\xx$. Then both $\bar\uu^2\xx$ and $\ol\bb\ol\uu\xx$ are strings, and hence the minimality of $|\ol\uu|$ ensures that $\theta_l(\ol\bb)=-\theta(\ol\uu)=-1$. Thus $(2)$ indeed holds.
\end{proof}

For the rest of this section, choose and fix $\xx_0\in\St, i \in \{-1,1\}$, $\sB\in\QBa$ and $\zz_0\in \OSt{l,1}{\xx_0,i;\sB} \cap \OSt{l,1}{\sB} \setminus \{\MM_{l,i}(\xx_0)\}$. Let $\yy_0:=\brac1\LB(\zz_0)=\prescript{\infty}{}{\bb}\uu\zz_0$ be as obtained in the above theorem. Then
\begin{equation}\label{hammocksimp2}
[\zz_0, \yy_0)_l\cong \{\zz_0\} + \sum_{n \in \omega} (\LB^n(\zz_0), \LB^{n+1}(\zz_0)]_l\cong \{\zz_0\} + \sum_{n \in \omega} H_l^{-1}(\LB^{n+1}(\zz_0))\cong\sum_{n \in \omega} H_l^{-1}(\LB^{n+1}(\zz_0)),
\end{equation}
where the second isomorphism follows from \cite[Proposition~9.3]{SKSK} and the third from the fact that the sum is infinite, discrete and bounded below. Since there are finitely many $H_l$-equivalence classes by Proposition \ref{prop: finite H_l-equivalence class}, choose the least $j>0$ such that $\LB^j(\zz_0)\equiv^l_H \LB^{j+k}(\zz_0)$ for some $k>0$; also assume that $k>0$ is the least such for the chosen $j$.

Let $\ol\xx:=\LB^j(\zz_0)$. Then Proposition \ref{prop: H-equivalent substrings} yields $\ol\bb\in\Cyc$ and $k'\geq 1$ such that $\LB^{j+k}(\zz_0)= \overline{\bb}^{k'}\overline{\xx}$. Since for any $\bb' \in \Cyc$, $\xx' \in \St$ and $n \in \N$ we have $(\bb')^n\xx'\eqvl\bb'\xx'$ whenever $\bb'\xx' \in \St$, the minimality of $k$ yields $k'=1$. Then Proposition \ref{prop: H-equialance in <1,lb>} gives that $\prescript{\infty}{}{\overline{\bb}}\overline{\xx} = \brac{1}{\LB} (\zz_0)=\yy_0 = \prescript{\infty}{}{\bb}\uu\zz_0$. Thus $\overline{\bb}$ is a cyclic permutation of $\bb$.

The next result shows that $j,k$ can be effectively computed.
\begin{prop}\label{jkbound}
The above chosen values of $j,k$ satisfy $j+k \leq 2|Q_1|$.    
\end{prop}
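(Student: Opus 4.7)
The plan is to apply a pigeonhole argument to the leftmost syllables of the strings $\LB^n(\zz_0)$ and exploit the deterministic behaviour of the operator $\LB$. For each $n\ge 0$, let $\gamma_n\in Q_1\cup Q_1^-$ denote the first (leftmost) syllable of $\LB^n(\zz_0)$; the ambient set of syllables has exactly $2|Q_1|$ elements, and the goal is to show that $\gamma_0,\gamma_1,\ldots,\gamma_{j+k-1}$ are pairwise distinct.

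The first step is to verify that the transition $\gamma_n\mapsto\gamma_{n+1}$ is a well-defined deterministic function. Writing $\LB^{n+1}(\zz_0)=\uu_{n+1}\alpha_{n+1}\LB^n(\zz_0)$, the inverse syllable $\alpha_{n+1}\in Q_1^-$ that can be prepended to $\LB^n(\zz_0)$ is uniquely determined by $\gamma_n$: indeed, $t(\alpha_{n+1})=s(\gamma_n)$, $\alpha_{n+1}$ is not the formal inverse of $\gamma_n$, and the indegree-$\le2$ condition at $s(\gamma_n)$ leaves at most one candidate. Once $\alpha_{n+1}$ is fixed, the maximal direct-predecessor word $\uu_{n+1}$ is uniquely determined by iterating the string-algebra axiom that for each $a\in Q_1$ there is at most one $b\in Q_1$ with $s(b)=t(a)$ and $ba\notin\langle\rho\rangle$. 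Hence $\gamma_{n+1}$, which is the leftmost syllable of $\uu_{n+1}$ when $\uu_{n+1}$ is non-empty and is $\alpha_{n+1}$ otherwise, is a function of $\gamma_n$ alone.

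The second step is the key implication: if $\gamma_m=\gamma_{m'}$ for some $0\le m<m'\le j+k-1$, then $\LB^m(\zz_0)\eqvl\LB^{m'}(\zz_0)$. Both strings are finite right-portions of the same left $\N$-string $\yy_0=\prescript{\infty}{}{\ol{\bb}}\ol\uu\zz_0$, and the determinism of the first step implies that subsequent $\LB$-iterations from $\LB^m(\zz_0)$ and $\LB^{m'}(\zz_0)$ prepend identical blocks; consequently the two strings admit arbitrarily long matching leftward extensions. Any monomial subpath of $\rho\cup\rho^{-1}$ that could straddle the junction between an arbitrary prependable $\ww$ and $\LB^m(\zz_0)$ (respectively $\LB^{m'}(\zz_0)$) is thus forced into this common leftward extension and yields the same verdict on both sides, establishing the $H_l$-equivalence. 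By the minimality of $(j,k)$ the $\eqvl$-classes of $\LB^0(\zz_0),\ldots,\LB^{j+k-1}(\zz_0)$ are pairwise distinct, so the key implication forces $\gamma_0,\ldots,\gamma_{j+k-1}$ to be $j+k$ pairwise distinct elements of $Q_1\cup Q_1^-$, yielding $j+k\le 2|Q_1|$.

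The main obstacle is making the key implication rigorous. The notion $\eqvl$ quantifies over \emph{all} prependable words $\ww$, not just those realised as further $\LB$-iterates; a monomial relation in $\rho$ of length greater than $2$ straddling the junction can involve several initial syllables of $\LB^m(\zz_0)$, and reconciling this with the single-syllable datum $\gamma_m=\gamma_{m'}$ requires carefully exploiting the forward determinism of $\gamma$ to explicitly align sufficiently many left-initial syllables of $\LB^m(\zz_0)$ and $\LB^{m'}(\zz_0)$.
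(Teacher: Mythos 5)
Your proposal has a genuine gap in the step you yourself flag at the end: the ``key implication'' that $\gamma_m=\gamma_{m'}$ forces $\LB^m(\zz_0)\eqvl\LB^{m'}(\zz_0)$ does not follow from forward determinism of $\gamma$. The relation $\eqvl$ tests every prependable $\ww$, and a relation of $\rho\cup\rho^{-1}$ straddling the junction reads a window of several {\it left-initial} syllables of $\LB^m(\zz_0)$ (resp.\ $\LB^{m'}(\zz_0)$). Those syllables are the block prepended at step $m-1\to m$ (resp.\ $m'-1\to m'$), hence are governed by $\gamma_{m-1}$ and $\gamma_{m'-1}$ --- not by $\gamma_m$ and $\gamma_{m'}$. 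A single-syllable agreement $\gamma_m=\gamma_{m'}$ propagates {\it forward} (so later blocks match, giving the common left $\N$-string tail you observe), but a functional graph on syllables is not backward-deterministic, so nothing forces $\gamma_{m-1}=\gamma_{m'-1}$. Consequently the windows that would be hit by a straddling relation need not agree, and the repair you suggest (``align sufficiently many left-initial syllables via forward determinism'') cannot work: you would have to pigeonhole on a full window of length up to the maximal relation length, degrading the bound to roughly $(2|Q_1|)^M$. A secondary imprecision: you claim $\uu_{n+1}$ is determined by iterating the string-algebra axiom alone, but $\LB(\xx)$ is, by definition, the longest $\uu\alpha\xx$ lying in $\OSt{l,1}{\xx_0,i;\sB}$, so the length of $\uu_{n+1}$ can be truncated by the $\OSt{}{}$-membership constraint, not merely by the quiver's relation set. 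The paper handles this by stating the parallel-iteration fact (Remark \ref{rem: l on strings with same target}) under the hypothesis that $\alpha\xx,\alpha\yy\in\OSt{l}{\xx_0,i;\sB}$, which is precisely the extra information you elide.

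The paper's route is structurally different. It pigeonholes on the unique prependable inverse syllable $\alpha\in Q_1^-$ (only $|Q_1|$ boxes), so $j+k>2|Q_1|$ forces {\it three} repetitions $n_1<n_2<n_3\le j+k-1$. Writing $\LB^{n_1}(\zz_0)=\xx$, $\LB^{n_2}(\zz_0)=\uu_1\xx$, $\LB^{n_3}(\zz_0)=\uu_2\uu_1\xx$, one of $n_2-n_1$ and $n_3-n_2$ is at most half of $n_3-n_1$; after doubling via Remark \ref{rem: l on strings with same target} one gets $\brac{1}{\LB}(\zz_0)=\prescript{\infty}{}{\uu_1}\xx$ (or $\prescript{\infty}{}{\uu_2}\xx$), so that $\uu_1$ (or $\uu_2$) is a power of a band cycle. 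The $\eqvl$-equivalence of two earlier iterates then follows from the principle that $(\bb')^n\xx'\eqvl\bb'\xx'$ for a cycle $\bb'$, {\it not} from matching initial syllables; combined with a length comparison this lands strictly before index $j$, contradicting minimality. In short, the crux in both approaches is producing two $\eqvl$-equivalent iterates earlier than $(j,k)$ allows, but the paper obtains the equivalence from band-power structure rather than from window alignment, which is exactly the mechanism your sketch is missing.
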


\begin{proof}
Towards a contradiction assume $j+k > 2| Q_1|$. Note that for every $\xx \in \{\LB^n(\zz_0 ) : n \in \N\}$ there exists $\alpha\in Q_1^-$ such that $\alpha\xx \in \OSt{l}{\sB}$. Then the pigeonhole principle yields $\alpha_1 \in Q_1^-$ and $0\leq n_1<n_2<n_3\leq j+k-1$ such that $\alpha_1\left(\LB^{n_1}(\zz_0)\right), \alpha_1\left(\LB^{n_2}(\zz_0)\right), \alpha_1\left(\LB^{n_3}(\zz_0)\right) \in \OSt{l}{\sB}$. Take $\uu_1,\uu_2, \xx \in \St$ such that $ \LB^{n_1} (\zz_0) = \xx, \LB^{n_2}(\zz_0) = \uu_1\xx$ and $\LB^{n_3}(\zz_0) = \uu_2\uu_1\xx$. There are two cases.

  \begin{itemize}
      \item If $n_2 - n_1 \leq n_3-n_2$, then $2n_2 - n_1 \leq n_3 \leq j+k-1$. Since $\alpha_1\xx , \alpha_1\uu_1\xx \in \OSt{l}{\sB}$ and $\LB^{n_2 - n_1}\left( \xx \right) = \uu_1\xx$, Remark \ref{rem: l on strings with same target} gives us that $\LB^{2n_2 - n_1}\left(\zz_0\right) = \LB^{n_2 - n_1}\left( \uu_1\xx \right) = \uu_1^2\xx$. A simple induction yields $ \uu_1^{m+1}\xx = \LB^{m(n_2-n_1)}(\uu_1\xx)$ for every $m>0$, and hence $\prescript{\infty}{}{\uu_1}\xx = \brac{1}{\LB}(\zz_0) = \prescript{\infty}{}{\overline{\bb}}\overline{\xx}$. Since $\overline{\bb} \in \Cyc$, we get that $\uu_1$ is a finite power of a cyclic permutation of $\overline{\bb}$ and hence $|\overline{\bb}| \leq |\uu_1|$. Combining this with $|\uu_1^2\xx|=|\LB^{2n_2-n_1}(\zz_0)| < |\LB^{j+k}(\zz_0)| = |\overline{\bb}\overline{\xx}|$ we get that $|\uu_1\xx| <|\overline{\xx}|$. Thus $0<n_2 < j$. Since $\LB^{n_2 }(\zz_0) = \uu_1\xx \equiv_H^l \uu_1^2\xx = \LB^{2n_2-n_1}(\zz_0)$, we obtain a contradiction to the minimality of $j$.

      \item If $n_3 - n_2 < n_2 - n_1$, then let $t_1 := n_3 - n_2$. Since $\alpha_1\uu_1\xx, \alpha_1\uu_2\uu_1\xx \in \OSt{l}{\sB}$ and $\LB^{t_1}(\uu_1\xx) = \LB^{n_3}(\zz_0) = \uu_2\uu_1\xx$, Remark \ref{rem: l on strings with same target} yields $\LB^{2t_1}\left(\uu_1\xx\right) = \LB^{t_1}(\uu_2\uu_1\xx) = \uu_2^2\uu_1\xx$. A simple induction yields $\uu_2^m\uu_1\xx =\LB^{mt_1}(\uu_1\xx)$ for every $m >0$. Since $\alpha_1\uu_1\xx, \alpha_1\xx \in \OSt{l}{\sB}$, Remark \ref{rem: l on strings with same target} gives $\LB^{mt_1}(\xx) = \uu_2^m\xx$ for every $m>0$. Thus $\prescript{\infty}{}{\uu_2}\xx =\brac{1}{\LB}(\xx) = \brac{1}{\LB}(\zz_0 ) = \prescript{\infty}{}{\overline{\bb}}\overline{\xx} $. Since $\overline{\bb}\in \Cyc$, we get that $\uu_2$ is a finite power of cyclic permutation of $\overline{\bb}$ and hence $ |\overline{\bb}|\leq |\uu_2| $. Since $\uu_2^2\xx = \LB^{2t_1}(\xx) = \LB^{2t_1+n_1}(\zz_0) \sqsubset_l \LB^{j+k}(\zz_0) = \overline{\bb}\overline{\xx}$, we get that $\LB^{t_1+n_1}(\zz_0) = \uu_2\xx \sqsubset_l \overline{\xx}  =\LB^{j}(\zz_0)$. Thus $0<t_1+n_1 < j $. Since $\LB^{t_1 + n_1} (\zz_0) = \uu_2\xx \equiv_H^l \uu_2^2\xx = \LB^{2t_1 + n_1} (\zz_0) $, we obtain a contradiction to the minimality of $j$.
  \end{itemize}
\end{proof}

Recall from \S~\ref{sec:H-eq} that if $\zz_1\eqvl\zz_2$ then $\ww\zz_1\mapsto\ww\zz_2$ defines an order isomorphism $H_l(\zz_1)\to H_l(\zz_2)$. Combining this fact with an argument involving Remark \ref{rem: l on strings with same target} as in the proof of the above proposition, we can show that $H_l^{-1}(\LB^{j+n}(\zz_0))\cong H_l^{-1}(\LB^{j+n+mk}(\zz_0))$ for any $n,m\in\N$. Thus Equation \eqref{hammocksimp2} further simplifies as
\begin{equation}\label{hammocksimp3}
[\zz_0,\yy_0)_l \cong \sum_{n=1}^{j-1} H_l^{-1}(\LB^n(\zz_0)) + \left( \sum_{n=0}^{k-1} H_l^{-1}(\LB^{j+n}(\zz_0))\right) \cdot \omega.
\end{equation}

At the end of this section, we recall an algorithm by Schr\"oer to compute the density of a hammock linear order for a domestic string algebra from \cite[\S~4]{Schroer98hammocksfor} and describe how to modify it for scattered half-hammocks for all string algebras.

Let $\zz \in \St$ and $j \in \{1,-1\}$. Theorems \ref{prop: Scattered hammock} and \ref{rank infinity iff not scattered} together imply that if the set $\QBa_{l,j}(\zz)$ contains a non-domestic element then $d(H_l^j(\zz))=\infty$. On the other hand, when $\QBa_{l,j}(\zz)$  consists exclusively of domestic elements, i.e., when $\overline{\QBa_{l,j}}(\zz)$ is a finite set, the algorithm developed by Schr\"oer \cite[\S~4.10]{Schroer98hammocksfor} to compute $d(H_l^j(\zz))$ becomes applicable. To see this, recall that if $\Lambda$ is domestic then the set $\QBa_0$ is finite. However, for a domestic $\Lambda$ and a zero-length string $\zz$, the proof of the theorem in \cite[\S~4.10]{Schroer98hammocksfor} uses only the finiteness of $\overline{\QBa_{l}}(\zz)$ to compute $d(H_l(\zz))$, instead of the full strength of the finiteness of $\QBa_0$. Thus for any string algebra $\Lambda$, if $\overline{\QBa_{l}}(\zz)$ is finite and non-empty then the same proof can be utilised to compute $d(H_l(\zz))$.

This algorithm employs a finite combinatorial gadget called the \emph{bridge quiver}--the details of its construction can be found in \cite[\S~4]{Schroer98hammocksfor}. Furthermore, if $\overline{\QBa_{l,j}}(\zz)$ is finite and non-empty, an appropriate subquiver of the bridge quiver that generates strings in $H_l^j(\zz)$, in the sense of \cite[\S~4.9]{Schroer98hammocksfor}, can be utilized to adapt the same algorithm to compute $d(H_l^j(\zz))$. This adaptation enables us to make the following conclusion from the proof of the theorem in \cite[\S~4.10]{Schroer98hammocksfor}.
\begin{prop}\label{subbridgequiver}
For $\zz \in \St$ and $j \in \{1,-1\}$, if $H_l^j(\zz)$ is scattered and infinite then $ \omega \cdot n \leq d \left( H_l^j(\zz) \right) < \omega \cdot (n +1)$, where $n$ is the maximal length of a path in the appropriate subquiver of the bridge quiver reachable from $\zz$.
\end{prop}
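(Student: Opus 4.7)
The plan is to adapt Schr\"oer's original argument \cite[\S~4.10]{Schroer98hammocksfor}, which was developed for domestic string algebras, to the present situation where $\Lambda$ may be non-domestic but where the band data locally visible from $\zz$ is still finite.

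First, I would translate the scatteredness hypothesis into the necessary local finiteness. By Theorem \ref{prop: Scattered hammock}, the scatteredness of $H_l^j(\zz)$ forces every element of $\QBa_{l,j}(\zz)$ to be domestic, so that $|\sB|=1$ for each such $\sB$. Combined with the finiteness of $\QBa$ itself, this yields that $\overline{\QBa_{l,j}}(\zz)=\bigsqcup_{\sB\in\QBa_{l,j}(\zz)}\sB$ is a finite set, namely exactly the set of bands that can appear in some string of $H_l^j(\zz)$. This is the hypothesis Schr\"oer's original proof genuinely needed, even though in his setting it came from the stronger assumption that $\Lambda$ itself is domestic.

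Next, I would construct the appropriate subquiver of the bridge quiver generating $H_l^j(\zz)$, whose vertices are the elements of $\overline{\QBa_{l,j}}(\zz)$ and whose arrows are bridges between such bands in the sense of \cite[\S~4]{Schroer98hammocksfor}, restricted to this finite vertex set. The key observation, which is the backbone of the bridge quiver approach, is that every string $\uu\zz \in H_l^j(\zz)$ admits a canonical decomposition into iterated bands from $\overline{\QBa_{l,j}}(\zz)$ glued by bridges, terminating in a band-free initial segment; a proof can be obtained by iterating the kind of argument used to establish Theorem \ref{prop: H-reduced infty bux}. Let $n$ denote the maximal length of a path in this subquiver reachable from $\zz$. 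Then the recursive unwinding of the above decomposition translates into the order-theoretic recursion that each band vertex contributes an $\omega$- or $\omega^*$-type factor to the order, while the residual Hausdorff rank at a given string is controlled by how many further bridge iterations are possible below it.

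Finally, the upper bound $d(H_l^j(\zz))<\omega\cdot(n+1)$ would follow by induction on path depth: strings at depth $k$ in the subquiver have Hausdorff rank at most $k$, and the finite branching at each level keeps the relevant quotient finite after $n$ condensations, so $\HR(H_l^j(\zz))\le n$. For the lower bound $\omega\cdot n\leq d(H_l^j(\zz))$, I would follow a maximal-length path $\bb_0\to\bb_1\to\cdots\to\bb_{n-1}$ in the subquiver and inductively build a bounded linear suborder witnessing $\HR\ge n$ by iterating the band at each level along the path while pushing through the corresponding bridges; Definition \ref{posetdensity} then concludes. The main obstacle is essentially bookkeeping: one must carefully verify that Schr\"oer's domestic-case induction appeals only to the finiteness of $\overline{\QBa_{l,j}}(\zz)$ (together with the bridge/band combinatorics generated by it) rather than to the global finiteness of $\QBa_0$, and that the subquiver on this finite vertex set truly generates all of $H_l^j(\zz)$. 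The scatteredness hypothesis is precisely what guarantees that no non-domestic band is ever encountered during the adapted procedure.
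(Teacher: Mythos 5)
Your proposal follows essentially the same route as the paper: translate scatteredness via Theorem \ref{prop: Scattered hammock} into domesticity of every $\sB\in\QBa_{l,j}(\zz)$, hence finiteness of $\overline{\QBa_{l,j}}(\zz)$, and then observe that Schr\"oer's \S~4.10 argument never actually uses finiteness of the global band set $\QBa_0$, only finiteness of the bands ``locally visible'' from $\zz$, so his bridge-quiver algorithm (restricted to the subquiver generated by those bands) applies verbatim. The paper itself leaves the adaptation as a citation to Schr\"oer and to the arch-bridge-quiver treatment in \cite{SardarKuberHamforDom}; your sketch correctly identifies the substitution that makes this work and fills in plausible detail for the two bounds, so it is a faithful reconstruction rather than a genuinely different argument.
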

The conclusion of the result above also follows from an alternative detailed algorithm described in \cite{SardarKuberHamforDom} using a variation of the bridge quiver called the \emph{arch bridge quiver}.

Propositions \ref{jkbound} and \ref{subbridgequiver} ensure that if $[\zz_0,\yy_0)_l$ is scattered then $\{ d\left( H_l^{-1}(\LB^n(\zz_0)) \right) \mid n \in [j+k-1] \}$ from Equation \eqref{hammocksimp3} can be effectively computed.

\subsection{An algorithm to compute the density of a maximal scattered box}\label{sec: Density of a box} 
In this subsection, we give an algorithm to compute the density of an MSB. Each MSB $\ii$ contains a special element, which we call a \emph{pivot}, say $(\xx_1,\xx_2)$ (Proposition \ref{boxpivot}). This pivot enjoys a special property that the maximal scattered intervals in left and right hammocks of the string $\xx_1\xx_2$ containing $\xx_1\xx_2$ are isomorphic to $\pi_l(\ii)$ and $\pi_r(\ii)$ respectively. The density of $\ii$ can be computed using the density of finitely many intervals of $\pi_l(\ii)$ and $\pi_r(\ii)$ (Proposition \ref{prop: density of a box}). The boundaries of such intervals can be effectively computed (Propositions \ref{boundaryprojMSB} and \ref{prop: boundary of the MSI}). Using the computation of the densities of MSBs, we show that the stable rank $\st(\Lambda)$ can be effectively computed up to a finite error term (Equation \eqref{stablerank8}). We also show a detailed computation of the stable rank of a string algebra (Example \ref{ex: Proof of theorem 2}), and use it to prove Theorem \ref{reversest}.

For any $v \in Q_0 $, recall that if $\ii \subseteq H(v)$ is a box then $\ii = \pi_l(\ii) \otimes \pi_r(\ii) $, where $\pi_l(\ii)$ and $\pi_r(\ii)$ are intervals in $H_l(1_{(v,1)})$ and $H_r(1_{(v,1)})$ respectively. First, we look at an extension of Proposition \ref{unique string with minimal length in an interval} for boxes in hammocks. 

\begin{prop}\label{boxpivot}
For any $v \in Q_0$, if $\ii \subseteq H(v)$ is a non-empty box then there exists a unique $(\xx_1 , \xx_2 )\in \ii$ such that  for any $(\yy_1 , \yy_2) \in \ii$ we have $\xx_1 \sqsubseteq_l \yy_1$ and $\xx_2 \sqsubseteq_r \yy_2$.
\end{prop}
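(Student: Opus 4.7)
The plan is to apply Proposition \ref{unique string with minimal length in an interval} componentwise to the two projections $\pi_l(\ii)$ and $\pi_r(\ii)$, since a box is determined entirely by its projections. Since $\ii \subseteq H(v) \subseteq H_l(1_{(v,1)}) \times H_r(1_{(v,1)})$ is non-empty and $\ii = \pi_l(\ii) \otimes \pi_r(\ii)$, both $\pi_l(\ii)$ and $\pi_r(\ii)$ are non-empty intervals in the respective hammock linear orders. By Proposition \ref{unique string with minimal length in an interval} there exist unique strings $\xx_1$ in $\pi_l(\ii)$ and $\xx_2$ in $\pi_r(\ii)$ of minimal length, and moreover $\pi_l(\ii) \subseteq H_l(\xx_1)$ and $\pi_r(\ii) \subseteq H_r(\xx_2)$. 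This is precisely the substring condition demanded of $(\xx_1, \xx_2)$.

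Next I would verify that the pair $(\xx_1, \xx_2)$ actually lies in $\ii$. Since a box is the intersection of $H(v)$ with $\pi_l(\ii) \times \pi_r(\ii)$, it is enough to show $(\xx_1,\xx_2) \in H(v)$ (as $\xx_1 \in \pi_l(\ii)$ and $\xx_2 \in \pi_r(\ii)$ by construction). Pick any $(\yy_1, \yy_2) \in \ii$ and write $\yy_1 = \uu_1 \xx_1$, $\yy_2 = \xx_2 \uu_2$ using the substring property just established. Then $\yy_1 \yy_2 = \uu_1 (\xx_1 \xx_2) \uu_2 \in \St$, and since any subword of a string is a string, $\xx_1 \xx_2 \in \St$. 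The source, target and sign compatibility conditions from the definition of $H(v)$ follow directly from $\xx_1 \in H_l(1_{(v,1)})$ and $\xx_2 \in H_r(1_{(v,1)})$, so $(\xx_1, \xx_2) \in H(v) \cap (\pi_l(\ii) \times \pi_r(\ii)) = \ii$.

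Finally, uniqueness is immediate: if $(\xx_1', \xx_2') \in \ii$ also satisfies the stated substring property with respect to every element of $\ii$, then applying the property of $(\xx_1, \xx_2)$ to $(\xx_1', \xx_2')$ and vice versa gives $\xx_1 \sqsubseteq_l \xx_1' \sqsubseteq_l \xx_1$ and similarly for the right component, forcing $(\xx_1, \xx_2) = (\xx_1', \xx_2')$. There is no real obstacle here; the only mild point to be careful about is that the decompositions $\yy_1 = \uu_1 \xx_1$ and $\yy_2 = \xx_2 \uu_2$ compose correctly in the intended order to form $\yy_1 \yy_2$, which is exactly how substrings of strings in $H(v)$ are concatenated.
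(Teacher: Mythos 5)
Your proof is correct and follows essentially the same approach as the paper's: apply Proposition~\ref{unique string with minimal length in an interval} to each projection to get the minimal-length strings $\xx_1$ and $\xx_2$, then use the substring factorizations to check that $\xx_1\xx_2 \in \St$ as a contiguous piece of some $\yy_1\yy_2 \in \St$, and conclude $(\xx_1,\xx_2)\in\ii$. The only cosmetic difference is that the paper fixes the first coordinate by choosing a point of the form $(\xx_1,\yy)$ and factoring only $\yy$, whereas you factor both coordinates of an arbitrary $(\yy_1,\yy_2)$; both variants go through for the same reason.
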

\begin{proof}
Since $\pi_l(\ii)$ and $\pi_r(\ii)$ are non-empty intervals in $H_l(1_{(v,1)})$ and $H_r(1_{(v,1)})$, Proposition \ref{unique string with minimal length in an interval} yields $\xx_1 \in \pi_l(\ii)$ and $\xx_2 \in \pi_r(\ii)$ such that $\pi_l(\ii) \subseteq H_l(\xx_1)$ and $\pi_r (\ii) \subseteq H_r(\xx_2)$. Since $\xx_1 \in \pi_l(\ii)$, take $\yy \in \pi_r (\ii)$ such that $(\xx_1, \yy) \in \ii$. Then $\xx_1\yy\in\St$. Thus using $\xx_2\sqsubseteq_l\yy$ we conclude $(\xx_1, \xx_2)\in \pi_l(\ii) \otimes \pi_r(\ii)=\ii$.
\end{proof}
We will refer to the pair $(\xx_1,\xx_2)$ obtained in the above proposition as the \emph{pivot} of the box. The above proof also shows the following.

\begin{rem}
The pivots of two distinct MSBs are different.
\end{rem}

For the rest of this section, fix $v \in Q_0 $ and an MSB $\ii \subseteq H(v)$ with pivot $ (\xx_1 , \xx_2)$. We will give an algorithm to compute the density of $\ii$. For simplicity, we deal with half intervals in $\pi_l (\ii)$ and $\pi_r (\ii)$ that are defined for $i\in\{1,-1\}$ as
$$\pi_{l,i} (\ii) := \pi_l (\ii) \cap H_l^i(\xx_1), \quad \pi_{r,i} (\ii) := \pi_r (\ii) \cap H_r^i(\xx_2).$$
Note that $ \pi_{l,1}(\ii), \pi_{r,-1} (\ii) \in \dLOfpb{1}{0} \cup \dLOfpb{1}{1} $ and $\pi_{l,-1}(\ii), \pi_{r,1} (\ii) \in \dLOfpb{0}{1} \cup \dLOfpb{1}{1} $. Thus Proposition \ref{period} yields $\ii^{(l,1)}_{0}, \ii^{(l,1)}_+, \ii^{(r,1)}_0, \ii^{(r,1)}_-, \ii^{(l,-1)}_0, \ii^{(l,-1)}_-, \ii^{(r,-1)}_0, \ii^{(r,-1)}_+ \in \dLOfpb{1}{1}$ such that 
\begin{align}\label{lefthammockbreakdown}
    \left( \pi_{l,1}(\ii), <_l \right) \cong \ii^{(l,1)}_{0} +  \ii^{(l,1)}_+ \cdot \omega, &\qquad \left( \pi_{r,1}(\ii), <_r \right) \cong  \ii^{(r,1)}_- \cdot \omega^* +\ii^{(r,1)}_0 ,\\\label{righthammockbreakdown}
    \left( \pi_{l,-1}(\ii), <_l \right) \cong \ii^{(l,-1)}_- \cdot \omega^* + \ii^{(l,-1)}_0 , &\qquad \left( \pi_{r,-1}(\ii), <_r \right) \cong 
\ii^{(r,-1)}_0 +  \ii^{(r,-1)}_+ \cdot \omega.
\end{align}
To find the density of $ \ii$, we must compute the values of all $8$ summands on the right hand sides. We present algorithms to compute $d\left(\ii^{(l,1)}_{0}\right), d\left(\ii^{(l,1)}_+\cdot \omega\right)$; the rest of the densities can be computed in a similar fashion.

We first need to find an interval isomorphic to $\pi_{l,1}(\ii)$ that is easier to deal with.

\begin{rem}\label{rem: box with pivot-1}
If $\ii \subseteq H(v)$ is a box pivoted at $(\xx_1, \xx_2)$, then $\{\uu\xx_2 \mid \uu \in \pi_l (\ii)\} $ is an interval in $H_l(\xx_1\xx_2)$ containing $\xx_1\xx_2$. Similarly $\{\xx_1\uu \mid \uu \in \pi_r (\ii)\} $ is an interval in $H_r(\xx_1\xx_2)$ containing $\xx_1\xx_2$.
\end{rem}

Consider the injective monotone maps $f_l:H_l(\xx_1\xx_2) \to H_l (v)$ and $ f_r: H_r(\xx_1\xx_2) \to H_r (v)$ defined by $f_l(\uu\xx_1\xx_2) := \uu\xx_1$ and $f_r(\xx_1\xx_2\vv) := \xx_2\vv $. Remark \ref{rem: box with pivot-1} yields $\pi_l(\ii) \subseteq \mathrm{Img}(f_l)$ and $\pi_r(\ii) \subseteq\mathrm{Img}(f_r)$.

\begin{prop}\label{prop: Image is interval}
The set $\mathrm{Img}(f_l)$ is an interval in $H_l(v)$.
\end{prop}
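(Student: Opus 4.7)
The plan is to show that if $\yy_1, \yy_2 \in \mathrm{Img}(f_l)$ with $\yy_1 <_l \yy_2$ and $\zz \in H_l(v)$ satisfies $\yy_1 <_l \zz <_l \yy_2$, then $\zz \in \mathrm{Img}(f_l)$ (the boundary cases $\zz \in \{\yy_1, \yy_2\}$ being trivial). Writing $\yy_j = \uu_j\xx_1$ with $\uu_j\xx_1\xx_2 \in \St$ for $j \in \{1,2\}$, the task splits into $\mathrm{(a)}$ showing $\xx_1 \sqsubseteq_l \zz$ and $\mathrm{(b)}$ verifying $\uu\xx_1\xx_2 \in \St$ for the $\uu$ defined by $\zz = \uu\xx_1$. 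Step $\mathrm{(a)}$ will follow directly from Remark \ref{rem: Hammock order property}: since $\xx_1 \sqsubseteq_l \yy_1 \sqcap_l \yy_2$, were $\xx_1$ not a left substring of $\zz$ we would have $\yy_1 \sqcap_l \zz \sqsubset_l \yy_1 \sqcap_l \yy_2$, forcing $\yy_1 <_l \zz$ and $\yy_2 <_l \zz$ to be equivalent, contradicting $\yy_1 <_l \zz <_l \yy_2$.

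For step $\mathrm{(b)}$ I would argue by contradiction. Suppose $\uu\xx_1\xx_2 \notin \St$. Since $\uu\xx_1$ and $\xx_1\xx_2$ are both strings, the obstruction must be a forbidden subword $p \in \rho \cup \rho^{-1}$ of the form $p = p'\xx_1 p''$ with $|p'|, |p''| \geq 1$, where $p'$ is a right-portion of $\uu$ and $p''$ is a left-portion of $\xx_2$ (so $\xx_1$ consists of syllables of the same type as $p$). Assume first $p \in \rho$. The string algebra axiom that for any $a \in Q_1$ there is at most one $b \in Q_1$ with $s(b) = t(a)$ and $ba \notin \langle\rho\rangle$, applied iteratively starting from the leftmost syllable of $\xx_1$, yields a unique maximal direct path $q_0 = q_0(\xx_1)$ with $q_0\xx_1 \in \St$; moreover the rightmost direct chain of any legal extension $\ww\xx_1 \in \St$ is forced to be a right-portion of $q_0$. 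Thus $p'$ must coincide with the rightmost-length-$|p'|$ portion of $q_0$. The occurrence of $p$ in $\uu\xx_1\xx_2$ then forces the rightmost direct chain of $\uu$ to have length at least $|p'|$, while $\uu_1\xx_1\xx_2 \in \St$ forces that of $\uu_1$ to have length strictly less than $|p'|$. Comparing $\zz = \uu\xx_1$ with $\yy_1 = \uu_1\xx_1$ at the first position of disagreement from the right, $\uu$ carries a direct syllable of $q_0$ while $\uu_1$ either terminates or carries an inverse syllable; the second or third bullet of Definition \ref{hammock defn} then yields $\zz <_l \yy_1$, contradicting $\yy_1 <_l \zz$. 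The case $p \in \rho^{-1}$ proceeds dually, invoking the second string algebra axiom to force uniqueness of the inverse extension and producing $\yy_2 <_l \zz$ via comparison with $\uu_2$, contradicting $\zz <_l \yy_2$.

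The main obstacle is step $\mathrm{(b)}$: establishing the uniqueness of the extension path $q_0$ from the bisection axioms, identifying correctly which of $\yy_1, \yy_2$ produces the contradiction based on the type of $p$, and verifying the order comparison via Definition \ref{hammock defn} in each subcase. The edge case $|\xx_1| = 0$ requires a minor adjustment where $p$ spans $\uu$ and $\xx_2$ directly; the same uniqueness argument then applies to the syllable immediately preceding the leftmost syllable of $\xx_2$.
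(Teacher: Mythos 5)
Your proof is correct, and step~$\mathrm{(a)}$ coincides with the paper's argument (the paper phrases it via the $\theta_l$-invariant rather than invoking Remark~\ref{rem: Hammock order property} explicitly, but the content is identical). Step~$\mathrm{(b)}$, however, takes a genuinely different and considerably longer route than the paper's. The paper's proof first observes that the obstruction $\zz_1 := p'\xx_1 \sqsubseteq_l \vv'$ ($=\zz$ in your notation) lies in $\rho$, hence is a direct path with $\delta(\zz_1) = -1$; since $\zz_1 \not\sqsubseteq_l \vv_1\xx_1$ (else $\zz_1\zz_2$ would be a subword of $\vv_1\xx_1\xx_2 \in \St$), the hammock order definition immediately gives $\zz_1 <_l \vv_1\xx_1$ because the branching syllable of $\zz_1$ is forced to be direct. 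Then, because $\zz_1 \sqsubseteq_l \vv'$, a single application of Remark~\ref{rem: Hammock order property} transfers this inequality to yield $\vv' <_l \vv_1\xx_1$, a contradiction. In contrast, you reconstruct the derived conclusion $\zz <_l \yy_1$ by a direct position-by-position comparison of $\uu$ with $\uu_1$, which obliges you to establish the auxiliary machinery of the unique maximal direct extension $q_0$ via the string-algebra bisection axioms and to reason about lengths of initial direct chains relative to $|p'|$. Both arguments are sound; the paper's is shorter and more modular because it leverages the one-signedness of $\rho$-relations to make $\zz_1$ itself an extreme element of the hammock, after which Remark~\ref{rem: Hammock order property} does all the remaining work, precisely as it did in step~$\mathrm{(a)}$ — making the two halves of the proof structurally parallel. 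Your approach avoids the second use of Remark~\ref{rem: Hammock order property} at the cost of re-deriving from first principles a fact (uniqueness of direct continuations) that the paper never needs to state. Your handling of the case $p \in \rho^{-1}$ and the $|\xx_1| = 0$ edge case are correctly sketched, and the cases do close up the same way the paper's "without loss of generality" does.
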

\begin{proof}
Towards a contradiction, assume $\mathrm{Img}(f_l)\subseteq H_l(v)$ is not an interval. Then there are $\vv_1\xx_1\xx_2, \vv_2\xx_1\xx_2 \in H_l(\xx_1\xx_2)$ and $\vv' \in H_l (v)\setminus\mathrm{Img}(f_l)$ such that $\vv_1\xx_1 <_l \vv' <_l \vv_2\xx_1$. If $\xx_1 \not\sqsubseteq_l\vv'$ then $\theta_l(\vv_1\xx_1 \mid \vv') = \theta_l(\xx_1 \mid \vv') = \theta_l (\vv_2\xx_2 \mid \vv') = 1$, which is a contradiction to $ \vv_1\xx_1 <_l \vv'$. Thus $\xx_1 \sqsubseteq_l\vv'$.

Since $\vv' \notin \mathrm{Img}(f_l)$, we get that $\vv'\xx_2$ is not a string. Thus there exist positive length strings $\zz_1 \sqsubseteq_l \vv'$ and $\zz_2 \sqsubseteq_r \xx_2$ such that $\zz_1\zz_2\in\rho\cup\rho^{-1}$. Without loss of generality, assume that $\zz_1\zz_2 \in \rho$. Since $\vv_1\xx_1\xx_2$ is a string, we get that $\zz_1 \not\sqsubseteq_l\vv_1\xx_1$. Since $\delta(\zz_1) = -1 $, we get $\theta_l(\zz_1 \mid \vv_1\xx_1) = -1$, and hence $\zz_1 <_l \vv_1\xx_1$. Since $\vv_1\xx_1 \sqcap_l \vv' \sqsubset_l \zz_1 = \zz_1 \sqcap_l \vv'$, Remark \ref{rem: Hammock order property} gives $\vv' <_l\vv_1\xx_1$, which is clearly a contradiction.
\end{proof}

\begin{cor}\label{rem: box with pivot-2}
For any intervals $I_1 \subseteq H_l(\xx_1\xx_2)$ and $I_2 \subseteq H_r(\xx_1\xx_2)$ containing $\xx_1\xx_2$, there exists a box $\ii' \subseteq H(v)$ pivoted at $(\xx_1,\xx_2)$ such that $ \pi_l (\ii')  = f_l(I_1)$ and $ \pi_r(\ii') = f_r(I_2)$.
\end{cor}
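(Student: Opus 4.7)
The plan is to take the natural construction $\ii' \defeq f_l(I_1) \otimes f_r(I_2) = H(v) \cap (f_l(I_1) \times f_r(I_2))$ and verify that it is a box in $H(v)$ with the prescribed projections and pivot. The first ingredient is that $f_l(I_1)$ and $f_r(I_2)$ are themselves intervals in $H_l(1_{(v,1)})$ and $H_r(1_{(v,1)})$ respectively. This follows from Proposition \ref{prop: Image is interval} together with its dual for $f_r$: since $f_l$ is an injective monotone map from the linear order $H_l(\xx_1\xx_2)$ onto the interval $\mathrm{Img}(f_l) \subseteq H_l(1_{(v,1)})$, it is an order isomorphism onto its image, and order isomorphisms between intervals carry sub-intervals to sub-intervals.

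The heart of the argument is the verification that $\pi_l(\ii') = f_l(I_1)$, with $\pi_r(\ii') = f_r(I_2)$ following symmetrically. The inclusion $\pi_l(\ii') \subseteq f_l(I_1)$ is immediate from the definition of $\ii'$. For the reverse inclusion, given any $\uu\xx_1 \in f_l(I_1)$ I would produce a partner by taking the second coordinate to be $\xx_2 = f_r(\xx_1\xx_2)$. This lies in $f_r(I_2)$ because the hypothesis $\xx_1\xx_2 \in I_2$ gives $\xx_2 \in f_r(I_2)$; and the condition $\uu\xx_1 \cdot \xx_2 \in \St$ required for $(\uu\xx_1, \xx_2) \in H(v)$ reduces to $\uu\xx_1\xx_2 \in \St$, which holds since $\uu\xx_1\xx_2 \in I_1 \subseteq H_l(\xx_1\xx_2)$.

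Once these projection equalities are established, $\ii' = \pi_l(\ii') \otimes \pi_r(\ii')$ holds by construction of $\ii'$, so it is indeed a box in the sense of Definition \ref{boxdefine}. The pivot property then follows: $\xx_1\xx_2 \in I_1 \cap I_2$ forces $(\xx_1, \xx_2) \in \ii'$, and for any $(\yy_1, \yy_2) \in \ii'$ the shapes $\yy_1 = \uu\xx_1$ and $\yy_2 = \xx_2\vv$ dictated by belonging to $\mathrm{Img}(f_l)$ and $\mathrm{Img}(f_r)$ yield $\xx_1 \sqsubseteq_l \yy_1$ and $\xx_2 \sqsubseteq_r \yy_2$.

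The subtle point I want to be cautious about is that a box, as defined, need not equal the full Cartesian product of its projections but only the intersection of that product with $H(v)$. Thus one need not worry whether $\uu\xx_1\xx_2\vv \in \St$ for every combination $(\uu\xx_1, \xx_2\vv) \in f_l(I_1) \times f_r(I_2)$; some combinations may genuinely fail, producing ``holes'' inside $\ii'$, but the pivot $(\xx_1, \xx_2)$ always acts as a universal witness ensuring that every element of $f_l(I_1)$ and $f_r(I_2)$ still appears in the correct projection.
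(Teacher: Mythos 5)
Your construction $\ii' = f_l(I_1)\otimes f_r(I_2) = H(v)\cap(f_l(I_1)\times f_r(I_2))$ is exactly the set the paper defines, and your verification of the reverse projection inclusion by pairing an arbitrary $\uu\xx_1\in f_l(I_1)$ with the second coordinate $\xx_2$ is the same key step the paper uses; the appeal to Proposition~\ref{prop: Image is interval} also matches. The argument is correct and essentially identical to the paper's, with the minor welcome addition of the explicit pivot verification and the remark about possible ``holes.''
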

\begin{proof}
Define $\ii': = \{ (\uu\xx_1 , \xx_2\vv) \in H(v) \mid \uu\xx_1\xx_2 \in I_1 \text{ and } \xx_1\xx_2\vv \in I_2\}$. If $\uu\xx_1\xx_2 \in I_1$ then $(\uu\xx_1,\xx_2)\in H(v)$, and hence $\uu\xx_1\in\pi(\ii')$. Thus $f_l(I_1)\subseteq\pi(\ii')$. Conversely, if $\uu\xx_1\in\pi_l(\ii')$ then  for some $\xx_1\xx_2\vv\in I_2$ we have $(\uu\xx_1,\xx_2\vv)\in\ii'$ and $\uu\xx_1\xx_2\in I_1$. Hence $\uu\xx_1\in f_l(I_1)$, and thus $\pi_l(\ii')= f_l(I_1)$. Similarly we can argue that $\pi_r(\ii')=f_r(I_2)$. Finally, Proposition \ref{prop: Image is interval} gives that $\pi_l(\ii')$ and $\pi_r(\ii')$ are intervals, and thus $\ii'$ is indeed a box in $H(v)$.
\end{proof}

Thus Corollary \ref{stdprojcor} and Proposition \ref{prop: Image is interval} together imply that $f_l^{-1}[\pi_l(\ii)]$ is the maximal scattered interval in $H_l(\xx_1\xx_2)$ containing $\xx_1\xx_2$. Furthermore, $f_l^{-1}[\pi_{l,1}(\ii)] $ is the maximal scattered interval in $H_l^1(\xx_1\xx_2)$ containing $\xx_1\xx_2$. Since $\pi_{l,1}(\ii) \cong f_l^{-1}[\pi_{l,1}(\ii)]$ it is enough to study the maximal scattered interval in $H_l^1(\xx_1\xx_2)$ containing $\xx_1\xx_2$. For brevity set $\xx_0 := \xx_1\xx_2$.

If each $\sB \in\QBa_{l,1}(\xx_0)$ is domestic then Theorem \ref{prop: Scattered hammock} gives that $H_l^1 (\xx_0)$ is scattered. From the discussion in the paragraph above, we also get $f_l^{-1}[\pi_{l,1} (\ii)] = H_l^1 (\xx_0)\cong\pi_{l,1}(\ii)$. Since $H_l^1 (\xx_0)$ is a bounded order we get $\ii^{(l,1)}_+ \cdot \omega = 0$ and $H_l^1(\xx_0) \cong \ii^{(l,1)}_0$. The algorithm for computing $d\left(\ii^{(l,1)}_0\right)=d(H_l^1(\xx_0))$ is outlined after Proposition \ref{jkbound}.
 
Now suppose $\QBa_{l,1}(\xx_0)$ contains a non-domestic element. Set
$$ X := \{ \brac{1}{\LB}(\xx_0)\mid\sB \in \QBa_{l,1}(\xx_0), \sB\text{ is non-domestic}\}.$$

\begin{prop}\label{boundaryprojMSB}
If $\yy_0$ is the $<_l$-least element in X, then $[\xx_0, \yy_0)_l = f_l^{-1}[\pi_{l,1} (\ii)].$
\end{prop}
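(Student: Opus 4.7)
The plan is to prove both directions of the set equality, relying on the identification of $f_l^{-1}[\pi_{l,1}(\ii)]$ (established in the paragraph preceding this proposition via Corollary~\ref{stdprojcor} and Proposition~\ref{prop: Image is interval}) as the maximal scattered interval in $H_l^1(\xx_0)$ containing $\xx_0$, combined with the minimality of $\yy_0$ in $X$ and Theorem~\ref{prop: Scattered hammock}. For $[\xx_0, \yy_0)_l \subseteq f_l^{-1}[\pi_{l,1}(\ii)]$ it suffices to verify that $[\xx_0, \yy_0)_l$ is scattered. Choose a non-domestic $\sB_0 \in \QBa_{l,1}(\xx_0)$ such that $\yy_0 = \brac{1}{\LB}(\xx_0)$, with $\LB$ referring to the operator associated with $\sB_0$; write it as $l_{\sB_0}$ for clarity. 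Since $\sB_0 \in \QBa_{l,1}(\xx_0)$ forces $H_l^1(\xx_0)$ to be non-trivial, we have $\xx_0 \in \OSt{l,1}{\xx_0, 1; \sB_0} \cap \OSt{l,1}{\sB_0} \setminus \{\MM_{l,1}(\xx_0)\}$, so the reasoning behind Equation~\eqref{hammocksimp2} applied at $\zz_0 \defeq \xx_0$ gives
\[
[\xx_0, \yy_0)_l \cong \sum_{n \in \omega} H_l^{-1}\bigl(l_{\sB_0}^{\,n+1}(\xx_0)\bigr).
\]
Proposition~\ref{prop: SCattered sum of linear orders} reduces scatteredness of this sum to scatteredness of every summand, and Theorem~\ref{prop: Scattered hammock} further reduces this to showing that $\QBa_{l,-1}(l_{\sB_0}^{\,n+1}(\xx_0))$ contains no non-domestic element for each $n \in \omega$. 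If a non-domestic $\sB_1 \in \QBa_{l,-1}(l_{\sB_0}^{\,n+1}(\xx_0))$ existed, witnesses $\bb_1 \in \sB_1$ and $\uu_1 \in \St$ with $\bb_1\uu_1\, l_{\sB_0}^{\,n+1}(\xx_0) \in H_l^{-1}(l_{\sB_0}^{\,n+1}(\xx_0))$ would certify that $\sB_1 \in \QBa_{l,1}(\xx_0)$ and hence $\brac{1}{l_{\sB_1}}(\xx_0) \in X$; the target contradiction is $\brac{1}{l_{\sB_1}}(\xx_0) <_l \yy_0$, violating the minimality of $\yy_0$.

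For the reverse inclusion $f_l^{-1}[\pi_{l,1}(\ii)] \subseteq [\xx_0, \yy_0)_l$, since $f_l^{-1}[\pi_{l,1}(\ii)]$ is an interval starting at $\xx_0$, it suffices to prove that $[\xx_0, \zz]_l$ is non-scattered for every $\zz \in H_l^1(\xx_0)$ with $\zz \geq_l \yy_0$ in $\widehat H_l^1(\xx_0)$. Because $\yy_0 \notin H_l^1(\xx_0)$ we have $\zz >_l \yy_0$ strictly, and $\yy_0$ is a limit in $\widehat H_l^1(\xx_0)$ of strings in $[\xx_0, \zz]_l$ (namely of $l_{\sB_0}^{\,k}(\xx_0)$ as $k \to \infty$). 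Exploiting the non-domesticity of $\sB_0$ and adapting the argument from the $(\Rightarrow)$ direction of Theorem~\ref{prop: Scattered hammock} (which invokes \cite[Corollary~11.5, Proposition~10.10]{SKSK}), one produces in $(\yy_0, \zz]_l$ a family of finite strings order-isomorphic to $\eta$ by interleaving two distinct bands of $\sB_0$ with arbitrarily long approximations of $\yy_0$, delivering the required non-scatteredness.

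The main obstacle is the strict inequality $\brac{1}{l_{\sB_1}}(\xx_0) <_l \yy_0$ used in the forward inclusion. The plan is to first observe the chain $\prescript{\infty}{}{\bb_1}\uu_1\, l_{\sB_0}^{\,n+1}(\xx_0) <_l l_{\sB_0}^{\,n+1}(\xx_0) <_l \yy_0$ in $\widehat H_l^1(\xx_0)$, where the first inequality rests on the fact that $\bb_1\uu_1\, l_{\sB_0}^{\,n+1}(\xx_0) \in H_l^{-1}(l_{\sB_0}^{\,n+1}(\xx_0))$ forces the rightmost syllable of $\uu_1$ to be direct, so that the infinite left extension $\prescript{\infty}{}{\bb_1}\uu_1$ still satisfies $\theta_l = -1$. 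We then invoke Theorem~\ref{prop: H-reduced infty bux} to present $\brac{1}{l_{\sB_1}}(\xx_0) = \prescript{\infty}{}{\ol\bb}\ol\uu\xx_0$ in a controllable form, enabling a cofinality argument that bounds each finite iterate $l_{\sB_1}^{\,k}(\xx_0)$ above by some $\bb_1^m\uu_1\, l_{\sB_0}^{\,n+1}(\xx_0)$, and therefore gives $\brac{1}{l_{\sB_1}}(\xx_0) \leq_l \prescript{\infty}{}{\bb_1}\uu_1\, l_{\sB_0}^{\,n+1}(\xx_0) <_l \yy_0$.
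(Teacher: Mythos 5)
Your overall decomposition — prove both inclusions directly, and reduce scatteredness of $[\xx_0,\yy_0)_l\cong\sum_{n\in\omega}H_l^{-1}(l_{\sB_0}^{n+1}(\xx_0))$ to domesticity of each $\QBa_{l,-1}(l_{\sB_0}^{n+1}(\xx_0))$ via Theorem~\ref{prop: Scattered hammock} — is a genuinely cleaner route than what the paper does: the paper instead assumes non-scatteredness, cuts its witness at length $|\LB^{n+1}(\xx_0)|+1$ to extract a finite set $X$ of prefixes, and locates a string $\zz_0\in X$ with $H_l(\zz_0)$ unscattered, which is substantially more intricate. Your identification that a non-domestic $\sB_1\in\QBa_{l,-1}(\LB^{n+1}(\xx_0))$ necessarily lies in $\QBa_{l,1}(\xx_0)$, so that $\brac{1}{l_{\sB_1}}(\xx_0)\in X$, is correct and is the right idea.

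However, the step you flag as ``the main obstacle'' is indeed a gap as you have sketched it. The proposed cofinality argument — ``bound each finite iterate $l_{\sB_1}^k(\xx_0)$ above by some $\bb_1^m\uu_1\,l_{\sB_0}^{n+1}(\xx_0)$'' — is unproved and essentially circular: establishing $l_{\sB_1}^k(\xx_0)<_l\prescript{\infty}{}{\bb_1}\uu_1\,l_{\sB_0}^{n+1}(\xx_0)$ already requires knowing that the $l_{\sB_1}$-iterates stay strictly below $\LB^{n+1}(\xx_0)$, which is the very thing you are trying to establish, and Theorem~\ref{prop: H-reduced infty bux} gives you a normal form for $\brac{1}{l_{\sB_1}}(\xx_0)$ but not a comparison with $\prescript{\infty}{}{\bb_1}\uu_1\,l_{\sB_0}^{n+1}(\xx_0)$. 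There is a direct and much shorter fix using Propositions~\ref{prop: <1,LB> is an order preserving map} and~\ref{prop: lb not in St-1}: the witness gives $\LB^{n+1}(\xx_0)\in\OSt{l,-1}{\sB_1}\subseteq\OSt{l}{\sB_1}$, so if one had $\LB^{n+1}(\xx_0)<_l\brac{1}{l_{\sB_1}}(\xx_0)$ then Proposition~\ref{prop: <1,LB> is an order preserving map} would force $\LB^{n+1}(\xx_0)=l_{\sB_1}^m(\xx_0)$ for some $m$; but $m=0$ gives $\LB^{n+1}(\xx_0)=\xx_0$ (false) and $m\geq 1$ contradicts Proposition~\ref{prop: lb not in St-1}, which says $l_{\sB_1}^m(\xx_0)\notin\OSt{l,-1}{\sB_1}$. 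Hence $\brac{1}{l_{\sB_1}}(\xx_0)\leq_l\LB^{n+1}(\xx_0)<_l\yy_0$, and equality is impossible, contradicting minimality of $\yy_0$. For the reverse inclusion, you do not need to place an $\eta$-family inside $(\yy_0,\zz]_l$ — this is both overreach and harder to execute than necessary. It suffices, as the paper does, to take $\yy':=\LB^n(\xx_0)$ with $|\yy'|>|\zz|$, observe $H_l(\yy')\subseteq[\xx_0,\zz)_l$ because all of $H_l(\yy')$ shares the same finite common prefix with $\zz$ as $\yy'$ does, and then invoke Theorem~\ref{prop: Scattered hammock} (with $\yy'\in\OSt{l,1}{\sB_0}$ and $\sB_0$ non-domestic) to conclude $H_l(\yy')$, hence $[\xx_0,\zz]_l$, is not scattered.
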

\begin{proof}
    Recall that $f_l^{-1}[\pi_{l,1} (\ii)]$ is the maximal scattered interval in $H_l^1(\xx_0)$ containing $\xx_0$. Take a non-domestic $\sB \in \QBa$ such that $\brac{1}{\LB}(\xx_0) = \yy_0$. There are two cases.
    \begin{enumerate}
        \item If $[\xx_0, \yy_0)_l \subseteq f_l^{-1}[\pi_{l,1} (\ii)]$, choose $\vv \in H_l^1(\xx_0)$ satisfying $\vv>_l\yy_0$. Take $n \in \N$ such that $\left| \LB^n (\xx_0) \right| > |\vv|$. Let $\yy' := \LB^n (\xx_0).$ Since $\yy' <_l \yy_0 <_l \vv$ and $|\yy'| > |\vv|$, we get that $\yy'' <_l \vv$ whenever $\yy' \sqsubseteq_l \yy''$, and hence $H_l(\yy') \subseteq [\xx_0 , \vv)_l$. Since $\yy' = \LB^n (\xx_0) \in \OSt{l,1}{\sB}$ and $\sB$ is non-domestic, Theorem \ref{prop: Scattered hammock} gives that $H_l(\yy')$ is not scattered, and thus $ [\xx_0,\vv)_l $ is not scattered for any $\vv>_l\yy_0$. Since $ f_l^{-1}[\pi_{l,1} (\ii)]$ is maximal scattered, $ f_l^{-1}[\pi_{l,1} (\ii)] \subset [\xx_0 , \vv)_l $ for any $\vv>_l\yy_0$. Thus we obtain $ f_l^{-1}[\pi_{l,1} (\ii)] = [\xx_0 , \yy_0)_l . $

        \item If $ f_l^{-1}[\pi_{l,1} (\ii)] \subseteq [\xx_0, \yy_0)_l,$ then due to the maximality of $ f_l^{-1}[\pi_{l,1} (\ii)]$ it is enough to show that $ [\xx_0, \yy_0)$ is scattered to conclude $f_l^{-1}[\pi_{l,1} (\ii)] = [\xx_0, \yy_0)_l$. Towards a contradiction assume $  [\xx_0, \yy_0)_l $ is not scattered. Since
        
$$  [\xx_0, \yy_0)_l \cong \sum_{n \in \omega} [\LB^n(\xx_0), \LB^{n+1}(\xx_0))_l,$$
        Proposition \ref{prop: SCattered sum of linear orders} yields $n \in \omega$ such that $[\LB^n(\xx_0) , \LB^{n+1} (\xx_0) ]_l$ is not scattered. Then $[\xx_0, \LB^{n+1} (\xx_0)]_l$ is not scattered. Let $\langle 
        \zz_i \mid i \in \eta\rangle$ be a sequence in $ [\xx_0 , \LB^{n+1} (\xx_0) ]_l $ such that $\zz_i <_l \zz_j$ if $i<j$. Since the number of strings with length at most $ \left|\LB^{n+1} (\xx_0)\right|$ is finite, without loss of generality assume $|\zz_i|> |\LB^{n+1} (\xx_0)|\geq |\xx_0|$ for all $i \in \eta.$ Let $$X := \{ \zz \in H_l^1(\xx_0) \mid \zz \sqsubseteq_l \zz_i \text{ for some }i \in \eta \text{ and } |\zz| = |\LB^{n+1} (\xx_0)|+1\}.$$
        Clearly $X$ is a finite subset of $[\xx_0, \LB^{n+1} (\xx_0)]_l$. For $\zz \in X$, define $A_{\zz} := \{ \zz_i\mid\zz \sqsubseteq_l \zz_i \}$. Since
        $$ \sum\limits_{\zz \in (X,<_l)} \left(A_{\zz}, <_l\right) \cong (\{\zz_i\mid i \in \eta\}, <_l) \cong \eta,$$
        Proposition \ref{prop: SCattered sum of linear orders} yields $\zz_0 \in X$ such that $(A_{\zz_0}, <_l)$ is not scattered. Since $A_{\zz_0} \subseteq H_l(\zz_0)$ and $|\zz_0|>|\LB^{n+1}(\xx_0)| > |\xx_0|$, we get that $ H_l(\zz_0) $ is not scattered and $H_l(\zz_0 ) \subseteq [\xx_0, \LB^{n+1}(\xx_0)]_l \subseteq [\xx_0 ,\yy_0)_l.$ Theorem \ref{prop: Scattered hammock} yields a non-domestic $\sB' \in \QBa_l(\zz_0)$ so that $\zz_0 \in \OSt{l}{\sB'}$. Then Remark \ref{rem: reversing the sign of STB} yields $\zz' \in H_l(\zz_0)\cap\OSt{l,1}{\sB'}$. Since ${\QBa_l}(\zz_0) \subseteq {\QBa_{l,1}}(\xx_0)$, we get that $ \sB' \in {\QBa_{l,1}}(\xx_0)$. Moreover, since $|\zz'| > \left|\LB^{n+1}(\xx_0)\right|$ and $\zz' \in [\xx_0 , \LB^{n+1} (\xx_0)]_l$, we get that $\brac{1}{l_{\sB'}}(\zz') <_l \LB^{n+1} (\xx_0) <_l \yy_0.$ Finally, Proposition \ref{prop: <1,LB> is an order preserving map} implies $\brac{1}{l_{\sB'}}(\xx_0) \leq_l \brac{1}{l_{\sB'}}(\zz') <_l \yy_0 $, which is a contradiction to $<_l$-minimality of $\yy_0$ in $X$.
    \end{enumerate}
\end{proof}

Let $\yy_0=\brac1{l_{\sB_0}}(\xx_0)$ for some $\sB_0\in\QBa_{l,1}(\xx_0)$. Although the result above gives a way to compute the boundary of a projection of an MSB as the minimum of a finite set $X$, the strings in $X$ are infinite. We rectify this with describing a sufficiently long finite substring of $\yy_0$ as the minimum of a finite set of finite strings. Let $Y$ be the set of all $\ol\bb_1^3\ol\uu_1\xx_0\in H_l^1(\xx_0)$ satisfying the following properties:
\begin{enumerate}
    \item $\ol\bb_1\in\Cyc$ and $\ol\bb_1$ is a cyclic permutation of a band $\bb_1$, where $\bb_1\in\mathsf{Ba}_l(\sB'')$ for some $\sB''\in\QBa$;
    \item $\ol\bb_1\ol\uu_1\xx_0\in\OSt{l}{\sB'}$ for some non-domestic $\sB'\in\QBa$;
    \item if $|\ol\uu_1|>0$ then
    \begin{enumerate}
        \item $\theta_r(\ol\bb_1)=-\theta_r(\ol\uu_1)$;
        \item if $\ol\vv_1\sqsubset_l\ol\uu_1$ then $\ol\vv_1\xx_0$ is $H_l$-reduced relative to $(\xx_0,1)$;
        \item if $\ol\uu_1\xx_0$ is not $H_l$-reduced relative to $(\xx_0,1)$ then $\theta_l(\ol\bb_1)=-1$.
    \end{enumerate}
\end{enumerate}

Theorem \ref{prop: H-reduced infty bux} applied to $\xx_0\in H_l^1(\xx_0)$ and $\sB_0\in\QBa$ yields $\sB\in\QBa$, $\bb\in\BaB$, $\ol\bb$ a cyclic permutation of $\bb$, $\uu\in\St$ and $\ol\uu\sqsubseteq_l\uu$ such that $\ol\bb^3\ol\uu\xx_0\in Y$ and $\yy_0=\brac1{\sB_0}(\xx_0)=\prescript{\infty}{}{\bb}\uu\xx_0=\prescript{\infty}{}{\ol\bb}\ol\uu\xx_0$. Thus $Y\neq\emptyset$. 
\begin{prop}\label{prop: boundary of the MSI}
Using the notation above, $\ol\bb^3\ol\uu\xx_0$ is the the $<_l$-minimal element in $Y$.
\end{prop}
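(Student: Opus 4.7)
The plan is to show that any $\ol\bb_1^3\ol\uu_1\xx_0 \in Y$ satisfies $\ol\bb^3\ol\uu\xx_0 \leq_l \ol\bb_1^3\ol\uu_1\xx_0$ (membership having already been recorded in the paragraph preceding the statement), by lifting to the infinite left $\N$-string level, applying the already-established minimality of $\yy_0$ inside $X$, and then descending back to finite prefixes.

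Fix an arbitrary $\ol\bb_1^3\ol\uu_1\xx_0 \in Y$ with non-domestic witness $\sB' \in \QBa$, so $\ol\bb_1\ol\uu_1\xx_0 \in \OSt{l}{\sB'}$. First I would verify that $\sB' \in \QBa_{l,1}(\xx_0)$ and $\xx_0 \in \OSt{l,1}{\sB'}$: for any $\bb'' \in \sB'$ and $\vv \in \St$ with $\bb''\vv\ol\bb_1\ol\uu_1\xx_0 \in \St$, the $\theta_l$-sign of this string relative to $\xx_0$ is controlled by the rightmost syllable of $\bb''\vv\ol\bb_1\ol\uu_1$, which is the same as for $\ol\bb_1^3\ol\uu_1 \in H_l^1(\xx_0)$, hence $+1$. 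So $\brac{1}{l_{\sB'}}(\xx_0)$ is defined and belongs to $X$.

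Next, I would identify $\brac{1}{l_{\sB'}}(\xx_0)$ with $\yy_1 := \prescript{\infty}{}{\ol\bb_1}\ol\uu_1\xx_0$ by invoking Proposition \ref{prop: H-equialance in <1,lb>} applied with $\xx = \xx_0$, $\yy = \ol\bb_1^2\ol\uu_1\xx_0$, $\bb = \ol\bb_1$ and $\sB = \sB'$. The required $H$-equivalence $\ol\bb_1 \cdot \ol\bb_1^2\ol\uu_1\xx_0 \eqvl \ol\bb_1^2\ol\uu_1\xx_0$ holds because $\ol\bb_1$ is a band-cycle, while the substring chain $\xx_0 \sqsubset_l \ol\bb_1^2\ol\uu_1\xx_0 \sqsubset_l \ol\bb_1^3\ol\uu_1\xx_0 \sqsubset_l \brac{1}{l_{\sB'}}(\xx_0)$ follows from the fact that $\LB$-iterates strictly increase in length and must eventually exceed $|\ol\bb_1^3\ol\uu_1\xx_0|$. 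Hence $\yy_1 \in X$, and the minimality of $\yy_0$ in $X$ (already used in Proposition \ref{boundaryprojMSB}) gives $\yy_0 \leq_l \yy_1$ in $\widehat H_l(\xx_0)$.

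It remains to descend the inequality to the finite strings. When $\yy_0 <_l \yy_1$, the meet $\zz := \yy_0 \sqcap_l \yy_1$ is a finite left-substring of both; the periodicity of the two tails (with periods $|\ol\bb|, |\ol\bb_1|$, each at most a third of $|\ol\bb^3\ol\uu\xx_0|$) forces the disagreement to occur within the first few period lengths, so $\zz \sqsubset_l \ol\bb^3\ol\uu\xx_0 \sqcap_l \ol\bb_1^3\ol\uu_1\xx_0$, and Remark \ref{rem: Hammock order property} promotes $\yy_0 <_l \yy_1$ to $\ol\bb^3\ol\uu\xx_0 <_l \ol\bb_1^3\ol\uu_1\xx_0$. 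When $\yy_0 = \yy_1$, the common infinite string has a unique primitive period, so the bands $\bb$ and $\bb_1$ in $\mathsf{Ba}_l$ coincide; condition (3b) in the definition of $Y$ prevents a full copy of $\ol\bb$ from appearing as a left-substring of either $\ol\uu$ or $\ol\uu_1$ (since $\ol\bb\vv\xx_0 \eqvl \vv\xx_0$ would violate $H_l$-reducedness), which bounds the rotation offset $||\ol\uu_1| - |\ol\uu||$ strictly below $|\ol\bb|$, and then (3a) pins down the cyclic rotation via $\theta_r(\ol\bb) = -\theta_r(\ol\uu)$, forcing $(\ol\bb, \ol\uu) = (\ol\bb_1, \ol\uu_1)$ and equality of the finite strings.

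The hardest part will be the equality case $\yy_0 = \yy_1$: extracting from $Y$'s conditions (3a)--(3c) that a single infinite periodic string admits at most one valid decomposition into $\prescript{\infty}{}{\ol\bb}\ol\uu\xx_0$. This requires a careful trade-off between the cyclic-rotation freedom of the period and the $H_l$-reducedness constraints built into the definition of $Y$.
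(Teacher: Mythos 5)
Your plan has a genuine gap at the step where you invoke Proposition~\ref{prop: H-equialance in <1,lb>} to identify $\brac{1}{l_{\sB'}}(\xx_0)$ with $\prescript{\infty}{}{\ol\bb_1}\ol\uu_1\xx_0$. That proposition requires $\ol\bb_1^3\ol\uu_1\xx_0$ to be a left substring of $\brac{1}{l_{\sB'}}(\xx_0)$, and your justification --- that ``$\LB$-iterates strictly increase in length and must eventually exceed $|\ol\bb_1^3\ol\uu_1\xx_0|$'' --- does not give that: length growth does not imply containment; the $l_{\sB'}$-orbit of $\xx_0$ may pass the length barrier without ever passing through $\ol\bb_1^3\ol\uu_1\xx_0$. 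The obstruction is real: one needs to separately handle the possibility that $\brac{1}{l_{\sB'}}(\xx_0) <_l \ol\bb_1^3\ol\uu_1\xx_0$, and in that case the identification $\brac{1}{l_{\sB'}}(\xx_0) = \yy_1$ simply fails. (By Proposition~\ref{prop: <1,LB> is an order preserving map}, membership in the orbit is equivalent to $\ol\bb_1^3\ol\uu_1\xx_0 <_l \brac{1}{l_{\sB'}}(\xx_0)$, which is exactly what you have not established.)

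The paper's proof avoids this by running a contradiction argument on the \emph{finite} strings rather than lifting up and descending. Assuming $\ol\bb_1^3\ol\uu_1\xx_0 <_l \ol\bb^3\ol\uu\xx_0$, it first uses Proposition~\ref{rem: Bl not being substrings} together with clause~(3b) of the definition of $Y$ and $\ol\bb^2\ol\uu\xx_0 \eqvl \ol\uu\xx_0$ to show that neither finite string is a left substring of the other, hence their meet $\vv\xx_0$ is a proper left substring of both. Remark~\ref{rem: Hammock order property} then promotes the finite inequality to $\ol\bb_1^3\ol\uu_1\xx_0 <_l \yy_0$ \emph{without} needing to know where $\ol\bb_1^3\ol\uu_1\xx_0$ sits relative to $\brac{1}{l_{\sB'}}(\xx_0)$ in advance; \emph{both} positions of $\brac{1}{l_{\sB'}}(\xx_0)$ then lead to a contradiction with the $<_l$-minimality of $\yy_0$ in $X$. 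Because the paper proves strict inequality by contradiction rather than $\leq_l$ directly, it never has to confront the equality case $\yy_0 = \yy_1$ --- the part you flagged as hardest and left incomplete. Your descent step in the $\yy_0 <_l \yy_1$ branch also relies on an informal Fine--Wilf-style periodicity estimate where the paper uses the concrete lemma that elements of $\mathsf{Ba}_l(\sB)$ cannot occur as substrings of each other's periodic extensions. You should restructure around the finite strings and a contradiction hypothesis, following the paper's two-case analysis on the position of $\brac1{l_{\sB'}}(\xx_0)$.
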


\begin{proof}
Suppose not. Then there are $\sB',\sB''\in\QBa$, $\bb_1\in\mathsf{Ba}_l(\sB')$, a cyclic permutation $\ol\bb_1$ of $\bb_1$ and $\ol\uu_1\in\St$ such that $\ol\bb_1^3\ol\uu_1\xx_0\in Y$ and $\ol\bb_1^3\ol\uu_1\xx_0<_l\ol\bb^3\ol\uu\xx_0$. Let $\vv\xx_0:=\ol\bb^3\ol\uu\xx_0\sqcap\ol\bb_1^3\ol\uu_1\xx_0$.

If $\ol\bb^3\ol\uu\xx_0\sqsubseteq_l\ol\bb_1^3\ol\uu_1\xx_0$, then using clause $(3b)$ of the definition of $Y$ and $\ol\bb^2\ol\uu\xx_0\eqvl\ol\bb\xx_0$, we get $\ol\bb^2\ol\uu\xx_0\not\sqsubset_l\ol\uu_1\xx_0$. Similarly, $\ol\uu_1\xx_0\sqsubseteq_l\ol\bb^2\ol\uu\xx_0\sqsubset_l\ol\bb^3\ol\uu\xx_0\sqsubseteq_l\ol\bb_1^3\ol\uu_1\xx_0$. Then $\ol\bb\sqsubseteq\ol\bb_1$, which is a contradiction to Proposition \ref{rem: Bl not being substrings}. Thus $\ol\bb^3\ol\uu\xx_0\not\sqsubseteq_l\ol\bb_1^3\ol\uu_1\xx_0$, and hence $\vv\xx_0\sqsubset_l\ol\bb^3\ol\uu\xx_0$. Similarly we can show that $\vv\xx_0\sqsubset_l\ol\bb_1^3\ol\uu_1\xx_0$. Then Remark \ref{rem: Hammock order property} applied to $\vv\xx_0\sqcap_l\ol\bb_1^3\ol\uu_1\xx_0=\vv\xx_0\sqsubset_l\prescript{\infty}{}{\ol\bb}\ol\uu\xx_0\sqcap_l\ol\bb^3\ol\uu\xx_0$ gives $\ol\bb_1^3\ol\uu_1\xx_0<_l\prescript{\infty}{}{\ol\bb}\ol\uu\xx_0$. Since $\ol\bb_1\ol\uu_1\xx_0\in\OSt{l}{\sB'}$ for a non-domestic $\sB'$ by the definition of $Y$, we also have $\ol\bb_1^3\ol\uu_1\xx_0\in\OSt{l}{\sB'}\cap H_l^1(\xx_0)$. There are two cases.
\begin{itemize}
    \item If $\brac1{l_{\sB'}}(\xx_0)<_l\ol\bb_1^3\ol\uu_1\xx_0$ then $\ol\bb^3\ol\uu\xx_0<_l\prescript{\infty}{}{\ol\bb}\ol\uu\xx_0=\yy_0$ implies $\brac1{l_{\sB'}}(\xx_0)<_l\yy_0$, a contradiction to the $<_l$-minimality of $\yy_0$ in $X$.
    \item If $\ol\bb_1^3\ol\uu_1\xx_0<_l\brac1{l_{\sB'}}(\xx_0)$ then Proposition \ref{prop: <1,LB> is an order preserving map} yields some $n\geq1$ such that $ l_{\sB'}^n(\xx_0) = \ol\bb_1^3\ol\uu_1\xx_0 $. Thus $\brac1{l_{\sB'}}(\xx_0)=\prescript{\infty}{}{\ol\bb_1}\ol\uu_1\xx_0$. Then Remark \ref{rem: Hammock order property} applied to $\yy_0\sqcap_l\brac1{l_{\sB'}}(\xx_0)=\vv\xx_0\sqsubset_l\ol\bb_1^3\ol\uu_1\xx_0=\ol\bb_1^3\ol\uu_1\xx_0\sqcap_l\brac1{l_{\sB'}}(\xx_0)$ gives $\brac{1}{l_{\sB'}}(\xx_0) <_l \yy_0$, which is again a contradiction to the $<_l$-minimality of $\yy_0$ in $X$. 
\end{itemize}
\end{proof}

Thanks to Propositions \ref{balbfiniteness} and \ref{prop: H-reduced finitely many}, the set $Y$ is finite can be computed effectively. Now recall from Proposition \ref{boundaryprojMSB} and Equation \eqref{hammocksimp3} that
\begin{equation}\label{hammocksimp}
\pi_{l,1}(\ii) \cong [\xx_0, \yy_0)_l\cong \sum_{n=1}^{j-1} H_l^{-1}(\LB^n(\xx_0)) + \left( \sum_{n=0}^{k-1} H_l^{-1}(\LB^{j+n}(\xx_0)) \right) \cdot \omega,
\end{equation}
so that we can choose $\ii^{(l,1)}_0:=\sum_{n=1}^{j-1} H_l^{-1}(\LB^n(\xx_0))$ and $\ii^{(l,1)}_+:=\sum_{n=0}^{k-1} H_l^{-1}(\LB^{j+n}(\xx_0))$.

We have shown at the end of \S~\ref{sec:alg msi lin ord} that the set $\{ d\left( H_l^{-1}(\LB^n(\xx_0)) \right) \mid n \in [j+k-1] \}$ can be effectively computed. If a finite sum of bounded discrete linear orders is finite then its density can be computed using counting. On the other hand, if the sum is infinite then its density can be computed using Proposition \ref{prop: omega times linear order} in combination with Remark \ref{rem: plus and plusdot}. This completes the description of an algorithm to compute $d(\ii_0^{(l,1)})$ and $d(\ii_+^{(l,1)})$ while $d(\ii_+^{(l,1)}\cdot\omega)$ can be computed using Proposition \ref{prop: omega times linear order}.

Finally we are ready to compute the density of the MSB $\ii$.
\begin{prop}\label{prop: density of a box}
If $\ii$ is an MSB in $H(v)$ withe unbounded projections then using the notations above,
$$d\left(\ii\right) = \max \left\{d\left( \ii_{0}^{(l,-1)}\dot+ \ii_{0}^{(l,1)} \dot+ \ii_{0}^{(r,-1)} \dot+ \ii_{0}^{(r,1)}\right), d\left(\ii_+^{(l,1)}\cdot \omega\right),d\left(\ii_-^{(l,-1)}\cdot \omega\right), d\left( \ii_{-}^{(r,1)} \cdot \omega \right), d\left( \ii_+^{(r,-1)} \cdot \omega \right) \right\}.$$
\end{prop}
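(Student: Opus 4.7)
The plan is to treat $\ii$ itself as a finitely presented scattered discrete 2-hammock with all projections unbounded (Theorem~\ref{hammockordermain}(3), Proposition~\ref{boxes are hammocks}, and the hypothesis), and apply Corollary~\ref{about density computation} to a corner decomposition $(\corner_-,H_0,\corner_+)$ of $\ii$ whose pieces are read off directly from Equations~\eqref{lefthammockbreakdown} and \eqref{righthammockbreakdown}. The corollary following Proposition~\ref{period}, applied to $\ii$, guarantees the existence of both corners, and Lemma~\ref{invariant} together with Proposition~\ref{corner equimorphism} ensures that the density computation is independent of the particular choice made.

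First I would fix a corner decomposition in which $\pi_l(\corner_-)$ is (isomorphic to) the periodic prefix $\ii_-^{(l,-1)}\cdot\omega^*$ of $\pi_l(\ii)$ and $\pi_r(\corner_-)$ is the periodic prefix $\ii_-^{(r,1)}\cdot\omega^*$ of $\pi_r(\ii)$, with $\corner_+$ chosen symmetrically from the periodic suffixes $\ii_+^{(l,1)}\cdot\omega$ and $\ii_+^{(r,-1)}\cdot\omega$, so that the bounded middle $H_0$ satisfies $\pi_l(H_0)=\ii_0^{(l,-1)}\dot+\ii_0^{(l,1)}$ and $\pi_r(H_0)=\ii_0^{(r,1)}\dot+\ii_0^{(r,-1)}$. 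Next comes the density computation: Proposition~\ref{Our formula} gives $d(\corner_-)=\omega\cdot(\HR(\ii_-^{(l,-1)}\dot+\ii_-^{(r,1)})+1)$, and then Proposition~\ref{prop: density of sum of orders} (the Hausdorff rank of a bounded dot sum is the maximum of the Hausdorff ranks of the summands) together with Corollary~\ref{prop: omega times linear order} collapses this to $\max\{d(\ii_-^{(l,-1)}\cdot\omega),\,d(\ii_-^{(r,1)}\cdot\omega)\}$; symmetrically $d(\corner_+)=\max\{d(\ii_+^{(l,1)}\cdot\omega),\,d(\ii_+^{(r,-1)}\cdot\omega)\}$. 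Theorem~\ref{Schroer formula} applied to the bounded discrete 2-hammock $H_0$ yields $d(H_0)=d(\pi_l(H_0)\dot+\pi_r(H_0))$, and Corollary~\ref{order irrelevant} lets me permute the four summands into the order displayed in the statement.

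Assembling these pieces via $d(\ii)=\max\{d(\corner_-),d(H_0),d(\corner_+)\}$ from Corollary~\ref{about density computation} yields the claimed formula.

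The main obstacle is the first step: certifying that $\corner_-,\corner_+$ can be taken as genuine sub-clats of $\ii$ whose projections \emph{exactly} match the full periodic parts named above, so that the remainder $H_0$ is bounded with the stated projections. I plan to apply the corollary after Proposition~\ref{period} to $\ii$ itself to obtain some bottom corner $\corner_-'$ sitting inside the first subclat of the subclat decomposition of $\ii$ inherited from $H(v)$ via the proof of Proposition~\ref{boxes are hammocks}; its projections are of the form $L_1'\cdot\omega^*$, $L_2'\cdot\omega^*$ with $L_i'$ bounded. Using the periodicity of $\ii_-^{(l,-1)}\cdot\omega^*$ and $\ii_-^{(r,1)}\cdot\omega^*$ and the identification $L'\cdot m\cdot\omega^*\cong L'\cdot\omega^*$ for $m\ge 1$, I iteratively enlarge these projections until they exhaust $\ii_-^{(l,-1)}\cdot\omega^*$ and $\ii_-^{(r,1)}\cdot\omega^*$ respectively; the top corner is obtained symmetrically. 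A secondary bookkeeping point is the degenerate cases: whenever some $\ii_\pm^{(\cdot,\pm 1)}=\mathbf 0$ the corresponding corner disappears, the associated $d(\mathbf 0\cdot\omega)=d(\mathbf 0)=-1$ is harmlessly absorbed by the outer maximum, and the one-corner or no-corner modifications indicated at the end of the proof of Theorem~\ref{fpdensity} apply verbatim.
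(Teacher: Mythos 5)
Your proposal follows essentially the same route as the paper: construct a corner decomposition of $\ii$ from the periodic prefix/suffix data in Equations~\eqref{lefthammockbreakdown}--\eqref{righthammockbreakdown}, invoke Corollary~\ref{about density computation}, evaluate $d(H_0)$ with Theorem~\ref{Schroer formula} plus Corollary~\ref{order irrelevant}, and evaluate corner densities with Proposition~\ref{Our formula}, then split each corner density into a maximum of two single-projection densities via Proposition~\ref{prop: density of sum of orders}. The additional bookkeeping you supply (explicitly certifying that the corners are genuine sub-clats sitting inside the inherited subclat decomposition via Proposition~\ref{corner equimorphism}, and handling vanishing corners through $d(\mathbf 0)=-1$) is not spelled out in the paper's one-paragraph proof but is consistent with it and correct.
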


\begin{proof}
Corollary \ref{about density computation} applied to the corner decomposition of $\ii$ given by Equations \eqref{lefthammockbreakdown}, \eqref{righthammockbreakdown} and \eqref{hammocksimp} followed by Theorem \ref{Schroer formula} and Proposition \ref{Our formula} gives
$$d\left(\ii\right) = \max \left\{ d\left(\ii_{0}^{(l,-1)}+ \ii_{0}^{(l,1)} \dot+ \ii_{0}^{(r,-1)} \dot+ \ii_{0}^{(r,1)}\right), d\left((\ii_+^{(l,1)}+\ii_+^{(r,-1)})\cdot\omega \right), d\left((\ii_-^{(l,-1)}+\ii_{-}^{(r,1)}) \cdot \omega \right)\right\}.$$

Furthermore, Propositions \ref{prop: density of sum of orders} and \ref{Our formula} simplify the densities of corners as the maximum of two quantities to give the expression in the statement.
\end{proof}

\begin{exmp}\label{ex: density of a box}
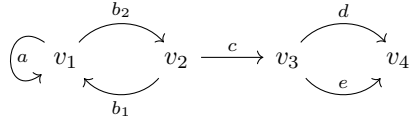
\begin{figure}[h]
   % https://tikzcd.yichuanshen.de/#N4Igdg9gJgpgziAXAbVABwnAlgFyxMJZABgBpiBdUkANwEMAbAVxiVoH0BGEAX1PUy58hFJ3JVajFmxrsATL34gM2PASJzx1es1aIOAZkUDVwoga2TdM9gBZeEmFADm8IqABmAJwgBbJGQgOBAB1AwQEGgaAOxkHoxwMBIMdABGMAwACoJqIiAMMB44INpSeiB0xiDefqFBIYhiIOlgUEi2AJyl1vqp8lU1-o3UwXUtbYid3dK9XAM+Q02jiJpWMyAAxvO1KyMNFs0wre1da+VtfJ4LSKvLB+NIALRTZ2ysPBQ8QA
\begin{tikzcd}
v_1 \arrow["a", loop, distance=2em, in=215, out=145] \arrow[r, "b_2", bend left=49] & v_2 \arrow[l, "b_1", bend left=49] \arrow[r, "c"] & v_3 \arrow[r, "d", bend left=49] \arrow[r, "e", bend right=49] & v_4
\end{tikzcd}
    \caption{$\Gamma_2$ with $\rho = \{ ab_1 , b_2a, a^2 , (b_1b_2)^3 , cb_2 , dc \}$}
    \label{fig: computing the density}
\end{figure}
Consider the string algebra $\Gamma_2$ from Figure \ref{fig: computing the density}. We choose and fix $\sigma $ and $\varepsilon$ maps as follows: $\sigma(a)=\varepsilon (b_1)=\varepsilon (b_2)=\sigma(c)=\sigma (e)=\varepsilon (d) := -1$ and $\sigma (b_1)=\varepsilon (c) =\varepsilon (e)=\sigma(b_2) =\varepsilon (a) =\sigma (d): = 1 $. It is easy to verify that the poset $\QBa$ contains only four elements, say $\QBa = \{\sB_1 , \sB_2, \sB'_1 , \sB'_2\}$, where $\sB_2=\{eD\}$ and $\sB'_2=\{dE\}$ are domestic while the other are non-domestic. Furthermore, $\mathrm{Ba}_l(\sB_1) = \mathrm{Ba}_{\rb} (\sB_1) = \{aB\}$, $\mathrm{Ba}_{\lb} (\sB_1) = \mathrm{Ba}_r (\sB_1) = \{aB^2\} $, $\mathrm{Ba}_l(\sB'_1) = \mathrm{Ba}_{\rb} (\sB'_1) = \{b^2A\}$, and $\mathrm{Ba}_{\lb} (\sB'_1) = \mathrm{Ba}_r (\sB'_1) = \{bA\} $.

We will compute the density of the MSB $\ii \subseteq H(v_2)$ pivoted at $( B_2, B_1)$.

As per the discussion after Corollary \ref{rem: box with pivot-1} we know that $\pi_{l,1} (\ii) $ is order isomorphic to the maximal scattered interval in $H_l^1(B_2B_1)$ containing $B_2B_1$. Using Proposition \ref{prop: boundary of the MSI}, we get that the maximal scattered interval in $H_l^1(B_2B_1)$ containing $B_2B_1$ is $\left[B_2B_1, \prescript{\infty}{}{(aB_2B_1)}B_2B_1\right)_l$. Note that $\langle 1, l_{\sB_1}\rangle (B_2B_1) = \prescript{\infty}{}{(aB_2B_1)}B_2B_1$ is indeed the minimum of the set $X$ defined before Proposition \ref{boundaryprojMSB}. Since $l_{\sB_1}^2(B_2B_1) = a(B_2B_1)^2 \eqvl (a(B_2B_1))^2 = l_{\sB_1}^4(B_2B_1)$, we plug in $j=k=2$ in Equation \eqref{hammocksimp} to get $\ii_0^{(l,1)} \cong H_l^{-1}(B_1B_2B_1)$ and $\ii_+^{(l,1)} \cong H_l^{-1}(a(B_2B_1)^2) + H_l^{-1} (B_1a(B_2B_1)^2)$. Following any algorithm at the end of \S~\ref{sec:alg msi lin ord}, we get $d(\ii_0^{(l,1)}) = \omega + 1$ and $d(\ii_+^{(l,1)}\cdot \omega) = \omega\cdot 2 $.

Similarly we obtain $d(\ii^{(l,-1)}_0) = -1$, $ d(\ii^{(l,-1)}_- \cdot \omega ) = \omega $, $d(\ii^{(r,1)}_0)= d(\ii^{(r,-1)}_0)<\omega$ and $ d(\ii^{(r,1)}_- \cdot \omega) =d(\ii^{(l,-1)}_- \cdot \omega ) = \omega $. Therefore, by Proposition \ref{prop: density of a box}, we get that $d(\ii) = \omega \cdot 2$.
\end{exmp}

\begin{cor}\label{Hlreqdensity}
Suppose $\ii_1$ and $\ii_2$ are MSBs in $H(v)$ with pivots $(\xx_1,\xx_2)$ and $(\yy_1,\yy_2)$ respectively. If $(\xx_1,\xx_2)\eqvlr(\yy_1,\yy_2)$ then $d(\ii_1)=d(\ii_2)$.
\end{cor}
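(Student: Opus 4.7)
The plan is to use the $H_{lr}$-equivalence of pivots to produce order isomorphisms $\pi_l(\ii_1) \cong \pi_l(\ii_2)$ and $\pi_r(\ii_1) \cong \pi_r(\ii_2)$, and then conclude via the density formula of Proposition \ref{prop: density of a box}.

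From $\xx_1\xx_2 \eqvl \yy_1\yy_2$, the discussion immediately after Definition \ref{Defn: Hl Hr equivalence} supplies an order isomorphism $\phi_l \colon H_l(\xx_1\xx_2) \to H_l(\yy_1\yy_2)$ via $\uu\xx_1\xx_2 \mapsto \uu\yy_1\yy_2$; taking $\uu = 1_{(v,1)}$, this map sends $\xx_1\xx_2$ to $\yy_1\yy_2$. By the paragraph after Corollary \ref{rem: box with pivot-2}, each projection $\pi_l(\ii_k)$ corresponds, via the canonical injection $f_l$ built from its pivot, to the maximal scattered interval in the left hammock of that pivot containing it -- equivalently, the scattered condensation class of the pivot in the sense of Definition \ref{sccondlinorder}. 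Since scattered condensation classes are preserved by order isomorphisms and $\phi_l$ maps one pivot to the other, $\phi_l$ restricts to an order iso of these MSIs, yielding $\pi_l(\ii_1) \cong \pi_l(\ii_2)$. A symmetric argument using $\xx_1\xx_2 \eqvr \yy_1\yy_2$ produces $\pi_r(\ii_1) \cong \pi_r(\ii_2)$. Moreover, because $\phi_l$ and $\phi_r$ preserve the pivot, these isomorphisms further restrict to the half-projections $\pi_{l,\pm 1}(\ii_1) \cong \pi_{l,\pm 1}(\ii_2)$ and $\pi_{r,\pm 1}(\ii_1) \cong \pi_{r,\pm 1}(\ii_2)$.

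For the conclusion, Proposition \ref{prop: density of a box} expresses $d(\ii_k)$ (when the projections are unbounded) as the maximum of densities of corner pieces $\ii_0^{(l,\pm 1)}, \ii_\pm^{(l,\pm 1)}, \ii_0^{(r,\pm 1)}, \ii_\pm^{(r,\pm 1)}$ extracted from the half-projections via Proposition \ref{period}; the sub-cases with some bounded projections are handled by Theorem \ref{Schroer formula} together with Corollary \ref{about density computation}. Although Proposition \ref{period} does not produce a canonical corner decomposition, Lemma \ref{invariant} guarantees that the resulting density is independent of the choice, so it depends only on the isomorphism types of the half-projections. Hence $d(\ii_1) = d(\ii_2)$. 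The only delicate point in executing this plan is confirming that $\phi_l$ genuinely carries the MSI at $\xx_1\xx_2$ onto the MSI at $\yy_1\yy_2$; this is immediate once MSIs are identified with scattered condensation classes, an order-theoretic invariant that any order isomorphism respects.
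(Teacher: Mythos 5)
Your proof is correct and follows essentially the same route as the paper's: identify each half-projection $\pi_{l,j}(\ii_k)$ and $\pi_{r,j}(\ii_k)$ (via the pivot's canonical injections $f_l, f_r$) with the maximal scattered interval containing the pivot in the corresponding half-hammock, observe that the $\eqvlr$ hypothesis furnishes pivot-preserving order isomorphisms of those half-hammocks, conclude that the corresponding half-projections are order isomorphic, and finish with Proposition \ref{prop: density of a box}. Your framing of the MSI as the scattered condensation class $c_S(\xx_1\xx_2)$, and your explicit appeal to Lemma \ref{invariant} for independence of the choice of decomposition in Equations \eqref{lefthammockbreakdown}--\eqref{righthammockbreakdown}, are useful clarifications of steps the paper leaves implicit, but they do not change the strategy.
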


\begin{proof}
Recall from the discussion after Corollary \ref{rem: box with pivot-2} that since $\ii_1$ is an MSB with pivot $(\xx_1 , \xx_2)$ then for any $j\in\{1,-1\}$ the interval $\pi_{l,j}(\ii_1)$ in $H_l(v)$ (resp. $\pi_{r,j}(\ii_1)$ in $H_r(v)$) is order isomorphic to the maximal scattered interval containing $\xx_1\xx_2$ in $H^j_l(\xx_1\xx_2)$ (resp. in $H_r^j(\xx_1\xx_2)$). A similar statement is true for the MSB $\ii_2$. Therefore, if $(\xx_1,\xx_2)\eqvlr(\yy_1,\yy_2)$ then Proposition \ref{prop: density of a box} gives $d(\ii_1)=d(\ii_2)$ as required.
\end{proof}

Finally for the stable rank computation, recall from Equation \eqref{stablerank7} that 
\begin{equation*}
\st(\Lambda)=\max_{v\in Q_0}\ \max\{d(\ol\ii)\mid \ol\ii\text{ is an MSB in }\hb(v)\}.
\end{equation*}
Theorem \ref{hammockordermain} and Lemma \ref{excptfin} ensure that there are only finitely many order isomorphism classes of MSBs in $\hb(v)$. Recall that if $\ol\ii$ is an MSB in $\hb(v)$ then $\ii:=\ol\ii\cap H(v)$ is an MSB in $H(v)$ and $|d(\ol\ii)-d(\ii)|$ is finite (Theorem \ref{density=rank-1}). Thanks to Lemma \ref{excptfin} and Proposition \ref{balbfiniteness}, an upper bound on this difference can be effectively computed. We have already described an algorithm to effectively compute $d(\ii)$ for an MSB $\ii$ in Proposition \ref{prop: density of a box}. In view of Propositions \ref{Hlrredfiniteness} and \ref{prop: Hlr reduced representative}, Corollary \ref{Hlreqdensity} ensures that we can choose, in a computationally effective manner, a representative set of isomorphism classes of MSBs in $H(v)$--this representative set consists of MSBs with $H_{lr}$-reduced pivots. Thus we have shown that it is possible to effectively compute $\st(\Lambda)$ for a string algebra $\Lambda$ up to a finite error, as expressed by the following analogue of the theorem on page 67 of \cite{Schroer98hammocksfor}
\begin{equation}\label{stablerank8}
0\leq\st(\Lambda)-\max_{v\in Q_0}\ \max\{d(\ii)\mid \ii\text{ is an MSB in }H(v)\text{ with }H_{lr}\text{-reduced pivot}\}\leq\max_{v\in Q_0}|E(v)|,
\end{equation}
where $E(v)$ is the set of exceptional points in $\hb(v)$.

\begin{exmp}\label{ex: Proof of theorem 2}
Continuing from Example \ref{ex: density of a box}, we will show that $\st(\Gamma_2) = \omega\cdot 2$.

Using the algorithm from \cite[\S~11]{SKSK}, we compute the order types of all hammock linear orders.
\begin{align*}
    (H_l(1_{(v_1 , 1)}), <_l ) &\cong (\omega+\omega^*)\cdot \omega + (\zeta\cdot\zeta)\cdot \eta + (\omega+\omega^*)\cdot\omega^*,  &\quad (H_r(1_{(v_1 , 1)}), <_l ) \cong \omega + \zeta \cdot \eta + \omega^*,\\
    (H_l(1_{(v_2 , 1)}), <_l ) &\cong (\omega+\omega^*)\cdot \omega + (\zeta\cdot\zeta)\cdot \eta + (\omega+\omega^*)\cdot\omega^*, &\quad (H_r(1_{(v_2 , 1)}), <_l ) \cong \omega + \zeta \cdot \eta + \omega^*, \\
    (H_l(1_{(v_3 , 1)}), <_l ) &\cong \omega+ \omega^* ,  &\quad (H_r(1_{(v_3 , 1)}), <_l ) \cong \omega + \zeta \cdot \eta + \omega^*,\\
    (H_l(1_{(v_4 , 1)}), <_l ) &\cong \omega+ \omega^*, &\quad (H_r(1_{(v_4 , 1)}), <_l ) \cong \omega + \zeta \cdot \eta + \omega^*.
\end{align*}

Recall that $\st (\Gamma_2) = \max_{v\in Q_0} \sup \{d(\ol\ii) \mid \ii \text{ is an MSB in } H(v)\}$. We will show that for every MSB $\ii$ in $\Gamma_2$ $d(\ol\ii)\leq \omega \cdot 2$, and there exists an MSB $\ii$ such that $d(\ol\ii) = \omega \cdot 2$.  
\begin{itemize}
    \item Recall that $(H(v_1),<)= (H_l(v_1), <_l) \otimes (H_r(v_1 ), <_r)$. From the above order type computations, we get that $d(\ii) \leq \omega \cdot 2$ for any MSB $\ii \subseteq H(v_1)$. We will show that $d(\ol\ii) = d(\ii)$ for any MSB $\ii \subseteq H(v_1)$.
    
    Suppose $\ii \subseteq H(v_1) $ satisfies $d(\ii) < d(\ol\ii)$. Recall from Theorem \ref{density=rank-1}(1) that $\ol\ii\setminus \ii$ is finite. If this set is non-empty then choose $\bb_1\in\mathsf{Cyc}(v)$, a cyclic permutation of $\bb\in \QBa_0$, such that $(\prescript{\infty}{}{\bb_1},\bb_1^\infty ) \in \ol\ii\setminus\ii$. Thanks to Proposition \ref{excptfin}, we know that $\bb \in \{a(B_2B_1)^2 , aB_2B_1, (b_1b_2)^2A , b_1b_2A, eD, dE\}$. Since $s(\bb_1) = v_1 = t(\bb_1)$ and $-\sigma(\bb_1) = 1 = \varepsilon(\bb_1)$, we get that $\bb \in \{ a(B_1B_2)^2, aB_1B_2 \}$. If $\bb = a(B_1B_2)^2 $ then $\bb \notin \bigcup\limits_{\sB \in \QBa}\left(\mathrm{Ba}_{l} (\sB) \cup \mathrm{Ba}_{\rb} (\sB)\right)$. Therefore Corollary \ref{cor: Exceptional point in left intervals} gives that $(\prescript{\infty}{}{\bb_1},\bb_1^{\infty})$ is a corner element of $\ol\ii$ not comparable to any other element of $\ii$. A similar incomparability argument works if $\bb=aB$. Therefore we have shown that $d(\ol\ii) = d(\ii)$.
    \item An argument similar to the above case shows that $d(\ol\ii)\leq\omega\cdot2$ for any MSB $\ii$ in $H(v_2)$. Moreover, the MSB in Example \ref{ex: density of a box} attains this bound.
    \item It is clear from the order type computations above that if $\ii$ is an MSB in either $H(v_3)$ or $H(v_4)$ then $d(\ii)\leq\omega+1$. Since $\ol\ii\setminus\ii$ is finite, we get that $d(\ol\ii)<\omega\cdot 2$ for any such $\ii$.
\end{itemize}
\end{exmp}
Therefore, we have completed the proof of $\st (\Gamma_2) = \omega \cdot 2$. In fact, for every $n\geq2$, we can similarly show that the string algebra $\Gamma_n$ from Figure \ref{ex: Gamma n} satisfies $\st(\Gamma_n)=\omega \cdot n$. 
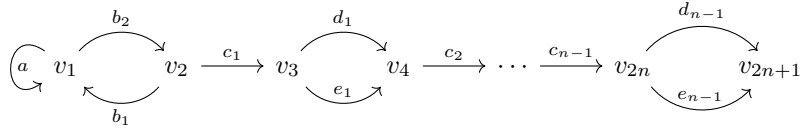
\begin{figure}[h]
  % https://tikzcd.yichuanshen.de/#N4Igdg9gJgpgziAXAbVABwnAlgFyxMJZABgBpiBdUkANwEMAbAVxiVoH0BGEAX1PUy58hFJ3JVajFmxrsATL34gM2PASJzx1es1aIOAZkUDVwoga2TdM9gBZjywWpHJblndP0AdLwGMoEDgIfCZC6igArO5SehzAYADUnDwOKmEuAGzR1vqy8QlyKTwSMFAA5vBEoABmAE4QALZIZCA4EM3UDBAQaBoA7GTVjHAwEgx0AEYwDAAKTmb6DDDVOCDaMWx0DnWNHa3tiGIgU2BQSLYAnOs5x-Lb9U2H1G17J2eIl9eet9whIDuPI4vRCaKzfXxce67EHPA4WY4wU7nK5g2JQSF-AFIUHA+FvJAAWk+qLYMAxSixiHhwLcJP0EIUmIe51hSCidJAEPiBOSUMe7OBWQRSI+KI8aPY3N5TOhAoOQvxiCJYo2+jJUqKFB4QA
\begin{tikzcd}
v_1 \arrow["a", loop, distance=2em, in=215, out=145] \arrow[r, "b_2", bend left=49] & v_2 \arrow[l, "b_1", bend left=49] \arrow[r, "c_1"] & v_3 \arrow[r, "d_1", bend left=49] \arrow[r, "e_1", bend right=49] & v_4 \arrow[r, "c_2"] & \cdots \arrow[r, "c_{n-1}"] & v_{2n} \arrow[r, "d_{n-1}", bend left=49] \arrow[r, "e_{n-1}", bend right=49] & v_{2n+1}
\end{tikzcd}
    \caption{$\Gamma_n$ with $\rho = \{ ab_1 , b_2a, a^2 , (b_1b_2)^3 , c_1b_2  \} \cup \{d_ic_i \mid i \in [n-1]\} \cup \{c_{i+1}d_i \mid i \in [n-2]\}$}
    \label{ex: Gamma n}
\end{figure}

Schr\"{o}er \cite[Theorem~1]{schroer2000infinite} proved that for any $n , d \in (\N^+ \times \N^+)\setminus \{(1,1)\}$ there exists a domestic string $\Lambda$ such that $\st (\Lambda) = \omega \cdot n + d$. Examples of string algebras with stable ranks of $\omega $ and $\omega + 1$ were given in \cite[Example~4.3.6]{GKS20}. Above we have shown that for every integer $n\geq2$ there exists a string algebra of stable rank $\omega \cdot n$. This completes the proof of Theorem \ref{reversest}.

\section{Stable rank trichotomy conjecture}\label{sec: new conjecture}
Fix an algebraically closed field $\K$ throughout this section. Without loss, assume that $\Lambda$ is a basic connected finite-dimensional $\K$-algebra. Choose and fix a presentation $\Lambda\cong\K\Q/\langle\rho\rangle$ for a finite connected quiver $\Q$ and a finite set $\rho$ of relations. We believe that all ``finiteness results'' for $\Lambda$ are {owed to} this finite presentation.

Recall that $\Lambda$ is a tame representation type algebra if, for all $d\in\N^+$, all but finitely many iso-classes of indecomposable modules of total dimension $d$ lie in one of $\mu(d)$-many $1$-parametrized families. Using this language we discuss salient features of the proof of Theorem \ref{oldconj} for a string algebra $\Lambda$.

Choose and fix a vertex $v$ of the quiver. The elements of $\HH(v)$ are in bijective correspondence with the iso-classes of $v$-pointed indecomposable modules in $\Lambda\dmod$, i.e., those with a composition factor the simple module $S(v)$; this set is partially ordered by the existence of $v$-pointed maps, i.e., whose image has a composition factor $S(v)$. The subposet $H(v)$ of $\HH(v)$ consists of only the finitely many iso-classes of exceptional modules for each $d\in\N^+$, i.e., those that do not belong to any $1$-parameter family. The elements of $\hb(v)\setminus H(v)$ are in bijective correspondence with the union of the sets of $1$-parameter families for $d$. In particular, $H(v)$ and $\hb(v)$ are countable. The interplay between the hammocks $H(v),\hb(v)$ and $\HH(v)$ lies at the heart of the proof of Theorem \ref{oldconj}--the order-theoretic aspects of this proof are mostly restricted to $H(v)$, and one of the central results in this direction is that the hammock poset $H(v)$ is of finite type.

There seems to be a loose parallel between the concepts in organic chemistry and in the representation theory of string algebras. Band-free strings and bands are analogous to aliphatic and aromatic compounds respectively. The concept of $H$-equivalence plays a central role in all the finiteness results for string algebras in \cite{sardar2021variations}, \cite{SardarKuberHamforDom}, \cite{SKSK} and this paper. Loosely speaking, two strings are $H$-equivalent if the set of ``functional groups'' that could be attached to them are the same. It is worth adding that the regions in the Auslander-Reiten quiver that are ``accessible'' by two $H$-equivalent strings have the same shape, where the word accessible refers to the set of strings containing a copy of them.

Amongst several finiteness results, the finiteness of the set of certain bands, defined in \cite{GKS20} as \emph{prime bands}, is a fundamental one since prime bands \emph{generate} all bands. Continuing with the chemistry analogy, the benzene ring could be thought of as a prime band whereas naphthalene could not. In this paper, we have exploited the finiteness of the subset $\bigcup_{\sB\in\QBa}(\BalB\cup\BalbB\cup\mathsf{Ba}_r(\sB)\cup\mathsf{Ba}_{\rb}(\sB))$ of the set of prime bands to capture the set of points in $\hb(v)\setminus H(v)$ relevant for the computation of the stable rank.

Now suppose $\Lambda$ is an arbitrary tame representation type algebra. Recall that the algebra $\Lambda$ is ``finitely presented'' using the pair $(\Q,\rho)$. In the early representation theory literature (e.g., \cite{skowronski1983representation}), $\alpha(\Lambda)$ denotes the maximum number of middle terms in an Auslander-Reiten sequence for $\Lambda$. We expect that the hammocks for $ \Lambda $ are bounded discrete abstract $\alpha(\Lambda)$-hammocks; here discreteness is a consequence of the existence of rank $1$ component maps between indecomposable modules in the Auslander-Reiten sequences. Say that a linear suborder $L$ of a poset $P$ is \emph{saturated} if it is maximal with respect to inclusion amongst linear suborders of $P$. We expect that each saturated linear order in $H(v)$ is a finite description linear order. Furthermore, we also expect that the hammock poset $H(v)$ is of finite type. 

The classes $\LOfp$ and $\LOfd$ of finitely presented and finite description linear orders respectively are the cornerstones of the order-theoretic components of the proof. Recall that these two classes were first introduced by L\"auchli and Leonard in a model-theoretic study of linear orders. The operations on linear orders used to construct these two types of linear orders are finitary, and each such order is constructed using only finite number of valid operations. We have already shown that the density of a finitely presented linear order is strictly bounded above by $\omega^2$.

We would like to close $\LOfd$ under more complex yet finitary operations to investigate hammocks for wild representation type algebras. However it seems that logic provides a natural bound on the density of such more complex orders. The Cantor normal form of any ordinal $\alpha<\omega^\omega$ expresses it as a finite sum of finite powers of $\omega$. Moreover, as a consequence of the works of Amit, Myers, Schmerl and Shelah, it is proved that $\omega^\omega$ is the smallest ordinal that is not finitely axiomatizable \cite[Corollary~13.24,Theorem~13.35]{rosenstein}. Thus we expect that, owing to the finite presentation of a finite-dimensional algebra, its density cannot attain $\omega^\omega$.

A classic theorem of Auslander states that a finite-dimensional algebra $\Lambda$ is of finite representation type if and only if $\st(\Lambda)<\omega$ and $\rad^{\st(\Lambda)}=0$. Prest and Ringel conjectured \cite[Conjecture~1.5]{prest2002serial} that an algebra $\Lambda$ is of domestic representation type if and only if $\rad^{\omega^2}=0$. Based on the discussion so far, the third author would like to extend this conjecture to classify all finite-dimensional algebras.
\begin{conj}(Characterization of Drozd and Crawley-Boevey's trichotomy in terms of stable rank)\label{newconj} If $\Lambda$ is a finite-dimensional $\K$-algebra then $\st(\Lambda)<\omega^\omega$. Furthermore, we have the following:
\begin{itemize}
    \item $\st(\Lambda)<\omega$ if and only if $\Lambda$ is of finite representation type;
    \item $\omega\leq\st(\Lambda)<\omega^2$ if and only if $\Lambda$ is of tame representation type but not of finite representation type. Furthermore, $\Lambda$ is of domestic representation type if and only if $\rad^{\st(\Lambda)}=0$;
    \item $\omega^2\leq\st(\Lambda)<\omega^\omega$ if and only if $\Lambda$ is of wild representation type.
\end{itemize}
\end{conj}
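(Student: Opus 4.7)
The plan is to decompose Conjecture \ref{newconj} into its three constituent implications and treat them separately, leveraging as much existing theory as possible. The first clause, $\st(\Lambda)<\omega$ iff finite representation type, is Auslander's classical theorem, so nothing is required there. The middle clause $\omega\le\st(\Lambda)<\omega^2$ in the tame non-finite case is, for string and special biserial algebras, precisely the content of Theorem \ref{oldconj} proved in this paper; the first step of the program is to extend this to arbitrary tame $\Lambda$. The strategy would be to use Crawley-Boevey's bocs reduction of tame representation type to a finite combinatorial skeleton controlling the $1$-parameter families, and then show that the morphism posets of $\Lambda$ can be realized as bounded discrete abstract $\alpha(\Lambda)$-hammocks (in the sense of Definition \ref{hammocks}) whose projections are finite description linear orders. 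Once this is set up, Theorem \ref{fpdensity} furnishes the bound $d<\omega^2$ for each maximal scattered box, and the translation of density to rank (Theorems \ref{density=rank-1}, \ref{noninfscatteredequivalence}) proceeds as in \S\ref{conjproofstringalg}. The precise domestic characterization via $\rad^{\st(\Lambda)}=0$ would follow from the same density computation, since domesticity corresponds precisely to the absence of $1$-parameter families at each vertex, i.e. to the absence of non-scattered boxes in $H(v)$.

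For the wild implication one direction amounts to producing, from wild representation type, a morphism of rank at least $\omega^2$, or equivalently a saturated linear suborder of some hammock $H(v)$ whose density reaches $\omega^2$. The plan is to exploit Drozd's theorem that a wild algebra embeds the representation theory of every finite-dimensional algebra into $\Lambda\dmod$ in a controlled way: I would try to construct, inside a single $v$-pointed hammock, a chain of iso-classes of indecomposables with composable pointed morphisms whose order type is $\omega^\omega$ (or at least an order of density $\ge \omega^2$), using the parameter $\K[T]$-$\K[T]$-bimodule structure coming from wildness. For the converse, the goal is to strengthen the classification of abstract hammocks so that density $\ge\omega^2$ forces a genuinely $2$-parameter family of indecomposables; heuristically, a finitely presented linear order cannot describe the interactions of two independent band-like degrees of freedom, and the jump from $\omega\cdot n$ to $\omega^2$ should reflect this multi-parameter complexity.

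For the global upper bound $\st(\Lambda)<\omega^\omega$ the plan is to exploit the fact that $\Lambda=\K\Q/\langle\rho\rangle$ is specified by finitely many data. Generalizing the approach of this paper, I would define a class $\LOfd^{\sharp}\supseteq\LOfd$ of linear orders generated by a fixed finite list of finitary operations (the arities and the number of operations being allowed to depend on $|Q_0|,|Q_1|,|\rho|$), prove an analogue of Proposition \ref{ordtypecomplexity} and hence a finite-type result for the morphism posets of $\Lambda$, and finally establish, using the Amit-Myers-Schmerl-Shelah result that $\omega^\omega$ is the smallest ordinal not finitely axiomatizable in the language of linear orders, the inequality $d(L)<\omega^\omega$ for every $L\in\LOfd^{\sharp}$. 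Translating density to rank then yields $\st(\Lambda)<\omega^\omega$.

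The hardest step, by a wide margin, will be producing the wild lower bound $\st(\Lambda)\ge\omega^2$. Unlike the tame case, where one has an explicit combinatorial description of indecomposables and of the morphisms between them (strings, bands, graph maps), the wild case offers no such canonical handle: Crawley-Boevey's parameterization gives the existence of $\K[T_1,T_2]$-bimodules realizing every representation type, but extracting from these an explicit chain of composable morphisms forming a dense linear suborder of a single hammock is a concrete representation-theoretic problem without obvious precedent. A secondary obstacle is the need, even in the tame extension, for a workable analogue of the $H$-equivalence relation (\S\ref{sec:H-eq}) outside the string algebra setting, since all finiteness results in this paper are ultimately traceable to the finiteness of its equivalence classes.
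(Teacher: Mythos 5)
The statement you are attempting to prove is Conjecture \ref{newconj}, which the paper explicitly presents as an \emph{open conjecture}, not a theorem. The authors offer no proof; Section \ref{sec: new conjecture} contains only heuristic motivation, and the paper's actual theorems (\ref{oldconj}, \ref{reversest}) cover only the middle bullet for special biserial algebras with at least one band. There is therefore no ``paper's own proof'' to compare against, and your text is a research program rather than a proof --- a point you yourself acknowledge by flagging the wild lower bound as ``the hardest step, by a wide margin'' and the extension of $H$-equivalence as an unsolved obstacle.

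Your sketch in fact closely tracks the paper's own speculative discussion: the expectation that hammocks for general $\Lambda$ are bounded discrete abstract $\alpha(\Lambda)$-hammocks with finite description saturated linear suborders, the appeal to Auslander's theorem for the finite representation type clause, the appeal to Prest--Ringel for the domestic refinement, and the invocation of the Amit--Myers--Schmerl--Shelah characterization of $\omega^\omega$ as the first non-finitely-axiomatizable ordinal to motivate the global bound $\st(\Lambda)<\omega^\omega$. Where you go beyond the paper is in suggesting a concrete strategy for the wild case (building a long chain of $v$-pointed composable morphisms from Drozd/Crawley-Boevey's parameterizing bimodules) and in introducing an enlarged class $\LOfd^\sharp$ to capture ``finitary'' linear orders; neither of these is worked out, and both face the gap you name: there is no known structure theory for morphisms between indecomposables for a wild algebra that could supply the abstract hammock $H(v)$, let alone establish its finite type. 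So the proposal is sound as a description of a program consistent with the authors' own view, but it should not be recorded as a proof; the conjecture remains open.
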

\section*{Declarations}
\subsection*{Ethical approval}
Not applicable

\subsection*{Competing interests}
Not applicable

% \subsection*{Authors' contribution}
% S.S. contributed to Sections 1, 2 and 3.4, V.S. contributed to Sections 1,3 and 4, A.K. supervised the project and contributed to all sections. All authors reviewed the manuscript.

\subsection*{Funding}
Not applicable

\subsection*{Availability of data and materials}
Not applicable

\printbibliography
\vspace{0.2in}
\noindent{}Suyash Srivastava\\
Indian Institute of Technology Kanpur\\
Uttar Pradesh, India\\
Email: \texttt{suyash20@iitk.ac.in}

\vspace{0.2in}
\noindent{}Vinit Sinha\\
Indian Institute of Technology Kanpur\\
Uttar Pradesh, India\\
Email: \texttt{vinitsinha20@iitk.ac.in}

\vspace{0.2in}
\noindent{}Corresponding Author: Amit Kuber\\
Indian Institute of Technology Kanpur\\
Uttar Pradesh, India\\
Email: \texttt{askuber@iitk.ac.in}\\
Phone: (+91) 512 259 6721\\
Fax: (+91) 512 259 7500
\end{document}